\newcommand{\Q}{\mathbb{Q}}
\newtheorem*{theoremA}{Theorem A}
\newtheorem*{theoremB}{Theorem B}
\newtheorem*{theoremC}{Theorem C}
\newtheorem*{theoremD}{Theorem D}
\newtheorem{theorem}{Theorem}[section]
\newtheorem{example}[theorem]{Example}
\newtheorem{lemma}[theorem]{Lemma}
\newtheorem{corollary}[theorem]{Corollary}
\newtheorem{proposition}[theorem]{Proposition}
\newtheorem{remark}[theorem]{Remark}
\newtheorem{definition}[theorem]{Definition}
\DeclareMathOperator{\Fr}{Fr}
\DeclareMathOperator{\Gal}{Gal}
\DeclareMathOperator{\Ker}{Ker}
\DeclareMathOperator{\Tr}{Tr}
\DeclareMathOperator{\Spec}{Spec}
\newcommand{\F}{\mathbb{F}}
\begin{document}
\title[Gauss--Heilbronn Sums and Coverings of Deligne--Lusztig Type Curves]
{Gauss--Heilbronn Sums and Coverings of Deligne--Lusztig Type Curves}

\author{Tetsushi Ito}
\address{
Department of Mathematics, Faculty of Science, Kyoto University
Kyoto, 606--8502, Japan}
\email{tetsushi@math.kyoto-u.ac.jp}

\author{Daichi Takeuchi}
\address{Department of Mathematics,
Institute of Science Tokyo,
2-12-1 Ookayama, Meguro-ku, Tokyo, 152-8551, Japan
}
\email{daichi.takeuchi4@gmail.com}

\author{Takahiro Tsushima}
\address{
Keio University School of Medicine,
4-1-1 Hiyoshi, Kohoku-ku,
Yokohama, 223-8521, Japan}
\email{tsushima@keio.jp}

\date{\fbox{November ??, 2025}}

\thanks{2020 \textit{Mathematics Subject Classification.}
Primary 11M38; Secondary 11L03, 11T24, 14F20, 11G20.}
\thanks{\textit{Key words and phrases.} Gauss--Heilbronn sum, 
Witt vectors, Deligne--Lusztig type curve, $L$-polynomial, Frobenius slopes.}

\date{}

\begin{abstract}
We study exponential sums on Witt vectors, known as Gauss--Heilbronn sums, and the curves whose Frobenius traces realize these sums via a Deligne--Lusztig type construction.
For $3$-typical Witt vectors of length two, we  analyze Gauss--Heilbronn sums, from which we fully determine the Frobenius slopes of the associated curves. 
%This investigation also yields an extension of Carlitz's classical formula for cubic exponential sums.
\end{abstract}

\maketitle
\vspace{-2em}

\section{Introduction}

Exponential sums over finite fields have been studied extensively for their 
intrinsic arithmetic properties and their connections to the geometry of 
algebraic curves. Classical examples include cubic and Kloosterman-type sums; 
see, for instance, \cite{B, Ca, Ca2, Da, Del, Ka0, Ka00, Liv}. 
Even in these basic yet nontrivial cases, such sums already exhibit rich arithmetic phenomena.

A natural generalization arises from the theory of Witt vectors, whose higher-level additive characters give rise to a richer family of exponential sums.
Let $q$ be a power of a prime $p$, let $l$ be a prime, and let 
$W_{n,q}$ denote the ring of $l$-typical Witt vectors of length $n$ over $\F_q$.
Associated to characters of $W_{n,q}$, one can define exponential sums that carry significant arithmetic content and reveal 
intricate connections between number-theoretic and geometric structures; see, for example, \cite{DWX, Ka0a, Ka1, Ka2, Ka3, K, KU1, KU2, Li, LW}. 
For an integer $m \ge 1$, which plays the role of a tame level when $p=l$, we introduce a Deligne--Lusztig type affine curve $C_{m,n,l,q}/\F_q$ 
whose Frobenius traces realize the exponential sums
\[
\mathcal S
 = \biggl\{
   -\sum_{x\in\F_q}
    \psi\!\left(\sum_{i=0}^{n-1} a_i x^{l^i m}\right)
   \,\biggm|\,
   (a_0,\dots,a_{n-1})\in\F_q^n\setminus\{0\}
  \biggr\},
\quad \text{in the case}\ p\neq l,
\]
and
\[
\mathcal T
 = \biggl\{
   -\sum_{x\in\F_q}
    \xi\bigl(a\cdot(x^m,0,\dots,0)\bigr)
   \,\biggm|\,
   a\in W_{n,q}\setminus\{0\}
  \biggr\},
\quad \text{in the case}\ p=l,
\]
where $\psi$ is a nontrivial additive character of $\F_q$ and 
$\xi$ is an additive character of $W_{n,q}$ of order $p^n$.
If $m=1$, the sums $\mathcal T$ are referred to as Gauss--Heilbronn sums in \cite{Li}.
In the following, we simply write $C_{m,n}$ for 
$C_{m,n,l,q}$. 

\medskip
In the case $n=2$, we give an 
explicit formula for the $L$-polynomials of
$\overline{C}_{1,2}$. 
Next, assuming $p=l$, we study the finite \'etale covering
\[
C_{P,n} \longrightarrow C_{1,n},
\]
which is associated with a separable $\F_p$-linearized polynomial $P(x) \in \F_q[x]$.  
For a smooth affine curve $X$, let $\overline{X}$ denote 
its smooth compactification. 
If $n=2$ and $p=2$, the curves $\{\overline{C}_{P,2}\}_P$ can be identified with the van der Geer--van der Vlugt curves \cite{ITT}.  
IN particular, we give an 
explicit formula for the $L$-polynomials of
$\overline{C}_{P,2}$ in the case $n=2$ and $p=3$.  
Consequently, the Frobenius slopes of $\overline{C}_{P,2}$ are $\{1/3,\, 2/3\}$. 
By varying the choice of $P$, we enlarge the class of curves whose Frobenius slopes can be completely determined.  
We also determine the 
Frobenius slopes of the smooth compactification $\overline{C}_{2,2}$. 

\medskip
Let $W=W_{2,q}=\F_q^2$ with abelian group law
\[
(a,b)+(a',b') = (a+a',\; b+b' - aa'(a+a')).
\]
Fix a prime $\ell\neq p$. 
Let $W^\vee_{\mathrm{prim}}$ denote 
the set of $\overline{\mathbb{Q}}_{\ell}$-valued characters whose restriction 
to $\{0\}\times\F_q$ is nontrivial.  
For a smooth projective curve $X/\F_q$, its 
$L$-polynomial is
\[
L_{X/\F_q}(T)
 := \det\!\left(1-\Fr_q^\ast T;\,H^1(X_{\F},\overline{\Q}_\ell)\right)
 \in\Q[T], 
\]
where $\F$ is an algebraic closure of $\F_q$. 
For $\xi \in W_{\mathrm{prim}}^{\vee}$, 
let 
\[
S_{\xi}:=\sum_{x \in \F_q} \xi(x,0). 
\]
As one of the principal results, we obtain an explicit factorization of the $L$-polynomial of $\overline{C}_{1,2}$.
The statement is as follows.
\begin{theoremA}[Explicit $L$-polynomial for $n=2$] 
Assume $n=2$ and $l=3$. 
Then we have 
\[
L_{\overline{C}_{1,2}/\F_q}(T)=\prod_{\xi \in W_{\rm prim}^{\vee}}
\left(1+S_{\xi} T+qT^2\right). 
\]
\end{theoremA}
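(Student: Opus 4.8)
The plan is to compute the $L$-polynomial by decomposing the cohomology of $\overline{C}_{1,2}$ under the natural $W$-action and matching each isotypic piece with an Artin--Schreier--Witt sheaf. By construction the structure map $\pi\colon C_{1,2}\to\mathbb{A}^1$ is a finite \'etale Galois cover with group $W=W_{2,q}$, given by the length-two Artin--Schreier--Witt equation $\wp(\mathbf{y})=(x,0)$. Since $\ell\neq p$ and $W$ is abelian, $\pi_\ast\overline{\Q}_\ell=\bigoplus_{\xi\in W^\vee}\mathcal{L}_\xi$, where $\mathcal{L}_\xi$ is the rank-one sheaf on $\mathbb{A}^1$ whose trace function at $x\in\F_q$ is $\xi(x,0)$. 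The $W$-action commutes with $\Fr_q$, so $H^1(\overline{C}_{1,2,\F},\overline{\Q}_\ell)=\bigoplus_\xi H^1_\xi$, with $H^1_\xi$ the interior cohomology $\mathrm{Im}\bigl(H^1_c(\mathbb{A}^1_\F,\mathcal{L}_\xi)\to H^1(\mathbb{A}^1_\F,\mathcal{L}_\xi)\bigr)$. Thus $L_{\overline{C}_{1,2}/\F_q}(T)=\prod_\xi\det(1-\Fr_q T\mid H^1_\xi)$, and everything reduces to computing the individual factors.

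First I would pin down the dimensions with the Grothendieck--Ogg--Shafarevich formula. For every nontrivial $\xi$ one has $H^0_c(\mathbb{A}^1_\F,\mathcal{L}_\xi)=H^2_c(\mathbb{A}^1_\F,\mathcal{L}_\xi)=0$, so $\dim H^1_c=\mathrm{Sw}_\infty(\mathcal{L}_\xi)-1$. The decisive input is the Swan conductor at $\infty$: writing $t=1/x$ and putting the Witt vector $(1/t,0)$ in reduced form, the break formula for length-two Witt characters gives $\mathrm{Sw}_\infty(\mathcal{L}_\xi)=p=3$ when $\xi\in W^\vee_{\mathrm{prim}}$ (i.e.\ $\xi$ is nontrivial on $\{0\}\times\F_q$), and $\mathrm{Sw}_\infty(\mathcal{L}_\xi)=1$ when $\xi$ is nontrivial but trivial on $\{0\}\times\F_q$ (here $\mathcal{L}_\xi$ is an ordinary Artin--Schreier sheaf $\mathcal{L}_\psi(x)$). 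Hence $\dim H^1_\xi=2$ for $\xi\in W^\vee_{\mathrm{prim}}$ and $\dim H^1_\xi=0$ otherwise; note that the primitive sheaves are totally wild at $\infty$, so $H^1_c=H^1$ and the interior cohomology equals $H^1_c$. This is precisely where the hypothesis $l=3$ is used: it forces each surviving factor to be a quadratic, matching $1+S_\xi T+qT^2$.

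It remains to identify the quadratic factor for $\xi\in W^\vee_{\mathrm{prim}}$. The Grothendieck--Lefschetz trace formula gives $S_\xi=\sum_{x\in\F_q}\xi(x,0)=-\Tr(\Fr_q\mid H^1_\xi)$, so $\det(1-\Fr_q T\mid H^1_\xi)=1+S_\xi T+d_\xi T^2$ with $d_\xi=\det(\Fr_q\mid H^1_\xi)$. Weil purity gives $|d_\xi|=q$. Moreover the $\F_q$-automorphism $[-1]\colon x\mapsto-x$ satisfies $[-1]^\ast\mathcal{L}_\xi\cong\mathcal{L}_{\xi^{-1}}$ (because $(x,0)^{-1}=(-x,0)$), whence $H^1_\xi\cong H^1_{\xi^{-1}}$ as $\Fr_q$-modules; combined with $d_{\xi^{-1}}=\overline{d_\xi}$ this shows $d_\xi$ is real, so $d_\xi=\pm q$. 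Since both Frobenius eigenvalues have absolute value $\sqrt q$, the value $d_\xi=-q$ would force them to equal $\pm\sqrt q$ and hence $S_\xi=0$; this is excluded by the symplectic nature of the Poincar\'e pairing on $H^1(\overline{C}_{1,2})$ (equivalently by the local constant of $\mathcal{L}_\xi$ at $\infty$), leaving $d_\xi=q$. Assembling the factors over $W^\vee_{\mathrm{prim}}$ yields the asserted product.

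The main obstacle is the ramification analysis at infinity: reducing the length-two Witt vector $(x,0)$ to standard form at $t=1/x$ and evaluating its Swan conductor. This single computation simultaneously forces $\dim H^1_\xi=2$ for every primitive $\xi$ (crucially using $p=3$) and, through the associated local constant, fixes the determinant normalization $d_\xi=q$. By contrast, the vanishing of the non-primitive contributions and the trace identity $\Tr(\Fr_q\mid H^1_\xi)=-S_\xi$ are comparatively routine applications of the Euler--Poincar\'e and Lefschetz formulas.
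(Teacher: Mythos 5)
Your overall architecture --- decomposing $H^1(\overline{C}_{1,2})$ into $\xi$-isotypic pieces $H^1_\xi$, computing $\dim H^1_\xi=2$ for $\xi\in W_{\rm prim}^{\vee}$ and $0$ otherwise via Grothendieck--Ogg--Shafarevich, and reading off the linear coefficient $S_\xi$ from the trace formula --- matches the paper (Lemmas \ref{ll}, \ref{dim}, \ref{Fum}, \ref{trace}). (A minor point: the Swan conductor at infinity of a primitive $\mathscr{L}_\xi$ is $l=3$, not ``$p=3$''; for $p\neq 3$ it is the degree of $a_0x+a_1x^3$, and for $p=3$ it is $p^{n-1}$ by Brylinski's formula --- both equal $3$ here.) The gap is in the one step that carries all the content: showing that the constant $d_\xi=\det(\Fr_q^\ast;H^1_\xi)$ equals $+q$ rather than $-q$. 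Your reduction to $d_\xi=\pm q$ is sound ($[-1]^\ast\mathscr{L}_\xi\cong\mathscr{L}_{\xi^{-1}}$ gives $d_\xi=d_{\xi^{-1}}$, complex conjugation gives $d_{\xi^{-1}}=\overline{d_\xi}$, purity gives $|d_\xi|=q$), but neither of your two proposed ways of fixing the sign works as stated. The Poincar\'e pairing is $W$-equivariant and therefore pairs $H^1_\xi$ only with $H^1_{\xi^{-1}}$, which is a \emph{different} summand (a primitive $\xi$ has order divisible by $p$, so $\xi\neq\xi^{-1}$ for odd $p$); the symplectic similitude relation thus yields only $d_\xi d_{\xi^{-1}}=q^2$, i.e.\ $d_\xi=\pm q$ again, and cannot detect the sign on a single isotypic piece. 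And the fallback implication ``$d_\xi=-q$ forces $S_\xi=0$, which is excluded'' does not close the argument, because $S_\xi=0$ genuinely occurs for primitive $\xi$: for $p\neq 3$ and $q\equiv 2\pmod 3$ one has $S_\xi=\sum_{x\in\F_q}\psi(a_1x^3)=\sum_{y\in\F_q}\psi(a_1y)=0$ whenever the linear ghost coordinate $a_0$ vanishes, since cubing is then a bijection of $\F_q$. For such $\xi$ your argument cannot distinguish the eigenvalue pair $\{i\sqrt q,-i\sqrt q\}$ (giving $d_\xi=q$) from $\{\sqrt q,-\sqrt q\}$ (giving $d_\xi=-q$). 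The local $\varepsilon$-constant at $\infty$ would indeed settle the sign, but you do not compute it, and that computation is precisely the nontrivial input.

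The paper closes this gap by an explicit second-moment computation: Lemma \ref{16} counts the fixed points of $\Fr_{q^2}\circ(a,b)$ on $C$, which via the Deligne--Lusztig fixed point formula gives $\alpha_\xi^2+\beta_\xi^2=-q+\nu_q(2)G(\psi)U_\xi$ (Corollary \ref{cc}); a completion of the square in the Witt group gives $S_\xi^2=(\alpha_\xi+\beta_\xi)^2=q+\nu_q(2)G(\psi)U_\xi$ (Lemma \ref{2}); subtracting yields $2\alpha_\xi\beta_\xi=2q$ unconditionally (Corollary \ref{cc2}). (The case $p=2$ is handled separately by identifying the factors with the $L$-polynomials of the elliptic curves $y^2+y=ax^3+bx$.) If you want to keep your route, you would need to actually evaluate the local constant at infinity, e.g.\ via Laumon's product formula in the style of Lemma \ref{laumon}.
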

This may be viewed as a generalization of Carlitz's formula \cite{Ca} for cubic exponential sums. 

We now focus on the case $p=3$ and study the $3$-adic valuation of $S_{\xi}$.
\begin{theoremB}[Valuation of 
Gauss--Heilbronn sums]\label{tA}
Assume $n=2$ and $p=l=3$. 
Let $\zeta_9:=e^{2\pi i/9}$ and let 
$\mathfrak{p}$ denote a prime of 
$\Q(\zeta_9)$ 
lying over $(3)$. 
Let $v_{\mathfrak{p}}(-)$ denote the 
valuation of $\Q(\zeta_9)$ at $\mathfrak{p}$.  
We write $q=3^f$. 
Then we have 
\[
v_{\mathfrak{p}}(S_{\xi})=\frac{f}{3}. 
\]
\end{theoremB}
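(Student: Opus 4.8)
The plan is to make the character fully explicit and then reduce the valuation to a single slope computation. Fix the isomorphism $W_{2,3}\cong\mathbb{Z}/9$ and the character $\psi_9\colon\mathbb{Z}/9\to\mu_9$ with $\psi_9(1)=\zeta_9$; then every $\xi\in W_{\mathrm{prim}}^{\vee}$ has the form $w\mapsto\psi_9(\Tr_{W_{2,q}/W_{2,3}}(\mathbf{a}\cdot w))$ for a unique $\mathbf{a}=(\alpha_0,\alpha_1)\in W_{2,q}$. Writing out $\mathbf{a}\cdot(0,b)=(0,\alpha_0^3 b)$ shows that $\xi$ is primitive if and only if $\alpha_0\neq 0$. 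Since $\mathbf{a}\cdot(x,0)=(\alpha_0 x,\alpha_1 x^3)$, I would decompose $(\alpha_0 x,\alpha_1 x^3)=[\alpha_0 x]+V(\alpha_1 x^3)$ (Teichm\"uller lift $[\,\cdot\,]$ and Verschiebung $V$) and use that the Witt trace commutes with $V$ while sending a Teichm\"uller element to the symmetric sum $\sum_{i}[(\alpha_0 x)^{3^i}]$; this yields an explicit phase $h(x)\in\mathbb{Z}/9$ with $\xi(x,0)=\zeta_9^{h(x)}$ and $h(x)\equiv\Tr_{\F_q/\F_3}(\alpha_0 x)\pmod 3$. Two symmetries cut down the problem. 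First, $3$ is totally ramified in $\Q(\zeta_9)$ with $e=\varphi(9)=6$, so $\mathfrak{p}$ is the unique prime over $3$; hence $v_{\mathfrak{p}}$ is $\Gal(\Q(\zeta_9)/\Q)$-invariant, and since $\sigma_c(S_\xi)=S_{\xi^c}$ the value $v_{\mathfrak{p}}(S_\xi)$ is unchanged on Galois orbits. Second, the substitution $x\mapsto\lambda x$ shows that $S_\xi$ is literally unchanged when $\mathbf{a}$ is replaced by $(\lambda\alpha_0,\lambda^3\alpha_1)$, $\lambda\in\F_q^{\times}$.

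I would next dispose of the case $\alpha_1=0$. Using the last symmetry with $\lambda=\alpha_0^{-1}$ we may assume $\mathbf{a}=(1,0)$, so that $\xi$ is pulled back from a character $\xi_0$ of $W_{2,3}$. By Theorem A over $\F_3$ the factor of $L_{\overline{C}_{1,2}/\F_3}(T)$ attached to $\xi_0$ is $1+S_{\xi_0}T+3T^2$, with reciprocal roots $\alpha,\beta$ satisfying $\alpha\beta=3$; over $\F_q=\F_{3^f}$ the pulled-back character contributes $1+S_\xi T+qT^2=(1-\alpha^f T)(1-\beta^f T)$, so $S_\xi=-(\alpha^f+\beta^f)$. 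For $f=1$ one computes
\[
S_{\xi_0}=1+\zeta_9+\zeta_9^{-1}=1+2\cos(2\pi/9),
\]
whose norm from the real cubic subfield $\Q(\zeta_9)^{+}$ to $\Q$ equals $-3$ (it is $-(T^3-3T+1)\big|_{T=-1}$, the value at $-1$ of the minimal polynomial of $2\cos(2\pi/9)$). As $3$ is totally ramified in $\Q(\zeta_9)^{+}$, this gives $v_{\mathfrak{p}}(S_{\xi_0})=\tfrac13$, whence $v_3(\alpha)=\tfrac13$, $v_3(\beta)=\tfrac23$, and therefore $v_{\mathfrak{p}}(S_\xi)=\min(f/3,2f/3)=f/3$ in the case $\alpha_1=0$.

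The crux is to remove the hypothesis $\alpha_1=0$, that is, to show that $v_{\mathfrak{p}}(S_{(\alpha_0,\alpha_1)})$ is independent of $\alpha_1$. Since $\mathbf{a}\mapsto\xi_{\mathbf{a}}$ is a homomorphism and $(\alpha_0,\alpha_1)=(\alpha_0,0)+(0,\alpha_1)$, one has
\[
S_{(\alpha_0,\alpha_1)}=\sum_{x\in\F_q}\xi_{(\alpha_0,0)}(x,0)\,\psi\bigl(\Tr_{\F_q/\F_3}(\alpha_1 x^3)\bigr),
\]
the $\alpha_1=0$ sum twisted by an order-$3$ additive character $\psi$. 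I would analyze this $\mathfrak{p}$-adically: expanding $\zeta_9^{h(x)}=(1+\lambda)^{h(x)}$ with $\lambda=\zeta_9-1$ (so $v_{\mathfrak{p}}(\lambda)=\tfrac16$) writes $S_\xi=\sum_{k\geq 0}\lambda^k\sum_{x}\binom{h(x)}{k}$, whose Newton polygon determines $v_{\mathfrak{p}}(S_\xi)$. The step to establish is that this polygon has the single normalized slope $1/3$ for every $\alpha_0\neq 0$ and every $\alpha_1$; equivalently, that the order-$9$ Teichm\"uller phase $\zeta_9^{[\Tr(\alpha_0 x)]}$ controls the leading term while the $\alpha_1$-dependent contribution, lying in $\mu_3$, enters only at strictly larger valuation. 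I expect this to be the main obstacle: it is a Stickelberger/Gross--Koblitz-type digit computation for the order-$9$ character, or, conceptually, the statement that the Swan conductor of the associated Artin--Schreier--Witt sheaf forces the break $1/p=1/3$. Controlling the interaction between the order-$9$ Teichm\"uller phase and the order-$3$ wild twist is precisely where the difficulty concentrates.
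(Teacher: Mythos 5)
Your proposal does not close the main case, and the gap sits exactly where you say it does. The explicit description of primitive characters, the rescaling symmetry $(\alpha_0,\alpha_1)\mapsto(\lambda\alpha_0,\lambda^3\alpha_1)$, and the treatment of the subfamily $\alpha_1=0$ are all correct: reducing to $\mathbf{a}=(1,0)$, computing $S_{\xi_0}=1+\zeta_9+\zeta_9^{-1}$ with norm $-3$ from $\Q(\zeta_9)^{+}$, and propagating the slopes $\{1/3,2/3\}$ of the reciprocal roots to $\F_{3^f}$ is a valid argument for those characters. But the rescaling only normalizes $\alpha_0$; it cannot remove $\alpha_1$, so this covers only $q-1$ of the $q(q-1)$ primitive characters. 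For the remaining ones you reduce the theorem to the assertion that the $\lambda$-adic Newton polygon of $\sum_k\lambda^k\sum_x\binom{h(x)}{k}$ has single slope $1/3$ uniformly in $\alpha_1$, and you explicitly flag this as unproven. That assertion \emph{is} the theorem, and the digit-counting route you sketch is essentially Liu's method, which (as the paper's Remark 4.12 explains) is precisely what breaks down for $q>p$: the valuations of the coefficients of the relevant characteristic polynomial cannot be controlled by the Stickelberger-type estimates alone once $f>1$. So the proposal establishes the result only in a thin subcase and defers the genuine content to an unexecuted and problematic computation.

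For comparison, the paper avoids any digit analysis of the order-$9$ phase. It first proves the quadratic identity $S_{\xi}^2=q+\nu_q(2)\,G(\psi)\,U_{\xi}$ with $U_{\xi}=\sum_{x\neq 0}\nu_q(x)\xi^2(x,0)$ by completing the square inside the Witt addition (Lemma 3.23), and then shows $v_{\mathfrak{p}}(S_{\xi}U_{\xi})=f/2$ (Proposition 4.5). The latter is the real input: after expanding $S_\xi U_\xi$, one is reduced to sums $S(a,b,\psi)=\sum_{x\neq0}\nu_q(x)\psi(ax^3+ax(x+b)b)$, which are recognized as Frobenius traces on the curves $y^3-y=cx^4+dx^2$; these are van der Geer--van der Vlugt curves (the right-hand side is $x$ times an $\F_3$-linearized polynomial), hence supersingular, so each such sum has valuation at least $f/2$, while the sum over $b$ collapses to an exact quadratic Gauss sum of valuation exactly $f/2$. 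Combining the two relations forces $3\,v_{\mathfrak{p}}(S_\xi)=f$. If you want to salvage your approach, the missing ingredient is some substitute for this supersingularity input that is uniform in $\alpha_1$; the purely local $\lambda$-expansion does not supply it.
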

This theorem is proved by comparing the exponential sums with the Frobenius traces of certain supersingular curves.

In \cite[Theorem~1.3]{Li}, Liu computes the Frobenius slopes of $\overline{C}_{1,n}$.
While Liu states the result for general $q$, the proof crucially relies on the assumption $q=p$ (see Remark~\ref{Cliu} for details). 
Our argument applies uniformly for all $q=3^f$, including the case $f>1$, where Liu's method seems not to apply.
Thus Theorem~A provides a genuinely new valuation result for Gauss--Heilbronn sums.

As an application of Theorem~B, we deduce the following result for $m=2$.

\begin{theoremC}[Frobenius slopes]
Assume $m=2$, $n=2$ and $p=l=3$. 
The Frobenius slopes of $\overline{C}_{2,2}$ with respect to
$\mathrm{Fr}_q^\ast$ are 
\[
\left\{\frac{1}{2},\  \frac{1}{3},\ \frac{2}{3},\ 
\frac{1}{6},\ \frac{5}{6}\right\}. 
\]
\end{theoremC}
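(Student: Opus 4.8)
The plan is to realize $\overline{C}_{2,2}$ as a tamely ramified double cover of $\overline{C}_{1,2}$ and to read off the slopes from the two pieces of the induced cohomological decomposition. Since $m=2$, the substitution $x\mapsto x^{2}$ in the defining sum $-\sum_{x}\xi(a\cdot(x^{2},0))$ produces a finite morphism $\overline{\pi}\colon \overline{C}_{2,2}\to\overline{C}_{1,2}$ of degree $2$, whose deck group $\mu_{2}$ acts through $x\mapsto -x$ (note $-1\neq 1$ as $p=3$). Because $\gcd(2,3)=1$ this cover is tame, so $\overline{\pi}_{*}\overline{\Q}_{\ell}=\overline{\Q}_{\ell}\oplus\mathcal{L}_{\chi_{2}}$ for the quadratic Kummer sheaf $\mathcal{L}_{\chi_{2}}$ (the eigensheaf for the nontrivial character of $\mu_{2}$), and since $\overline{\pi}$ is finite the Leray spectral sequence degenerates to give
\[
L_{\overline{C}_{2,2}/\F_{q}}(T)=L_{\overline{C}_{1,2}/\F_{q}}(T)\cdot M(T),
\]
where $M(T)=\det(1-\Fr_{q}^{\ast}T;\,H^{1}(\overline{C}_{1,2,\F},\mathcal{L}_{\chi_{2}}))$ is the $(-1)$-eigenpart under $\mu_{2}$. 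By Theorem~A and Theorem~B the first factor is $\prod_{\xi}(1+S_{\xi}T+qT^{2})$ with $v_{\mathfrak{p}}(S_{\xi})=f/3$, so its Newton polygon has vertices $(0,0),(1,f/3),(2,f)$ and contributes exactly the normalized slopes $1/3$ and $2/3$.

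For the new factor $M(T)$ I would first pin down its Frobenius traces. The elementary identity $\sum_{x}g(x^{2})=\sum_{u}g(u)+\sum_{u\neq 0}\chi_{2}(u)g(u)$ with $g(u)=\xi(a\cdot(u,0))$ shows that the eigenvalues accounting for $M$ are the quadratic-twisted Gauss--Heilbronn sums
\[
T_{a}:=\sum_{u\in\F_{q}^{\ast}}\chi_{2}(u)\,\xi\bigl(a\cdot(u,0)\bigr),\qquad a\in W_{2,q}\setminus\{0\}.
\]
Organizing these by the $W_{2,q}$-action should split $M(T)$ into quadratic factors of the shape $1+T_{a}T+qT^{2}$, in parallel with Theorem~A; I would establish this factorization by the same cohomological argument that underlies Theorem~A, now applied to the $\mathcal{L}_{\chi_{2}}$-twisted sheaf, so that the slopes of each factor are governed by $v_{\mathfrak{p}}(T_{a})$.

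The crux is that these twisted valuations take exactly two values. When the length-two Witt structure decouples from $\chi_{2}$, the sum degenerates to a classical quadratic Gauss sum $\sum_{u}\chi_{2}(u)\psi(u)$, which is supersingular of absolute value $\sqrt{q}$ and, by Stickelberger, satisfies $v_{\mathfrak{p}}=f/2$; the associated factor $1+T_{a}T+qT^{2}$ then has Newton polygon a single segment of slope $f/2$, yielding the normalized slope $1/2$. In the remaining genuinely primitive case I would prove a quadratic-twisted refinement of Theorem~B, namely $v_{\mathfrak{p}}(T_{a})=f/6$; the factor then has Newton vertices $(0,0),(1,f/6),(2,f)$ and hence the normalized slopes $1/6$ and $5/6$. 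Assembling the untwisted and twisted contributions produces the slope set $\{1/2,\,1/3,\,2/3,\,1/6,\,5/6\}$, symmetric about $1/2$ as purity of $H^{1}$ requires.

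I expect the main obstacle to be the valuation identity $v_{\mathfrak{p}}(T_{a})=f/6$ for the primitive twisted sums. Unlike in Theorem~B, there is no evident supersingular curve whose Frobenius trace equals $T_{a}$, so the cleanest route is a direct Gross--Koblitz/Stickelberger digit computation for the order-$9$ additive character of $W_{2,q}$ composed with the order-$2$ multiplicative character $\chi_{2}$. The delicate step is showing that the leading $\mathfrak{p}$-adic term does not cancel, so that the Newton slope is exactly $f/6$ rather than strictly larger; I would control this by tracking the base-$3$ digit expansion governing the relevant Gauss sum and matching it against the weight-one bound coming from purity of $H^{1}(\overline{C}_{2,2,\F},\overline{\Q}_{\ell})$.
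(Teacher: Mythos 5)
Your overall strategy---splitting $H^1(\overline{C}_{2,2})$ into the pullback of $H^1(\overline{C}_{1,2})$ (which gives the slopes $1/3,2/3$ via Theorems~A and~B) plus a quadratic-twisted piece---is the same decomposition the paper uses. The genuine gap is your claim that the twisted part factors into quadratics $1+T_aT+qT^2$ ``in parallel with Theorem~A.'' For $\xi\in W_{\mathrm{prim}}^{\vee}$ the twisted sheaf $\mathscr{K}_{\nu_0}(x)\otimes\mathscr{L}_{\xi}(x,0)$ on $\mathbb{G}_{\mathrm{m}}$ has Swan conductor $3$ at infinity (Lemma~\ref{lemma:Brylinski}), so $H^1_{\mathrm{c}}$ is \emph{three}-dimensional and the local factor is a cubic, not a quadratic. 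A dimension count exposes the error: $g(\overline{C}_{2,2})=(q-1)(5q+1)/2$, whereas your ansatz yields only $(q-1)(4q+1)/2$. Each primitive cubic factor carries the three slopes $f/6,\ f/2,\ 5f/6$; in particular the slope $1/2$ also arises from the primitive twisted part, not only from the degenerate quadratic Gauss sums. Your final slope \emph{set} happens to be correct, but the argument as structured cannot produce it: knowing only the trace valuation $f/6$ of a degree-$3$ factor does not determine its Newton polygon. The paper supplies the two missing inputs---the Frobenius determinant on the $3$-dimensional space equals $q\,G(\psi)$ up to a root of unity (Lemma~\ref{laumon}, via Laumon's product formula), so the three valuations sum to $3f/2$, and Poincar\'e duality applied to $\xi$ and $\xi^{-1}$ then forces the slopes to be exactly $\{f/6,\ f/2,\ 5f/6\}$.

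Separately, the step you flag as the ``main obstacle''---proving $v_{\mathfrak{p}}(T_a)=f/6$---does not require a new Gross--Koblitz/Stickelberger digit computation. Since squaring permutes $W_{\mathrm{prim}}^{\vee}$, your $T_a$ are exactly the sums $U_{\xi}=\sum_{x\in\F_q^{\times}}\nu_q(x)\,\xi^2(x,0)$, and Corollary~\ref{sl} already gives $v_{\mathfrak{p}}(U_{\xi})=f/6$ as a byproduct of Theorem~B: the paper deduces it from $v_{\mathfrak{p}}(S_{\xi}U_{\xi})=f/2$ (Proposition~\ref{ve}, proved by comparison with supersingular Artin--Schreier curves) together with the relation $S_{\xi}^2=q+\nu_q(2)\,G(\psi)\,U_{\xi}$ of Lemma~\ref{2}. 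So the hard part of your plan is already done; what is actually missing is the cubic structure of the twisted factors and the determinant-plus-duality argument that locates the middle slope.
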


Using Theorem~B, we obtain
\[
v_{\mathfrak{p}}\biggl(\sum_{x \in \F_q} \nu_q(x)\,
\xi(x,0)\biggr)=\frac{f}{6},
\]
where $\nu_q(x):=x^{\frac{q-1}{2}}$ is the quadratic character
and $\xi \in W_{\mathrm{prim}}^{\vee}$.  
Combining this valuation with the product formula in \cite{La} 
yields Theorem~C.

\medskip

For a finite extension $\F_{p^a}/\F_{p^b}$, let $\Tr_{p^a/p^b} \colon 
 W_{n, p^a} \to W_{n, p^b}$ denote the trace map.

 We define the curve $C_{P,n}$ associated with a separable $\F_p$-linearized polynomial $P(x) \in \F_q[x]$.
The curve $C_{P,n}$ is geometrically connected. 
We will study the $L$-polynomial and Frobenius slopes of the projective curve $\overline{C}_{P,n}$ in the case $n=2$ and $p=l=3$.

Using Theorems~A and B, we show the following. 
\begin{theoremD}[Explicit $L$-polynomial for $n=2$, $p=l=3$]
Assume $n=2$ and $p=l=3$.  We take a nontrivial additive 
character $\psi$ of $\F_q$.
For a suitable extension $\F_{q^s}$ one has
\begin{equation}\label{LL}
L_{\overline{C}_{P,2}/\F_{q^s}}(T)
= \prod_{\xi\in W^\vee_{\mathrm{prim}}}
  \prod_{\alpha\in K_{P,\psi}}
  \biggl(
    1 
    + \sum_{x\in\F_{q^s}}
        \psi(\Tr_{q^s/q}(\alpha x))\,
        \xi(\Tr_{q^s/q}(x,0))\,T
    + q^s T^2
  \biggr),
\end{equation}
where $K_{P,\psi}\subset \F_{q^s}$ is a finite explicitly determined subset depending on $P$ and $\psi$.
As a consequence, the Frobenius slopes of $\overline{C}_{P,2}$ over $\F_q$ 
are precisely $\{1/3,2/3\}$.
\end{theoremD}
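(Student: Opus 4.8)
The plan is to derive both assertions from the étale Galois structure of $C_{P,2}\to C_{1,2}$ together with Theorems~A and~B. Since $P$ is separable and $\F_p$-linearized, this covering is Galois with group $G=\ker P$, an elementary abelian $3$-group. First I would pass to an extension $\F_{q^s}$ large enough that every root of $P$ is rational, so that $G$ and all of its $\overline{\Q}_\ell$-characters become constant. By the duality for linearized polynomials (the same mechanism behind the van der Geer--van der Vlugt identification recalled in the introduction), the characters $\chi\in\widehat{G}$ are realized on $C_{1,2}$ by Artin--Schreier sheaves $\mathcal L_{\psi(\alpha\,\cdot)}$, where $\alpha$ runs over an explicit finite set $K_{P,\psi}\subset\F_{q^s}$ of size $|G|$ governed by the adjoint (transpose) of $P$, and $f_*\overline{\Q}_\ell=\bigoplus_{\alpha\in K_{P,\psi}}\mathcal L_{\psi(\alpha\,\cdot)}$ on the open curve.

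Next I would combine this decomposition with the $W^\vee_{\mathrm{prim}}$-decomposition of Theorem~A. Extending each sheaf to the smooth compactification by middle extension and applying the Grothendieck--Lefschetz trace formula yields a $\Fr_{q^s}^\ast$-stable decomposition
\[
H^1\bigl((\overline{C}_{P,2})_{\F},\overline{\Q}_\ell\bigr)=\bigoplus_{\xi\in W^\vee_{\mathrm{prim}}}\ \bigoplus_{\alpha\in K_{P,\psi}} V_{\xi,\alpha},\qquad \dim V_{\xi,\alpha}=2,
\]
on which the trace of $\Fr_{q^s}^\ast$ is read off from the trace formula as $-c_{\xi,\alpha}$, where
\[
c_{\xi,\alpha}=\sum_{x\in\F_{q^s}}\psi\bigl(\Tr_{q^s/q}(\alpha x)\bigr)\,\xi\bigl(\Tr_{q^s/q}(x,0)\bigr).
\]
To pin down the quadratic factor I first note that $c_{\xi,\alpha}$ is real: since $-(x,0)=(-x,0)$ in $W$ and $\Tr_{q^s/q}$ is additive, the substitution $x\mapsto -x$ gives $\overline{c_{\xi,\alpha}}=c_{\xi,\alpha}$. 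By Deligne's purity each $V_{\xi,\alpha}$ is pure of weight one, so its two Frobenius eigenvalues lie on $|z|=q^{s/2}$; a real and nonzero trace (nonvanishing follows from the valuation discussed below) then forces their product to equal $q^s$, since they are either complex conjugate or a repeated real eigenvalue $\pm q^{s/2}$. Hence $\det(\Fr_{q^s}^\ast\mid V_{\xi,\alpha})=q^s$, the corresponding factor is $1+c_{\xi,\alpha}T+q^sT^2$, and summing over $(\xi,\alpha)$ gives \eqref{LL}; the term $\alpha=0$ recovers Theorem~A over $\F_{q^s}$, a useful consistency check.

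For the Frobenius slopes I would pass to $3$-adic valuations. Writing $\gamma,\gamma'$ for the inverse roots of a factor, the above gives $\gamma\gamma'=q^s=3^{sf}$ and $\gamma+\gamma'=-c_{\xi,\alpha}$, so $v_{\mathfrak p}(\gamma)+v_{\mathfrak p}(\gamma')=sf$ while $v_{\mathfrak p}(\gamma+\gamma')=v_{\mathfrak p}(c_{\xi,\alpha})$. The key input is the uniform valuation $v_{\mathfrak p}(c_{\xi,\alpha})=sf/3$ for every $(\xi,\alpha)$. Granting it, $sf/3<sf/2$ excludes equal valuations, so $v_{\mathfrak p}(c_{\xi,\alpha})=\min\{v_{\mathfrak p}(\gamma),v_{\mathfrak p}(\gamma')\}=sf/3$, which together with the sum $sf$ forces $\{v_{\mathfrak p}(\gamma),v_{\mathfrak p}(\gamma')\}=\{sf/3,2sf/3\}$. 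Dividing by $sf$ gives slopes $\{1/3,2/3\}$, uniformly across all factors; since Newton slopes are unchanged under the finite base change $\F_q\to\F_{q^s}$, the same holds for $\overline{C}_{P,2}$ over $\F_q$.

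The main obstacle is exactly this uniform valuation $v_{\mathfrak p}(c_{\xi,\alpha})=sf/3$ in the twisted range $\alpha\neq 0$. For $\alpha=0$ it is Theorem~B applied over $\F_{q^s}$. For $\alpha\neq 0$ the additive twist $\psi(\alpha\,\cdot)$ has order $3$ and therefore perturbs only the order-$3$ part of the order-$9$ function $x\mapsto\xi(x,0)$, leaving its genuinely nonlinear leading part intact; I expect this to be made precise either by the supersingular-curve comparison underlying Theorem~B applied to the twisted curve---where the twist is a translation that does not move the Newton polygon---or by a Stickelberger/Gross--Koblitz digit computation showing the valuation is controlled by the order-$9$ part alone. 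A secondary, more routine point is the local analysis of $\overline{C}_{P,2}\to\overline{C}_{1,2}$ at the boundary, needed to confirm that $H^1$ contributes no Frobenius eigenvalues beyond those indexed by the pairs $(\xi,\alpha)$.
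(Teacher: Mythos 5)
Your overall architecture matches the paper's: decompose $\pi_{2\ast}\varphi_{2\ast}\overline{\Q}_{\ell}$ into rank-one pieces indexed by $(\xi,\alpha)\in W^\vee_{\mathrm{prim}}\times K_{P,\psi}$, show each contributes a quadratic factor $1+c_{\xi,\alpha}T+q^sT^2$ of the $L$-polynomial, and convert the $3$-adic valuation of $c_{\xi,\alpha}$ into the slopes. One step is a genuine (and legitimate) variant: you obtain the constant term $q^s$ from the total reality of $c_{\xi,\alpha}$ plus purity plus nonvanishing, whereas the paper quotes Corollary~\ref{cc2}, whose proof is the fixed-point/second-moment computation behind Theorem~A. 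Your shortcut works, but note it is conditional on $c_{\xi,\alpha}\neq 0$, which you can only get from the valuation statement you have not yet established.

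That is the genuine gap: the uniform valuation $v_{\mathfrak{p}}(c_{\xi,\alpha})=sf/3$ for $\alpha\neq 0$, which you yourself flag as ``the main obstacle,'' is exactly the point on which the theorem turns, and you offer only a heuristic and two candidate strategies without carrying either out. The paper's resolution is short but essential (Lemma~\ref{chd}): since $p=3$ and $\psi={}^0\psi\circ\Tr_{q/p}$, one has $\psi\bigl(\Tr_{q^s/q}(\alpha x)\bigr)={}^0\xi\bigl(\Tr\bigl((0,\alpha^{3})\cdot(x,0)\bigr)\bigr)$, so the additive twist is \emph{absorbed} into the Witt-vector character: $\mathscr{L}_{\psi}(\alpha x)\otimes\mathscr{L}_{\xi_a}(x,0)\simeq\mathscr{L}_{\xi_{a'}}(x,0)$ with $a'=a+(0,\alpha^{3})$. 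Because $a'_0=a_0\neq 0$, the character $\xi_{a'}$ is again primitive, so $c_{\xi,\alpha}=S_{\xi_{a'}}$ is literally an untwisted Gauss--Heilbronn sum over $\F_{q^s}$ and Theorem~B (Corollary~\ref{sl}) applies verbatim, giving $v_{\mathfrak{p}}(c_{\xi,\alpha})=sf/3$; there is no separate ``twisted range'' to handle. Your heuristic that the order-$3$ twist leaves the nonlinear leading part intact is morally this identity, but without it you have proved neither the valuation, nor the nonvanishing your determinant argument needs, nor the slopes. The remaining points you defer (the dimension count via Swan conductors, the isomorphism $H_{\rm c}^1\xrightarrow{\sim}H^1(\overline{C}_{P,2})$ from ramification at infinity, invariance of slopes under finite base change) are indeed routine and agree with the paper.
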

The proof proceeds by reducing the problem to Theorem~A and then applying 
Theorem~B to evaluate the resulting exponential sums.

\medskip

\subsubsection*{Organization of the paper.}
Section~\ref{s2} introduces the Deligne--Lusztig type curve $C_{m,n}$ and develops its basic properties for general levels. 
Section~\ref{s3} proves Theorem~A and clarifies the supersingularity phenomena for $n=2$ and $l=3$, showing that $\overline{C}_{m,n}$ is supersingular when $p=2$ or $5$. 
In contrast, when $p=l$ (with $l$ an arbitrary prime) or $p\equiv1\pmod{l}$, we show that $\overline{C}_{m,n}$ is not supersingular. 
The proofs of these cases rely on Katz's monodromy theory and Stickelberger's theorem, respectively, and are independent of our main valuation results. 
Section~\ref{s3.5} treats the case $p=3$, and proves Theorems~B and C. 
Section~\ref{s4} studies the coverings $C_{P,n}$ of $C_{1,n}$ and establishes Theorem~D.

\section{Deligne--Lusztig type construction}\label{s2}
In this section, we introduce the curve $C_{m,n,l,q}$, which is the main object of study in this paper. 

Let $l$ be a fixed prime number and $n \ge 1$. 
The $l$-typical Witt vector scheme of length $n$, denoted by $\mathbb{W}_n$, is defined over $\mathbb{Z}$ as the affine scheme
\[
\mathbb{W}_n := \Spec \mathbb{Z}[X_0, X_1, \dots, X_{n-1}],
\]
where the coordinates $X_i$ correspond to the components of a Witt vector.

To define the addition on $\mathbb{W}_n$, we use the ghost map
\[
\operatorname{gh} \colon \mathbb{W}_n \longrightarrow \mathbb{G}^n_{\mathrm{a},\,\mathbb{Z}}, \quad 
X \mapsto (w_0(X), \dots, w_{n-1}(X)),
\]
where the ghost coordinates $w_i$ are integer-coefficient polynomials in $X_0, \dots, X_{n-1}$ given by
\[
\begin{aligned}
w_0(X) &= X_0, \\
w_1(X) &= X_0^l + l X_1, \\
w_2(X) &= X_0^{l^2} + l X_1^l + l^2 X_2, \\
&\;\;\vdots
\end{aligned}
\]
The addition and multiplication on $\mathbb{W}_n$ are defined uniquely by the conditions
\[
\operatorname{gh}(X + Y) = \operatorname{gh}(X) + \operatorname{gh}(Y),
\qquad
\operatorname{gh}(X \cdot Y) = \operatorname{gh}(X) \cdot \operatorname{gh}(Y),
\]
so that both operations become coordinate-wise in the ghost components. 
In other words, the Witt vector sum and product
\begin{equation}\label{S}
\begin{aligned}
X + Y &:= (S_0(X,Y), \dots, S_{n-1}(X,Y)), \\ 
X \cdot Y &:= (P_0(X,Y), \dots, P_{n-1}(X,Y))
\end{aligned}
\end{equation}
are determined by integer polynomials $S_i(X,Y), P_i(X,Y)$ satisfying the above identities
(cf.\ \cite[II\S6]{Se}).

Let $q$ be a power of a prime number $p$. 
We let 
\begin{equation}\label{wnq}
\mathbb{W}_{n,q}:=\mathbb{W}_n \otimes_{\mathbb{Z}} \F_q, \qquad 
W_{n,q}:=\mathbb{W}_{n,q}(\F_q). 
\end{equation}
Let 
\[
L_q \colon \mathbb{W}_{n,q}
\to \mathbb{W}_{n,q}, \quad (x_0,\ldots,x_{n-1})
\mapsto (x_0^q,\ldots,x_{n-1}^q)-
(x_0,\ldots,x_{n-1})
\]
be the Lang torsor. 
\begin{definition}
Let $m$ be a positive integer prime to $p$. 
Let 
\[
i \colon \mathbb{A}_{\F_q}^1 \hookrightarrow \mathbb{W}_{n,q}, \quad 
t \mapsto (t^m, 0,\ldots,0)
\]
be the closed immersion. 
Let $C_{m,n,l,q}$ denote the 
affine curve over $\F_q$ defined by 
the cartesian diagram
\[
\xymatrix{
C_{m,n,l,q} \ar[r]\ar[d]^{\pi} & \mathbb{W}_{n,q} \ar[d]^{L_q}\\
\mathbb{A}_{\F_q}^1 \ar[r]^-i & \mathbb{W}_{n,q}. 
}
\] 
\end{definition}
Then $C_{m,n,l,q}$ admits a natural action of 
$W_{n,q}:=\mathbb{W}_{n,q}(\F_q)$. 
The morphism 
\begin{equation}\label{pi}
\pi \colon C_{m,n,l,q} \to \mathbb{A}_{\F_q}^1, \quad 
(t, x_0,\ldots,x_{n-1}) \mapsto t, 
\end{equation}
is a finite \'etale Galois covering whose Galois group 
is $W_{n,q}$. 
If $m=1$, the parameter $t$ is 
expressed as $x_0^q-x_0$.

For $\xi \in W_{n,q}^{\vee}$, 
let $\mathscr{L}_{\xi}(x_0,\ldots,x_{n-1})$ denote the 
sheaf of rank one 
on $\mathbb{W}_{n,q}$ 
defined by the $W_{n,q}$-covering 
$L_q \colon \mathbb{W}_{n,q} \to
\mathbb{W}_{n,q}$ and $\xi$. 
For a morphism of $\F_q$-schemes 
$(f_0,\ldots, f_{n-1}) \colon X \to 
\mathbb{W}_{n,q},\ x \mapsto 
(f_0(x), \ldots, f_{n-1}(x))$, 
let $\mathscr{L}_{\xi}(f_0, \ldots, f_{n-1})$
denote the pull-back 
\[
(f_0,\ldots, f_{n-1})^\ast \mathscr{L}_{\xi}(x_0,\ldots,x_{n-1}).
\]
We simply write $\mathscr{L}_{\xi}(t^m,0'\mathrm{s})$ for 
$\mathscr{L}_{\xi}(t^m,0,\ldots,0)$.

Let $\F$ be an algebraic closure of $\F_q$. 
We write $\mathbb{A}^1$
for the affine line over $\F$. 
We take a prime number $\ell \neq p$.
For an algebraic variety $X$ over a field 
$F$ and a field extension $E/F$, let 
$X_E$ denote the base change of $X$ to $E$. 
For an algebraic variety $X$ over $\F_q$, 
we denote by $H_{\rm c}^i(X)$ and 
$H^i(X)$ for 
$H_{\rm c}^i(X_{\mathbb{F}},\overline{\mathbb{Q}}_{\ell})$ and $H^i(X_{\mathbb{F}},\overline{\mathbb{Q}}_{\ell})$, respectively. 
For a finite abelian group $A$, 
let $A^{\vee}$ denote the character group 
$\mathrm{Hom}_{\mathbb{Z}}(A,\overline{\mathbb{Q}}_{\ell}^{\times})$. 

\begin{lemma}\label{ll}
We have an isomorphism 
\[
H^1_{\rm c}(C_{m,n,l,q}) \simeq 
\bigoplus_{\xi \in W_{n,q}^{\vee}} 
H_{\rm c}^1(\mathbb{A}^1,\mathscr{L}_{\xi}(t^m,0'\mathrm{s})). 
\]
\end{lemma}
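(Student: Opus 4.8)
The plan is to exploit the fact, recorded just after the definition, that $\pi\colon C_{m,n,l,q}\to\mathbb{A}^1_{\F_q}$ is a finite \'etale Galois covering with abelian Galois group $W_{n,q}$, so that the pushforward of the constant sheaf decomposes into rank-one isotypic pieces indexed by the characters $\xi\in W_{n,q}^{\vee}$. First I would record that, because $\pi$ is finite, it is proper and affine; hence $\pi_{!}=\pi_{*}$ and $R^i\pi_{*}\overline{\mathbb{Q}}_{\ell}=0$ for $i>0$, which yields a canonical identification $H^i_{\rm c}(C_{m,n,l,q})\cong H^i_{\rm c}(\mathbb{A}^1,\pi_{*}\overline{\mathbb{Q}}_{\ell})$ for every $i$, in particular for $i=1$.

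The central step is to decompose $\pi_{*}\overline{\mathbb{Q}}_{\ell}$. For a finite \'etale Galois cover with abelian group $W_{n,q}$, the Galois action gives an eigenspace decomposition $\pi_{*}\overline{\mathbb{Q}}_{\ell}=\bigoplus_{\xi\in W_{n,q}^{\vee}}\mathscr{F}_{\xi}$ into rank-one local systems, where $\mathscr{F}_{\xi}$ is the subsheaf on which $W_{n,q}$ acts through $\xi$. I would then identify each $\mathscr{F}_{\xi}$ with $\mathscr{L}_{\xi}(t^m,0'\mathrm{s})$. This uses the cartesian diagram defining $C_{m,n,l,q}$: the curve is by construction the pullback of the Lang torsor $L_q$ along $i\colon t\mapsto(t^m,0,\ldots,0)$, while $\mathscr{L}_{\xi}(x_0,\ldots,x_{n-1})$ is by definition the rank-one sheaf cut out on $\mathbb{W}_{n,q}$ by the same Lang torsor together with $\xi$. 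Since the construction of the $\xi$-eigensheaf from a $W_{n,q}$-torsor commutes with base change, pulling back along $i$ sends the $\xi$-component of $(L_q)_{*}\overline{\mathbb{Q}}_{\ell}$ to $i^{*}\mathscr{L}_{\xi}=\mathscr{L}_{\xi}(t^m,0'\mathrm{s})$, which is precisely $\mathscr{F}_{\xi}$.

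Combining these, one obtains $H^1_{\rm c}(C_{m,n,l,q})\cong H^1_{\rm c}(\mathbb{A}^1,\pi_{*}\overline{\mathbb{Q}}_{\ell})\cong\bigoplus_{\xi}H^1_{\rm c}(\mathbb{A}^1,\mathscr{L}_{\xi}(t^m,0'\mathrm{s}))$, since the formation of compactly supported cohomology commutes with finite direct sums of sheaves. This gives the asserted isomorphism.

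The one point requiring care, and the step I expect to be the only genuine obstacle, is the base-change identification $\mathscr{F}_{\xi}\cong i^{*}\mathscr{L}_{\xi}$: one must verify that forming the $\xi$-isotypic rank-one sheaf out of the Lang (Artin--Schreier--Witt) torsor is compatible with pullback along $i$, i.e.\ that the cartesian square is respected at the level of eigensheaves. This is formal once the definition of $\mathscr{L}_{\xi}$ as the torsor-plus-character sheaf is unwound, but it is where the geometry of the diagram actually enters; everything else reduces to the standard exactness of $\pi_{*}$ for finite morphisms and the character decomposition of an abelian Galois cover.
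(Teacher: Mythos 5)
Your proposal is correct and follows exactly the route of the paper's own (much terser) proof: decompose $\pi_{*}\overline{\mathbb{Q}}_{\ell}$ into character eigensheaves $\mathscr{L}_{\xi}(t^m,0'\mathrm{s})$ using that $\pi$ is a finite \'etale $W_{n,q}$-Galois cover pulled back from the Lang torsor, then apply $H^1_{\rm c}(\mathbb{A}^1,-)$. The base-change compatibility you flag as the one delicate point is indeed immediate from the paper's definition of $\mathscr{L}_{\xi}(f_0,\ldots,f_{n-1})$ as the pullback $(f_0,\ldots,f_{n-1})^{*}\mathscr{L}_{\xi}(x_0,\ldots,x_{n-1})$ combined with the cartesian square defining $C_{m,n,l,q}$.
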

\begin{proof}
Let $\pi$ be as in \eqref{pi}. 
We have $\pi_{\ast} \overline{\mathbb{Q}}_{\ell}
\simeq \bigoplus_{\xi \in W_{n,q}^{\vee}} \mathscr{L}_{\xi}(t^m,0'\mathrm{s})$. 
Taking $H_{\rm c}^1(\mathbb{A}^1,-)$ gives the claim. 
\end{proof}
\begin{lemma}\label{gt}
Assume $p \neq l$. 
The ghost morphism 
\[
\mathrm{gh} \colon \mathbb{W}_{n,q}
\to \mathbb{G}_{\mathrm{a},\,\F_q}^n, \quad 
x=(x_0,\ldots,x_{n-1}) \mapsto (w_0(x),\ldots,w_{n-1}(x))
\]
is an isomorphism of ring schemes.
\end{lemma}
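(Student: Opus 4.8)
The plan is to exploit the fact that the hypothesis $p \neq l$ makes $l$ a unit in $\F_q$, which is precisely the obstruction to inverting the ghost map over $\mathbb{Z}$. First I would observe that $\mathrm{gh}$ is a morphism of ring schemes \emph{by construction}: the defining identities $\mathrm{gh}(X+Y)=\mathrm{gh}(X)+\mathrm{gh}(Y)$ and $\mathrm{gh}(X\cdot Y)=\mathrm{gh}(X)\cdot\mathrm{gh}(Y)$ say exactly that $\mathrm{gh}$ intertwines the Witt ring-scheme structure on the source with the componentwise ring structure on $\mathbb{G}_{\mathrm{a},\F_q}^n$. Consequently it suffices to prove that $\mathrm{gh}$ is an isomorphism of the underlying schemes, because the inverse of a bijective ring homomorphism is automatically a ring homomorphism, and the same holds functorially for ring-scheme morphisms.

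To establish the scheme isomorphism, I would exhibit an explicit inverse on coordinate rings. Writing $w_i(X)=\sum_{j=0}^{i} l^{j} X_j^{l^{i-j}}$, the top variable $X_i$ enters $w_i$ only through the linear term $l^i X_i$, all remaining terms involving $X_0,\dots,X_{i-1}$. Since $l \in \F_q^\times$, one may solve recursively: set $v_0 := Y_0$ and, for $i \geq 1$,
\[
v_i := l^{-i}\Bigl(Y_i - \sum_{j=0}^{i-1} l^{j}\, v_j^{\,l^{i-j}}\Bigr) \in \F_q[Y_0,\dots,Y_i].
\]
By construction $v_i\bigl(w_0(X),\dots,w_i(X)\bigr) = X_i$ for all $i$, so the assignments $Y_i \mapsto w_i(X)$ and $X_i \mapsto v_i(Y)$ define mutually inverse $\F_q$-algebra homomorphisms between $\F_q[Y_0,\dots,Y_{n-1}]$ and $\F_q[X_0,\dots,X_{n-1}]$. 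This shows $\mathrm{gh}$ is an isomorphism of affine schemes, and combined with the first paragraph, an isomorphism of ring schemes.

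A more conceptual alternative would use the Jacobian criterion: the matrix $(\partial w_i/\partial X_j)$ is lower triangular, with diagonal entries $\partial w_i/\partial X_i = l^i$, so its determinant equals $l^{\,n(n-1)/2} \in \F_q^\times$; hence $\mathrm{gh}$ is étale, and together with the recursive surjectivity above (each $X_i$ lies in the image of the comorphism) one again concludes it is an isomorphism.

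The step requiring the most care is organizational rather than computational: verifying that the recursively defined $v_i$ genuinely furnish a two-sided inverse, i.e.\ that substituting $Y_j = w_j(X)$ recovers $X_i$ and, conversely, that $w_i(v_0,\dots,v_i) = Y_i$ as polynomial identities. Both follow by a routine induction on $i$ from the triangular shape of the ghost polynomials, and the hypothesis $p\neq l$ is used only when dividing by $l^i$, so no further obstacle arises.
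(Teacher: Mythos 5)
Your proof is correct. The paper itself gives no argument here --- it simply cites this as well-known (Serre, \emph{Corps Locaux}, II\S6, Th\'eor\`eme 6) --- and your recursive inversion of the ghost polynomials, using that $l\in\F_q^\times$ because $p\neq l$, together with the observation that $\mathrm{gh}$ is a ring-scheme morphism by the very definition of the Witt operations, is precisely the standard proof of that cited fact; the induction verifying that the $v_i$ give a two-sided inverse is carried out correctly.
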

\begin{proof}
This is well-known (cf.\ \cite[II\S6, Th\'eor\`eme 6]{Se}). 
\end{proof}
\begin{lemma}\label{defining}
Assume $p \neq l$. The curve $C_{m,n,l,q}$ is 
defined by 
\begin{align*}
 w_0^q-w_0&=t^m, \\
 w_1^q-w_1&=t^{lm}, \\
&  \vdots \\
 w_{n-1}^q-w_{n-1}&=t^{l^{n-1}m}. 
\end{align*}
\end{lemma}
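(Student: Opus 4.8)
The plan is to push the defining equation of the fiber product through the ghost isomorphism of Lemma~\ref{gt}, under which the Lang torsor becomes componentwise. By the cartesian diagram, $C_{m,n,l,q}$ is the closed subscheme of $\mathbb{A}^1_{\F_q}\times\mathbb{W}_{n,q}$ consisting of pairs $(t,x)$ with $x=(x_0,\ldots,x_{n-1})$ satisfying the single Witt-vector equation
\[
L_q(x)=i(t)=(t^m,0,\ldots,0),
\]
that is, $F(x)-x=(t^m,0,\ldots,0)$, where $F$ denotes the coordinatewise $q$-power map $(x_0,\ldots,x_{n-1})\mapsto(x_0^q,\ldots,x_{n-1}^q)$ and the subtraction is Witt subtraction. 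Since $p\neq l$, Lemma~\ref{gt} gives that $\mathrm{gh}$ is an isomorphism of ring schemes, so I would change coordinates from $(x_0,\ldots,x_{n-1})$ to the ghost components $(w_0,\ldots,w_{n-1})=\mathrm{gh}(x)$.

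Two facts about $\mathrm{gh}$ are needed. First, additivity, $\mathrm{gh}(X+Y)=\mathrm{gh}(X)+\mathrm{gh}(Y)$, immediately gives $\mathrm{gh}(F(x)-x)=\mathrm{gh}(F(x))-\mathrm{gh}(x)$ componentwise. Second, $\mathrm{gh}$ must intertwine the two Frobenius maps, i.e.\ $w_i(x_0^q,\ldots,x_{n-1}^q)=w_i(x_0,\ldots,x_{n-1})^q$ for each $i$. I would check this directly by expanding both sides of $w_i=\sum_{j=0}^{i}l^j X_j^{l^{i-j}}$: in characteristic $p$ the $q$-power map is additive and fixes each coefficient $l^j$, so $w_i(x)^q=\sum_j l^j x_j^{q\,l^{i-j}}=w_i(x^q)$. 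Applying $\mathrm{gh}$ to $F(x)-x=i(t)$ then yields, componentwise,
\[
w_i^q-w_i=w_i(t^m,0,\ldots,0),\qquad i=0,\ldots,n-1.
\]

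It remains only to evaluate the right-hand side: substituting $X_0=t^m$ and $X_1=\cdots=X_{n-1}=0$ into $w_i=\sum_{j=0}^{i}l^j X_j^{l^{i-j}}$ kills every term except $j=0$, leaving $w_i(t^m,0,\ldots,0)=(t^m)^{l^i}=t^{l^i m}$, which produces exactly the asserted system. The only point requiring genuine care is the Frobenius-compatibility $w_i(x^q)=w_i(x)^q$; this is where the characteristic-$p$ hypothesis does the real work, and the assumption $p\neq l$ is what makes $\mathrm{gh}$ an honest change of coordinates via Lemma~\ref{gt} rather than merely a morphism.
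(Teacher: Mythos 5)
Your proposal is correct and follows essentially the same route as the paper's own proof: apply the ghost isomorphism of Lemma~\ref{gt}, use the Frobenius compatibility $w_i(x^q)=w_i(x)^q$ together with $w_i(t^m,0,\ldots,0)=t^{l^im}$, and read off the system componentwise. Your explicit verification of the Frobenius compatibility is a welcome detail that the paper only asserts.
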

\begin{proof}
We consider the defining equation of 
$C_{m,n,l,q}$: 
\begin{equation}\label{gtt}
(x_0^q,\ldots,x_{n-1}^q)=(x_0,\ldots,x_{n-1})+(t^m,0\ldots,0). 
\end{equation}
We note $w_i(x_0^q,\ldots,x_{n-1}^q)=w_i(x_0,\ldots,x_{n-1})^q$. 
We have 
\begin{equation}\label{w}
w_i(t^m,0'\mathrm{s}):=w_i(t^m,0,\ldots,0)=t^{l^i m} \qquad \textrm{for $0 \leq i \leq n-1$}. 
\end{equation}
Let $x=(x_0,\ldots,x_{n-1})$. 
By the definition of the sum of 
$\mathbb{W}_{n,q}$ and Lemma \ref{gt}, the equality \eqref{gtt} is equivalent to 
\begin{align*}
 w_0(x)^q&=w_0(x)+t^m, \\
 w_1(x)^q&=w_1(x)+t^{lm}, \\
& \  \vdots \\
 w_{n-1}(x)^q&=w_{n-1}(x)+t^{l^{n-1}m}. 
\end{align*}
\end{proof}
\begin{lemma}\label{pneql2}
Assume $p \neq l$. 
We identify $W_{n,q}=\mathbb{W}_{n,q}(\F_q)$
with $\F_q^n$ via $\mathrm{gh}$. 
For $a=(a_0,\ldots,a_{n-1}) \in \F_q^n$, 
let $f_{a}(t):=\sum_{i=0}^{n-1} a_i t^{l^i m}$. 
Let $\psi \in \F_q^{\vee} \setminus \{1\}$. 
Let $\xi \in W_{n,q}^{\vee}$, which corresponds to 
the character of $\F_q^n$ defined by $x:=(x_0,\ldots,x_{n-1}) \mapsto 
\psi(\sum_{i=0}^{n-1}a_i x_i)$ with 
$a=(a_0,\ldots,a_{n-1}) \in \F_q^n$. 
Then 
\[
\mathscr{L}_{\xi}(t^m,0'\mathrm{s})
\simeq \mathscr{L}_{\psi}(f_{a}(t)). 
\]
We also have 
\[
\sum_{t \in \F_q} \xi(t^m,0'\mathrm{s})
=\sum_{t\in \F_q} \psi(f_{a}(t)). 
\]
\end{lemma}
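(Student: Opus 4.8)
The plan is to transport everything through the ghost isomorphism $\mathrm{gh}$ of Lemma~\ref{gt} (available since $p\neq l$), exploiting that it is simultaneously an isomorphism of additive group schemes and compatible with the $q$-power Frobenius. First I would record the latter compatibility: since each ghost polynomial $w_i$ has coefficients in $\F_p$, one has $w_i(x_0^q,\ldots,x_{n-1}^q)=w_i(x_0,\ldots,x_{n-1})^q$ (as already used in the proof of Lemma~\ref{defining}), so $\mathrm{gh}$ commutes with the coordinatewise $q$-Frobenius. Combined with the additivity of $\mathrm{gh}$ (Lemma~\ref{gt}), this yields
\[
\mathrm{gh}\circ L_q = \mathrm{AS}_q\circ \mathrm{gh},
\]
where $\mathrm{AS}_q\colon \mathbb{G}_{\mathrm a,\F_q}^n\to\mathbb{G}_{\mathrm a,\F_q}^n$, $(y_0,\dots,y_{n-1})\mapsto(y_0^q-y_0,\dots,y_{n-1}^q-y_{n-1})$, is the coordinatewise Lang (equivalently, Artin--Schreier) torsor. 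Thus, under the identifications $W_{n,q}\cong\F_q^n$ and $\mathbb{W}_{n,q}\cong\mathbb{G}_{\mathrm a,\F_q}^n$ furnished by $\mathrm{gh}$, the $W_{n,q}$-torsor $L_q$ is carried isomorphically and equivariantly onto the $\F_q^n$-torsor $\mathrm{AS}_q$.

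Next I would identify the sheaf. By definition $\mathscr{L}_\xi$ is associated to the torsor $L_q$ and the character $\xi$, so by the previous paragraph it is the $\mathrm{gh}$-pullback of the sheaf on $\mathbb{G}_{\mathrm a,\F_q}^n$ associated to $\mathrm{AS}_q$ and the character $\chi\colon(y_i)_i\mapsto\psi(\sum_i a_i y_i)$ corresponding to $\xi$. Since $\mathrm{AS}_q$ is the external product of the one-variable Artin--Schreier torsors and $\chi=\prod_i\psi(a_iy_i)$ factors coordinatewise, the standard multiplicativity of Artin--Schreier sheaves, namely $\mathscr{L}_\psi(g)\otimes\mathscr{L}_\psi(h)\simeq\mathscr{L}_\psi(g+h)$ together with the fact that $\mathscr{L}_\psi(a_iy_i)$ is the pullback of $\mathscr{L}_\psi$ along multiplication by $a_i$, identifies this sheaf with $\mathscr{L}_\psi(\sum_i a_i y_i)$, the pullback of $\mathscr{L}_\psi$ along the linear form $\sum_i a_iy_i$. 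Hence $\mathscr{L}_\xi\simeq\mathrm{gh}^\ast\mathscr{L}_\psi(\sum_i a_iy_i)$.

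I would then pull back along the closed immersion $i\colon t\mapsto(t^m,0'\mathrm{s})$. By \eqref{w} the composite $\mathrm{gh}\circ i$ is $t\mapsto(t^m,t^{lm},\dots,t^{l^{n-1}m})$, so
\[
\mathscr{L}_\xi(t^m,0'\mathrm{s})
= i^\ast\mathrm{gh}^\ast\mathscr{L}_\psi\!\Big(\textstyle\sum_i a_iy_i\Big)
= \mathscr{L}_\psi\!\Big(\textstyle\sum_{i=0}^{n-1} a_i t^{l^i m}\Big)
= \mathscr{L}_\psi(f_a(t)),
\]
which is the first assertion. For the character identity I would argue pointwise rather than invoke a trace formula: for $t\in\F_q$, the definition of $\xi$ together with $w_i(t^m,0'\mathrm{s})=t^{l^im}$ gives
\[
\xi(t^m,0'\mathrm{s})=\psi\!\Big(\textstyle\sum_{i=0}^{n-1} a_i\, w_i(t^m,0'\mathrm{s})\Big)=\psi(f_a(t)),
\]
and summing over $t\in\F_q$ yields $\sum_{t}\xi(t^m,0'\mathrm{s})=\sum_{t}\psi(f_a(t))$.

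The only genuinely nontrivial point is the torsor-transport step, that is, the functoriality of the ``rank-one sheaf attached to a torsor and a character'' construction along the group-scheme isomorphism $\mathrm{gh}$; this rests entirely on its Frobenius-compatibility and is where the hypothesis $p\neq l$ enters through Lemma~\ref{gt}. Once that is in place the sheaf identification is the formal calculus of Artin--Schreier sheaves, and the character-sum identity is an immediate pointwise consequence of \eqref{w}.
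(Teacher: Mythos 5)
Your proposal is correct and follows essentially the same route as the paper: both arguments transport the situation through the ghost isomorphism of Lemma~\ref{gt} and then invoke the identity $w_i(t^m,0'\mathrm{s})=t^{l^im}$ from \eqref{w}. The paper compresses the torsor-transport step into the single display \eqref{pn}, whereas you spell out explicitly why $\mathrm{gh}$ intertwines $L_q$ with the coordinatewise Artin--Schreier torsor; that is a faithful expansion of the same idea, not a different argument.
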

\begin{proof}
By Lemma \ref{gt} and \eqref{w}, we have 
\begin{equation}\label{pn}
\mathscr{L}_{\xi}(t^m,0'\mathrm{s})
\simeq \mathscr{L}_{\psi}\biggl(\sum_{i=0}^{n-1}a_i w_i(t^m,0'\mathrm{s})\biggr)  
\simeq \mathscr{L}_{\psi}(f_{a}(t)).
\end{equation}
Finally, the map $\mathrm{gh}\colon W_{n,q} \xrightarrow{\sim} \F_q^n$
is an isomorphism, and applying the same identity
$w_i(t^m,0'\mathrm{s}) = t^{l^i m}$
gives
\[
\sum_{t\in \F_q} \xi(t^m,0'\mathrm{s})
   = \sum_{t\in \F_q} \psi(f_a(t)).
\]
\end{proof}
\begin{corollary}\label{pneql}
Assume $p \neq l$. We take a 
nontrivial character $\psi \in \F_q^{\vee}
\setminus \{1\}$. 
For $a=(a_0,\ldots,a_{n-1}) \in \F_q^n$, 
let $f_{a}(t):=\sum_{i=0}^{n-1} a_i t^{l^i m}$. 
Then we have an isomorphism 
\[
H_{\rm c}^1(C_{m,n,l,q}) \simeq 
\bigoplus_{a \in \F_q^n}
H_{\rm c}^1(\mathbb{A}^1,\mathscr{L}_{\psi}
\left(f_{a}(t)\right)). 
\]
Furthermore, we have 
\begin{align*}
\dim 
H_{\rm c}^1(\mathbb{A}^1,\mathscr{L}_{\psi}
\left(f_{a}(t)\right))&=\deg (f_{a}(t)-1) \quad \textrm{for $a \neq 0$}, \\ 
\dim H_{\rm  c}^1(C_{m,n,l,q})&=
\sum_{a \in \F_q^n \setminus \{0\}}
\deg (f_{a}(t)-1)=(q-1)\sum_{i=0}^{n-1} q^i(m l^i-1). 
\end{align*}
\end{corollary}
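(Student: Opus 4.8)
The plan is to combine the two preceding lemmas with a standard Euler--Poincar\'e computation for Artin--Schreier sheaves, and then to perform a short counting argument.

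First I would establish the decomposition. Fixing the nontrivial character $\psi$, the assignment $a \mapsto \xi_a$, where $\xi_a$ is the character $(x_0,\dots,x_{n-1}) \mapsto \psi(\sum_i a_i x_i)$ transported to $W_{n,q}$ via the ghost isomorphism of Lemma~\ref{gt}, is a bijection $\F_q^n \xrightarrow{\sim} W_{n,q}^{\vee}$. By Lemma~\ref{pneql2} we have $\mathscr{L}_{\xi_a}(t^m,0'\mathrm{s}) \simeq \mathscr{L}_{\psi}(f_a(t))$ for every $a$. Substituting this identification into the decomposition of Lemma~\ref{ll} yields the claimed isomorphism, now indexed by $a \in \F_q^n$. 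The term $a=0$ gives $f_0=0$, whence $\mathscr{L}_{\psi}(f_0)$ is the constant sheaf and contributes $H^1_{\rm c}(\mathbb{A}^1,\overline{\Q}_\ell)=0$; so it may be discarded and only the $a\neq 0$ terms matter for the dimension count.

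Next I would compute each summand for $a \neq 0$. Here $\mathscr{L}_{\psi}(f_a(t))$ is a lisse rank-one sheaf on $\mathbb{A}^1$, nontrivial because $f_a$ is nonconstant; since the geometric monodromy acts through a nontrivial character, both invariants and coinvariants vanish, so $H^0_{\rm c}=0$ and, by Poincar\'e duality, $H^2_{\rm c}=0$ as well. It then remains to apply the Grothendieck--Ogg--Shafarevich formula on $\mathbb{P}^1$ with the single missing point $\infty$:
\[
\chi_{\rm c}\bigl(\mathbb{A}^1, \mathscr{L}_{\psi}(f_a)\bigr) = \chi_{\rm c}(\mathbb{A}^1) - \mathrm{Swan}_\infty\bigl(\mathscr{L}_{\psi}(f_a)\bigr) = 1 - \deg f_a.
\]
The crucial input is that $\mathrm{Swan}_\infty = \deg f_a$. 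This holds precisely because $\deg f_a = l^j m$, where $j$ is the largest index with $a_j\neq 0$, is prime to $p$: indeed $l\neq p$ and $m$ is prime to $p$, so $f_a$ is already in reduced form for the Artin--Schreier equivalence $f\sim f+(h^p-h)$, no substitution can lower the pole order at $\infty$, and the Swan conductor equals the degree. Combined with the vanishing of $H^0_{\rm c}$ and $H^2_{\rm c}$, this gives $\dim H^1_{\rm c}(\mathbb{A}^1, \mathscr{L}_{\psi}(f_a)) = \deg f_a - 1$.

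Finally I would sum over $a \neq 0$. Since $\deg f_a = l^j m$ depends only on $j=\max\{i : a_i\neq 0\}$, and the number of $a$ with this value of $j$ is $(q-1)q^j$, I obtain
\[
\dim H^1_{\rm c}(C_{m,n,l,q}) = \sum_{a \neq 0}(\deg f_a - 1) = \sum_{j=0}^{n-1}(q-1)q^j(l^j m - 1) = (q-1)\sum_{i=0}^{n-1} q^i(m l^i - 1),
\]
as claimed. The main obstacle, such as it is, lies in justifying the Swan conductor computation: one must be sure that the coprimality $\gcd(\deg f_a, p)=1$ places $f_a$ in reduced form so that $\mathrm{Swan}_\infty$ coincides with the degree. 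Everything else is bookkeeping built on the already-established lemmas and the vanishing of the outer cohomology.
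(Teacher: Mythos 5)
Your proposal is correct and follows essentially the same route as the paper: the decomposition comes from Lemmas \ref{ll} and \ref{pneql2}, the dimension of each summand from the vanishing of $H^0_{\rm c}$ and $H^2_{\rm c}$ together with the Grothendieck--Ogg--Shafarevich formula and the fact that $\mathrm{Swan}_\infty(\mathscr{L}_{\psi}(f_a)) = \deg f_a$ because $p \nmid lm$, and the final identity from a direct count. Your write-up merely fills in the bookkeeping (the grouping of $a$ by the largest index $j$ with $a_j \neq 0$) that the paper leaves as ``directly checked.''
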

\begin{proof}
The first claim follows from Lemmas \ref{ll}
and \ref{pneql2}. 

The latter claim follows from $H_{\rm c}^i(\mathbb{A}^1,\mathscr{L}_{\psi}
\left(f_{a}(t)\right))=0$ for $i \neq 1$ and 
the Grothendieck--Ogg--Shafarevich formula, since the Swan conductor exponent of $\mathscr{L}_{\psi}
\left(f_{a}(t)\right)$ is $\deg f_{a}(t)$ by 
$p \nmid l m$. 
The last equality is directly checked. 
\end{proof}
Now, we treat the case $p=l$. 
We now recall a result of \cite{Br} and \cite[Lemma 3.1]{Ka0a}. 
Let   
$W_{n,p}:=\mathbb{W}_n(\F_p)$ and 
let $\Tr_{q/p} \colon W_{n,q} \to W_{n,p}$
denote the trace map. 
Let $W_{n,q,\mathrm{prim}}$ denote the 
set of characters $\xi$ of $W_{n,q}$ such that
$\xi|_{\{0'\mathrm{s}\} \times \F_q} \neq 1$.  
\begin{lemma}\label{ch}
Let ${}^0 \xi \in W_{n,p}^{\vee}$ be a faithful character. 
For $a \in W_{n,q}$, define $\xi_a \in W_{n,q}^{\vee}$
by 
\[
\xi_{a}(x)=
{}^0 \xi\circ \Tr_{q/p}(ax). 
\]
Then the map 
\[
W_{n,q} \longrightarrow 
W_{n,q}^{\vee}, \quad 
a \longmapsto \xi_a
\]
is a bijection. 
For $a=(a_0,\ldots,a_{n-1}) \in W_{n,q}$, we have 
\[
\xi_a \in W_{n,q, \mathrm{prim}}^{\vee}
\iff a_0 \neq 0. 
\]
\end{lemma}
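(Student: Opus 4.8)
The plan is to treat $a\mapsto\xi_a$ as a homomorphism of finite abelian groups, reduce the bijectivity to the nondegeneracy of a trace pairing, and then read off the primitivity criterion by evaluating $\xi_a$ on the bottom subgroup $\{0'\mathrm{s}\}\times\F_q$. First I would verify that $a\mapsto\xi_a$ is a homomorphism $(W_{n,q},+)\to W_{n,q}^{\vee}$: since multiplication in $W_{n,q}$ is distributive and $\Tr_{q/p}$ is additive, $\xi_{a+a'}(x)={}^0\xi\bigl(\Tr_{q/p}(ax)+\Tr_{q/p}(a'x)\bigr)=\xi_a(x)\xi_{a'}(x)$, while each $\xi_a$ is a character because $x\mapsto ax$ is additive. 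As $|W_{n,q}^{\vee}|=|W_{n,q}|=q^n$, it then suffices to prove injectivity. If $\xi_a=1$, then since ${}^0\xi$ is faithful we get $\Tr_{q/p}(ax)=0$ for all $x\in W_{n,q}$, so the whole point becomes the nondegeneracy of the symmetric pairing $(a,x)\mapsto\Tr_{q/p}(ax)$ on $W_{n,q}$ valued in $W_{n,p}$.

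To establish this nondegeneracy I would identify $W_{n,q}=W(\F_q)/p^n$ as a free module of rank $f$ over $W_{n,p}=\mathbb{Z}/p^n\mathbb{Z}$ (writing $q=p^f$), the extension $W(\F_q)/\mathbb{Z}_p$ being unramified. A symmetric bilinear form on a free $\mathbb{Z}/p^n\mathbb{Z}$-module is perfect if and only if its reduction modulo $p$ is nondegenerate, and the reduction of $\Tr_{q/p}(ax)$ modulo $p$ is exactly the field trace pairing $(\bar a,\bar x)\mapsto\Tr_{\F_q/\F_p}(\bar a\,\bar x)$ on $W_{n,q}/pW_{n,q}\cong\F_q$, which is nondegenerate by separability of $\F_q/\F_p$. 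Hence the pairing is perfect, $a\mapsto\Tr_{q/p}(a\cdot(-))$ is injective, and the asserted bijection follows. (Alternatively one may simply invoke \cite{Br} and \cite[Lemma~3.1]{Ka0a}.)

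For the primitivity criterion I would evaluate $\xi_a$ on $(0,\ldots,0,b)$. Writing $V,F$ for the Verschiebung and Frobenius on $W_{n,q}$ and $(0,\ldots,0,b)=V^{n-1}([b])$ for the Teichm\"uller representative $[b]=(b,0,\ldots,0)$, the projection formula $x\cdot V(y)=V(Fx\cdot y)$, iterated $n-1$ times, yields $a\cdot(0,\ldots,0,b)=V^{n-1}\!\bigl(F^{n-1}(a)\cdot[b]\bigr)=(0,\ldots,0,a_0^{p^{n-1}}b)$, since the first Witt component of a product is the product of first components. Because $\Tr_{q/p}$ commutes with $V$ and restricts on the bottom piece to the field trace, $\Tr_{q/p}(0,\ldots,0,a_0^{p^{n-1}}b)=(0,\ldots,0,\Tr_{\F_q/\F_p}(a_0^{p^{n-1}}b))$. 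As ${}^0\xi$ is faithful on $W_{n,p}\cong\mathbb{Z}/p^n\mathbb{Z}$, its restriction to the unique subgroup of order $p$, namely $\{0'\mathrm{s}\}\times\F_p$, is a nontrivial additive character $\chi$ of $\F_p$, so $\xi_a(0,\ldots,0,b)=\chi\bigl(\Tr_{\F_q/\F_p}(a_0^{p^{n-1}}b)\bigr)$. Thus $\xi_a|_{\{0'\mathrm{s}\}\times\F_q}\neq1$ precisely when $b\mapsto\Tr_{\F_q/\F_p}(a_0^{p^{n-1}}b)$ is not identically zero, which by nondegeneracy of the field trace holds if and only if $a_0^{p^{n-1}}\neq0$, i.e.\ if and only if $a_0\neq0$.

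The main obstacle is the perfectness of the Witt-vector trace pairing underlying bijectivity; everything else is a formal consequence of the ring structure and standard Witt identities. I would dispatch it by the lift-from-mod-$p$ argument above, the decisive input being that $\F_q/\F_p$ is separable and $W(\F_q)/\mathbb{Z}_p$ is unramified, so the different is trivial and the mod-$p$ nondegeneracy lifts to a unit determinant over $\mathbb{Z}/p^n\mathbb{Z}$.
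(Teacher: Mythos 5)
Your proposal is correct and follows essentially the same route as the paper: the primitivity criterion rests on the identity $a\cdot(0'\mathrm{s},b)=(0'\mathrm{s},a_0^{p^{n-1}}b)$, which is exactly the formula the paper's proof invokes (you derive it via the projection formula $x\cdot V(y)=V(Fx\cdot y)$, which is a clean way to justify it). The only difference is that the paper dismisses the bijectivity as ``well-known,'' whereas you supply a complete argument via perfectness of the trace pairing on the free $\mathbb{Z}/p^n\mathbb{Z}$-module $W_{n,q}$, reduced mod $p$ to the nondegenerate field trace pairing on $\F_q$; that argument is sound.
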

\begin{proof}
The former claim is well-known. 

The latter claim follows from 
$\xi_{a}(0'\mathrm{s}, x)
={}^0 \xi\bigl(\Tr_{q/p}(0'\mathrm{s},a_0^{p^{n-1}}x)\bigr)$ for $x \in \F_q$. 
\end{proof}

\begin{lemma}[{\cite[Lemma 3.1]{Ka0a}}]\label{lemma:Brylinski}
If $a = (a_0, \dots, a_{n-1}) \in W_{n,q}$ is nonzero, then $\xi_a$ has order $p^{n-d}$, 
where $d$ is the largest integer such that $a_i = 0$ for all $i \le d-1$. 
For such an $a$, the Swan conductor exponent of 
$\mathscr{L}_{\xi_a}(t^m,0'\mathrm{s})$ at $\infty$ is given by
\[
\operatorname{Swan}_\infty
\big(\mathscr{L}_{\xi_a}(t^m,0'\mathrm{s})\big)
= m p^{n-1-d}.
\]
We have $\dim H_{\rm c}^1(\mathbb{A}^1,\mathscr{L}_{\xi_a}(t^m,0'\mathrm{s}))=m p^{n-1-d}-1$. In particular, if $\xi_a \in W_{n,q,\mathrm{prim}}^{\vee}$, 
we have $\dim H_{\rm c}^1(\mathbb{A}^1,\mathscr{L}_{\xi_a}(t^m,0'\mathrm{s}))=m p^{n-1}-1$. 
\end{lemma}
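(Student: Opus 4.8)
\emph{Order of $\xi_a$.} By Lemma~\ref{ch} the assignment $a\mapsto\xi_a$ is a bijection $W_{n,q}\to W_{n,q}^{\vee}$; since ${}^0\xi$ is a character and $\Tr_{q/p}$ is additive, it is moreover a group isomorphism, so the order of $\xi_a$ equals the additive order of $a$ in $(W_{n,q},+)$. I would compute this order from the Witt-vector identity $p\cdot y=VF(y)=FV(y)$ for the Verschiebung $V$ and Frobenius $F$ (cf.\ \cite[II\S6]{Se}). Over an $\F_p$-algebra the Frobenius $F$ acts componentwise by $x_i\mapsto x_i^p$ and $V$ shifts the components up by one, so if $a$ has its first nonzero entry in position $d$ then $p^k a$ has its first nonzero entry in position $d+k$ for $d+k\le n-1$, and $p^{\,n-d}a=0$. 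Hence $\xi_a$ has order $p^{\,n-d}$.

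\emph{Swan conductor.} Locally at $\infty$ the sheaf $\mathscr L_{\xi_a}(t^m,0'\mathrm{s})$ is the Artin--Schreier--Witt sheaf attached to ${}^0\xi$ and the Witt vector of rational functions $a\cdot(t^m,0'\mathrm{s})$, the $\F_q/\F_p$-trace affecting only the constants and not the pole orders in $t$. Writing $a=V^d(u)$ with $u$ a unit Witt vector of length $n-d$ (obtained by deleting the $d$ leading zeros of $a$), and using that $(t^m,0'\mathrm{s})$ is the Teichm\"uller representative of $t^m$ with $F^d(t^m,0'\mathrm{s})=(t^{mp^d},0'\mathrm{s})$, the projection formula $V^d(u)\cdot y=V^d\bigl(u\cdot F^d(y)\bigr)$ gives $a\cdot(t^m,0'\mathrm{s})=V^d\bigl(u\cdot(t^{mp^d},0'\mathrm{s})\bigr)$. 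Consistently with the order $p^{\,n-d}$, this exhibits the sheaf as an Artin--Schreier--Witt sheaf of length exactly $n-d$ attached to $w:=u\cdot(t^{mp^d},0'\mathrm{s})$, whose $j$-th component is $u_j\,t^{mp^{d+j}}$. I would then put $w$ in reduced form modulo the coboundaries $\mathbf y^{(q)}-\mathbf y$ and read off the break from the standard break formula of \cite{Br}: the reduction absorbs the factor $p^{d}$ in the leading pole order $mp^{d}$, leaving pole order $m$ (which is prime to $p$) at the top level, whose weight in the length-$(n-d)$ tower is $p^{(n-d)-1}=p^{\,n-1-d}$. This yields $\operatorname{Swan}_\infty=m\,p^{\,n-1-d}$. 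This reduction is the main obstacle: after clearing the lower components $u_j\,t^{mp^{d+j}}$ one must verify that the top level still dominates, so that no larger break is produced.

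\emph{Cohomology.} Finally I would apply the Grothendieck--Ogg--Shafarevich formula on $\mathbb A^1$. The sheaf is lisse of rank one on $\mathbb A^1$ and, since $m\,p^{\,n-1-d}\ge 1$, is wildly ramified and nontrivial at $\infty$, so $H^0_{\rm c}=H^2_{\rm c}=0$. As $\chi_{\rm c}(\mathbb A^1)=1$, the Euler--Poincar\'e formula gives $-\dim H^1_{\rm c}=1-\operatorname{Swan}_\infty$, whence $\dim H^1_{\rm c}(\mathbb A^1,\mathscr L_{\xi_a}(t^m,0'\mathrm{s}))=m\,p^{\,n-1-d}-1$. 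For the final assertion, the last part of Lemma~\ref{ch} shows that $\xi_a\in W_{n,q,\mathrm{prim}}^{\vee}$ forces $a_0\neq 0$, i.e.\ $d=0$; substituting $d=0$ gives $\operatorname{Swan}_\infty=m\,p^{\,n-1}$ and $\dim H^1_{\rm c}=m\,p^{\,n-1}-1$.
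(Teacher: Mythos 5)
Your proposal is correct in substance, but it does considerably more work than the paper, which disposes of the order and Swan-conductor assertions in one line by citing \cite[Lemma 3.1]{Ka0a} and only argues the dimension claim (via Grothendieck--Ogg--Shafarevich and the vanishing of $H_{\rm c}^i$ for $i\neq 1$) and the primitivity claim (via Lemma~\ref{ch}) --- on those two points your argument coincides with the paper's. For the order of $\xi_a$ your direct computation via $p=VF$ and the componentwise Frobenius is complete and correct, and is essentially the standard argument (compare the proof of Lemma~\ref{cyclic} in the paper). For the Swan conductor you are reconstructing the proof of Katz's lemma itself: the identity $V^d(u)\cdot y=V^d(u\cdot F^d(y))$, the reduction of $a\cdot[t^m]$ modulo $(F-1)W_n$ to a representative whose level-$j$ component has pole order $mp^{\,j-d}$, and the Brylinski--Kato break formula. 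The step you flag as ``the main obstacle'' is genuinely the content of the cited lemma, but it does close: after replacing $V^jF^j[a_j^{1/p^j}t^m]$ by $V^j[a_j^{1/p^j}t^m]$ modulo coboundaries, level $j\ge d$ contributes $p^{\,n-1-j}\cdot mp^{\,j-d}=mp^{\,n-1-d}$, and the level $j=d$ attains this with pole order $m$ prime to $p$, so the maximum is realized and equals the Swan conductor. So there is no error, only an unexecuted (standard) verification that the paper avoids entirely by citation; if you want a self-contained proof you should carry out that reduction explicitly rather than leave it as a remark.
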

\begin{proof}
The first claim is just \cite[Lemma 3.1]{Ka0a}. 
The second claim follows from 
the Grothendieck--Ogg--Shafarevich formula
and $H_{\rm c}^i(\mathbb{A}^1,\mathscr{L}_{\xi_a}(t^m,0'\mathrm{s}))=0$ for $i \neq 1$. The last claim follows from Lemma \ref{ch}. 
\end{proof}
\begin{lemma}\label{cyclic}
Assume $p =l$ and write $q=p^f$. 
We have an isomorphism $W_{n,q} \simeq (\mathbb{Z}/p^n \mathbb{Z})^f$ as groups. 
\end{lemma}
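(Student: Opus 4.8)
The plan is to identify $W_{n,q}=W_n(\F_q)$ with a truncation of the full ($p$-typical) Witt ring and then exploit that $\F_q$ is perfect. Recall the Verschiebung $V\colon W(\F_q)\to W(\F_q)$ is injective and that truncation gives $W_n(\F_q)\cong W(\F_q)/V^nW(\F_q)$. Writing $F$ for the Witt Frobenius, the standard relations $FV=VF=p$ hold on $W(\F_q)$, and since $\F_q$ is perfect the operator $F$ is an automorphism of $W(\F_q)$ commuting with $V$. Consequently
\[
p^nW(\F_q)=F^nV^nW(\F_q)=V^nW(\F_q),
\]
so that
\[
W_{n,q}=W_n(\F_q)\cong W(\F_q)/V^nW(\F_q)=W(\F_q)/p^nW(\F_q).
\]

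Next I would invoke the classical description of $W(\F_q)$. Writing $q=p^f$, the ring $W(\F_q)$ is the ring of integers $\mathcal{O}_K$ of the unramified extension $K/\mathbb{Q}_p$ of degree $f$, in which $p$ is a uniformizer; as a module over $W(\F_p)=\mathbb{Z}_p$ it is free of rank $f$. Reducing a $\mathbb{Z}_p$-basis modulo $p^n$ exhibits $W(\F_q)/p^nW(\F_q)$ as a free $\mathbb{Z}/p^n\mathbb{Z}$-module of rank $f$, hence isomorphic to $(\mathbb{Z}/p^n\mathbb{Z})^f$ as an abelian group. Combining this with the previous display yields the asserted isomorphism.

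The only genuinely nontrivial point is the identity $V^nW(\F_q)=p^nW(\F_q)$, which is exactly where perfectness of $\F_q$—equivalently bijectivity of $F$—enters; over a non-perfect base this fails. If one prefers to avoid lifting to $W(\F_q)$, the same conclusion follows by a purely finite argument: $W_{n,q}$ has order $q^n=p^{nf}$ and is a module over $W_n(\F_p)=\mathbb{Z}/p^n\mathbb{Z}$, and one checks it is free of rank $f$. Concretely, choosing an $\F_p$-basis $\bar\omega_1,\dots,\bar\omega_f$ of $\F_q$ and taking Teichmüller lifts $[\bar\omega_1],\dots,[\bar\omega_f]\in W_{n,q}$, these reduce to a basis of $W_{n,q}/pW_{n,q}\cong\F_q$, so by Nakayama they generate $W_{n,q}$ over $\mathbb{Z}/p^n\mathbb{Z}$; comparing orders forces the generating map $(\mathbb{Z}/p^n\mathbb{Z})^f\to W_{n,q}$ to be an isomorphism, giving $W_{n,q}\simeq(\mathbb{Z}/p^n\mathbb{Z})^f$.
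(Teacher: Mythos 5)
Your proposal is correct, and your primary route is genuinely different from the paper's. The paper works entirely at finite level: it takes an $\F_p$-basis $v_1,\dots,v_f$ of $\F_q$, uses $FV=VF=p$ to compute $p^j(v_i,0,\dots,0)$ explicitly in Witt coordinates (showing each $(v_i,0,\dots,0)$ has order $p^n$), defines the map $(\mathbb{Z}/p^n\mathbb{Z})^f\to W_{n,q}$ sending $(a_1,\dots,a_f)$ to $\sum_i a_i(v_i,0,\dots,0)$, proves injectivity coordinate by coordinate, and concludes by counting. Your main argument instead lifts to the full Witt ring, uses perfectness of $\F_q$ to identify $V^nW(\F_q)=p^nW(\F_q)$, and then invokes the classical identification of $W(\F_q)$ with the ring of integers of the unramified degree-$f$ extension of $\mathbb{Q}_p$, free of rank $f$ over $\mathbb{Z}_p$. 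This is more structural and imports a standard fact rather than computing, which is a perfectly legitimate trade-off; it does require knowing (or proving) that $W_n(\F_q)\cong W(\F_q)/V^nW(\F_q)$ and the unramified description of $W(\F_q)$, neither of which the paper needs. Your closing "purely finite" alternative is essentially the paper's proof: the elements $(v_i,0,\dots,0)$ in the paper are exactly the Teichm\"uller lifts $[v_i]$, and the only difference is that you establish surjectivity via Nakayama and deduce injectivity by cardinality, whereas the paper establishes injectivity directly and deduces surjectivity by cardinality. Both variants are complete and correct.
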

\begin{proof}
We take a basis $\{v_1,\ldots, v_f\}$ of $\F_q$
over $\F_p$. 
Recall 
\begin{align*}
F &\colon \mathbb{W}_{n,q} \to \mathbb{W}_{n,q}, \quad (x_0,\ldots,x_{n-1}) \mapsto (x_0^p,\ldots,x_{n-1}^p), \\
V &\colon \mathbb{W}_{n,q} \to \mathbb{W}_{n,q}, \quad (x_0,\ldots,x_{n-1}) \mapsto (0,x_0,\ldots,x_{n-2}).
\end{align*}
By $FV=VF=p$,  
\[
p^j(v_i,0'\mathrm{s})=
(0, \dots, 0, v_i^{p^j}, 0, \dots, 0), 
\]
where $v_i^{p^j}$ appears in the $(j+1)$-th component. Thus $(v_i,0'\mathrm{s})$ has order $p^n$. 
Let 
\[
\varphi \colon (\mathbb{Z}/p^n \mathbb{Z})^f \to W_{n,q}, \quad 
(a_1,\ldots,a_f) \mapsto \sum_{i=1}^f a_i (v_i,0'\mathrm{s}). 
\]
Assume $\varphi(a_1,\ldots,a_f)=0$. 
Since the Witt addition coincides with coordinatewise addition on the first 
component, the first coordinate of $\varphi(a_1,\ldots,a_f)$ is 
\[
\sum_{i=1}^f (a_i \bmod p)\, v_i \in \F_q .
\]
Since $\{v_1,\ldots,v_f\}$ is an $\F_p$-basis, 
we obtain $a_i \equiv 0 \pmod{p}$ for all $i$.  By applying the same argument 
to the higher $p$-power components, we obtain 
$a_i \equiv 0 \pmod{p^n}$ for every $i$. 
Thus $\varphi$ is injective.  
Since the both sides of $\varphi$ have the same cardinality, we obtain the claim. 
\end{proof}
For a smooth projective curve $X$, let $g(X)$ denote its genus. 
\begin{corollary}\label{genus}
We have 
\[
g(\overline{C}_{m,n,l,q})
=\frac{q-1}{2}\sum_{i=0}^{n-1}q^i(m l^i-1).  
\]
\end{corollary}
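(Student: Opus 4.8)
The plan is to combine the dimension count of $H^1_{\mathrm c}$ established above with an analysis of the behaviour of $\pi$ over $t=\infty$, using the standard Euler--Poincar\'e comparison between the compactly supported cohomology of the open curve and the genus of its compactification. Write $C:=C_{m,n,l,q}$ and let $\overline{C}$ be its smooth compactification; since $\pi$ is finite \'etale, $C$ is smooth, and $\overline{C}\to\mathbb{P}^1_{\F_q}$ is a $W_{n,q}$-Galois cover ramified only above $\infty$.

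First I would record the key local input: for every nontrivial $\xi\in W_{n,q}^{\vee}$, the sheaf $\mathscr{L}_{\xi}(t^m,0'\mathrm{s})$ is wildly ramified at $\infty$, with strictly positive Swan conductor there. When $p\neq l$ this follows from Lemma~\ref{pneql2} together with Corollary~\ref{pneql}, since $\mathscr{L}_{\xi}(t^m,0'\mathrm{s})\simeq\mathscr{L}_{\psi}(f_a(t))$ with $\deg f_a$ prime to $p$; when $p=l$ it is Lemma~\ref{lemma:Brylinski}, where the Swan conductor equals $mp^{n-1-d}>0$. Let $I_{\infty}\subseteq W_{n,q}$ denote the inertia group at a point above $\infty$. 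If $I_{\infty}$ were proper, there would exist a nontrivial character of $W_{n,q}$ trivial on $I_{\infty}$, that is, a nontrivial $\xi$ unramified at $\infty$, contradicting the positivity of the Swan conductor; hence $I_{\infty}=W_{n,q}$. As the residue field is algebraically closed, the decomposition group coincides with $I_{\infty}$, so there is a single point above $\infty$, i.e.\ $\overline{C}\setminus C$ consists of one point. Moreover $I_{\infty}=W_{n,q}$ forces the monodromy map $\pi_1(\mathbb{A}^1_{\F})\to W_{n,q}$ to be surjective, whence $C$ is geometrically connected.

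With geometric connectedness and a single point $Z:=\overline{C}\setminus C$ at infinity in hand, I would invoke the long exact sequence relating $H^{\bullet}_{\mathrm c}(C)$ and $H^{\bullet}(\overline{C})$. Using $H^0_{\mathrm c}(C)=0$ and $H^1(Z)=0$, it reduces to the four-term exact sequence $0\to H^0(\overline{C})\to H^0(Z)\to H^1_{\mathrm c}(C)\to H^1(\overline{C})\to 0$. Since $\dim H^0(\overline{C})=\dim H^0(Z)=1$, the first map is an isomorphism, so $\dim H^1_{\mathrm c}(C)=\dim H^1(\overline{C})=2g(\overline{C})$.

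Finally I would substitute the value $\dim H^1_{\mathrm c}(C)=(q-1)\sum_{i=0}^{n-1}q^i(ml^i-1)$: this is Corollary~\ref{pneql} when $p\neq l$, and when $p=l$ it is obtained by summing the local dimensions $mp^{n-1-d}-1$ of Lemma~\ref{lemma:Brylinski} over the bijection $a\leftrightarrow\xi_a$ of Lemma~\ref{ch}, grouping the $(q-1)q^{n-1-d}$ vectors $a$ with exactly $d$ leading zero components. Dividing by $2$ gives the claimed genus. The main obstacle is the total-ramification step: confirming that $I_{\infty}$ fills out all of $W_{n,q}$ rather than a proper subgroup, which is precisely where the positivity of the Swan conductors of all nontrivial characters is indispensable.
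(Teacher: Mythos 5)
Your proof is correct, and its middle step differs from the one in the paper. The paper argues entirely on the level of the character decomposition: for each nontrivial $\xi$ the sheaf $\mathscr{L}_{\xi}(t^m,0'\mathrm{s})$ is ramified at $\infty$, so the forgetful map $H^1_{\mathrm c}(\mathbb{A}^1,\mathscr{L}_{\xi}(t^m,0'\mathrm{s}))\to H^1(\mathbb{A}^1,\mathscr{L}_{\xi}(t^m,0'\mathrm{s}))$ is an isomorphism on every summand, hence $H^1_{\mathrm c}(C)\to H^1(C)$ is an isomorphism, and then the standard identification of $H^1(\overline{C})$ with the image of $H^1_{\mathrm c}(C)\to H^1(C)$ yields $\dim H^1_{\mathrm c}(C)=2g(\overline{C})$. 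You instead convert the same input (positivity of the Swan conductor at $\infty$ for every nontrivial $\xi$) into the group-theoretic statement $I_{\infty}=W_{n,q}$, deduce that the boundary $\overline{C}\setminus C$ is a single point and that $C$ is geometrically connected, and then run the excision sequence
\[
0\to H^0(\overline{C})\to H^0(Z)\to H^1_{\mathrm c}(C)\to H^1(\overline{C})\to 0
\]
to reach the same conclusion. Both routes are sound and rest on the same ramification lemmas (Corollary~\ref{pneql} for $p\neq l$, Lemmas~\ref{ch} and~\ref{lemma:Brylinski} for $p=l$), and your final dimension count, grouping the $(q-1)q^{n-1-d}$ vectors $a$ with exactly $d$ leading zeros, reproduces the paper's formula exactly. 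What your version buys is an explicit proof of geometric connectedness of $\overline{C}_{m,n,l,q}$ and total ramification over $\infty$, facts the paper uses implicitly (the genus formula presupposes connectedness) but does not spell out here; what the paper's version buys is brevity and the avoidance of any discussion of the boundary.
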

\begin{proof}
We show that 
the canonical map
\[
H_{\rm c}^1(C_{m,n,l,q}) \to H^1(C_{m,n,l,q})
\]
is an isomorphism. 

Assume $p \neq l$. 
Then the forgetful map 
$H_{\rm c}^1(\mathbb{A}^1,\mathscr{L}_{\psi}(f_{a}(t))) \to 
H^1(\mathbb{A}^1,\mathscr{L}_{\psi}(f_{a}(t)))$
is an isomorphism if $a \neq 0$. 
Hence the claim follows from Lemma \ref{pneql}. 

Assume $p=l$. 
For $a \in W_{n,q} \setminus \{0\}$, 
the canonical map
\[
H_{\rm c}^1(\mathbb{A}^1,\mathscr{L}_{\xi_a}(t^m,0'\mathrm{s})) \to 
H^1(\mathbb{A}^1,\mathscr{L}_{\xi_a}(t^m,0'\mathrm{s})) 
\]
is an isomorphism since $\mathscr{L}_{\xi_a}(t^m,0'\mathrm{s})$ is ramified at $\infty$. 
Hence the claim follows from 
Lemma \ref{lemma:Brylinski}. 

By the claim, the natural map 
$H_{\rm c}^1(C_{m,n,l,q}) \to H^1(\overline{C}_{m,n,l,q})$ is an isomorphism. 
The dimension of  $H_{\rm c}^1(C_{m,n,l,q})$ is computed using Lemmas \ref{pneql} and \ref{lemma:Brylinski}.
\end{proof}
\begin{lemma}\label{trace}
\begin{itemize}
\item[{\rm (1)}]
We have 
\[
\Tr(\Fr_q^\ast; H_{\rm c}^1(\mathbb{A}^1,\mathscr{L}_{\xi}(t^m,0'\mathrm{s})))=-\sum_{t \in \F_q}\xi(t^m,0'\mathrm{s}). 
\]
\item[{\rm (2)}]
We take an isomorphism $\overline{\mathbb{Q}}_{\ell} \simeq \mathbb{C}$. 
Then we have 
\[
\biggl|\sum_{t \in \F_q}\xi(t^m,0'\mathrm{s})\biggr|
\leq \dim H_{\rm c}^1(\mathbb{A}^1,\mathscr{L}_{\xi}(t^m,0'\mathrm{s}))\, \sqrt{q} . 
\]
\end{itemize}
\end{lemma}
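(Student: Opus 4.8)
The plan is to derive both statements from the standard $\ell$-adic formalism applied to the lisse rank-one sheaf $\mathscr{F}:=\mathscr{L}_{\xi}(t^m,0'\mathrm{s})$ on $\mathbb{A}^1$, under the (tacit) assumption that $\xi$ is nontrivial, so that $\mathscr{F}$ is a nontrivial local system: part (1) is the Grothendieck--Lefschetz trace formula, and part (2) is Deligne's bound from Weil~II.

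For (1), first I would record the two vanishing statements $H_{\rm c}^0(\mathbb{A}^1,\mathscr{F})=H_{\rm c}^2(\mathbb{A}^1,\mathscr{F})=0$: the former holds because $\mathbb{A}^1$ is connected and non-proper and $\mathscr{F}$ is lisse, while the latter follows from Poincaré duality, since $H_{\rm c}^2(\mathbb{A}^1,\mathscr{F})\cong H^0(\mathbb{A}^1,\mathscr{F}^\vee)^\vee(-1)$ and the nontrivial irreducible local system $\mathscr{F}^\vee$ has no nonzero global sections. The Grothendieck--Lefschetz trace formula then reads
\[
\sum_{t\in\F_q}\Tr\!\left(\Fr_t;\mathscr{F}_{\bar t}\right)
=\sum_{i}(-1)^i\Tr\!\left(\Fr_q^\ast;H_{\rm c}^i(\mathbb{A}^1,\mathscr{F})\right)
=-\Tr\!\left(\Fr_q^\ast;H_{\rm c}^1(\mathbb{A}^1,\mathscr{F})\right).
\]
It remains to identify the local term $\Tr(\Fr_t;\mathscr{F}_{\bar t})$ with $\xi(t^m,0'\mathrm{s})$ for each $t\in\F_q$. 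By construction $\mathscr{L}_{\xi}$ is the $\xi$-isotypic summand of $L_{q,\ast}\overline{\mathbb{Q}}_{\ell}$ for the Lang torsor $L_q$, so at a rational point $w\in W_{n,q}$ the geometric Frobenius acts on the fiber with trace $\xi(w)$, exactly as for the Artin--Schreier sheaf; pulling back along $t\mapsto(t^m,0'\mathrm{s})$ and using compatibility of the trace with this pullback gives $\Tr(\Fr_t;\mathscr{F}_{\bar t})=\xi(t^m,0'\mathrm{s})$. Substituting yields (1).

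For (2), I would observe that $\mathscr{F}$ is pointwise pure of weight $0$: its local Frobenius eigenvalues are the values of the finite-order character $\xi$, hence roots of unity of absolute value $1$ under the fixed isomorphism $\overline{\mathbb{Q}}_{\ell}\simeq\mathbb{C}$. Deligne's main theorem of Weil~II then shows that $H_{\rm c}^1(\mathbb{A}^1,\mathscr{F})$ is mixed of weight $\le 1$, so every eigenvalue $\alpha_j$ of $\Fr_q^\ast$ on it satisfies $|\alpha_j|\le\sqrt q$. Consequently
\[
\biggl|\Tr\!\left(\Fr_q^\ast;H_{\rm c}^1(\mathbb{A}^1,\mathscr{F})\right)\biggr|
\le\sum_j|\alpha_j|\le\dim H_{\rm c}^1(\mathbb{A}^1,\mathscr{F})\,\sqrt q,
\]
and combining this with part (1) gives the asserted bound on $\bigl|\sum_{t\in\F_q}\xi(t^m,0'\mathrm{s})\bigr|$.

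The only step that is more than a direct citation is the local trace computation $\Tr(\Fr_t;\mathscr{F}_{\bar t})=\xi(t^m,0'\mathrm{s})$; the hard part is unwinding the functorial definition of the Lang-torsor sheaf $\mathscr{L}_{\xi}$ and checking that the trace of geometric Frobenius on the torsor fiber matches $\xi$ with the correct normalization, together with its compatibility with pullback along $t\mapsto(t^m,0'\mathrm{s})$. Everything else---the vanishing of $H_{\rm c}^0$ and $H_{\rm c}^2$, the purity of $\mathscr{F}$, and the weight bound---is standard once $\xi$ is known to be nontrivial.
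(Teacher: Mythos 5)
Your proposal is correct, and for part (1) it is exactly the paper's (one-line) argument, fleshed out: Grothendieck--Lefschetz plus the vanishing of $H_{\rm c}^0$ and $H_{\rm c}^2$ and the standard identification of the local traces of a Lang-torsor sheaf. You are also right to flag the tacit nontriviality hypothesis on $\mathscr{L}_{\xi}(t^m,0'\mathrm{s})$; as literally stated the lemma fails for $\xi=1$, where $H_{\rm c}^1$ vanishes but the character sum equals $q$, so the paper is implicitly restricting to the geometrically nontrivial case.

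The only genuine divergence is in part (2). The paper cites ``the Weil conjectures for curves,'' i.e.\ it implicitly realizes $H_{\rm c}^1(\mathbb{A}^1,\mathscr{L}_{\xi}(t^m,0'\mathrm{s}))$ as a direct summand of $H^1(\overline{C}_{m,n,l,q})$ (via Lemma~\ref{ll} and the isomorphism $H_{\rm c}^1\simeq H^1$ from the proof of Corollary~\ref{genus}), where the classical Weil bound gives $|\alpha|=\sqrt{q}$ exactly. You instead apply Deligne's Weil~II weight bound directly to the pointwise pure weight-zero lisse sheaf on the open curve, obtaining $|\alpha|\le\sqrt{q}$. Both routes yield the stated inequality; yours is more self-contained at the level of the sheaf and generalizes beyond sheaves arising from coverings, while the paper's route needs only the Weil conjectures for projective curves rather than the full strength of Weil~II. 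Either is acceptable here.
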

\begin{proof}
The claim (1) follows from the Grothendieck 
trace formula. 

The claim (2) follows from the Weil conjectures 
for curves. 
\end{proof}

 \begin{lemma}\label{ff}
 Let $1 \le n' \le n$. We write $q=p^f$. 
There exist finite morphisms 
\begin{align*}
& \phi_1 \colon C_{m,n,l,q} \to C_{1,n,l,q}, \quad 
(t,x_0,\ldots,x_{n-1}) \mapsto (x_0,\ldots,x_{n-1}), \\
& \phi_2 \colon C_{1,n,l,q} \to C_{1,n,l,p}, \quad 
(x_0,\ldots,x_{n-1}) \mapsto 
\sum_{i=0}^{f-1} (x_0^{p^i},\ldots,x_{n-1}^{p^i}), \\
& \phi_3 \colon 
C_{1,n,l,p} \to C_{1,n',l,p}, \quad 
(x_0,\ldots,x_{n-1}) \mapsto (x_0,\ldots,x_{n'-1}), 
\end{align*} 
where the sum in $\phi_2$ is taken in the commutative algebraic group  $\mathbb{W}_{n,q}$. 
 \end{lemma}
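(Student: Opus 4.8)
The plan is to treat the three maps uniformly by exploiting the finite \'etale structure morphisms $\pi$ to affine lines from \eqref{pi}: each of the curves is finite over a parameter line $\mathbb{A}^1$, and I would check that every $\phi_j$ is a morphism over such a line whose well-definedness is dictated by the Lang-torsor equation. Throughout, write $F\colon \mathbb{W}_{n,q}\to\mathbb{W}_{n,q}$ for the Frobenius $(x_0,\dots,x_{n-1})\mapsto(x_0^p,\dots,x_{n-1}^p)$, which is an endomorphism of the ring scheme, so that the Lang torsor factors as $L_q=F^f-1$ and, on the length-$n$ (resp. length-$n'$) scheme over $\F_p$, the corresponding Lang torsor is $L_p=F-1$, the differences being taken in the commutative group scheme.

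First I would check well-definedness. For $\phi_1$, a point of $C_{m,n,l,q}$ satisfies $L_q(x)=(t^m,0'\mathrm{s})$, which is exactly the condition that $x$ lie on $C_{1,n,l,q}$ with parameter $t^m=x_0^q-x_0$; equivalently the coordinate ring of $C_{m,n,l,q}$ is $B[t]/(t^m-(x_0^q-x_0))$, where $B$ is the coordinate ring of $C_{1,n,l,q}$. For $\phi_2$, the defining formula is $\phi_2=\sum_{i=0}^{f-1}F^i$, and the telescoping identity $(F-1)\sum_{i=0}^{f-1}F^i=F^f-1$ in $\mathbb{Z}[F]$ yields the operator identity $L_p\circ\phi_2=L_q$. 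Hence $L_q(x)=(s,0'\mathrm{s})$ forces $L_p(\phi_2(x))=(s,0'\mathrm{s})$, so $\phi_2(x)\in C_{1,n,l,p}$; moreover the parameter is preserved, since $z_0^p-z_0=x_0^q-x_0$ as an identity of regular functions, where $z_0=\sum_{i=0}^{f-1}x_0^{p^i}$. For $\phi_3$, the truncation $R\colon\mathbb{W}_{n,p}\to\mathbb{W}_{n',p}$ is a homomorphism of ring schemes commuting with $F$, hence with the Lang torsor, so $L_p(Rx)=R(L_p x)=R(s,0'\mathrm{s})=(s,0'\mathrm{s})$ places $Rx$ on $C_{1,n',l,p}$, again with the same parameter.

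Finiteness then follows formally. Each $\phi_j$ is a morphism of affine schemes, hence affine, and fits into a commutative triangle over a parameter line $\mathbb{A}^1$: the target is finite over $\mathbb{A}^1$ via $\pi$, hence separated, while the source is also finite over the same $\mathbb{A}^1$ --- for $\phi_1$ by composing $\pi$ with the degree-$m$ map $t\mapsto t^m$, and for $\phi_2$ by composing $\pi$ with the finite base change $\mathbb{A}^1_{\F_q}\to\mathbb{A}^1_{\F_p}$. By the cancellation property for proper morphisms (if $g\circ f$ is proper and $g$ is separated, then $f$ is proper), each $\phi_j$ is proper; being affine and proper, it is finite. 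For $\phi_1$ one may instead read the finiteness directly off the presentation $B[t]/(t^m-(x_0^q-x_0))$, which is free of rank $m$ over $B$.

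The only step that is not purely formal is the well-definedness of $\phi_2$: one must recognize the defining map as $\sum_{i=0}^{f-1}F^i$ and verify the identity $L_p\circ\phi_2=L_q$ at the level of endomorphisms of the Witt group scheme. In particular the parameter compatibility $z_0^p-z_0=x_0^q-x_0$ must be checked as an identity of regular functions on $C_{1,n,l,q}$, not merely on $\F_q$-points, where both sides degenerate to $0$. For $\phi_3$ the corresponding input is the standard commutation of Witt truncation with Frobenius, and once all three maps are placed over a common parameter line, the finiteness is immediate from the cancellation argument.
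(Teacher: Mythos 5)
Your proposal is correct and follows essentially the same route as the paper: the only non-formal step in both is the telescoping identity $L_p\circ\phi_2=(F-1)\sum_{i=0}^{f-1}F^i=F^f-1=L_q$, which is exactly the computation the paper records, with $\phi_1$ and $\phi_3$ read off from the definitions. You additionally spell out the finiteness of each map (via the presentation $B[t]/(t^m-(x_0^q-x_0))$ and the cancellation argument over the parameter lines), which the paper leaves implicit; this is a harmless and correct elaboration.
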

 \begin{proof}
The second claim follows from, for 
$(x_0,\ldots,x_{n-1}) \in \mathbb{W}_n$, 
\[
L_p(\phi_2(x_0,\ldots,x_{n-1}))=(x_0^q,\ldots,x_{n-1}^q)-(x_0,\ldots,x_{n-1})
=L_q(x_0,\ldots,x_{n-1}).
\] 
The first and third claims follow from definition. 
 \end{proof}

\section{Witt sums of length two}\label{s3}
\subsection{Length two}
In this section, we specialize the results of the previous section to the case 
$m=1$ and $n=2$, and discuss them in more detail
for use in the next section.

We define 
\[
g_l(x,y):=\frac{x^l+y^l-(x+y)^l}{l} \in \mathbb{Z}[x,y], 
\]
which we regard as an element of $\F_q[x,y]$. 
This is nothing but $S_1(x,y)$ in \eqref{S}.

The addition law of the 
algebraic group $\mathbb{W}_{2,q}$
is given by 
\[
(x,y)+(x',y')=(x+x', y+y'+g_l(x,x')). 
\]
If $l \neq 2$, we have $g_l(x,-x)=0$. 
Thus
\begin{align*}
L_q(x,y)=(x^q,y^q)-(x,y)&=
\begin{cases}
(x^q,y^q)+(-x,-y+x^2) & \textrm{if $l=2$}, \\
(x^q,y^q)+(-x,-y) & \textrm{if $l \neq 2$} \\
\end{cases} \\
&=
\begin{cases}
(x^q-x, y^q-y-x^{q+1}+x^2) & \textrm{if $l=2$}, \\
(x^q-x,y^q-y+g_l(x^q,-x)) & \textrm{if $l \neq 2$}. 
\end{cases}
\end{align*} 
Recall that $C_{1,2, l,q}$ is defined by the cartesian diagram
\[
\xymatrix{
C_{1,2, l,q} \ar[r]\ar[d]^{\pi} & \mathbb{W}_{2,q} \ar[d]^{L_q}\\
\mathbb{A}^1_{\mathbb{F}_q} \ar[r]^-i & 
\mathbb{W}_{2,q}, 
}
\]
where $i \colon \mathbb{A}_{\F_q}^1 \hookrightarrow \mathbb{W}_{2,q}$ is the closed immersion $t \mapsto (t,0)$. 
Then $C_{1,2, l,q}$ is a smooth affine curve defined by 
\[
\begin{cases}
y^q-y=-x(x^q-x) & \textrm{if $l=2$}, \\
y^q-y=-g_l(x^q,-x) & \textrm{if $l \neq 2$}.  
\end{cases}
\]
We simply write $C$ for $C_{1,2, l,q}$.  
The curve $C$ admits a natural action 
of $W:=\mathbb{W}_{2,q}(\F_q)$ as follows: 
\begin{equation}\label{xycd}
(x,y) \cdot (a,b)=(x+a, y+b+g_l(x,a)) \quad 
\textrm{for $(x,y) \in C, \ (a,b) \in W$}.
\end{equation}
%since $L_q((x,y) \cdot (a,b))=L_q(x,y)+L_q(a,b)=L_q(x,y)$. 
We recall that the morphism
\[
\pi \colon C \to \mathbb{A}_{\F_q}^1,\quad (x,y) \mapsto x^q - x,
\]
is a finite \'etale Galois covering whose Galois group is $W$ (cf.\ \eqref{pi}).
\begin{lemma}
The curve $\overline{C}_{1,2,2,q}$ is supersingular. 
\end{lemma}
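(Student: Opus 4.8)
The plan is to prove that every Frobenius slope equals $1/2$; writing $q = p^f$ and normalizing $v_p(q) = f$, this amounts to showing that each eigenvalue $\alpha$ of $\Fr_q^\ast$ on $H^1(\overline{C}_{1,2,2,q})$ satisfies $v_p(\alpha) = f/2$. By Corollary~\ref{genus} the forgetful map gives $H^1(\overline{C}_{1,2,2,q}) \cong H_{\rm c}^1(C_{1,2,2,q})$, so by Lemma~\ref{ll} it suffices to analyze each summand $H_{\rm c}^1(\mathbb{A}^1, \mathscr{L}_\xi(t,0))$ separately. Because the $W$-action on $C_{1,2,2,q}$ is defined over $\F_q$, it commutes with $\Fr_q^\ast$, so each isotypic summand is $\Fr_q^\ast$-stable; and in each case below the nonvanishing summands are one-dimensional, whence $\Fr_q^\ast$ acts by the scalar $-\sum_{t\in\F_q}\xi(t,0)$ of Lemma~\ref{trace}(1). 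The task thus reduces to computing $v_p\bigl(\sum_{t\in\F_q}\xi(t,0)\bigr) = f/2$ for each contributing $\xi$.

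In the case $p\ne 2$ (so $p\ne l$) I would apply Lemma~\ref{pneql2}: under the ghost identification $\xi\leftrightarrow(a_0,a_1)\in\F_q^2$ one has $\sum_{t\in\F_q}\xi(t,0) = \sum_{t\in\F_q}\psi(a_0 t + a_1 t^2)$, and by Corollary~\ref{pneql} the summand is one-dimensional exactly when $a_1\ne 0$ and vanishes otherwise. Completing the square presents this sum as a root of unity times the quadratic Gauss sum $G = \sum_{s\in\F_q^\times}\chi_2(s)\psi(s)$, where $\chi_2$ is the quadratic character. The identity $G^2 = \chi_2(-1)\,q$ gives $v_p(G) = f/2$, hence $v_p\bigl(\sum_t\xi(t,0)\bigr) = f/2$ for every contributing $\xi$. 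The $q(q-1)$ characters with $a_1\ne 0$ account for all of $H^1$, since $\dim H^1(\overline{C}_{1,2,2,q}) = q(q-1)$ by Corollary~\ref{genus}.

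In the case $p = 2$ (so $p = l$) the ghost map is unavailable and one argues with Witt characters directly. By Lemmas~\ref{ch} and~\ref{lemma:Brylinski}, the character $\xi_a$ with $a=(a_0,a_1)$ contributes a one-dimensional summand precisely when $a_0\ne 0$, in which case $\xi_a$ is primitive of order $4$, and contributes $0$ when $a_0 = 0$; once more there are $q(q-1) = \dim H^1(\overline{C}_{1,2,2,q})$ such characters. Here the relevant eigenvalue is the order-$4$ Gauss--Heilbronn sum $-\sum_{t\in\F_q}\xi_a(t,0)$, and the crux is to establish $v_2\bigl(\sum_{t\in\F_q}\xi_a(t,0)\bigr) = f/2$. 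I expect this to be the main obstacle, as there is no elementary squaring identity as in the quadratic case. I would resolve it by recognizing that in characteristic $2$ the affine model $y^q - y = -x(x^q - x)$ becomes $y^q + y = x(x^q + x)$, i.e.\ an Artin--Schreier--type equation attached to the additive polynomial $x^q + x$; such curves are of van der Geer--van der Vlugt type (compare the identification of $\{\overline{C}_{P,2}\}_P$ in \cite{ITT}), and their supersingularity---equivalently the valuation $f/2$ of the associated length-two Heilbronn sums---is known. Alternatively, one may compute the $2$-adic valuation of these Heilbronn sums directly.

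Putting the two cases together, in every characteristic each eigenvalue of $\Fr_q^\ast$ on $H^1(\overline{C}_{1,2,2,q})$ has $p$-adic valuation $f/2$, so the Newton polygon is a single segment of slope $1/2$ and the curve is supersingular. I would stress that the weight bound $|\alpha| = \sqrt q$---which already follows from Lemma~\ref{trace}(2) together with $H_{\rm c}^1 \cong H^1$---is by itself insufficient (an ordinary curve also satisfies it); the genuine content lies in the $p$-adic valuation, and this is precisely why the $p = 2$ Heilbronn-sum estimate is the heart of the argument.
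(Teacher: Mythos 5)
Your argument is sound, but it is partly different from and partly identical to the paper's. The paper's entire proof is the single observation that $C_{1,2,2,q}$ has affine model $y^q-y=xR(x)$ with $R(x)=-x^q+x$ an $\F_q$-linearized polynomial, so it is a van der Geer--van der Vlugt type curve and supersingularity follows at once from \cite{ITT, TT}, uniformly in $p$. You instead split on the characteristic: for $p\neq 2$ you reduce, via the ghost map and Lemma~\ref{pneql2}, to quadratic Gauss sums and the identity $G^2=\chi_2(-1)q$ --- a correct, self-contained and more elementary argument than a citation, and your bookkeeping of which summands are one-dimensional and which vanish matches Corollary~\ref{pneql}; for $p=2$ you fall back on exactly the van der Geer--van der Vlugt citation that the paper invokes for all $p$, so there the two proofs coincide. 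Two remarks. First, the case split is avoidable: the linearized-polynomial observation already covers both characteristics at once. Second, the $p=2$ ``main obstacle'' you flag can in fact be closed as elementarily as the $p\neq 2$ case: each contributing summand is one-dimensional and pure of weight one (since $H^1_{\rm c}\simeq H^1$), so its eigenvalue is $\alpha=-S_{\xi_a}\in\mathbb{Z}[\zeta_4]$ with $\alpha\bar\alpha=q$, and because $2$ is totally ramified in $\Q(\zeta_4)$ the unique place above $2$ is fixed by complex conjugation, forcing $v(\alpha)=v(\bar\alpha)=f/2$; no appeal to the literature is needed there either. Your closing point --- that the archimedean bound $|\alpha|=\sqrt q$ alone does not give supersingularity and the content is $p$-adic --- is well taken and correctly identifies where the work lies.
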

\begin{proof}
The defining equation of 
$C_{1,2,2,q}$ is $y^q-y=x R(x)$, where 
$R(x):=-x^q+x$ is $\F_q$-linearized. 
Hence the claim follows from \cite{ITT, TT}.
\end{proof}
\begin{lemma}\label{de}
Assume $p \neq l$. Then $C$ is defined by 
$y^q-y=(x^q-x)^l$. 
\end{lemma}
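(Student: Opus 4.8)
The plan is to read off the equation of $C$ in ghost coordinates, where it takes the asserted shape almost tautologically, and then to transfer this back to a plane curve in $(x,y)$. Since $p\neq l$, Lemma~\ref{gt} tells us that the ghost morphism $\mathrm{gh}\colon\mathbb{W}_{2,q}\xrightarrow{\sim}\mathbb{G}_{\mathrm a,\F_q}^2$ is an isomorphism, so the ghost coordinates $w_0=x_0$ and $w_1=x_0^l+lx_1$ may be used as global coordinates on $\mathbb{W}_{2,q}=\mathbb{A}^2$. Specializing Lemma~\ref{defining} to $m=1$ and $n=2$, the curve $C=C_{1,2,l,q}$ is cut out inside $\mathbb{A}^1\times\mathbb{A}^2$ by the two equations
\[
w_0^q-w_0=t,\qquad w_1^q-w_1=t^{\,l}.
\]

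First I would eliminate the parameter $t$. The first equation gives $t=w_0^q-w_0=x^q-x$ with $x:=w_0=x_0$; substituting into the second yields
\[
w_1^q-w_1=(x^q-x)^l.
\]
Next I would record that the assignment $(x_0,x_1)\mapsto(x_0,\,x_0^l+lx_1)=(w_0,w_1)$ is a polynomial automorphism of $\mathbb{A}^2$, with polynomial inverse $(x,w_1)\mapsto(x,(w_1-x^l)/l)$; the division by $l$ is legitimate exactly because $p\neq l$ renders $l$ invertible in $\F_q$. This automorphism carries $C$ isomorphically onto the plane curve $y^q-y=(x^q-x)^l$, where $y:=w_1$, which is the asserted description.

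There is no real analytic difficulty here: the argument is a reduction to the two lemmas above together with one linear change of variable. The only point requiring care is the invertibility of the substitution $x_1\mapsto x_0^l+lx_1$, and this is precisely where the hypothesis $p\neq l$ is used; when $p=l$ the coefficient $l$ vanishes, the ghost map degenerates, and the whole strategy breaks down, consistent with the different, non-linearized shape of the defining equations in that case. If one instead preferred to argue directly from the explicit equations $y^q-y=-g_l(x^q,-x)$ for $l\neq2$ and $y^q-y=-x(x^q-x)$ for $l=2$, the same substitution $y\mapsto x^l+ly$ turns these into $y^q-y=(x^q-x)^l$ after invoking the identity $-l\,g_l(x^q,-x)=(x^q-x)^l-\bigl((x^l)^q-x^l\bigr)$ (and the analogue for $l=2$); but routing the computation through the ghost coordinates makes this bookkeeping unnecessary.
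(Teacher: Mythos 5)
Your proof is correct and follows the same route as the paper: specialize Lemma~\ref{defining} to $m=1$, $n=2$ to get $w_0^q-w_0=t$ and $w_1^q-w_1=t^l$, eliminate $t$, and set $x=w_0$, $y=w_1$. Your additional remarks (that the ghost coordinate change is an automorphism precisely because $l$ is invertible when $p\neq l$, and the alternative direct verification via the identity for $g_l$) are sound but not needed beyond what the paper records.
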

\begin{proof}
By Lemma \ref{defining} for $m=1$ and $n=2$, the defining equation of 
$C$ is $w_0^q-w_0=t$ and $w_1^q-w_1=t^l$. 
Setting $x = w_0$ and $y = w_1$, we obtain
$y^q - y = (x^q - x)^l$,
as claimed.
\end{proof}
Let $\xi \in W_{\mathrm{prim}}^{\vee}$. 
For simplicity, we set  
\[
\mathscr{L}_{\xi}:=\mathscr{L}_{\xi}(x,0).
\] 
\begin{lemma}\label{dim}
We have $H_{\rm c}^1(\mathbb{A}^1,\mathscr{L}_{\xi})=l-1$. 
\end{lemma}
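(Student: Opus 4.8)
The plan is to prove the stated dimension formula $\dim H^1_{\rm c}(\mathbb{A}^1,\mathscr{L}_\xi) = l-1$ by splitting into the cases $p\neq l$ and $p=l$ and reducing each to a dimension count already carried out in Section~\ref{s2}. In both cases the only real content is to translate the primitivity hypothesis $\xi\in W^\vee_{\mathrm{prim}}$ into the numerical input (a polynomial degree, respectively a Swan conductor) that feeds the relevant formula.

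First, suppose $p\neq l$. I would apply Lemma~\ref{pneql2} with $n=2$ and $m=1$: under the ghost identification $\mathrm{gh}\colon W_{2,q}\xrightarrow{\sim}\F_q^2$, the character $\xi$ corresponds to a pair $a=(a_0,a_1)\in\F_q^2$ with $\mathscr{L}_\xi=\mathscr{L}_\xi(x,0)\simeq\mathscr{L}_\psi(f_a(t))$, where $f_a(t)=a_0 t+a_1 t^{l}$. Since $l$ is invertible in $\F_q$, the closed immersion sends the subgroup $\{0\}\times\F_q$ to $\{0\}\times\F_q$ in ghost coordinates, so the condition $\xi|_{\{0\}\times\F_q}\neq 1$ is equivalent to $a_1\neq 0$, and hence to $\deg f_a=l$ (the leading monomial $a_1 t^{l}$ survives because $p\nmid l$). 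Corollary~\ref{pneql} then gives $\dim H^1_{\rm c}(\mathbb{A}^1,\mathscr{L}_\psi(f_a(t)))=\deg f_a-1=l-1$.

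Next, suppose $p=l$. Here $\mathscr{L}_\xi=\mathscr{L}_\xi(x,0)$ is exactly the sheaf $\mathscr{L}_{\xi_a}(t^m,0'\mathrm{s})$ appearing in Lemma~\ref{lemma:Brylinski} with $m=1$ and $n=2$, after the harmless renaming $x=t$. Writing $\xi=\xi_a$ via the bijection of Lemma~\ref{ch}, the two notions of primitivity coincide: for $n=2$ one has $\{0'\mathrm{s}\}\times\F_q=\{0\}\times\F_q$, so $W^\vee_{\mathrm{prim}}=W^\vee_{2,q,\mathrm{prim}}$, which by Lemma~\ref{ch} is the condition $a_0\neq 0$. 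The last assertion of Lemma~\ref{lemma:Brylinski} then applies verbatim, yielding $\dim H^1_{\rm c}=m p^{\,n-1}-1=p-1=l-1$.

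Combining the two cases gives $\dim H^1_{\rm c}(\mathbb{A}^1,\mathscr{L}_\xi)=l-1$, as claimed. I do not expect any genuine obstacle here: the statement is essentially a corollary of Corollary~\ref{pneql} and Lemma~\ref{lemma:Brylinski}, and the only point requiring care — and the only place primitivity is actually used — is verifying that $\xi\in W^\vee_{\mathrm{prim}}$ forces the leading term of $f_a$ (when $p\neq l$), respectively the order of $\xi_a$ (when $p=l$), to be maximal, so that the degree, respectively Swan conductor, entering the Grothendieck--Ogg--Shafarevich computation equals $l$ rather than a smaller value.
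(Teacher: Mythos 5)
Your proof is correct and follows essentially the same route as the paper, whose proof simply cites Lemma~\ref{pneql2}, Corollary~\ref{pneql} and Lemma~\ref{lemma:Brylinski} for the two cases $p\neq l$ and $p=l$. The extra care you take in checking that primitivity forces $a_1\neq 0$ (hence $\deg f_a=l$) in the first case and $a_0\neq 0$ in the second is exactly the content the paper leaves implicit.
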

\begin{proof}
The claim follows from Lemmas \ref{pneql2}, \ref{pneql}
and \ref{lemma:Brylinski}. 
\end{proof}
Recall that $W_{\rm prim}^{\vee}$ is the set of 
characters of $W$ whose restriction to 
the subgroup 
$\{0\} \times \F_q$ is nontrivial. 

\begin{lemma}
\label{Fum}
We have an isomorphism 
\[
H_{\rm c}^1(C) \simeq \bigoplus_{\xi \in W_{\rm prim}^{\vee}} H_{\rm c}^1(\mathbb{A}^1,\mathscr{L}_{\xi}). 
\]
Further, we have 
$\dim H_{\rm c}^1(C)=(l-1)q(q-1)$. 
\end{lemma}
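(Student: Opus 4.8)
The plan is to start from the full character decomposition of Lemma~\ref{ll}, which gives
\[
H_{\rm c}^1(C)\simeq\bigoplus_{\xi\in W^{\vee}}H_{\rm c}^1(\mathbb{A}^1,\mathscr{L}_{\xi}),
\]
and then to show that every summand indexed by a non-primitive character $\xi\in W^{\vee}\setminus W_{\rm prim}^{\vee}$ vanishes. Since $W_{\rm prim}^{\vee}$ is exactly the set of $\xi$ whose restriction to $\{0\}\times\F_q$ is nontrivial, a non-primitive $\xi$ factors through the quotient of $W$ by the subgroup $\{0\}\times\F_q$, that is, through the first-component projection $W\to\F_q$. Once the vanishing is established the isomorphism follows at once, and the dimension count reduces to multiplying the number of primitive characters by the rank $l-1$ supplied by Lemma~\ref{dim}.

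For the vanishing I would treat the two cases separately. Assume first $p\neq l$. By Lemma~\ref{pneql2} the summand is $H_{\rm c}^1(\mathbb{A}^1,\mathscr{L}_{\psi}(f_a(t)))$ with $f_a(t)=a_0t+a_1t^{l}$, and tracing through the ghost identification shows that $\xi$ is non-primitive precisely when $a_1=0$ (here one uses that $l$ is invertible in $\F_q$). For such $\xi$ the polynomial $f_a(t)=a_0t$ is linear, so either $a_0=0$ and $\mathscr{L}_{\xi}$ is the constant sheaf, or $a_0\neq0$ and $\mathscr{L}_{\psi}(a_0t)$ is a nontrivial Artin--Schreier sheaf of degree one. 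In both situations $H_{\rm c}^1(\mathbb{A}^1,-)$ vanishes: the constant sheaf on $\mathbb{A}^1$ has compactly supported cohomology only in degree two, while a degree-one additive character has vanishing $H_{\rm c}^\ast$ throughout (the Euler characteristic is $1-1=0$ by Grothendieck--Ogg--Shafarevich, and both $H_{\rm c}^0$ and $H_{\rm c}^2$ vanish since the geometric monodromy is nontrivial).

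Now assume $p=l$. Parametrize $\xi=\xi_a$ by $a=(a_0,a_1)\in W$ as in Lemma~\ref{ch}; then $\xi$ is non-primitive exactly when $a_0=0$. If $a=0$ the character is trivial and $H_{\rm c}^1(\mathbb{A}^1,\mathscr{L}_{\xi})=0$ as before, while if $a_0=0$ but $a_1\neq0$ the integer $d$ of Lemma~\ref{lemma:Brylinski} equals $1$, so that lemma gives $\dim H_{\rm c}^1(\mathbb{A}^1,\mathscr{L}_{\xi_a})=p^{\,n-1-d}-1=p^{0}-1=0$. This establishes the vanishing in all remaining cases, hence the desired isomorphism. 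Finally, the $q$ characters trivial on $\{0\}\times\F_q$ are exactly the non-primitive ones, so $|W_{\rm prim}^{\vee}|=q^2-q=q(q-1)$; combining this with Lemma~\ref{dim} yields $\dim H_{\rm c}^1(C)=(l-1)q(q-1)$. The only real subtlety I anticipate is bookkeeping the dictionary between the two coordinate systems (ghost versus Witt components) so that the condition $a_1=0$, respectively $a_0=0$, correctly singles out the non-primitive characters in each regime; the cohomological inputs themselves are immediate from the cited lemmas.
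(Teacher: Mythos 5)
Your proof is correct, but it takes a genuinely different route from the paper's. The paper disposes of the non-primitive summands in one stroke: the characters $\xi$ trivial on $\{0\}\times\F_q$ contribute exactly the $(\{0\}\times\F_q)$-fixed part of $H_{\rm c}^1(C)$, and this fixed part is $H_{\rm c}^1$ of the quotient $C/(\{0\}\times\F_q)$, which is the intermediate covering $x^q-x=t$, i.e.\ $\mathbb{A}^1$ itself --- so it vanishes, uniformly in $p$ versus $l$. You instead verify the vanishing summand by summand, splitting into the two regimes: for $p\neq l$ you identify the non-primitive characters with $a_1=0$ under the ghost coordinates (correct, since the restriction to $\{0\}\times\F_q$ is $x_1\mapsto\psi(la_1x_1)$ and $l$ is invertible) and kill the resulting linear Artin--Schreier sheaves by the Euler-characteristic computation; for $p=l$ you read off $d=1$ in Lemma~\ref{lemma:Brylinski} so that the dimension is $p^{0}-1=0$. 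Both arguments are sound and both use Lemma~\ref{ll} and Lemma~\ref{dim} for the dimension count; the paper's quotient argument is shorter and avoids the coordinate bookkeeping you rightly flag as the delicate point, while yours makes the vanishing mechanism explicit in each case and would generalize directly to settings where the quotient curve is not so readily identified.
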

\begin{proof}
By Lemma \ref{ll}, 
\[
H_{\rm c}^1(C) \simeq \bigoplus_{\xi \in W^{\vee}} H_{\rm c}^1(\mathbb{A}^1,\mathscr{L}_{\xi}).
\]
The $(\{0\} \times \F_q)$-fixed part of 
$H_{\rm c}^1(C)$ is zero because $C/(\{0\} \times 
\F_q)$ is isomorphic to $\mathbb{A}^1_{\F_q}$. Hence the former claim follows. 

The latter claim follows from $\# W^{\vee}_{\rm prim}=q(q-1)$ and Lemma \ref{dim} (cf.\  Corollary \ref{genus}). 
\end{proof}
\begin{comment}
\begin{lemma}
We have 
\[
|C^{\Fr_q \circ (a,b)}|=
\begin{cases}
q^2 & \textrm{if $b= 0$}, \\ 
0 & \textrm{otherwise}. 
\end{cases}
\]
\end{lemma}
\begin{proof}
Let $(a,b) \in W$. 
We consider 
\begin{align*}
& x^q=x-a, \\
& y^q+g_l(x^q,a)+b=y, \\
& y^q-y=-g_l(x^q,-x).
\end{align*}
Substituting the first equality to the second one, 
we obtain 
\[
y^q-y+g_l(x-a,a)=-b. 
\]
On the other hand, $x^q-x=-a$ implies that 
\[
y^q-y=-g_l(x^q,-x)=-g_l(x-a,-x)=-g_l(x-a,a). 
\]
Hence if $b \neq 0$, we have 
$|C^{\Fr_q \circ (a,b)}|=0$. 
If $b=0$, we have $|C^{\Fr_q \circ (a,b)}|=q^2$. 
\end{proof}
\end{comment}
We define 
\[
S_{\xi}:=\sum_{x \in \F_q} \xi(x,0). 
\]
By Lemmas \ref{trace}(2) 
and \ref{dim}, 
\begin{equation}\label{abs}
|S_{\xi}| \leq (l-1) \sqrt{q}. 
\end{equation}
If $p \neq l$, by Lemma \ref{gt}, 
\begin{equation}\label{cca}
W \xrightarrow{\sim} \F_q^2, \quad 
(x_0,x_1) \mapsto (x_0,x_0^l+l x_1). 
\end{equation}
For $a \in \F_q$ and 
$\psi \in \F_q^{\vee}$, let $\psi_a \in \F_q^{\vee}$
denote the character $x \mapsto \psi(ax)$.  
\begin{lemma}\label{sum}
Assume $p \neq l$. 
Let $\xi \in W^{\vee}_{\rm prim}$.
Assume that via \eqref{cca},  
the character $\xi$ corresponds to 
$\psi_a \boxtimes \psi$ with 
$\psi \in \F_q^{\vee} \setminus \{1\}$
and $a \in \F_q$. 
We have 
\[
S_{\xi}=\sum_{x \in \F_q}
\psi (x^l+ax). 
\]
\end{lemma}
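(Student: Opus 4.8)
The plan is to reduce the claim to the explicit evaluation already packaged in Lemma~\ref{pneql2}, tracking the two identifications carefully. The key computation at the heart of the argument is the behaviour of the ghost isomorphism \eqref{cca} on the vector $(t,0)$: since $w_0(X)=X_0$ and $w_1(X)=X_0^l+lX_1$, and the second Witt component vanishes, the ghost coordinates of $(t,0)$ are $(w_0(t,0),w_1(t,0))=(t,t^l)$. This is the only place where the specific shape $t^l+at$ will enter.

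First I would unwind the hypothesis that $\xi$ corresponds to $\psi_a\boxtimes\psi$ via \eqref{cca}. By definition of the external product, this means that for a Witt vector with ghost coordinates $(w_0,w_1)$ one has $\xi=\psi_a(w_0)\,\psi(w_1)=\psi(aw_0+w_1)$. In the notation of Lemma~\ref{pneql2} (where the identifying coordinates $x_i$ are the ghost coordinates and the character is $x\mapsto\psi(\sum a_i x_i)$), this is exactly the character attached to the tuple $(a_0,a_1)=(a,1)\in\F_q^2$, with the linear coefficient $a$ in the degree-one slot and the normalized leading coefficient $1$ in the degree-$l$ slot.

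Next I would apply Lemma~\ref{pneql2} with $m=1$ and $n=2$ to the tuple $(a_0,a_1)=(a,1)$. That lemma gives $S_\xi=\sum_{t\in\F_q}\xi(t,0)=\sum_{t\in\F_q}\psi(f_{(a,1)}(t))$, where $f_{(a,1)}(t)=a_0 t^{l^0}+a_1 t^{l^1}=at+t^l$; rewriting $at+t^l=t^l+at$ then yields the asserted formula. Alternatively, and perhaps more transparently, I would bypass Lemma~\ref{pneql2} and argue directly from the ghost-coordinate computation above: evaluating $\xi$ on $(t,0)$ through its ghost coordinates $(t,t^l)$ gives $\xi(t,0)=\psi(at)\,\psi(t^l)=\psi(t^l+at)$, and summing over $t\in\F_q$ is immediate.

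Either route involves no genuine difficulty; the entire content is bookkeeping of the two conventions, namely the ghost-coordinate identification \eqref{cca} and the external-product decomposition $\psi_a\boxtimes\psi$. The only point requiring care is to ensure that the coefficient $a$ and the (normalized) leading coefficient $1$ land in the correct slots of $f_{(a,1)}$, so that the exponential sum comes out as $\sum_{t\in\F_q}\psi(t^l+at)$ rather than, say, a reversed or rescaled variant. I expect this matching of conventions to be the one place where a sign or normalization slip could occur, and it is the step I would verify most explicitly.
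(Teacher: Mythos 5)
Your proof is correct and matches the paper's intended argument: the paper states this lemma without a separate proof precisely because it is the specialization of Lemma~\ref{pneql2} to $m=1$, $n=2$ with the tuple $(a_0,a_1)=(a,1)$, which is exactly your first route, and your direct verification via the ghost coordinates $(t,t^l)$ of $(t,0)$ is the same computation unwound. The slot-matching you flag as the delicate point is handled correctly, since $f_{(a,1)}(t)=at+t^l$ as required.
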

We write $q=p^f$. 
We define an equivalence relation on $\F_q^{\times} \times \F_q$ by 
\[
(a,b) \sim (a',b') \iff \textrm{there exists 
$c \in \F_p^{\times}$ such that 
$a'=ca$ and $b'=cb$}. 
\]
\begin{lemma}\label{coho}
For $(a,b) \in \F_q^{\times} \times \F_q$, 
let $C_{a,b}$ denote the affine curve 
over $\F_q$ defined by $y^p-y=a x^l+b x$. 
Assume $p \neq l$. 
Then we have 
\[
H^1(\overline{C}) \simeq 
\bigoplus_{[(a,b)] \in (\F_q^{\times} \times \F_q)/\sim}H^1
(\overline{C}_{a,b}). 
\]
\end{lemma}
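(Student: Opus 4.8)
The plan is to reorganize both sides into one and the same direct sum of Artin--Schreier cohomology groups indexed by $\F_q^\times \times \F_q$. On the left, Lemma~\ref{Fum} together with the comparison $H_{\rm c}^1(C) \simeq H^1(\overline{C})$ obtained in the proof of Corollary~\ref{genus} gives $H^1(\overline{C}) \simeq \bigoplus_{\xi \in W_{\rm prim}^\vee} H_{\rm c}^1(\mathbb{A}^1, \mathscr{L}_\xi)$. Fix $\psi_0 \in \F_p^\vee \setminus \{1\}$ and set $\psi := \psi_0 \circ \Tr_{q/p}$. By Lemmas~\ref{pneql2} and~\ref{sum}, each $\xi \in W_{\rm prim}^\vee$ satisfies $\mathscr{L}_\xi \simeq \mathscr{L}_{\psi'}(x^l + ax)$, where $\psi'$ is the nontrivial $\F_q$-character coming from the second ghost factor of $\xi$ and $a \in \F_q$. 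Writing $\psi' = \psi(c\,\cdot\,)$ for the unique $c \in \F_q^\times$, one has $\mathscr{L}_{\psi'}(x^l + ax) = \mathscr{L}_\psi(cx^l + cax)$, so $\xi \mapsto (c, ca)$ defines a bijection $W_{\rm prim}^\vee \to \F_q^\times \times \F_q$.

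On the right, each $C_{a,b}$ is an $\F_p$-Galois cover of $\mathbb{A}^1$ via $y \mapsto y + t$ with $t \in \F_p$. Pushing forward $\overline{\Q}_\ell$ and using the comparison of compactly supported and ordinary cohomology as in Corollary~\ref{genus} (the relevant sheaves are wildly ramified at $\infty$ since $a \neq 0$ and $p \nmid l$), I get $H^1(\overline{C}_{a,b}) \simeq \bigoplus_{j \in \F_p^\times} H_{\rm c}^1(\mathbb{A}^1, \mathscr{L}_{\psi_0}(jax^l + jbx))$, the nontrivial characters of $\F_p$ being written $\psi_0^j$. As $j$ ranges over $\F_p^\times$ the pairs $(ja, jb)$ range over the class $[(a,b)]$, and since $a \neq 0$ the action of $\F_p^\times$ is free, so each class has exactly $p-1$ elements. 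Summing over classes therefore yields $\bigoplus_{[(a,b)]} H^1(\overline{C}_{a,b}) \simeq \bigoplus_{(a,b) \in \F_q^\times \times \F_q} H_{\rm c}^1(\mathbb{A}^1, \mathscr{L}_{\psi_0}(ax^l + bx))$.

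The step I expect to be the main obstacle is matching the two families of sheaves, namely showing that the $\F_q$-level sheaf $\mathscr{L}_\psi(g)$ built from the Lang torsor $L_q$ coincides with the $\F_p$-level Artin--Schreier sheaf $\mathscr{L}_{\psi_0}(g)$ for the same $g \in \F_q[x]$. The tempting but wrong move is to pass to $\Tr_{q/p}(g) = \sum_i g^{p^i}$, whose degree $l p^{f-1}$ produces the wrong Swan conductor; the correct statement keeps $g$ of degree $l$. Indeed, the Frobenius trace of $\mathscr{L}_\psi(g)$ at $x_0 \in \F_{q^s}$ is $\psi(\Tr_{q^s/q}(g(x_0))) = \psi_0(\Tr_{q^s/p}(g(x_0)))$ by transitivity of the trace, which is precisely the Frobenius trace of $\mathscr{L}_{\psi_0}(g)$; as both sheaves are lisse of rank one on $\mathbb{A}^1$ away from $\infty$, equality of traces over every $\F_{q^s}$ forces $\mathscr{L}_\psi(g) \simeq \mathscr{L}_{\psi_0}(g)$. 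Applying this with $g = cx^l + cax$ identifies the summand $\mathscr{L}_\xi$ attached to $\xi \mapsto (c, ca)$ with the summand $\mathscr{L}_{\psi_0}(cx^l + cax)$ on the right; since the bijection of the first paragraph matches the two index sets $\F_q^\times \times \F_q$, combining the three reductions gives the asserted isomorphism. As a consistency check, each summand has dimension $l-1$ by Lemma~\ref{dim}, both sides have $q(q-1)$ summands, and $\dim H^1(\overline{C}) = (l-1)q(q-1)$ by Lemma~\ref{Fum}.
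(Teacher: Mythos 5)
Your proposal is correct and follows essentially the same route as the paper: both sides are decomposed into Artin--Schreier summands $H_{\rm c}^1(\mathbb{A}^1,\mathscr{L}_{{}^0\psi}(ax^l+bx))$ indexed by $(a,b)\in\F_q^\times\times\F_q$ (via Corollary~\ref{pneql}/Lemma~\ref{sum} on the left, and the $\F_p$-covering structure of $C_{a,b}$ on the right), and the index sets are matched by grouping the $\F_p^\times$-orbits. The only difference is that you make explicit the identification $\mathscr{L}_{{}^0\psi\circ\Tr_{q/p}}(g)\simeq\mathscr{L}_{{}^0\psi}(g)$, which the paper uses silently; your Chebotarev/trace argument for it is valid.
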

\begin{proof}
We take ${}^0 \psi \in \F_p^{\vee} \setminus \{1\}$
and set $\psi={}^0 \psi \circ \Tr_{q/p}$. 
From $p \neq l$ and Lemma \ref{pneql}, it follows that  
\begin{align*}
H^1(\overline{C})&\simeq H_{\rm c}^1(C)
 \simeq \bigoplus_{a_0 \in \F_q^{\times}}\bigoplus_{a_1 \in \F_q}H_{\rm c}^1(\mathbb{A}^1,\mathscr{L}_{{}^0 \psi}(a_0 x^l+a_1 x)), \\
H^1
(\overline{C}_{a,b}) & \simeq 
H^1_{\rm c}
(C_{a,b})
 \simeq \bigoplus_{c \in \F_p^{\times}}
H_{\rm c}^1(\mathbb{A}^1,\mathscr{L}_{{}^0 \psi}
(cax^l+cb x)).
 \end{align*}
 Thus the claim follows. 
\end{proof}
\subsection{Stickelberger's theorem and Witt sum valuation}
By computing the valuation of $S_{\xi}$ when $q=p=l$, 
we deduce that $\overline{C}_{m,n,p,q}$ is not supersingular 
for $n \ge 2$ and arbitrary $q$.

Let $p\ge 3$ be a prime number. Let 
\[
\xi\colon W_2(\F_p)\to\mathbb{C}^\times
\]
be a faithful character. Let $\psi(x):=\xi(0,x)$ for $x \in \F_p$.

For an integer $n \ge 1$, let $\zeta_n:=e^{2\pi i/n}$.  
Fix a prime ideal $\mathfrak{p}_0$ of $\mathbb{Q}(\zeta_{p-1})$ lying above $p$. Then the mod $\mathfrak{p}_0$ reduction induces an isomorphism 
\[
\mu_{p-1}(\mathbb{C})\to\F_p^\times. 
\]
Let $\nu$ denote the character 
\[
\F_p^\times\xrightarrow{\cong}\mu_{p-1}(\mathbb{C})\hookrightarrow \mathbb{C}^\times. 
\]
We set $\nu(0)=0$. 
Set 
\begin{align*}
S_\xi&:=\sum_{x\in\F_p}\xi(x,0),\\
U_\xi&:=\sum_{x\in\F_p}\nu(x)^{-1}\xi^{p-1}(x,0). 
\end{align*}

Let $\mathfrak{p}'$ be the unique prime ideal of $\mathbb{Q}(\zeta_{p-1},\zeta_{p^2})$ lying above $\mathfrak{p}_0$. 
\begin{proposition}\label{vals}
Let $v$ denote the valuation of $\mathbb{Q}(\zeta_{p-1},\zeta_{p^2})$ associated to $\mathfrak{p}'$, normalized by $v(p)=1$. The following hold: 
    \begin{enumerate}
     \item $v(S_\xi U_\xi)=\dfrac{1}{p-1}. $
        \item $v(S_\xi)=\dfrac{1}{p}. $
        \item $v(U_\xi)=\dfrac{1}{p(p-1)}. $
    \end{enumerate}
\end{proposition}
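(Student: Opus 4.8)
The plan is to make both sums completely explicit, reduce each to a one-variable exponential sum on $\F_p$, and then read off the valuation from a $\pi$-adic (Newton polygon) expansion; claim (1) will then fall out by additivity of $v$.

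First I would identify $W_2(\F_p)$ with $\mathbb{Z}/p^2\mathbb{Z}$ as a ring (Lemma~\ref{cyclic} with $f=1$), under which the multiplicative representative $(x,0)$ corresponds to the reduction modulo $p^2$ of the Teichm\"uller lift $\omega(x)$, and one has $\omega(x)\equiv x^p\pmod{p^2}$. Writing the faithful character as $\xi(c)=\zeta_{p^2}^{uc}$ for a unit $u$, this gives $S_\xi=\sum_{x=0}^{p-1}\zeta_{p^2}^{u x^p}$ and $U_\xi=\sum_{x\in\F_p^\times}\nu(x)^{-1}\zeta_{p^2}^{(p-1)u x^p}$. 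Since $u$ and $(p-1)u$ are prime to $p$, both $\zeta_{p^2}^{u}$ and $\rho:=\zeta_{p^2}^{(p-1)u}$ are primitive $p^2$-th roots of unity, so $\pi:=\zeta_{p^2}^{u}-1$ and $\varpi:=\rho-1$ are uniformizers at $\mathfrak p'$ with $v(\pi)=v(\varpi)=\tfrac1{p(p-1)}$; the choice of $u$ will be irrelevant to the valuations.

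For (2), expand $\zeta_{p^2}^{u x^p}=(1+\pi)^{x^p}=\sum_{k\ge0}\binom{x^p}{k}\pi^k$ and sum over $x$, giving $S_\xi=\sum_{k\ge0}A_k\pi^k$ with $A_k=\sum_{x=0}^{p-1}\binom{x^p}{k}\in\mathbb Z$. The Stickelberger-type input is the congruence $A_k\equiv\sum_{x=0}^{p-1}\binom{x}{k}=\binom{p}{k+1}\pmod p$, using $x^p\equiv x\bmod p$ and the hockey-stick identity; hence $A_k\equiv0$ for $0\le k\le p-2$ while $A_{p-1}\equiv1$. A short Newton-polygon bookkeeping then shows that $A_{p-1}\pi^{p-1}$ is the unique term of minimal valuation: the terms with $k\le p-2$ have $v\ge1$, the term $k=p-1$ has $v=\tfrac{p-1}{p(p-1)}=\tfrac1p$, and the terms with $k\ge p$ have $v\ge\tfrac1{p-1}$. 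Therefore $v(S_\xi)=\tfrac1p$.

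For (3), the identical expansion in $\varpi$ gives $U_\xi=\sum_{k\ge0}B_k\varpi^k$ with $B_k=\sum_{x\in\F_p^\times}\nu(x)^{-1}\binom{x^p}{k}\in\mathbb Z[\zeta_{p-1}]$. Here $B_0=\sum_{x\in\F_p^\times}\nu(x)^{-1}=0$ because $\nu^{-1}$ is a nontrivial character (as $p\ge3$), while reducing modulo $\mathfrak p_0$ and using $\nu(x)\equiv x$ together with $x^p\equiv x$ gives $B_1\equiv\sum_{x\in\F_p^\times}x^{-1}x=p-1\equiv-1$, a unit. Thus $B_1\varpi$ is again the unique minimal term and $v(U_\xi)=v(\varpi)=\tfrac1{p(p-1)}$. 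Finally (1) follows by additivity: $v(S_\xi U_\xi)=\tfrac1p+\tfrac1{p(p-1)}=\tfrac1{p-1}$, which is exactly the Stickelberger valuation of the classical Gauss sum attached to $\nu^{-1}$ and is the conceptual reason behind the statement. The main obstacle is the no-cancellation (Newton polygon) step: one must control the coefficients $A_k,B_k$ precisely enough, via the mod-$p$ congruences above for small $k$ and the crude integrality bound $v\ge0$ for $k\ge p$, to guarantee a unique term of minimal valuation in each expansion.
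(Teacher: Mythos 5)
Your argument is correct, and it takes a genuinely different route from the paper's. The paper proves part (1) first: it expands $S_\xi U_\xi$ as a double sum over the Witt group, reorganizes it as $\sum_z\xi(z,0)\sum_y\nu(y)^{-1}\psi(y^p-g(y,z))$, and shows that modulo terms of valuation $>\tfrac{1}{p-1}$ this equals a $\mathfrak{p}'$-unit times the classical Gauss sum $\sum_y\nu(y)^{-1}\psi(y)$, whose valuation $\tfrac{1}{p-1}$ is supplied by Stickelberger's theorem; parts (2) and (3) are then deduced from (1) together with the soft lower bounds $v(S_\xi)\ge\tfrac1p$ (because $S_\xi$ lies in the degree-$p$ subfield of $\Q(\zeta_{p^2})$ and is $\equiv 0\pmod{\mathfrak{p}'}$) and $v(U_\xi)\ge\tfrac1{p(p-1)}$. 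You go the other way: you compute (2) and (3) exactly, using $W_2(\F_p)\cong\mathbb{Z}/p^2\mathbb{Z}$, the congruence $\omega(x)\equiv x^p\pmod{p^2}$ for the Teichm\"uller lift, and the binomial expansion in the uniformizer $\pi=\zeta_{p^2}^u-1$; the congruences $A_k\equiv\binom{p}{k+1}\pmod p$ for $k\le p-1$ (legitimate because $\binom{m}{k}\bmod p$ depends only on $m\bmod p$ when $k<p$, and $k!$ is then a unit) together with $B_0=0$ and $B_1\equiv-1\pmod{\mathfrak{p}_0}$ isolate a unique term of strictly minimal valuation in each expansion, and (1) follows by additivity of $v$. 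Your route is more elementary and self-contained: Stickelberger's theorem is replaced by the hockey-stick congruence, and no Gauss-sum input is needed at all. What the paper's convolution argument buys is that the same manipulation of products via the Witt group law is reused for general $q=3^f$ in Lemma~\ref{2} and Proposition~\ref{ve}, where your expansion is unavailable because $W_{2,q}$ is no longer cyclic; but for the statement at hand, which assumes $q=p$, both proofs are complete.
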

\begin{proof}
$(1)$: 
We simply write $g(x,y)$ for $g_p(x,y)$. 
We compute 
\begin{align*}
S_\xi U_\xi&=\sum_{x,y\in\F_p}\xi(x,0)\nu(y)^{-1}\xi((p-1)(y,0))\\
&=\sum_{x,y\in\F_p}\nu(y)^{-1}\xi((x,0)+(-y,y^p))\\
&=\sum_{x,y\in\F_p}\nu(y)^{-1}\xi(x-y,y^p+g(x,-y))\\
&=\sum_{y,z\in\F_p}\nu(y)^{-1}\xi(z,y^p-g(y,z))\quad (z:=x-y)\\
&=\sum_z\xi(z,0)\sum_y\nu(y)^{-1}\psi(y^p-g(y,z)). 
\end{align*}
We claim that $v(\sum_y\nu(y)^{-1}\psi(y^p-g(y,z)))\geq \frac{1}{p-1}$ for any $z$. Indeed, we have 
\[
\sum_y\nu(y)^{-1}\psi(y^p-g(y,z)))\equiv \sum_y\nu(y)^{-1}=0\quad(\text{mod $\mathfrak{p}'$}). 
\]
Since the sum is contained in $\mathbb{Q}(\zeta_{p-1},\zeta_p)$, we must have $v(\sum_y\nu(y)^{-1}\psi(y^p-g(y,z)))\geq \frac{1}{p-1}$. 

For $x,\,y \in \mathbb{Q}(\zeta_{p-1},\zeta_{p^2})$
and a rational number $r$, we write $x \equiv y \pmod {r+}$ if 
$v(x-y)>r$. 
We can proceed 
\begin{align*}
    S_\xi U_\xi&=\sum_z\xi(z,0)\sum_y\nu(y)^{-1}\psi(y^p-g(y,z))\\
    &\equiv\sum_{z,y}\nu(y)^{-1}\psi(y^p-g(y,z))\quad (\text{mod $\frac{1}{p-1}+$})\\
    &=\sum_{z,y}\nu(y)^{-1}\psi(y^p(1-g(1,z)))\quad (\text{replace $z$ by $yz$})\\
\end{align*}
    if $1=g(1,z)$, then the sum $\sum_y$ is zero. If $1\neq g(1,z)$, we have
    \begin{align*}
        \sum_{y}\nu(y)^{-1}\psi(y^p(1-g(1,z)))&=\sum_{y}\nu(y)^{-1}\psi(y(1-g(1,z))) \quad(q=p)\\
        &=\nu(1-g(1,z))\sum_{y}\nu(y)^{-1}\psi(y). 
    \end{align*}
Set $\alpha:=\sum_{y}\nu(y)^{-1}\psi(y)$. By Stickelberger's formula for $p$-adic valuation of Gauss sum, we have $v(\alpha)=\frac{1}{p-1}$. 

We obtain 
\begin{align*}
S_\xi U_\xi&\equiv\sum_z\nu(1-g(1,z))\alpha \quad(\text{mod $\frac{1}{p-1}+$}). 
\end{align*}
To conclude, it remains to show that $\sum_z\nu(1-g(1,z))$ is a $\mathfrak{p}'$-adic unit. This follows from the computation 
\begin{align*}
    \sum_z\nu(1-g(1,z))\equiv \sum_z(1-g(1,z))\quad(\text{mod $\mathfrak{p}_0$})
\end{align*}
and the observations that $\sum_zz^i=0$ for $i=0,1,\dots, p-2$ and that $g(1,z)$ is of the form $-z^{p-1}+(\text{lower term})$. 
Hence 
\[
\sum_z(1-g(1,z))\equiv -\sum_z z^{p-1} \equiv 1
\quad(\text{mod $\mathfrak{p}_0$}). 
\]
Thus the claim follows. 

$(2), (3)$: We show 
\[
v(S_\xi)\geq \frac{1}{p}, \qquad v(U_\xi)\geq \frac{1}{p(p-1)}. 
\]
Then the claims follow from $(1)$. For the estimate of $S_\xi$, 
\[
S_\xi =\sum_x\xi(x,0)\equiv 0\quad(\text{mod $\mathfrak{p}'$}). 
\]
Let $K$ be the unique subextension of $\mathbb{Q}(\zeta_{p^2})/\mathbb{Q}$ such that $[K:\mathbb{Q}]=p$. Note that the subgroup $\Gal(\mathbb{Q}(\zeta_{p^2})/K)\subset \Gal(\mathbb{Q}(\zeta_{p^2})/\mathbb{Q})\cong W_2(\F_p)^\times$ consists of elements of the form $(a,0)$. 
The action of $(a,0)$ is given by $\xi(1,0) \mapsto 
\xi(a,0)$. Then $\xi(x,0)$ is sent to $\xi(ax,0)$ by the action of $(a,0)$ for $x \in \F_p$. 
Hence $S_\xi\in K$. This proves $v(S_\xi)\geq \frac{1}{p}$ since the ramification index of $K/\mathbb{Q}$ at $p$ is $p$.

Similarly, 
\[
U_\xi=\sum_{x \in \F_p} \nu(x)^{-1} 
\xi^{p-1}(x,0)\equiv \sum_{x \in \F_p} 
\nu(x)^{-1}=0 \quad(\text{mod $\mathfrak{p}'$}). 
\]
Thus $v(U_{\xi}) \geq (p(p-1))^{-1}$. 
%The estimate for $U_\xi$ follows similarly. 
\end{proof}
\begin{corollary}\label{nss}
Assume that $p$ is odd and $n \ge 2$. 
Then the curve $\overline{C}_{m,n,p,q}$ is 
not supersingular. 
\end{corollary}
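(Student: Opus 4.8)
The plan is to reduce the general curve $\overline{C}_{m,n,p,q}$ to the single curve $\overline{C}_{1,2,p,p}$ over the prime field, for which Proposition~\ref{vals} already exhibits a Frobenius slope different from $1/2$, and then to propagate non-supersingularity downward along a tower of finite coverings.

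First I would settle the base case $\overline{C}_{1,2,p,p}$. By Lemma~\ref{Fum} (with $q=p$) the cohomology $H^1(\overline{C}_{1,2,p,p})\simeq H^1_{\rm c}(C_{1,2,p,p})$ decomposes as $\bigoplus_{\xi\in W_{\rm prim}^\vee}H^1_{\rm c}(\mathbb{A}^1,\mathscr{L}_\xi)$ into $\Fr_p^\ast$-stable pieces, and by Lemma~\ref{trace}(1) the trace of $\Fr_p^\ast$ on the $\xi$-piece is $-S_\xi$. Take a faithful $\xi$; since $W_2(\F_p)$ is cyclic of order $p^2$ by Lemma~\ref{cyclic}, faithfulness is equivalent to nontriviality on the unique order-$p$ subgroup $\{0\}\times\F_p$, so $\xi\in W_{\rm prim}^\vee$. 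The reciprocal roots $\alpha_1,\dots,\alpha_{p-1}$ of that piece are pure of weight one and satisfy $\sum_i\alpha_i=-S_\xi$. If every $\alpha_i$ had $p$-adic slope $1/2$ we would get $v(S_\xi)\ge 1/2$, whereas Proposition~\ref{vals}(2) gives $v(S_\xi)=1/p$, and $p$ odd forces $1/p<1/2$. Hence some reciprocal root has slope strictly below $1/2$, so $\overline{C}_{1,2,p,p}$ is not supersingular. This is precisely where the hypothesis that $p$ is odd enters.

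Next I would reduce the general case to this one. Composing the finite morphisms of Lemma~\ref{ff},
\[
C_{m,n,p,q}\xrightarrow{\phi_1}C_{1,n,p,q}\xrightarrow{\phi_2}C_{1,n,p,p}\xrightarrow{\phi_3}C_{1,2,p,p}
\]
(the last with $n'=2\le n$), yields a finite, hence surjective, morphism, which extends to a finite surjective morphism $\overline{C}_{m,n,p,q}\to\overline{C}_{1,2,p,p}$ of smooth compactifications. After base change to $\F$, Albanese functoriality produces a surjection up to isogeny $\mathrm{Jac}(\overline{C}_{m,n,p,q})_{\F}\twoheadrightarrow\mathrm{Jac}(\overline{C}_{1,2,p,p})_{\F}$.

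Finally, a quotient of a supersingular abelian variety is again supersingular, so if $\overline{C}_{m,n,p,q}$ were supersingular then $\mathrm{Jac}(\overline{C}_{1,2,p,p})_{\F}$ would be supersingular as well, contradicting the base case; this proves the corollary. The step I expect to require the most care is the passage through $\phi_2$, which descends the base field from $\F_q$ to $\F_p$ and involves Frobenius twists. To avoid tracking Frobenius-compatibility across this change of base field, I would phrase supersingularity as the purely geometric statement that $\mathrm{Jac}(-)_{\F}$ is isogenous to a power of a supersingular elliptic curve (equivalently, that its Newton polygon is a single segment of slope $1/2$), so that only the existence of the finite cover over $\F$, and not the $\F_q$- versus $\F_p$-structure, is needed.
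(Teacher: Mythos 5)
Your proposal is correct and follows essentially the same route as the paper: reduce via the tower of finite morphisms from Lemma~\ref{ff} to the single curve $\overline{C}_{1,2,p,p}$, then observe that supersingularity would force $v(S_\xi)\ge 1/2$ for a faithful (hence primitive) $\xi$, contradicting $v(S_\xi)=1/p$ from Proposition~\ref{vals}(2). You merely make explicit the details the paper leaves implicit (the Jacobian/Albanese justification of the reduction and the trace-of-eigenvalues argument), which is fine.
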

\begin{proof}
By Lemma \ref{ff}, 
we have finite morphisms 
\[
C_{m,n,p,q} \xrightarrow{\phi_1} C_{1,n,p,q} \xrightarrow{\phi_2} 
C_{1,n,p,p} \xrightarrow{\phi_3} C_{1,2,p,p}. 
\]
This induces a finite morphism $\overline{C}_{m,n,p,q} \to \overline{C}_{1,2,p,p}$. 
Hence it suffices to show the claim for 
$m=1$, $n=2$ and $q=p$. 
If $\overline{C}_{1,2,p,p}$ is supersingular, 
the valuation of the Frobenius trace $-S_{\xi}$ is greater than or equal to $1/2$. 
This does not occur by Proposition \ref{vals}. 
\end{proof}
\begin{remark}
The above corollary can also be deduced from using \cite[Theorem 1.3]{Li}. 
Here we give a short proof for completeness.
\end{remark}

\begin{lemma}
Write $q=p^f$ with a prime $p$, $l$ an odd prime, and assume $p \equiv 1 \pmod{l}$. 
\begin{itemize}
\item[{\rm (1)}] 
The projective curve defined by $y^p - y = x^l$
is not supersingular.
\item[{\rm (2)}] 
The curve $\overline{C}_{m,n,l,q}$ is not supersingular.
\end{itemize}
\end{lemma}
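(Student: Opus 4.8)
The plan is to prove (1) by a Gauss-sum computation in the style of Proposition \ref{vals} and then to deduce (2) from (1) by the reduction scheme of Corollary \ref{nss}.

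For (1), I would first record that $p\equiv 1\pmod l$ forces $p\neq l$, so the $p\neq l$ formalism of Section \ref{s2} applies, and moreover $l\mid (p-1)$, so $\F_p^\times$ carries exactly $l-1$ nontrivial multiplicative characters $\chi$ with $\chi^l=1$. Writing $D$ for the affine curve $y^p-y=x^l$ and projecting to the $x$-line realizes $\overline D$ as an Artin--Schreier cover with group $\F_p$ acting by $y\mapsto y+c$, whence $H^1(\overline D)\simeq\bigoplus_{\psi\neq 1}H^1_{\rm c}(\mathbb{A}^1,\mathscr{L}_\psi(x^l))$. By Lemma \ref{trace}(1) the trace of $\Fr_p^\ast$ on the $\psi$-summand is $-\sum_{x\in\F_p}\psi(x^l)$. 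The key elementary identity, obtained by counting $l$-th roots through characters, is
\[
\sum_{x\in\F_p}\psi(x^l)=\sum_{\substack{\chi^l=1\\ \chi\neq 1}}g(\chi),
\qquad g(\chi):=\sum_{u\in\F_p^\times}\chi(u)\psi(u).
\]
By Stickelberger's theorem, with $v$ normalized by $v(p)=1$, the $l-1$ Gauss sums $g(\chi)$ have pairwise distinct valuations $1/l,2/l,\dots,(l-1)/l$; hence there is no cancellation at the minimum and $v\bigl(\sum_x\psi(x^l)\bigr)=1/l$. Since $l\geq 3$ gives $1/l\leq 1/3<1/2$, the trace of $\Fr_p^\ast$ on this summand has valuation strictly below $1/2$, so not every Frobenius eigenvalue there can have slope $1/2$; thus $\overline D$ is not supersingular.

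For (2), I would follow Corollary \ref{nss}: Lemma \ref{ff} with $n'=2$ yields a finite morphism $\overline{C}_{m,n,l,q}\to\overline{C}_{1,2,l,p}$ (composing $\phi_1,\phi_2,\phi_3$). By Lemma \ref{de} the curve $C_{1,2,l,p}$ is defined by $y^p-y=(x^p-x)^l$, and $(x,y)\mapsto(x^p-x,y)$ defines a finite morphism onto $D$, extending to $\overline{C}_{1,2,l,p}\to\overline D$. Composing gives a finite morphism $f\colon\overline{C}_{m,n,l,q}\to\overline D$. Then $f^\ast\colon H^1(\overline D)\to H^1(\overline{C}_{m,n,l,q})$ is injective (split by $(\deg f)^{-1}f_\ast$) and $\Fr_q^\ast$-equivariant, so—slopes being invariant under the base change from $\F_p$ to $\F_q$—every Frobenius slope of $\overline D$ occurs on $\overline{C}_{m,n,l,q}$. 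Since $\overline D$ has a slope $\neq 1/2$ by (1), the same holds for $\overline{C}_{m,n,l,q}$, which is therefore not supersingular.

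The main obstacle is the no-cancellation argument at the heart of (1): the conclusion relies entirely on Stickelberger guaranteeing that the $l-1$ relevant Gauss sums have \emph{distinct} valuations, so that the term of minimal valuation $1/l$ survives in the trace. The reduction in (2) is then formal; the only point requiring a small verification is that the composite morphism to $\overline D$ is genuinely finite (equivalently dominant), which holds since each constituent morphism is.
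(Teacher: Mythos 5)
Your proof is correct. Part (2) is essentially the paper's own reduction: the paper also produces a finite morphism $\overline{C}_{m,n,l,q}\to \overline{D}$ through $C_{1,2,l,q}\colon y^q-y=(x^q-x)^l$, only it maps to $D$ in one step by $(x,y)\mapsto(x^q-x,\sum_{i=0}^{f-1}y^{p^i})$ instead of first descending to $\F_p$ via $\phi_2$; both are fine, and your remarks on injectivity of pullback and invariance of slopes under base extension are the right justifications. Part (1), however, takes a genuinely different route. The paper uses the extra $\mu_l$-action on $D$ (the Kummer structure in the $x$-variable) to refine the Artin--Schreier decomposition, so that each individual Gauss sum $G(\chi,\psi)$ with $\chi^l=1$, $\chi\neq 1$, occurs as a Frobenius \emph{eigenvalue} on $H^1(\overline{D})$; Stickelberger then immediately exhibits an eigenvalue of valuation $k/l\neq 1/2$, and no cancellation question ever arises. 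You work only with the $\F_p$-decomposition and the \emph{trace} $-\sum_{x}\psi(x^l)=-\sum_{\chi^l=1,\,\chi\neq 1}g(\chi)$, so you must additionally argue that the minimal-valuation term survives; your appeal to the pairwise distinctness of the valuations $k/l$ ($k=1,\dots,l-1$) is exactly what is needed and is correct for $q=p$, where the digit sum in Stickelberger's formula reduces to $a/(p-1)$ with $a=k(p-1)/l$. The trade-off: your argument avoids invoking the Kummer-sheaf refinement, at the cost of the no-cancellation step; the paper's argument is cancellation-free and pins down a specific impure eigenvalue. Both are valid, and both ultimately rest on the same input from Stickelberger's theorem.
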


\begin{proof}
(1) 
For $\chi \in \mu_l(\F_p)^{\vee} \setminus \{1\}$ and $\psi \in \F_p^{\vee} \setminus \{1\}$, 
the Gauss sum
$G(\chi,\psi)=\sum_{x \in \F_p^{\times}}
\chi(x^{(p-1)/l}) \psi(x)$ is not of the form $\sqrt{p}$ times a root of unity
by Stickelberger's theorem (cf.\ \cite[Remark 2.2]{A}). 
Let $D$ be the projective curve with affine equation $y^p-y=x^l$. 
Then $G(\chi,\psi)$ appears as a Frobenius eigenvalue on $H^1(D)$. 
Hence the claim follows.
 
(2) 
By Lemma \ref{ff}, we have a finite morphism  $C_{m,n,l,q} \to C_{1,2,l,q}$ and the curve 
$C_{1,2,l,q}$ is defined by $y^q-y=(x^q-x)^l$ by Lemma \ref{de}. 
Further, we have a finite morphism 
$\overline{C}_{1,2,l,q} \to D$ given by 
$(x,y) \mapsto (x^q-x, \sum_{i=0}^{f-1} y^{p^i})$. 
Hence the claim follows from (1). 
\end{proof}

\subsection{Cubic case}
In this section, we always assume that $m=1$, $n=2$ and $l=3$. Then
$g_3(x,x')=-xx'(x+x')$. 
We simply write $C$ for $C_{1, 2, 3,q}$. 
Hence the curve  
$C$ is defined by
\[
y^q-y=-g_3(x^q,-x)=-x^{q+1}(x^q-x). 
\]
For a finite extension of finite fields 
$\F_{p^t}/\F_{p^s}$, let $\Tr_{p^t/p^s}
\colon \F_{p^t} \to \F_{p^s}$ denote
the trace map.

\subsection{Characteristic two}
First, we treat the case $p=2$. 
 \begin{lemma}
Assume $p=2$. 
For $(a,b) \in \F_q^{\times} \times \F_q$, 
let $E_{a,b}$ denote the 
elliptic curve over $\F_q$ defined by 
$y^2+y=ax^3+bx$. 
Then 
\begin{align*}
L_{\overline{C}/\F_q}(T)&=\prod_{(a,b) \in \F_q^{\times} \times \F_q} 
L_{E_{a,b}/\F_q}(T) \\
&=\prod_{(a,b) \in \F_q^{\times} \times \F_q}
\biggl(1+\sum_{x \in \F_q} (-1)^{\Tr_{q/2}(ax^3+bx)} T+qT^2\biggr) \\
&=\prod_{\xi \in W_{\rm prim}^{\vee}}
\left(1+S_{\xi} T+qT^2\right).    
\end{align*}
Hence $\overline{C}$ is supersingular.  
 \end{lemma}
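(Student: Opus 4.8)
The plan is to derive the three displayed equalities in turn and then read off supersingularity from the elliptic curves $E_{a,b}$. For the first equality I would specialize Lemma~\ref{coho} to $p=2$, $l=3$. The equivalence relation $\sim$ is scaling by $\F_p^{\times}=\F_2^{\times}=\{1\}$, hence trivial, so the index set $(\F_q^{\times}\times\F_q)/\!\sim$ is all of $\F_q^{\times}\times\F_q$. Moreover the curve $C_{a,b}\colon y^{2}-y=ax^{3}+bx$ of Lemma~\ref{coho} is, in characteristic $2$, exactly $E_{a,b}\colon y^2+y=ax^3+bx$. Thus Lemma~\ref{coho} gives a $\Fr_q^{\ast}$-equivariant decomposition $H^1(\overline{C})\simeq\bigoplus_{(a,b)\in\F_q^{\times}\times\F_q}H^1(\overline{E}_{a,b})$, whence $L_{\overline{C}/\F_q}(T)=\prod_{(a,b)}L_{E_{a,b}/\F_q}(T)$.

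For the second equality I would count points on $E_{a,b}$. Since $a\neq0$, the smooth projective model $\overline{E}_{a,b}$ is an elliptic curve with a single point at infinity, and over each $x\in\F_q$ the equation $y^2+y=ax^3+bx$ has $1+(-1)^{\Tr_{q/2}(ax^3+bx)}$ solutions, because $y^2+y=\gamma$ is solvable precisely when $\Tr_{q/2}(\gamma)=0$. Hence $\#E_{a,b}(\F_q)=q+1+\sum_{x\in\F_q}(-1)^{\Tr_{q/2}(ax^3+bx)}$, so the Frobenius trace is $-\sum_x(-1)^{\Tr_{q/2}(ax^3+bx)}$ and $L_{E_{a,b}/\F_q}(T)=1+\sum_{x\in\F_q}(-1)^{\Tr_{q/2}(ax^3+bx)}T+qT^2$. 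Equivalently, this is the Grothendieck trace formula (Lemma~\ref{trace}(1)) applied to the nontrivial Artin--Schreier component $H^1_{\mathrm{c}}(\mathbb{A}^1,\mathscr{L}_{\psi}(ax^3+bx))\simeq H^1(\overline{E}_{a,b})$.

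The third equality is a reindexing. By Lemma~\ref{sum} a character $\xi\in W^{\vee}_{\mathrm{prim}}$ corresponds to $\psi_{c}\boxtimes\psi$ with $\psi\in\F_q^{\vee}\setminus\{1\}$ and $c\in\F_q$, and then $S_{\xi}=\sum_{x}\psi(x^3+cx)$. Writing the nontrivial $\psi$ as $\psi(z)=(-1)^{\Tr_{q/2}(az)}$ for a unique $a\in\F_q^{\times}$ and putting $b:=ac$ gives $S_{\xi}=\sum_{x}(-1)^{\Tr_{q/2}(ax^3+bx)}$. As $(a,c)$ ranges over $\F_q^{\times}\times\F_q$ the pair $(a,b)$ ranges bijectively over $\F_q^{\times}\times\F_q$, and since $\#W^{\vee}_{\mathrm{prim}}=q(q-1)=\#(\F_q^{\times}\times\F_q)$ this matches the two products term by term, yielding $\prod_{(a,b)}(1+S_{\xi}T+qT^2)=\prod_{\xi\in W^{\vee}_{\mathrm{prim}}}(1+S_{\xi}T+qT^2)$.

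Finally, supersingularity is immediate from the factorization: each $E_{a,b}$ with $a\neq0$ is supersingular in characteristic $2$, having $j$-invariant $0$ (cf.\ \cite{ITT,TT}), so every Frobenius eigenvalue on $H^1(\overline{E}_{a,b})$ has slope $1/2$; hence all slopes on $H^1(\overline{C})=\bigoplus H^1(\overline{E}_{a,b})$ equal $1/2$ and $\overline{C}$ is supersingular. I expect the only real friction to be the bookkeeping in the third step---making the three parametrizations of the index set agree so that the products coincide termwise---together with a clean justification that every $E_{a,b}$ is supersingular; everything else is a direct assembly of Lemmas~\ref{coho}, \ref{trace} and~\ref{sum}.
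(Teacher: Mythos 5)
Your proposal is correct and follows essentially the same route as the paper: the first equality from Lemma~\ref{coho} (with the equivalence relation collapsing since $\F_2^{\times}$ is trivial), the second from the standard point count on $E_{a,b}$, the third from the reindexing of Lemma~\ref{sum}, and supersingularity from that of each $E_{a,b}$ in characteristic $2$. The paper's proof is just a terser version of exactly this argument, so no further comment is needed.
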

 \begin{proof}
The first equality 
follows from Lemma \ref{coho}. 
The second equality is well-known. 
The third equality follows from Lemma \ref{sum}. 

Since each $E_{a,b}$ is supersingular, 
the curve $\overline{C}$, whose $L$-polynomial is the product of 
$L_{E_{a,b}/\F_q}(T)$, is also supersingular.
 \end{proof}
 \begin{remark}
 The sum 
 $\sum_{x \in \F_q} (-1)^{\Tr_{q/2}(ax^3+bx)}$
 is explicitly calculated in \cite{Ca2}. 
 \end{remark}
 \subsection{Odd characteristic}
 Through this subsection, we assume $p \neq 2$. 
 We count the number of fixed points. 
\begin{lemma}\label{16}
Let $(a,b) \in W$. 
Then we have 
\[
\# C_{\F}^{\Fr_{q^2} \circ (a,b)} =
\begin{cases}
2q^2, & \text{if } a \neq 0 \text{ and } b=-\dfrac{a^3}{4}+\zeta a 
\text{ with } \zeta \in \F_q^{\times} \setminus (\F_q^{\times})^2, \\[7pt]
q^2, & \text{if } a \neq 0 \text{ and } b=-\dfrac{a^3}{4}, \\[4pt]
q^3, & \text{if } a=b=0, \\[4pt]
0, & \text{otherwise.}
\end{cases}
\]
\begin{comment}
Assume $2 \mid q$. 
We have 
\[
\left|C_{\F}^{\Fr_{q^2} \circ (a,b)}\right|
=\begin{cases}
2q^2 & \textrm{if $a \neq 0$ and 
$\Tr_{q/2}(b/a^3)=1$}, \\
q^2 & \textrm{if $a \neq 0$ and $\Tr_{q/2}(b/a^3)=0$},  \\
q^3 & \textrm{if $a =b=0$}, \\
0 &  \textrm{otherwise}. 
\end{cases}
\]
\end{comment}
\end{lemma}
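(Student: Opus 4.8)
The plan is to translate the condition of being fixed by $\Fr_{q^2}\circ(a,b)$ into explicit equations in the coordinates $(x,y)$, using the action law \eqref{xycd} and the defining equation $y^q-y=-x^{q+1}(x^q-x)$ of $C$. A point $(x,y)\in C_{\F}$ is fixed precisely when $(x+a)^{q^2}=x$ and $(y+b+g_3(x,a))^{q^2}=y$. Since $a,b\in\F_q$, the first equation is equivalent to $x^{q^2}-x=-a$, a separable additive equation with exactly $q^2$ solutions $x\in\F$; this is the primary constraint on $x$.

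Next I would eliminate $y$. Writing $c(x):=-x^{q+1}(x^q-x)$, applying $\Fr_q$ to the curve equation $y^q-y=c(x)$ and adding the result to the original shows that every point of $C$ satisfies $y^{q^2}-y=c(x)+c(x)^q$, a quantity depending only on $x$. Comparing this with the second fixed-point condition — which, after expanding $g_3(x,a)=-ax^2-a^2x$ and substituting $x^{q^2}=x-a$, reads $y^{q^2}-y=-b+ax^2-a^2x$ — turns the $y$-condition into a condition on $x$ alone. When it holds, all $q$ roots $y$ of $y^q-y=c(x)$ are fixed; when it fails, none are. Hence $C_{\F}^{\Fr_{q^2}\circ(a,b)}$ has cardinality $q$ times the number of admissible $x$.

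The key computation is the simplification of $c(x)+c(x)^q$ after substituting $x^{q^2}=x-a$: for $a\neq 0$ it should collapse, after dividing by $a$ and recognizing $-(x^q-x)^2-a(x^q-x)$, to the single condition $(x^q-x)^2+a(x^q-x)=b/a$. Setting $t:=x^q-x$, one checks $t^q+t=x^{q^2}-x=-a$ automatically, so the admissible $x$ are exactly the $q$ preimages under $x\mapsto x^q-x$ of each root $t\in\F$ of the quadratic $t^2+at-b/a=0$ satisfying $t^q+t=-a$. Combining the two factors of $q$, the count equals $q^2$ times the number of such roots $t$.

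It then remains to count the roots $t$ of $t^2+at-b/a=0$ obeying $t^q+t=-a$, which is governed by the discriminant $\Delta=(a^3+4b)/a$. Writing $b=-a^3/4+\zeta a$ gives $\Delta=4\zeta$, so (as $p\neq 2$) $\Delta$ is zero, a nonzero square, or a nonsquare according as $\zeta=0$, $\zeta\in(\F_q^\times)^2$, or $\zeta\in\F_q^\times\setminus(\F_q^\times)^2$. For the double root $t=-a/2\in\F_q$ one has $t^q+t=-a$, giving one admissible root and count $q^2$; for two distinct roots in $\F_q$ neither equals $-a/2$, so $t^q+t=2t=-a$ fails and the count is $0$; for two conjugate roots in $\F_{q^2}\setminus\F_q$ one has $t^q$ equal to the other root, so $t^q+t=-a$ holds for both and the count is $2q^2$. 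The case $a=0$ I would treat directly: then $x^{q^2}=x$ and the condition forces $b=0$, yielding $q^3$ when $b=0$ and $0$ otherwise. Assembling these cases reproduces the stated table. The main obstacle is the clean reduction of the $y$-condition to the quadratic in $t$ — i.e.\ the algebraic collapse of $c(x)+c(x)^q$ and the correct bookkeeping of the two factors of $q$ — together with the Frobenius-conjugacy argument distinguishing the split and inert cases; everything else is routine.
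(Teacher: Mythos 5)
Your proposal is correct and follows essentially the same route as the paper: eliminate $y$ via $y^{q^2}-y=(y^q-y)^q+(y^q-y)$, reduce to the quadratic $z^2+az=b/a$ in $z=x^q-x$ subject to the side condition $z^q+z=-a$, and count fibers ($q$ choices of $y$ over each admissible $x$, $q$ admissible $x$ over each admissible root $z$). The only cosmetic difference is in the last step: you sort the roots of the quadratic by discriminant and Frobenius conjugacy, whereas the paper completes the square ($z_1=z+a/2$, $z_1^{q-1}=-1$) and reads off the quadratic-residue condition $\nu_q(a^2/4+b/a)=-1$; the treatment of the case $a=0$ and the final tallies agree with the paper's.
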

\begin{proof}
By \eqref{xycd}, 
it suffices to count the number of pairs $(x,y) \in \F^2$ satisfying
\begin{gather}\label{sol}
\begin{aligned}
&x^{q^2} +a= x, \\
&y^{q^2} - a x^{q^2}(x^{q^2}+a) + b = y, \\
&y^q - y = - x^{q+1}(x^q - x).
\end{aligned}
\end{gather}
Substituting the first equation into the second gives
\[
y^{q^2} - y = a x^{q^2}(x^{q^2}+a) - b = ax^2-a^2 x - b.
\]
Using the first and third equations of \eqref{sol}, we obtain
\begin{align*}
y^{q^2} - y 
&= (y^q - y)^q + (y^q - y) \\
&= -x^{q^2+q}(x^{q^2}-x^q) - x^{q+1}(x^q - x) \\
&=- x^q(x-a)(x - x^q - a) + x^{q+1}(x^q - x) \\
&= -a x^{2q} + 2a x^{q+1} - a^2 x^q.
\end{align*}
Hence, 
\[
a x^{2q} - 2a x^{q+1} + a^2 x^q+ ax^2- a^2 x = b.
\]
Set $z := x^q - x$. 
Then this implies
\[
a z^2 + a^2 z = b.
\]
From $x^{q^2} - x = -a$, we also have
\[
z^q + z = x^{q^2} - x = -a.
\]
Therefore \eqref{sol} is equivalent to
\begin{gather}\label{soll}
\begin{aligned}
&z = x^q - x, \\
&z^q + z = -a, \\
&a z^2 + a^2 z = b, \\
&y^q - y = -x^{q+1}(x^q - x).
\end{aligned}
\end{gather}

\smallskip
\noindent
\textbf{Case 1:} $a \neq 0$.  
Let $z_1 = z + (a/2)$.  
Substituting this into $a z^2 + a^2 z = b$ gives
\begin{equation}\label{z1}
z_1^2 = \frac{a^2}{4} + \frac{b}{a}.
\end{equation}
The equality $z^q + z = -a$ is equivalent to $z_1^q = -z_1$.

\smallskip
\emph{{\rm (i)} Assume $b \neq -a^3/4$.}  
Then $z_1 \neq 0$, and hence $z_1^{q-1} = -1$.  
Let $\nu_q(x) = x^{(q-1)/2}$ for $x \in \F_q^{\times}$.  
From \eqref{z1}, we have
\[
\nu_q\!\left(\frac{a^2}{4} + \frac{b}{a}\right)
= z_1^{q-1} = -1.
\]
Thus, if a solution of \eqref{soll} exists, we must have 
$\dfrac{a^2}{4} + \dfrac{b}{a} \in 
\F_q^{\times} \setminus (\F_q^{\times})^2$.  
If this condition holds, there exist $2q^2$ solutions of \eqref{soll}.

\smallskip
\emph{{\rm (ii)} Assume $b =- a^3/4$.}  
Then $z_1 = 0$ by \eqref{z1}, and clearly $z_1^q = z_1$.  
Hence there are $q^2$ solutions of \eqref{soll}$.$

\smallskip
\noindent
\textbf{Case 2:} $a = 0$.  
If $b \neq 0$, there is no solution of \eqref{soll}.  
Assume $b = 0$.  
Then we consider
\[
x^{q^2} = x, \qquad y^{q^2} = y, \qquad 
y^q - y = -x^{q+1}(x^q - x).
\]
If $x = 0$, there are $q$ solutions.  
Assume $x \neq 0$.  
Since $x^{q+1} \in \F_q^{\times}$, we have
\[
\left(\frac{y}{x^{q+1}}\right)^q - 
\frac{y}{x^{q+1}} = -x^q + x.
\]
Thus $(y/x^{q+1}) + x \in \F_q$, and hence $y \in \F_{q^2}$.  
Namely, there are $q(q^2 - 1)$ such solutions.  
Therefore we obtain $q^3$ solutions of \eqref{soll}.
\end{proof}
In the following, let $\xi \in W_{\mathrm{prim}}^{\vee}$. 
\begin{corollary}\label{cc}
Let $\nu_q(x) := x^{(q-1)/2}$ for $x \in \F_q^{\times}$, and define
\[
U_{\xi} := \sum_{x \in \F_q^{\times}} \nu_q(x)\, \xi^2(x,0).
\]
Let $\psi(x) := \xi(0,x)$ for $x \in \F_q$, and put
\[
G(\psi) := \sum_{x \in \F_q} \psi(x^2).
\]
Then
\[
\Tr(\Fr_{q^2}^*; H_{\mathrm{c}}^1(\mathbb{A}^1, \mathscr{L}_{\xi}))
= -q + \nu_q(2)\, G(\psi)\, U_{\xi}.
\]
\end{corollary}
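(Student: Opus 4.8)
The plan is to run the twisted Grothendieck--Lefschetz trace formula for the self-maps $\Fr_{q^2}\circ(a,b)$ of $C=C_{1,2,3,q}$ and then isolate the $\xi$-isotypic piece by Fourier inversion over the finite abelian group $W$. Since $W$ consists of $\F_q$-rational points, its action commutes with $\Fr_{q^2}$, so each summand $H_{\rm c}^1(\mathbb{A}^1,\mathscr{L}_{\xi})$ in the decomposition of Lemma \ref{Fum} is $\Fr_{q^2}^\ast$-stable and $(a,b)^\ast$ acts on it by a scalar $\chi_\xi(a,b)$, where $\chi_\xi$ is the character of $W$ attached to $\xi$ (the normalization dictated by the construction of $\mathscr{L}_\xi$ is $\chi_\xi=\xi^{-1}$, so that $\overline{\chi_\xi}=\xi$; this must be pinned down carefully). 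Write $T_\xi:=\Tr(\Fr_{q^2}^\ast;H_{\rm c}^1(\mathbb{A}^1,\mathscr{L}_\xi))$. Using $H_{\rm c}^0(C)=0$ and the geometric connectedness of $C$ (the $x$-projection exhibits $C$ as the Artin--Schreier $\F_q$-cover $y^q-y=-x^{2q+1}+x^{q+2}$, whose pole order $2q+1$ at $\infty$ is prime to $p=3$), we get $H_{\rm c}^2(C)=\overline{\mathbb{Q}}_\ell(-1)$ with trivial $W$-action, so the trace formula reads
\[
\# C_{\F}^{\Fr_{q^2}\circ(a,b)} = q^2 - \sum_{\xi\in W_{\rm prim}^\vee}\chi_\xi(a,b)\,T_\xi
\]
for all $(a,b)\in W$. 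Multiplying by $\overline{\chi_\xi(a,b)}$ and summing over $W$, the constant $q^2$ (the $H^2$-term) is annihilated because $\chi_\xi$ is nontrivial --- this is where primitivity of $\xi$ enters --- and orthogonality gives $T_\xi = -q^{-2}\sum_{(a,b)\in W}\overline{\chi_\xi(a,b)}\,\# C_{\F}^{\Fr_{q^2}\circ(a,b)}$.

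Next I would substitute the fixed-point counts of Lemma \ref{16}. Only $(0,0)$ and the points with $a\neq 0$, $b=-a^3/4+\zeta a$ for $\zeta\in\{0\}\cup(\F_q^{\times}\setminus(\F_q^{\times})^2)$ contribute, so after factoring $\overline{\chi_\xi(a,-a^3/4+\zeta a)}=\overline{\chi_\xi(a,-a^3/4)}\,\overline{\chi_\xi(0,\zeta a)}$ via the group law of $W$,
\[
\sum_{(a,b)}\overline{\chi_\xi(a,b)}\,\#C_{\F}^{\Fr_{q^2}\circ(a,b)} = q^3 + q^2\sum_{a\in\F_q^{\times}}\overline{\chi_\xi(a,-a^3/4)}\Bigl(1+2\!\!\sum_{\zeta\in\F_q^{\times}\setminus(\F_q^{\times})^2}\!\!\overline{\chi_\xi(0,\zeta a)}\Bigr).
\]
With the normalization above, $\overline{\chi_\xi(0,t)}=\xi(0,t)=\psi(t)$. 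Writing the characteristic function of the nonsquares as $\tfrac12(1-\nu_q(\zeta))$ collapses the inner bracket to a quadratic Gauss sum: it equals $-\nu_q(a)\sum_{\zeta\in\F_q^{\times}}\nu_q(\zeta)\psi(\zeta)=-\nu_q(a)\,G(\psi)$, where one uses $\sum_{\zeta\in\F_q^{\times}}\psi(\zeta)=-1$ and the standard identity $G(\psi)=\sum_{x}\psi(x^2)=\sum_{\zeta\in\F_q^{\times}}\nu_q(\zeta)\psi(\zeta)$.

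Finally I would reorganize the surviving $a$-sum into $U_\xi$. Since $\overline{\chi_\xi(a,-a^3/4)}=\xi(a,-a^3/4)=\xi(a,0)\psi(-a^3/4)$ and the doubling law gives $\xi(x,0)^2=\xi\bigl(2x,g_3(x,x)\bigr)=\xi(2x,-2x^3)$, the substitution $y=2x$ yields $\sum_{a\in\F_q^{\times}}\nu_q(a)\,\xi(a,-a^3/4)=\nu_q(2)\,U_\xi$. Assembling,
\[
\sum_{(a,b)}\overline{\chi_\xi(a,b)}\,\#C_{\F}^{\Fr_{q^2}\circ(a,b)} = q^3 - q^2\,\nu_q(2)\,G(\psi)\,U_\xi,
\]
and dividing by $-q^2$ produces $T_\xi=-q+\nu_q(2)G(\psi)U_\xi$, as claimed. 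I expect the main obstacle to be bookkeeping rather than depth: one must fix the character convention so that $\psi$ and $U_\xi$ (not their complex conjugates) appear, verify that $W$ acts trivially on $H_{\rm c}^2(C)$ so that the $H^2$-contribution is a genuine constant killed by orthogonality, and track the factor $\nu_q(2)$ produced by the doubling substitution $y=2x$. The two computational hinges are recognizing the $\zeta$-sum as a quadratic Gauss sum and converting the $a$-sum into $U_\xi$ through the Witt group law.
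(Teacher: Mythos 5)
Your proposal is correct and follows essentially the same route as the paper: average the Lefschetz fixed-point counts of $\Fr_{q^2}\circ(a,b)$ against the character of $W$ to project onto the $\xi$-isotypic summand, substitute the counts from Lemma~\ref{16}, recognize the $\zeta$-sum as $-\nu_q(a)G(\psi)$ via the $\tfrac12(1-\nu_q)$ indicator of nonsquares, and convert the remaining $a$-sum to $\nu_q(2)U_\xi$ using $2(x/2,0)=(x,-x^3/4)$. The character-normalization issue you flag is indeed harmless here, since the fixed-point data are invariant under $(a,b)\mapsto(-a,-b)$, so both conventions yield the same expression.
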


\begin{proof}
Using Lemma~\ref{16} and the fixed point formula (cf.\ \cite[Section~3]{DL}), we have
\begin{gather}\label{sol2}
\begin{aligned}
& \Tr(\Fr_{q^2}^*; H_{\mathrm{c}}^1(\mathbb{A}^1, \mathscr{L}_{\xi})) \\
&= \frac{1}{q^2}\sum_{(a,b)\in W}
\xi^{-1}(a,b)\Tr\bigl((\Fr_{q^2}\circ(a,b))^*; H_{\mathrm{c}}^1(C)\bigr) \\
&= \frac{1}{q^2}\sum_{(a,b)\in W}
\xi^{-1}(a,b)\bigl(q^2 - \#C_{\F}^{\Fr_{q^2}\circ(a,b)}\bigr) \\
&= -\sum_{x\in\F_q^{\times}}\xi^{-1}\left(x,-\frac{x^3}{4}\right)
 -2\sum_{x\in\F_q^{\times},\, \zeta\in\F_q^{\times}\setminus(\F_q^{\times})^2}
 \xi^{-1}\left(x,-\frac{x^3}{4}+\zeta x\right) - q \\
&= -\sum_{x\in\F_q^{\times}}\xi^{-2}\left(\frac{x}{2},0\right)
 -2\sum_{x\in\F_q^{\times}}\xi^{-2}\left(\frac{x}{2},0\right)
 \sum_{\zeta\in\F_q^{\times}\setminus(\F_q^{\times})^2}\xi(0,-\zeta x)
 - q,
\end{aligned}
\end{gather}
where we used $2 (x/2,0)= (x, -x^3/4)$ in the last equality.

For $x \in \F_q^{\times}$, we have
\[
\sum_{\zeta\in\F_q^{\times}\setminus(\F_q^{\times})^2}\xi(0,-\zeta x)
+ \frac{1}{2}\sum_{\zeta\in\F_q^{\times}}\xi(0,-\zeta^2 x)
= \sum_{\zeta\in\F_q^{\times}}\xi(0,-\zeta x) = -1,
\]
and therefore
\[
\sum_{\zeta\in\F_q^{\times}\setminus(\F_q^{\times})^2}\xi(0,-\zeta x)
= -\frac{1}{2}\left(\sum_{\zeta\in\F_q}\xi(0,-\zeta^2 x) + 1\right)
= -\frac{\nu_q(-x) G(\psi) + 1}{2}.
\]
Using this, we obtain
\begin{align*}
&2\sum_{x\in\F_q^{\times}}\xi^{-2}\left(\frac{x}{2},0\right)
  \sum_{\zeta\in\F_q^{\times}\setminus(\F_q^{\times})^2}\xi(0,-\zeta x) \\
&= -\sum_{x\in\F_q^{\times}}
 \xi^{-2}\left(\frac{x}{2},0\right)\bigl(\nu_q(-x)G(\psi) + 1\bigr) \\
&= -G(\psi)\sum_{x\in\F_q^{\times}}\nu_q(-x)\xi^{-2}\left(\frac{x}{2},0\right)
   -\sum_{x\in\F_q^{\times}}\xi^{-2}\left(\frac{x}{2},0\right) \\
&= -\nu_q(2)\, G(\psi)\, U_{\xi}
   -\sum_{x\in\F_q^{\times}}\xi^{-2}\left(\frac{x}{2},0\right).
\end{align*}
Substituting this into \eqref{sol2} yields the claimed formula.
\end{proof}

\begin{lemma}\label{2}
We have
$S_{\xi}^2 = q + \nu_q(2)\, G(\psi)\,U_{\xi}$. 
\end{lemma}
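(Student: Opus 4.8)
The plan is to read off $S_\xi^2$ from the action of Frobenius on the two–dimensional space $H^1_{\mathrm c}(\mathbb A^1,\mathscr L_\xi)$. By Lemma~\ref{dim} this space has dimension $2$; write $\alpha,\beta$ for the eigenvalues of $\Fr_q^*$ on it. Lemma~\ref{trace}(1) (taken with $m=1$) identifies the trace of $\Fr_q^*$ as $\alpha+\beta=-S_\xi$, while Corollary~\ref{cc} computes the trace of $\Fr_{q^2}^*=(\Fr_q^*)^2$ as
\[
\alpha^2+\beta^2=-q+\nu_q(2)\,G(\psi)\,U_\xi .
\]
Newton's identity then gives
\[
S_\xi^2=(\alpha+\beta)^2=(\alpha^2+\beta^2)+2\alpha\beta=-q+\nu_q(2)\,G(\psi)\,U_\xi+2\alpha\beta ,
\]
so the assertion is equivalent to the single identity $\alpha\beta=q$, i.e.\ $\det(\Fr_q^*;H^1_{\mathrm c}(\mathbb A^1,\mathscr L_\xi))=q$.

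To pin down this determinant I would use Poincar\'e duality together with an auxiliary involution. Since $l=3$ we have $-(x,0)=(-x,0)$ in $W$, so (as $p$ is odd) the map $\sigma\colon(x,y)\mapsto(-x,-y)$ is an $\F_q$-automorphism of $\overline C$. A direct check on the group law shows $\sigma\circ T_w=T_{-w}\circ\sigma$ for the $W$-translation $T_w$; hence on cohomology $\sigma^*$ carries the $\xi$-isotypic summand $H^1_{\mathrm c}(\mathbb A^1,\mathscr L_\xi)$ of $H^1(\overline C)$ (via Lemma~\ref{Fum} and the proof of Corollary~\ref{genus}) isomorphically onto the $\xi^{-1}$-isotypic summand, commuting with $\Fr_q^*$ because $\sigma$ is defined over $\F_q$.

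Now transport the cup–product pairing through $\sigma$. The pairing on $H^1(\overline C)$ is alternating, $\Fr_q^*$-equivariant with values in $\overline{\mathbb Q}_\ell(-1)$, $W$-invariant, and it pairs the $\xi$- and $\xi^{-1}$-summands perfectly. Setting $B(u,v):=\langle u,\sigma^*v\rangle$ for $u,v\in H^1_{\mathrm c}(\mathbb A^1,\mathscr L_\xi)$ therefore yields a non-degenerate form on this summand with $B(\Fr_q^*u,\Fr_q^*v)=q\,B(u,v)$. Because $\sigma^2=\mathrm{id}$ and $\sigma^*$ preserves the cup product, a short computation shows $B$ is again alternating; on a two-dimensional space one has $B(\Fr_q^*u,\Fr_q^*v)=\det(\Fr_q^*)\,B(u,v)$, so comparing with the conformal factor $q$ gives $\det(\Fr_q^*)=\alpha\beta=q$. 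Substituting $\alpha\beta=q$ into the displayed computation yields exactly $S_\xi^2=q+\nu_q(2)\,G(\psi)\,U_\xi$.

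The main obstacle is precisely this determinant step. Purity gives $|\alpha|=|\beta|=\sqrt q$, and stability of $\{\alpha,\beta\}$ under complex conjugation (via $\sigma$, using $\mathscr L_{\xi^{-1}}=\mathscr L_\xi^\vee$) already forces either $\beta=\overline\alpha$, whence $\alpha\beta=q$, or $\alpha,\beta\in\{\pm\sqrt q\}$. The only dangerous configuration is $\{\alpha,\beta\}=\{\sqrt q,-\sqrt q\}$, which would give $\det=-q$ and falsify the lemma; it is exactly this real, supersingular-type case that the alternating form $B$ excludes, since a symplectic similitude of factor $q$ cannot have determinant $-q$. I would therefore take care to verify that $B$ is genuinely \emph{alternating} rather than merely symmetric, as this sign is what separates $\det=q$ from $\det=-q$.
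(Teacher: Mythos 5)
Your proof is correct, but it takes a genuinely different route from the paper's. The paper proves Lemma~\ref{2} by a purely character-theoretic computation: expand $S_{\xi}^2=\sum_{x,y}\xi\bigl((x,0)+(y,0)\bigr)$ using the Witt group law $(x,0)+(y,0)=(x+y,-xy(x+y))$, substitute $z=x+y$, complete the square in $x$, and apply the quadratic Gauss-sum identity $\sum_t\psi(\alpha t^2)=\nu_q(\alpha)G(\psi)$; no cohomology is needed. You instead take Corollary~\ref{cc} (hence the fixed-point count of Lemma~\ref{16}) and Lemma~\ref{trace}(1) as input, reduce the assertion to $\det(\Fr_q^\ast;H_{\rm c}^1(\mathbb{A}^1,\mathscr{L}_{\xi}))=q$, and establish that determinant by transporting the cup-product pairing on $H^1(\overline{C})$ through the involution $\sigma\colon(x,y)\mapsto(-x,-y)$, which intertwines $T_w$ with $T_{-w}$ and hence swaps the $\xi$- and $\xi^{-1}$-isotypic summands. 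Your key verifications all check out: $\sigma$ preserves the defining equation since $q+1$ is even, $g_3(-x,-a)=-g_3(x,a)$ gives the intertwining, $B(u,v)=\langle u,\sigma^\ast v\rangle$ is alternating because $(\sigma^\ast)^2=\mathrm{id}$ and $\sigma^\ast$ acts trivially on $H^2(\overline{C})$, and an alternating form on a two-dimensional space forces $\det=q$ rather than $\pm q$ --- you are right that this is the load-bearing point, as a merely symmetric form would leave the supersingular-type configuration $\{\sqrt q,-\sqrt q\}$ open. Note that in the paper the logical order is reversed: Corollary~\ref{cc2} deduces $\alpha_\xi\beta_\xi=q$ \emph{from} Lemma~\ref{2} and Corollary~\ref{cc}, whereas you prove $\alpha\beta=q$ independently and recover the lemma; your argument therefore also yields an alternative proof of Corollary~\ref{cc2}, at the cost of making Lemma~\ref{2} depend on the geometric fixed-point computation, which the paper's direct proof avoids.
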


\begin{proof}
Note that $2(x/2,0)=(x,-x^3/4)$.  We compute
\begin{align*}
S_{\xi}^2
=\Bigl(\sum_{x\in\F_q}\xi(x,0)\Bigr)^2
&=\sum_{x,y\in\F_q}\xi(x+y,- xy(x+y)) \\
&=\sum_{x,z\in\F_q}\xi\bigl(z,\;- x(z-x)z\bigr)\qquad(z:=x+y)\\
&=\sum_{x,z\in\F_q}\xi\left(z,\;z\left(x-\frac{z}{2}\right)^2-\frac{z^3}{4}\right)\\
&=\sum_{x_1,z\in\F_q}\xi\left(z,\;z x_1^2-\frac{z^3}{4}\right)
\qquad\left(x_1:=x-\frac{z}{2}\right)\\
&=\sum_{z\in\F_q}\xi^2\left(\frac{z}{2},0\right)\sum_{x_1\in\F_q}\psi(z x_1^2).
\end{align*}
For $z=0$ the inner sum equals $q$, and for $z\neq0$ we apply the Gauss-sum identity
\(\sum_{t\in\F_q}\psi(\alpha t^2)=\nu_q(\alpha)G(\psi)\) for $\alpha \in \F_q^{\times}$ to get
\[
S_{\xi}^2
= q + \sum_{z\in\F_q^\times}\xi^2\left(\frac{z}{2},0\right)\,\nu_q(z)\,G(\psi).
\]
Putting $w:=z/2$, we obtain
\[
S_{\xi}^2 = q + \nu_q(2)\,G(\psi)\sum_{w\in\F_q^\times}\nu_q(w)\xi^2(w,0)
= q + \nu_q(2)\,G(\psi)\,U_{\xi}.
\]
This completes the proof.
\end{proof}

The following formula is a generalization of 
\cite[(1.3)]{Ca}. 
\begin{proposition}\label{3}
We let 
\[
T_{\xi}:=\sum_{x \in \F_q} \xi(x^2,0). 
\]
Then 
\[
S_{\xi}^2=q+\nu_q(2)\, G(\psi) \left(T_{\xi^2}-S_{\xi^2}\right). 
\]
\end{proposition}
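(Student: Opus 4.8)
The plan is to deduce this directly from Lemma~\ref{2}, which already gives $S_{\xi}^2 = q + \nu_q(2)\,G(\psi)\,U_{\xi}$ with $U_{\xi} = \sum_{x \in \F_q^{\times}} \nu_q(x)\,\xi^2(x,0)$. Comparing this with the desired formula, the entire content of the proposition reduces to the single character-sum identity
\[
U_{\xi} = T_{\xi^2} - S_{\xi^2}.
\]
Thus no fixed-point count or new geometric input is needed beyond Lemma~\ref{2}; everything reduces to an elementary manipulation of sums over $\F_q$.

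To establish this identity, the key step is to reorganize the sum defining $T_{\xi^2}$ according to the value $y = x^2$. Using the standard fiber count $\#\{x \in \F_q : x^2 = y\} = 1 + \nu_q(y)$, valid for all $y \in \F_q$ under the convention $\nu_q(0) = 0$, I would write
\[
T_{\xi^2} = \sum_{x \in \F_q} \xi^2(x^2,0) = \sum_{y \in \F_q} \bigl(1 + \nu_q(y)\bigr)\,\xi^2(y,0).
\]
Splitting the right-hand side into its two natural pieces, the term coming from $1$ is exactly $S_{\xi^2} = \sum_{y \in \F_q} \xi^2(y,0)$, while the term coming from $\nu_q(y)$ is $\sum_{y \in \F_q} \nu_q(y)\,\xi^2(y,0) = U_{\xi}$, the $y=0$ contribution dropping out because $\nu_q(0) = 0$. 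Hence $T_{\xi^2} = S_{\xi^2} + U_{\xi}$, which is the required identity.

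Substituting $U_{\xi} = T_{\xi^2} - S_{\xi^2}$ into the formula of Lemma~\ref{2} then yields the claim. The only point demanding care is the bookkeeping at $y = 0$: one must verify that the convention $\nu_q(0) = 0$ makes the fiber-count formula hold at the origin as well, so that no stray constant term is introduced. Beyond this the argument is purely formal, and the statement should be read as the promised refinement of Carlitz's formula \cite[(1.3)]{Ca}, now with the quadratic-character sum $U_{\xi}$ rewritten as the more symmetric difference $T_{\xi^2} - S_{\xi^2}$.
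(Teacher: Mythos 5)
Your argument is correct and is essentially the paper's own proof: the paper likewise reduces everything to Lemma~\ref{2} and the identity $U_{\xi}=T_{\xi^2}-S_{\xi^2}$, obtained by rewriting $\sum_{x\in\F_q^{\times}}\xi(x^2,0)$ as $\sum_{x\in\F_q^{\times}}(1+\nu_q(x))\xi(x,0)$ via the fiber count of the squaring map (the paper isolates the $x=0$ term by hand rather than invoking the convention $\nu_q(0)=0$, but this is the same bookkeeping). No gap.
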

\begin{proof}
We have 
\begin{align*}
T_{\xi}=1+\sum_{x \in \F_q^{\times}} \xi(x^2,0)=1+
\sum_{x \in \F_q^{\times}} (1+\nu_q(x))
\xi(x,0)=S_{\xi}+\sum_{x \in 
\F_q^{\times}}\nu_q(x) \xi(x,0). 
\end{align*}
Replacing $\xi$ by $\xi^2$ gives 
\[
U_{\xi}=\sum_{x \in 
\F_q^{\times}}\nu_q(x) \xi^2(x,0)=T_{\xi^2}-S_{\xi^2}. 
\]
Thus Lemma \ref{2} implies 
\[
S_{\xi}^2=q+\nu_q(2)\, G(\psi)\,
U_{\xi}
=q+\nu_q(2) \,G(\psi)\,(T_{\xi^2}-S_{\xi^2}). 
\]
Hence we obtain the claim. 
\end{proof}
We recall $\dim H_{\rm c}^1(\mathbb{A}^1,\mathscr{L}_{\xi})=2$ by Lemma \ref{dim}. 
\begin{corollary}\label{cc2}
Let $\alpha_{\xi},\, \beta_{\xi}$ be the eigenvalues of $\Fr_q^\ast$ on 
$H_{\rm c}^1(\mathbb{A}^1,\mathscr{L}_{\xi})$. 
Then we have 
$\alpha_{\xi}+\beta_{\xi}=-S_{\xi}$
and $\alpha_{\xi} \beta_{\xi}=q$. 
\end{corollary}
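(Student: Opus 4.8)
The plan is to read off the sum of the eigenvalues directly from the Grothendieck trace formula, and then to pin down their product by computing the second power sum $\alpha_\xi^2+\beta_\xi^2$ via the action of $\Fr_{q^2}^\ast$, using the point-count already carried out.

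First I would record the sum. By Lemma~\ref{dim} we have $\dim H_{\rm c}^1(\mathbb{A}^1,\mathscr{L}_{\xi})=2$, so the trace of $\Fr_q^\ast$ on this space is exactly $\alpha_\xi+\beta_\xi$. Applying Lemma~\ref{trace}(1) with $m=1$ identifies this trace:
\[
\alpha_\xi+\beta_\xi=\Tr\bigl(\Fr_q^\ast;H_{\rm c}^1(\mathbb{A}^1,\mathscr{L}_{\xi})\bigr)=-\sum_{t\in\F_q}\xi(t,0)=-S_\xi.
\]

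Next I would extract the product from the second power sum. Since $\Fr_{q^2}^\ast=(\Fr_q^\ast)^2$, the eigenvalues of $\Fr_{q^2}^\ast$ on $H_{\rm c}^1(\mathbb{A}^1,\mathscr{L}_{\xi})$ are $\alpha_\xi^2$ and $\beta_\xi^2$, so that $\Tr(\Fr_{q^2}^\ast;H_{\rm c}^1)=\alpha_\xi^2+\beta_\xi^2$. Corollary~\ref{cc} evaluates this trace as $-q+\nu_q(2)\,G(\psi)\,U_\xi$, while Lemma~\ref{2} gives $\nu_q(2)\,G(\psi)\,U_\xi=S_\xi^2-q$. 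Substituting yields
\[
\alpha_\xi^2+\beta_\xi^2=-q+\bigl(S_\xi^2-q\bigr)=S_\xi^2-2q.
\]
Combining this with the Newton identity $\alpha_\xi^2+\beta_\xi^2=(\alpha_\xi+\beta_\xi)^2-2\alpha_\xi\beta_\xi=S_\xi^2-2\alpha_\xi\beta_\xi$ forces $S_\xi^2-2\alpha_\xi\beta_\xi=S_\xi^2-2q$, hence $\alpha_\xi\beta_\xi=q$.

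The main obstacle has in fact already been overcome upstream: the entire arithmetic content lives in the two-variable fixed-point computation of Corollary~\ref{cc} and the Gauss-sum evaluation of Lemma~\ref{2}, so the present statement is a purely formal consequence. The only point needing a word of justification is that $\Fr_{q^2}^\ast$ acts as the square of $\Fr_q^\ast$ on $H_{\rm c}^1$, which is standard; everything else is bookkeeping with the two elementary symmetric functions of $\alpha_\xi,\beta_\xi$.
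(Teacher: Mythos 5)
Your proposal is correct and follows essentially the same route as the paper: both read off $\alpha_\xi+\beta_\xi=-S_\xi$ from Lemma~\ref{trace}(1), obtain $\alpha_\xi^2+\beta_\xi^2$ from the $\Fr_{q^2}^\ast$-trace in Corollary~\ref{cc}, and then combine with Lemma~\ref{2} via the Newton identity to force $\alpha_\xi\beta_\xi=q$. The only difference is cosmetic (you substitute Lemma~\ref{2} before forming $2\alpha_\xi\beta_\xi$, the paper after), so there is nothing further to add.
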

\begin{proof}
By lemma \ref{trace}(1) and Corollary \ref{cc}, we have 
\begin{align*}
\alpha_{\xi}+\beta_{\xi}&=-\sum_{x \in \F_q}\xi(x,0)=-S_{\xi}, \\
\alpha_{\xi}^2+\beta_{\xi}^2&=-q+\nu_q(2)\,G(\psi)\,U_{\xi}. 
\end{align*}
Hence 
\begin{align*}
2\alpha_{\xi}\beta_{\xi}=(\alpha_{\xi}+\beta_{\xi})^2-(\alpha_{\xi}^2+\beta_{\xi}^2) =S_{\xi}^2+q-\nu_q(2)\,G(\psi)\,U_{\xi}=2q, 
\end{align*}
where the last equality follows from Lemma \ref{2}.  
\end{proof}
We give an explicit formula for the $L$-polynomial $L_{\overline{C}/\F_q}(T)$. 
\begin{theorem}\label{main}
For $\xi \in W_{\mathrm{prim}}^{\vee}$, let 
$S_{\xi}=\sum_{x \in \F_q} \xi(x,0)$. 
Then we have 
\[
L_{\overline{C}/\F_q}(T)=\prod_{\xi \in W_{\rm prim}^{\vee}}
\left(1+S_{\xi} T+qT^2\right). 
\]
\end{theorem}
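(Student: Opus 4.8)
The plan is to compute the $L$-polynomial factor-by-factor using the cohomological decomposition of $H^1(\overline{C})$ into characters of $W$, identifying each resulting two-dimensional piece via the eigenvalue data already recorded in Corollary~\ref{cc2}. Essentially all the analytic work has been done beforehand; the theorem is the assembly of those pieces.

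First I would reduce from the projective curve to the affine one. In the proof of Corollary~\ref{genus} it is shown that the canonical map $H_{\rm c}^1(C) \to H^1(\overline{C})$ is an isomorphism (each local system $\mathscr{L}_{\xi}$ is ramified at $\infty$ for $\xi \in W_{\rm prim}^{\vee}$, and the remaining characters contribute nothing). Since this isomorphism is $\Fr_q^\ast$-equivariant, the $L$-polynomial may be computed as $\det(1 - \Fr_q^\ast T;\, H_{\rm c}^1(C))$. Next I would invoke the decomposition of Lemma~\ref{Fum},
\[
H_{\rm c}^1(C) \simeq \bigoplus_{\xi \in W_{\rm prim}^{\vee}} H_{\rm c}^1(\mathbb{A}^1, \mathscr{L}_{\xi}).
\]
The key point to verify is that this decomposition is preserved by $\Fr_q^\ast$: the summands are the $\xi$-isotypic components for the action of $W = \mathbb{W}_{2,q}(\F_q)$, and because every $w \in W$ is an $\F_q$-rational automorphism of $C$, one has $\Fr_q \circ w = w \circ \Fr_q$, so $\Fr_q^\ast$ commutes with the $W$-action and therefore stabilizes each isotypic piece. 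Consequently the characteristic polynomial of $\Fr_q^\ast$ factors as a product over $\xi \in W_{\rm prim}^{\vee}$.

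It then remains to identify each factor. By Lemma~\ref{dim} the space $H_{\rm c}^1(\mathbb{A}^1, \mathscr{L}_{\xi})$ is two-dimensional (here $l = 3$), so writing $\alpha_{\xi}, \beta_{\xi}$ for the two eigenvalues of $\Fr_q^\ast$ on it, Corollary~\ref{cc2} gives $\alpha_{\xi} + \beta_{\xi} = -S_{\xi}$ and $\alpha_{\xi}\beta_{\xi} = q$. Hence
\[
\det\!\left(1 - \Fr_q^\ast T;\, H_{\rm c}^1(\mathbb{A}^1, \mathscr{L}_{\xi})\right)
= (1 - \alpha_{\xi} T)(1 - \beta_{\xi} T)
= 1 + S_{\xi} T + q T^2,
\]
and taking the product over all $\xi \in W_{\rm prim}^{\vee}$ yields the stated formula.

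The only genuinely new thing to check at this stage is the Frobenius-compatibility of the isotypic decomposition, which is the routine remark above; the substantive input—the relation $S_{\xi}^2 = q + \nu_q(2)\,G(\psi)\,U_{\xi}$ of Lemma~\ref{2} and the trace computation of Corollary~\ref{cc}, both ultimately resting on the fixed-point count of Lemma~\ref{16}—has already been established, so no further hard estimate is expected here. I would simply be careful to state explicitly that the decomposition respects $\Fr_q^\ast$ so that the determinant splits as a product, since this is what legitimizes passing from the individual eigenvalue relations to the global factorization of $L_{\overline{C}/\F_q}(T)$.
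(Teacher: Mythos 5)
Your proposal is correct and follows essentially the same route as the paper: reduce to $H_{\rm c}^1(C)$ via the isomorphism from the proof of Corollary~\ref{genus}, decompose by Lemma~\ref{Fum}, and read off each quadratic factor from Corollary~\ref{cc2}. Your explicit remark that $\Fr_q^\ast$ commutes with the $W$-action and hence preserves the isotypic decomposition is a point the paper leaves implicit, but it is the standard justification and adds nothing beyond what the paper's argument already relies on.
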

\begin{proof}
By the proof of Corollary \ref{genus} and Lemma \ref{Fum}, we have 
\[
H^1(\overline{C}) \xleftarrow{\sim} 
H_{\mathrm{c}}^1(C)\simeq \bigoplus_{\xi\in W_{\mathrm{prim}}^{\vee}}
H_{\mathrm{c}}^1(\mathbb{A}^1,\mathscr{L}_{\xi}).
\]
Therefore the determinant factors as the product of the determinants on the summands:
\[
L_{\overline{C}/\F_q}(T)
=\prod_{\xi\in W_{\mathrm{prim}}^{\vee}}
\det\!\big(1-\Fr_q^\ast T; H_{\mathrm{c}}^1(\mathbb{A}^1,\mathscr{L}_{\xi})\big).
\]

For each $\xi\in W_{\mathrm{prim}}^{\vee}$ let $\alpha_{\xi},\beta_{\xi}$ be the Frobenius eigenvalues on $H_{\mathrm{c}}^1(\mathbb{A}^1,\mathscr{L}_{\xi})$. By Corollary~\ref{cc2} they satisfy
\[
\alpha_{\xi}+\beta_{\xi}=-S_{\xi},\qquad
\alpha_{\xi}\beta_{\xi}=q.
\]
Hence the local factor for $\xi$ is
\[
\det\!\big(1-\Fr_q^\ast T; H_{\mathrm{c}}^1(\mathbb{A}^1,\mathscr{L}_{\xi})\big)
=(1-\alpha_{\xi}T)(1-\beta_{\xi}T) 
=1+S_{\xi}T+qT^2. 
\]

Combining these factors for all $\xi\in W_{\mathrm{prim}}^{\vee}$ yields the claimed  formula.
\end{proof}
\begin{corollary}
For $\xi \in W_{\rm prim}^{\vee}$, 
let $0 \le \theta_{\xi} \le \pi$ be the element satisfying 
\[
\cos \theta_{\xi}=-\frac{S_{\xi}}{2 \sqrt{q}} 
\]
(cf.\ \eqref{abs}). 
Then we have 
\[
\# \overline{C}(\F_{q^n})
=q^n+1-q^{n/2}\sum_{\xi \in W_{\rm prim}^{\vee}}
2 \cos n \theta_{\xi}.  
\]
\end{corollary}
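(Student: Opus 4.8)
The plan is to deduce the point count directly from the factorization of the $L$-polynomial in Theorem~\ref{main} together with the Grothendieck--Lefschetz trace formula. First I would apply the trace formula to the $n$-th power of geometric Frobenius, $(\Fr_q^\ast)^n=\Fr_{q^n}^\ast$, acting on $H^i(\overline{C})$. Since $\overline{C}$ is smooth, projective and geometrically connected, $H^0(\overline{C})$ and $H^2(\overline{C})$ are one-dimensional, with $\Fr_q^\ast$ acting by $1$ and by $q$ respectively; hence
\[
\#\overline{C}(\F_{q^n})
=\sum_{i=0}^{2}(-1)^i\,\Tr\bigl((\Fr_q^\ast)^n;\,H^i(\overline{C})\bigr)
=q^n+1-\Tr\bigl((\Fr_q^\ast)^n;\,H^1(\overline{C})\bigr).
\]
It therefore remains to compute the trace on $H^1$.

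Next I would read off the Frobenius eigenvalues on $H^1(\overline{C})$ from Theorem~\ref{main}. The factorization
\[
L_{\overline{C}/\F_q}(T)=\prod_{\xi\in W_{\rm prim}^{\vee}}\bigl(1+S_\xi T+qT^2\bigr)
\]
exhibits each quadratic factor as $(1-\alpha_\xi T)(1-\beta_\xi T)$ with $\alpha_\xi+\beta_\xi=-S_\xi$ and $\alpha_\xi\beta_\xi=q$ (cf.\ Corollary~\ref{cc2}), so the multiset of eigenvalues of $\Fr_q^\ast$ on $H^1(\overline{C})$ is exactly $\{\alpha_\xi,\beta_\xi:\xi\in W_{\rm prim}^{\vee}\}$.

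To put these in the desired trigonometric form, I would invoke the Riemann hypothesis for curves (the Weil bound already quoted as \eqref{abs}), which gives $|\alpha_\xi|=|\beta_\xi|=\sqrt q$ under the fixed embedding $\overline{\Q}_\ell\simeq\mathbb{C}$. Combined with $\alpha_\xi\beta_\xi=q$, this forces $\beta_\xi=q/\alpha_\xi=\overline{\alpha_\xi}$, so that $\alpha_\xi$ and $\beta_\xi$ are complex conjugate of modulus $\sqrt q$ and $S_\xi=-(\alpha_\xi+\beta_\xi)$ is real. Writing $\alpha_\xi=\sqrt q\,e^{i\theta_\xi}$ and $\beta_\xi=\sqrt q\,e^{-i\theta_\xi}$ recovers $\cos\theta_\xi=-S_\xi/(2\sqrt q)$, matching the definition of $\theta_\xi$, and yields $\alpha_\xi^n+\beta_\xi^n=2q^{n/2}\cos(n\theta_\xi)$.

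Summing over $\xi$ then gives
\[
\Tr\bigl((\Fr_q^\ast)^n;\,H^1(\overline{C})\bigr)
=\sum_{\xi\in W_{\rm prim}^{\vee}}\bigl(\alpha_\xi^n+\beta_\xi^n\bigr)
=q^{n/2}\sum_{\xi\in W_{\rm prim}^{\vee}}2\cos(n\theta_\xi),
\]
and substituting into the first display completes the proof. No step is genuinely difficult; the only point requiring care is the well-definedness of $\theta_\xi$, i.e.\ that $\beta_\xi=\overline{\alpha_\xi}$, which is precisely where the Weil bound enters.
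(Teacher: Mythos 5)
Your proposal is correct and follows essentially the same route as the paper: Lefschetz trace formula on $H^0,H^1,H^2$, the eigenvalue data $\alpha_\xi+\beta_\xi=-S_\xi$, $\alpha_\xi\beta_\xi=q$ from Theorem~\ref{main} and Corollary~\ref{cc2}, and the polar form $\{\alpha_\xi,\beta_\xi\}=\{\sqrt{q}\,e^{\pm i\theta_\xi}\}$. Your extra care in justifying $\beta_\xi=\overline{\alpha_\xi}$ via the Weil bound is a point the paper leaves implicit but is the same argument in substance.
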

\begin{proof}
Let the notation be as in the proof of Theorem 
\ref{main}. 
We have $\{\alpha_{\xi},\beta_{\xi}\}
=\{\sqrt{q} e^{\pm i \theta_{\xi}}\}$. 
Hence the claim follows from
\[
\# \overline{C}(\F_{q^n})
=q^n+1-\sum_{\xi \in W_{\rm prim}^{\vee}}
(\alpha_{\xi}^n+\beta_{\xi}^n)
=q^n+1-q^{n/2}\sum_{\xi \in W_{\rm prim}^{\vee}}
2 \cos n \theta_{\xi}. 
\]
\end{proof}
\begin{comment}
\begin{corollary}
We write $q=p^f$ with a prime number $p$. 
We assume $p \neq 3$.
Let ${}^0 \psi \in \F_p^{\vee} \setminus \{1\}$, 
let $\psi_q:={}^0 \psi \circ \Tr_{q/p}$
and let 
\[
B(a,b,{}^0 \psi):=-\sum_{x \in \F_q}\psi_q(ax^3+bx)
\] 
for $a \in \F_q^{\times}$ and $b \in \F_q$. 
Let $0 \le \theta_{a,b,{}^0 \psi} \le \pi$ be the element satisfying 
\[
\cos \theta_{a,b,{}^0 \psi}=\frac{B(a,b,{}^0 \psi)}{2 \sqrt{q}}. 
\]
We have 
\[
|\overline{C}(\F_{q^n})|
=q^n+1-q^{n/2}\sum_{a\in \F_q^{\times},\, b\in \F_q}
2 \cos n \theta_{a,b,{}^0 \psi}.  
\]
\end{corollary}
\end{comment}
\begin{corollary}
Assume that $q \equiv 2 \pmod 3$ and 
that $p$ is odd. 
Let $\psi \in \F_q^{\vee} \setminus \{1\}$,
and define 
\[
S_{\psi,a}:=\sum_{x \in \F_q}\psi(x^3+ax).
\]
Then 
\[
L_{\overline{C}/\F_q}(T)
=\prod_{a \in \F_q}
(1+S_{\psi,a} T+q T^2)^{q-1}.
\]
\end{corollary}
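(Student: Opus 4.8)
The plan is to start from the factorization in Theorem~\ref{main} and then repackage the local factors by a change of variables that is available precisely because $q\equiv 2\pmod 3$. Note first that $q\equiv 2\pmod 3$ forces $p\neq 3$, while the hypothesis gives $p\neq 2$; hence $p\neq l$ with $l=3$, and all of the cubic-case results apply. Recall also from Lemma~\ref{Fum} that $\#W_{\rm prim}^{\vee}=q(q-1)$, which already matches the total number $q(q-1)$ of factors on the right-hand side.

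First I would make the parametrization of $W_{\rm prim}^{\vee}$ explicit. Using the isomorphism \eqref{cca} together with Lemma~\ref{sum}, every $\xi\in W_{\rm prim}^{\vee}$ corresponds to a pair $(\chi,a)$ with $\chi\in\F_q^{\vee}\setminus\{1\}$ (the character on the second factor) and $a\in\F_q$ (so that the character on the first factor is $\chi_a$), and under this correspondence $S_\xi=\sum_{x\in\F_q}\chi(x^3+ax)$. Hence Theorem~\ref{main} rewrites as
\[
L_{\overline{C}/\F_q}(T)
=\prod_{\chi\in\F_q^{\vee}\setminus\{1\}}\ \prod_{a\in\F_q}
\Bigl(1+\Bigl(\textstyle\sum_{x\in\F_q}\chi(x^3+ax)\Bigr)T+qT^2\Bigr).
\]

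The crux is to show that the inner product over $a$ does not depend on the choice of $\chi$. Fix $\chi\in\F_q^{\vee}\setminus\{1\}$ and write $\chi=\psi_c$ for the unique $c\in\F_q^{\times}$. Because $q\equiv 2\pmod 3$ we have $\gcd(3,q-1)=1$, so cubing is a bijection on $\F_q$; in particular there is a unique $\lambda\in\F_q^{\times}$ with $c\lambda^3=1$. Substituting $x=\lambda u$ gives
\[
\sum_{x\in\F_q}\chi(x^3+ax)
=\sum_{u\in\F_q}\psi\bigl(c\lambda^3u^3+ac\lambda u\bigr)
=\sum_{u\in\F_q}\psi\bigl(u^3+(ac\lambda)u\bigr)
=S_{\psi,\,ac\lambda}.
\]
As $a$ runs over $\F_q$, the index $ac\lambda$ runs over $\F_q$ bijectively since $c\lambda\neq 0$; therefore the multiset $\{\sum_x\chi(x^3+ax):a\in\F_q\}$ equals $\{S_{\psi,a}:a\in\F_q\}$, and consequently the inner product equals $\prod_{a\in\F_q}(1+S_{\psi,a}T+qT^2)$ for every $\chi$.

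Finally I would assemble the pieces: the displayed inner product is the same for each of the $q-1$ characters $\chi\in\F_q^{\vee}\setminus\{1\}$, so the outer product simply raises it to the power $q-1$, yielding $L_{\overline{C}/\F_q}(T)=\prod_{a\in\F_q}(1+S_{\psi,a}T+qT^2)^{q-1}$, as claimed. I expect the only step requiring genuine care is the change of variables: one must invoke $q\equiv 2\pmod 3$ to guarantee that cubing is invertible, which is exactly what makes the scaling $x\mapsto\lambda u$ collapse the dependence on $\chi$. Without this hypothesis the local factors for different $\chi$ need not coincide, so this is precisely where the arithmetic assumption enters.
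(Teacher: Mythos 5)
Your proposal is correct and follows essentially the same route as the paper: both start from Theorem~\ref{main}, identify each $S_{\xi}$ with a cubic sum $\sum_{x}\psi(ax^{3}+bx)$ via Lemma~\ref{sum} and \eqref{cca}, and then use that cubing is a bijection on $\F_q$ (since $q\equiv 2\pmod 3$) to rescale $x$ and absorb the leading coefficient, so that each $S_{\psi,d}$ occurs exactly $q-1$ times. Your write-up is somewhat more explicit about the counting (showing the inner product over $a$ is independent of the choice of nontrivial character), but the underlying argument is the same.
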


\begin{proof}
Let $\xi \in W_{\mathrm{prim}}^{\vee}$. 
By Lemma~\ref{sum}, there 
exists $(a,b) \in \F_q^{\times} \times \F_q$ such that  
\[
S_{\xi}=\sum_{x \in \F_q} \psi(ax^3+bx). 
\]
Since $q \equiv 2 \pmod 3$, 
we can take $c \in \F_q^{\times}$ such that $c^3=a$. 
Letting $d:=a^{-1}b$, we then have 
$S_{\xi}=S_{\psi,d}$. 
The assertion follows from Theorem~\ref{main}.
\end{proof}
\subsubsection{Katz's monodromy theory and the supersingularity of $\overline{C}$}

Let $\mathscr{F}$ be a smooth $\overline{\mathbb{Q}}_{\ell}$-sheaf on $\mathbb{A}^1_{\F_q}$. 
Let 
$\rho_{\mathscr{F}}$ denote the $\ell$-adic representation of 
$\pi_1(\mathbb{A}^1_{\F_q})$ associated to 
$\mathscr{F}$. 
The Zariski closure of the image of $\pi_1(\mathbb{A}^1_{\F_q})$ (resp.\ the normal subgroup $\pi_1(\mathbb{A}^1)$) by $\rho_{\mathscr{F}}$
is called the arithmetic (resp.\ geometric) monodromy group of $\mathscr{F}$, we denote by $G_{\mathscr{F}, \rm arith}$ (resp.\ $G_{\mathscr{F}, {\rm geom}}$). 
If $G_{\mathscr{F}, {\rm arith}}$
is finite, $\mathscr{F}$ is said to have finite monodromy. 
%It is known that $\mathscr{F}_{\rm arith}$is finite if and only if $\mathscr{F}_{\rm geom}$is finite in \cite{Ka}. 

Choose and fix a square root $\sqrt{q}\in\overline{\mathbb{Q}}_{\ell}^\times$.
Let $\Gal(\F/\F_q)$ be the Galois group. 
Define the one-dimensional Galois representation $\overline{\mathbb{Q}}_{\ell}\bigl(\tfrac12\bigr)$
by requiring that a geometric Frobenius $\mathrm{Frob}_q \in \Gal(\F/\F_q)$ acts on it as $\sqrt{q}^{-1}$.

Let $l$ be a prime and  
let $\psi \in \F_q^{\vee} \setminus \{1\}$. 
Let $\pi \colon \mathbb{A}^2_{\F_q}
\to \mathbb{A}_{\F_q}^1,\ (x,t) \mapsto t$.  
We define a local system on $\mathbb{A}_{\F_q}^1$: 
\[
\mathscr{F}_{\psi,l,q}:=R^1\pi_! 
\mathscr{L}_{\psi}(x^l+tx)\bigl(\tfrac12\bigr) 
\]
(cf.\ \cite{KL} or \cite{La}). 
\begin{proposition}
Assume $p \neq l$. 
The curve $\overline{C}_{1,2,l,q}$ is supersingular for all
$q$ if and only if $\mathscr{F}_{{}^0 \psi \circ \Tr_{q/p},l,q}$
has finite monodromy for ${}^0 \psi \in \F_p^{\vee} \setminus \{1\}$. 
\end{proposition}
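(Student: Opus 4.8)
The plan is to translate both conditions into statements about the Frobenius eigenvalues of the lisse sheaf $\mathscr{F} := \mathscr{F}_{\psi_0, l, q}$, where $\psi_0 := {}^0\psi \circ \Tr_{q/p}$. By Lemma~\ref{dim} and the Grothendieck--Ogg--Shafarevich formula, $\mathscr{F}$ is lisse of rank $l-1$ on all of $\mathbb{A}^1_{\F_q}$, and by Deligne's purity theorem it is pure of weight $0$ and geometrically semisimple. Its geometric fiber over a point $t = a \in \F_{q^n}$ is $H^1_{\mathrm c}(\mathbb{A}^1, \mathscr{L}_{\psi_{q^n}}(x^l + ax))(\tfrac12)$ with $\psi_{q^n} := {}^0\psi\circ\Tr_{q^n/p}$, and by Lemma~\ref{trace}(1) the trace of $\Fr_{q^n}^\ast$ on this fiber equals $-q^{-n/2}\sum_{x\in\F_{q^n}}\psi_{q^n}(x^l+ax)$. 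The argument rests on the following criterion, which I would isolate as a lemma: for such $\mathscr{F}$, the group $G_{\mathscr{F},\mathrm{arith}}$ is finite if and only if for every closed point of $\mathbb{A}^1_{\F_q}$ the eigenvalues of the corresponding Frobenius on $\mathscr{F}$ are roots of unity.

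For the nontrivial direction of the criterion, I would argue that the image $\Gamma$ of $\pi_1(\mathbb{A}^1_{\F_q})$ is a compact $\ell$-adic Lie group in which the Frobenius elements are dense by Chebotarev. Since $\mathscr{F}$ is cut out of the $H^1$ of smooth projective curves, each Frobenius acts semisimply by Weil's theorem on abelian varieties, so root-of-unity eigenvalues force it to have finite order; thus the torsion elements of $\Gamma$ are dense. If $\dim\Gamma>0$, then $\Gamma$ contains an open uniform pro-$\ell$ subgroup isomorphic to $\mathbb{Z}_\ell^{\dim\Gamma}$, which is torsion-free, contradicting the density of torsion. Hence $\dim\Gamma = 0$ and $\Gamma$ is finite.

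Granting the criterion, the forward implication is direct. Assuming $\overline{C}_{1,2,l,q'}$ is supersingular for every $q'$, I would use the decomposition $H^1(\overline{C}_{1,2,l,q^n})\simeq H^1_{\mathrm c}(C_{1,2,l,q^n})\simeq\bigoplus_\xi H^1_{\mathrm c}(\mathbb{A}^1,\mathscr{L}_\xi)$ of Lemma~\ref{Fum} together with Lemma~\ref{sum}, applied to the characters $\xi$ whose associated additive character is $\psi_{q^n}$ and whose leading coefficient is $1$: supersingularity says the eigenvalues of $\Fr_{q^n}^\ast$ on $H^1_{\mathrm c}(\mathbb{A}^1,\mathscr{L}_{\psi_{q^n}}(x^l+ax))$ are $\sqrt{q^n}$ times roots of unity, hence the eigenvalues on the Tate-twisted fiber $\mathscr{F}_a$ are roots of unity for all $n$ and all $a\in\F_{q^n}$. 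As these fibers exhaust the Frobenii at all closed points of $\mathbb{A}^1_{\F_q}$, the criterion yields finiteness of $G_{\mathscr{F},\mathrm{arith}}$.

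For the converse, finiteness of $G_{\mathscr{F},\mathrm{arith}}$ gives, via the criterion, that all sums $\sum_{x\in\F_{q^n}}\psi_{q^n}(x^l+ax)$ are $\sqrt{q^n}$ times roots of unity. By Lemma~\ref{sum}, supersingularity of $\overline{C}_{1,2,l,q^n}$ requires the same for all sums $\sum_{x}\psi_{q^n}(bx^l+ax)$ with $b\in\F_{q^n}^\times$, so the remaining task is to promote the leading-coefficient-$1$ case to arbitrary $b$. To do so I would pass to an extension $\F_{q^{nm}}$ in which $b$ becomes an $l$-th power; the substitution $x\mapsto\mu x$ with $\mu^l = b^{-1}$ identifies $\sum_{x\in\F_{q^{nm}}}\psi_{q^{nm}}(bx^l+ax)$ with a leading-coefficient-$1$ sum over $\F_{q^{nm}}$, already known to be $\sqrt{q^{nm}}$ times a root of unity. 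Since $\mathscr{L}_{\psi_{q^n}}(bx^l+ax)$ base-changes to $\mathscr{L}_{\psi_{q^{nm}}}(bx^l+ax)$ and $\Fr_{q^{nm}}^\ast = (\Fr_{q^n}^\ast)^m$, each eigenvalue $\lambda$ over $\F_{q^n}$ satisfies that $\lambda^m$ is $\sqrt{q^{nm}}$ times a root of unity, whence $\lambda$ is $\sqrt{q^n}$ times a root of unity. Feeding this back into the decomposition of Lemma~\ref{Fum} shows every $\overline{C}_{1,2,l,q^n}$ is supersingular. The main obstacle is the monodromy criterion itself --- securing Frobenius semisimplicity and the $\ell$-adic Lie-theoretic step --- together with this base-change/scaling bridge needed to recover the full family of sums in $H^1(\overline{C})$ from the leading-coefficient-$1$ fibers of $\mathscr{F}$.
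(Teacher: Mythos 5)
Your proof is correct and follows essentially the same route as the paper: both identify the fibres of $\mathscr{F}$ at the closed points of $\mathbb{A}^1_{\F_q}$ with the summands $H^1_{\mathrm c}(\mathbb{A}^1,\mathscr{L}_{\psi}(x^l+tx))\bigl(\tfrac12\bigr)$ occurring in $H^1(\overline{C}_{1,2,l,q^n})\bigl(\tfrac12\bigr)$, and then invoke the equivalence between finite arithmetic monodromy and all Frobenius eigenvalues at closed points being roots of unity. The difference is only one of packaging: the paper cites Katz's Theorem~8.14.4 and Rojas-Le\'on for that equivalence and disposes of the varying additive character by noting that the geometric monodromy group is independent of $\psi$, whereas you prove the criterion directly (via semisimplicity of Frobenius on $H^1$ of curves and the torsion-freeness of an open uniform pro-$\ell$ subgroup) and recover arbitrary leading coefficients by the explicit substitution $x\mapsto\mu x$ over a finite extension.
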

\begin{proof}
We write $C_{l,q}$ for $C_{1,2,l,q}$. 
Let $t \in \F_q$. 
By the proper base change theorem, 
\[
(\mathscr{F}_{\psi,l,q})_{\bar{t}}
\simeq H_{\rm c}^1(\mathbb{A}^1,\mathscr{L}_{\psi}(x^l+tx))\bigl(\tfrac12\bigr). 
\]
Hence, by the proof of Lemma \ref{coho}, 
\[
H^1(\overline{C}_{l,q})\bigl(\tfrac12\bigr) \simeq 
\bigoplus_{\psi \in \F_q^{\vee} \setminus \{1\}}
\bigoplus_{t \in \F_q} (\mathscr{F}_{\psi,l,q})_{\bar{t}}. 
\]
The geometric monodromy group of 
$\mathscr{F}_{\psi,l,q}$ equals the one of $\mathscr{F}_{{}^0 \psi \circ \Tr_{q/p},l,q}$. 
Hence the claim follows from \cite[Theorem 8.14.4]{Ka}
(cf.\ \cite[Lemma 1 and Corollary 1]{R}). 
\end{proof}

The \emph{period} of a supersingular curve $X$ over $\F_q$
is defined as the smallest positive integer $n$ such that
the Frobenius map $\Fr_{q^n}^\ast$
acts on $H^1(X)$ as multiplication by $q^{n/2}$. 

\begin{lemma}
Assume $q=5^f$. 
Then the curve $\overline{C}$ is supersingular with period $60$. 
\end{lemma}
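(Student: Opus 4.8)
The plan is to combine the monodromy criterion of the preceding proposition with Katz's determination of the monodromy of the relevant rank-two sheaf in characteristic $5$, and then to read off the period from the group structure. Throughout, write $\mathscr{F}:=\mathscr{F}_{{}^0\psi\circ\Tr_{q/5},\,3,\,q}=R^1\pi_!\mathscr{L}_\psi(x^3+tx)(\tfrac12)$ for the sheaf attached to the family of cubic exponential sums; this is lisse of rank $\deg(x^3+tx)-1=2$ on $\mathbb{A}^1_{\F_q}$ and pure of weight $0$. The key point is that, in characteristic $5$, $\mathscr{F}$ is an Airy-type sheaf whose monodromy is \emph{finite} and realizes the binary icosahedral group.

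First I would establish supersingularity. By the preceding proposition, $\overline{C}=\overline{C}_{1,2,3,q}$ is supersingular for every $q=5^f$ as soon as $\mathscr{F}$ has finite monodromy; since the geometric monodromy group is a characteristic-$5$ invariant independent of $f$, it suffices to treat a single field. Here I would invoke Katz's classification \cite{Ka} to identify the geometric monodromy group of $\mathscr{F}$ with the binary icosahedral group $\mathrm{SL}_2(\F_5)$ (order $120$), acting through a faithful two-dimensional representation with trivial determinant; in particular it is finite. Because $\mathscr{F}$ is pure of weight $0$ with determinant of finite order, the arithmetic monodromy group is finite as well, and this yields the supersingularity of $\overline{C}$.

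It then remains to compute the period. By the proof of the preceding proposition (together with Lemma~\ref{Fum} and Lemma~\ref{coho}) one has $H^1(\overline{C})(\tfrac12)\simeq\bigoplus_{\psi}\bigoplus_{t}(\mathscr{F}_\psi)_{\bar t}$, so the eigenvalues of $\Fr_q^\ast$ on $H^1(\overline{C})$ are exactly $\sqrt{q}$ times the eigenvalues of the normalized, finite-order Frobenius elements acting through the faithful two-dimensional representation of $\mathrm{SL}_2(\F_5)$. The element orders occurring in $\mathrm{SL}_2(\F_5)$ are $\{1,2,3,4,5,6,10\}$, and in a faithful two-dimensional representation with trivial determinant an element of order $d$ has eigenvalues $\zeta,\zeta^{-1}$ with $\zeta$ a primitive $d$-th root of unity; hence every normalized Frobenius eigenvalue is a root of unity of order dividing $\operatorname{lcm}(1,2,3,4,5,6,10)=60$. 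Since the period is the least common multiple of the orders of these normalized eigenvalues, it divides $60$. For the reverse divisibility I would show that the Frobenius conjugacy classes attached to the pairs $(\psi,t)$ realize elements of every order $2,3,4,5,6,10$; by Deligne's equidistribution theorem this is automatic once $q=5^f$ is large, and the remaining small cases can be checked directly. The corresponding normalized eigenvalues are then primitive roots of unity of orders $2,3,4,5,6,10$, whose least common multiple is $60$, so the period equals $60$.

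The main obstacle is the identification of the geometric monodromy group with $\mathrm{SL}_2(\F_5)$ in characteristic $5$: one must pin down the correct Katz result and verify that it is precisely the characteristic-$5$ hypothesis that forces the monodromy to drop from the generic $\mathrm{SL}_2$ to the finite binary icosahedral group acting via its faithful symplectic two-dimensional representation. A secondary point requiring care is confirming that the Frobenius classes saturate the element orders $\{2,3,4,5,6,10\}$ over each $\F_{5^f}$, so that the least common multiple is exactly $60$ and not a proper divisor; for large $f$ this follows from equidistribution, while for the finitely many small $f$ it should be verified by a direct computation of the sums $S_\xi$.
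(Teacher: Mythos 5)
Your proposal follows essentially the same route as the paper: reduce to finite monodromy via the preceding proposition, identify the arithmetic and geometric monodromy groups of $\mathscr{F}_{{}^0\psi\circ\Tr_{q/5},3,q}$ as $\mathrm{SL}_2(\F_5)$, and read off the period as the least common multiple $60$ of the element orders $\{1,2,3,4,5,6,10\}$; the precise reference is Katz--Tiep \cite[Theorem 17.1(1)]{KT0} rather than \cite{Ka}. The paper's proof actually stops at ``the lcm of the element orders is $60$'' without verifying the lower bound (that the Frobenius classes at the $\F_q$-rational parameters realize orders with lcm exactly $60$), so your equidistribution-plus-small-case argument is a legitimate, and in fact more complete, way to finish that step.
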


\begin{proof}
%The supersingularity follows from \cite{KT} (cf.\ \cite[Theorem 8.14.5]{Ka} and \cite[Theorem 1]{GR}). 

Let ${}^0 \psi \in \F_5^{\vee} \setminus \{1\}$. 
By \cite[Theorem 17.1(1)]{KT0}, both the arithmetic and geometric monodromy groups of $\mathscr{F}_{{}^0 \psi \circ \Tr_{q/5},3,q}$ are $\mathrm{SL}_2(\F_5)$. 
The least common multiple of the orders of elements of  $\mathrm{SL}_2(\F_5)$ is $60$. 
Hence the required assertion follows. 
\end{proof}
\begin{comment}
\begin{example}
Let $\zeta_n=e^{2\pi i/n}$ for an integer $n \ge 1$.
Let $q=5$. We observe that the period of $\overline{C}$
equals $60$. 
Let $S_{\psi,a} = \sum_{x \in \F_5} \psi(x^3+ax)$ with $\psi(y) = \zeta_5^y$.
\begin{table}[htbp]
\centering
\caption{Values of $S_{\psi,a}$ and roots of $f_a(t)=t^2+S_{\psi,a}t+5$ for $a \in \F_5$.}
\vspace{1ex}
\begin{tabular}{c|c|c}
\hline
$a$ & $S_{\psi,a}$ & roots of $f_a(t)=t^2+S_{\psi,a}t+5$ \\[0.5mm]
\hline 
$0$ & $0$ & $\sqrt{5}\,\zeta_4^{\pm 1}$ \\[1mm]
$1$ & $\displaystyle \frac{5-\sqrt{5}}{2}$ & $\sqrt{5}\,\zeta_{10}^{\pm 3}$ \\[1mm]
$2$ & $-\sqrt{5}$ & $\sqrt{5}\,\zeta_6^{\pm 1}$ \\[1mm]
$3$ & $\sqrt{5}$ & $\sqrt{5}\,\zeta_3^{\pm 1}$ \\[1mm]
$4$ & $\displaystyle \frac{5+\sqrt{5}}{2}$ & $\sqrt{5}\,\zeta_5^{\pm 2}$ \\[1mm]
\hline
\end{tabular}
\label{tab:Spsi_roots}
\end{table}

\end{example}
\end{comment}

%\begin{question}Is the curve $\overline{C}$ supersingular if and only if $p=2$ or $5$?\end{question}

%\begin{remark}It seems that the curve $\overline{C}$ is not supersingular if $p \neq 2,5$. This observation is supported by the results of \cite[Corollary~3]{R} for $d=3$ and \cite[Proposition~5]{Ka0} (cf.\ \cite{GR} and \cite{KT}).  \end{remark}

\section{Characteristic three}\label{s3.5}
We assume $p=3$ and let $q=3^f$. 
Let $\Tr_{q/3} \colon W_{2,q} \to W_{2,3}$ denote the 
trace map. 
\begin{lemma}\label{ss}
Let ${}^0 \xi \in W_{2,3}^{\vee}$ be a 
faithful character. 
Let $(a,b) \in \F_q^{\times} \times \F_q$, and put $c:=a^{-1}b^{1/3}$. 
Then 
\[
S_{\xi_{a,b}}
=\sum_{x \in \F_q} 
{}^0 \xi\bigl(\Tr_{q/3}(x,cx)\bigr). 
\]
\end{lemma}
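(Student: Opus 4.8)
The plan is to reduce the statement to the explicit multiplication and addition laws of $W_{2,q}$ together with two elementary compatibilities of the Witt trace, after which the two sums will match term by term up to a single linear substitution. First I would unwind the definition: by Lemma~\ref{ch} we have $\xi_{a,b}(x)={}^0\xi(\Tr_{q/3}((a,b)\cdot x))$, so that
\[
S_{\xi_{a,b}}=\sum_{x\in\F_q}{}^0\xi\bigl(\Tr_{q/3}((a,b)\cdot(x,0))\bigr),
\]
the product being taken in the ring $W_{2,q}$. The next step is to compute this Witt product. Since the length-two multiplication polynomial has the shape $P_1=X_0^3Y_1+X_1Y_0^3+p\,X_1Y_1$ and $p=3$ kills the last term, one finds $(a,b)\cdot(x,0)=(ax,\,bx^3)$.

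The key manipulation is to peel off the Verschiebung part. Because $g_3(ax,0)=0$ we may write $(ax,bx^3)=(ax,0)+(0,bx^3)$ in $W_{2,q}$; since $\Tr_{q/3}=\sum_{i=0}^{f-1}F^i$ is additive for Witt addition and ${}^0\xi$ is a homomorphism, this gives
\[
{}^0\xi\bigl(\Tr_{q/3}(ax,bx^3)\bigr)
={}^0\xi\bigl(\Tr_{q/3}(ax,0)\bigr)\cdot\psi_q(bx^3),
\]
where $\psi_q:=\psi_0\circ\Tr_{q/3}$ with $\psi_0:={}^0\xi|_{\{0\}\times\F_3}$, and I have used that the Witt trace commutes with Verschiebung, i.e.\ $\Tr_{q/3}(0,s)=(0,\Tr_{q/3}(s))$ with $\Tr_{q/3}\colon\F_q\to\F_3$ the field trace (the length-one Witt trace). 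Now I would absorb the cube: the hypothesis $c=a^{-1}b^{1/3}$ means $b=(ac)^3$, so the Frobenius-invariance $\Tr_{q/3}(t^3)=\Tr_{q/3}(t)$ of the field trace yields $\psi_q(bx^3)=\psi_q((acx)^3)=\psi_q(acx)$. Reassembling the two factors gives
\[
{}^0\xi\bigl(\Tr_{q/3}(ax,bx^3)\bigr)
={}^0\xi\bigl(\Tr_{q/3}(ax,0)\bigr)\,\psi_q(acx)
={}^0\xi\bigl(\Tr_{q/3}(ax,\,c\cdot ax)\bigr),
\]
using once more $g_3(ax,0)=0$ to recombine $(ax,0)+(0,acx)=(ax,acx)$. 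Summing over $x$ and substituting $x\mapsto a^{-1}x$ (a bijection of $\F_q$ since $a\neq 0$), which replaces $ax$ by the new summation variable, produces $\sum_{x}{}^0\xi(\Tr_{q/3}(x,cx))$, as claimed.

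The only genuinely technical inputs, which I would record as preliminary observations, are the two trace compatibilities—additivity of $\Tr_{q/3}$ and its commuting with Verschiebung, both immediate from $\Tr_{q/3}=\sum_{i=0}^{f-1}F^i$ and $g_3(0,\cdot)=0$—together with the field-trace identity $\Tr_{q/3}(t^3)=\Tr_{q/3}(t)$. The main obstacle is therefore not conceptual but bookkeeping: one must compute the Witt multiplication genuinely in characteristic three so that the $p$-divisible term vanishes, and one must correctly split ${}^0\xi\circ\Tr_{q/3}$ into its Teichm\"uller and Verschiebung contributions. Once these are in place, the asserted identity is, after the single substitution $x\mapsto a^{-1}x$, an equality of summands term by term.
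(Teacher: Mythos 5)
Your proof is correct and follows essentially the same route as the paper's: expand $\xi_{a,b}(x,0)={}^0\xi(\Tr_{q/3}(ax,bx^3))$ via the Witt product, replace $bx^3$ by $b^{1/3}x=acx$ in the Verschiebung component using Frobenius-invariance of the trace, and substitute $t=ax$. The only difference is that you spell out the decomposition $(ax,bx^3)=(ax,0)+(0,bx^3)$ and the compatibility $\Tr_{q/3}(0,s)=(0,\Tr_{\F_q/\F_3}(s))$, which the paper leaves implicit.
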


\begin{proof}
We have 
\[
S_{\xi_{a,b}}
=\sum_{x \in \F_q}
\xi_{a,b}(x,0)
=\sum_{x \in \F_q} 
{}^0 \xi\bigl(\Tr_{q/3}(ax,bx^3)\bigr)
=\sum_{x \in \F_q} 
{}^0 \xi\bigl(\Tr_{q/3}(ax,b^{1/3}x)\bigr).
\]
Setting $t=ax$, we obtain the claim.
\end{proof}

\begin{corollary}\label{3mainc}
Let ${}^0 \xi \in W_{2,3}^{\vee}$
be a faithful character. 
For $c \in \F_q$, set 
\[
S_{q,c}:=\sum_{x \in \F_q} 
{}^0 \xi\bigl(\Tr_{q/3}(x,cx)\bigr).
\]
Then 
\[
L_{\overline{C}/\F_q}(T)
=\prod_{c \in \F_q} 
(1+S_{q,c}T+qT^2)^{q-1}.
\]
\end{corollary}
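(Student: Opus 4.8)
The plan is to derive this identity directly from the factorization in Theorem~\ref{main}, reorganizing the product indexed by $W_{\mathrm{prim}}^{\vee}$ according to the single parameter $c$.

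First I would recall from Theorem~\ref{main} that
\[
L_{\overline{C}/\F_q}(T)=\prod_{\xi\in W_{\mathrm{prim}}^{\vee}}(1+S_{\xi}T+qT^2),
\]
and then re-index this product. By Lemma~\ref{ch} applied with $n=2$ and $p=3$, the assignment $(a,b)\mapsto\xi_{a,b}$ is a bijection from $\F_q^{\times}\times\F_q$ onto $W_{\mathrm{prim}}^{\vee}$, the condition $a\neq 0$ being precisely primitivity. Thus the product may be rewritten as one over pairs $(a,b)\in\F_q^{\times}\times\F_q$.

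Next I would substitute the evaluation of Lemma~\ref{ss}: for each pair $(a,b)$ one has $S_{\xi_{a,b}}=S_{q,c}$ with $c:=a^{-1}b^{1/3}$, so the local factor attached to $\xi_{a,b}$ equals $1+S_{q,c}T+qT^2$ and depends on $(a,b)$ only through $c$. It then remains to count, for each fixed $c\in\F_q$, how many pairs $(a,b)$ satisfy $a^{-1}b^{1/3}=c$. Since $p=3$, the cubing map is the Frobenius and hence a bijection of $\F_q$, so $b^{1/3}$ is a well-defined function of $b$ and the equation rearranges to $b=c^3a^3$; for fixed $c$ this determines $b$ uniquely from $a\in\F_q^{\times}$. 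Consequently the fiber over each $c$ has exactly $q-1$ elements (consistent with $\#W_{\mathrm{prim}}^{\vee}=q(q-1)$), and grouping the product accordingly gives
\[
L_{\overline{C}/\F_q}(T)=\prod_{c\in\F_q}(1+S_{q,c}T+qT^2)^{q-1}.
\]

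The argument is essentially formal once the results above are in place; the only step requiring care is the fiber count, for which one must invoke the bijectivity of cubing in characteristic three both to make sense of $b^{1/3}$ and to conclude that every fiber has the same cardinality $q-1$. I do not anticipate any further obstacle.
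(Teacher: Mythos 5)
Your proposal is correct and follows essentially the same route as the paper: apply Theorem~\ref{main}, re-index the product over $W_{\mathrm{prim}}^{\vee}$ by pairs $(a,b)\in\F_q^{\times}\times\F_q$ via Lemma~\ref{ch}, substitute Lemma~\ref{ss}, and group by $c=a^{-1}b^{1/3}$. The paper compresses the fiber count into a single displayed equality, whereas you spell out that $b=c^3a^3$ is determined by $a$ for fixed $c$, so each fiber has exactly $q-1$ elements; this is a correct and welcome elaboration, not a deviation.
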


\begin{proof}
By Theorem~\ref{main} and Lemmas~\ref{ch} and~\ref{ss}, we obtain 
\begin{align*}
L_{\overline{C}/\F_q}(T)
&=\prod_{\xi \in W_{\mathrm{prim}}^{\vee}} 
(1+S_{\xi}T+qT^2) \\
&=\prod_{(a,b) \in \F_q^{\times} \times \F_q} 
(1+S_{\xi_{a,b}}T+qT^2) 
=\prod_{c \in \F_q} 
(1+S_{q,c}T+qT^2)^{q-1}.
\end{align*}
\end{proof}

\begin{lemma}\label{prod}
Let $c \in \F_q^{\times}$. Let ${}^0 \xi \in W_{2,3}^{\vee}$ be a faithful character
and put $\xi := {}^0 \xi \circ \Tr_{q/3}$. 
Let $\psi(x) := \xi(0,x)$ for $x\in\F_q$, and define
$G(\psi) := \sum_{t\in\F_q}\psi(t^2)$.
Then
\[
S_{q,0}\, S_{q,c}=G(\psi)\sum_{x\in\F_q^{\times}}
\xi\left(x,-(c+1)x-c^2 x^{-1}\right)\nu_q(x).
\]
\end{lemma}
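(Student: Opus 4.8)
The plan is to expand $S_{q,0}S_{q,c}$ as a double sum over $\F_q^2$, exploit that $\xi$ is a homomorphism for the Witt addition, and then reshape the second Witt coordinate into the right-hand side by completing squares and evaluating a quadratic Gauss sum. Since $\xi={}^0\xi\circ\Tr_{q/3}$ gives $S_{q,c}=\sum_{x\in\F_q}\xi(x,cx)$, and using $g_3(x,y)=-xy(x+y)$, I would first write
\[
S_{q,0}S_{q,c}=\sum_{x,y\in\F_q}\xi\bigl((x,0)+(y,cy)\bigr)=\sum_{x,y\in\F_q}\xi\bigl(x+y,\;cy-xy(x+y)\bigr).
\]
Substituting $z:=x+y$ turns the second coordinate into the quadratic $P(x,z)=zx^2-(z^2+c)x+cz$ in the variable $x$, so that $S_{q,0}S_{q,c}=\sum_{x,z\in\F_q}\xi(z,P(x,z))$.

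Next I would complete the square in $x$, which is legitimate because $p=3\neq2$. For $z\neq0$ this gives $P(x,z)=z x_1^2+K(z)$ with $x_1=x-(z^2+c)/(2z)$, and a direct simplification in characteristic $3$ shows $K(z)=-cz-z^3-c^2z^{-1}$. Splitting $(z,zx_1^2+K(z))=(z,K(z))+(0,zx_1^2)$ (valid since $g_3(z,0)=0$) and using $\xi(0,w)=\psi(w)$, the inner sum over $x_1$ is the quadratic Gauss sum $\sum_{x_1\in\F_q}\psi(zx_1^2)=\nu_q(z)G(\psi)$, exactly as in Lemma~\ref{2}. The remaining term $z=0$ contributes $\sum_{x\in\F_q}\psi(-cx)=0$ because $c\in\F_q^\times$, so it drops out, yielding
\[
S_{q,0}S_{q,c}=G(\psi)\sum_{z\in\F_q^\times}\xi\bigl(z,\,-cz-z^3-c^2z^{-1}\bigr)\,\nu_q(z).
\]

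The decisive step, and the one I expect to be the main obstacle, is to replace $-z^3$ by $-z$ inside the second Witt coordinate. The two vectors $(z,-cz-z^3-c^2z^{-1})$ and $(z,-(c+1)z-c^2z^{-1})$ agree in the first coordinate and differ in the second by $z-z^3$, so
\[
\xi\bigl(z,-cz-z^3-c^2z^{-1}\bigr)=\xi\bigl(z,-(c+1)z-c^2z^{-1}\bigr)\,\psi(z-z^3).
\]
It then remains to prove $\psi(z-z^3)=1$. For this I would observe that $\psi(x)=\xi(0,x)={}^0\xi(\Tr_{q/3}(0,x))$, and that the Witt trace commutes with Verschiebung, giving $\Tr_{q/3}(0,x)=(0,\Tr_{\F_q/\F_3}(x))$ for the ordinary field trace; hence $\psi={}^0\psi\circ\Tr_{\F_q/\F_3}$ with ${}^0\psi$ a nontrivial character of $\F_3$. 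Frobenius-invariance of the field trace, $\Tr_{\F_q/\F_3}(z^3)=\Tr_{\F_q/\F_3}(z)$, forces $\psi(z^3)=\psi(z)$, so $\psi(z-z^3)=1$. Substituting this and renaming $z$ as $x$ gives the asserted identity. The genuinely delicate ingredient is precisely this absorption of the cubic term, which relies on $p=l=3$ and on $\psi$ factoring through the field trace.
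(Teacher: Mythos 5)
Your proof is correct and follows essentially the same route as the paper: expand the product over the Witt group law, substitute $z=x+y$, discard the vanishing $z=0$ term, complete the square to extract the quadratic Gauss sum $\nu_q(z)G(\psi)$, and finally absorb the cubic term using $\psi(z^3)=\psi(z)$, which holds because $\psi$ factors through the field trace $\Tr_{\F_q/\F_3}$. The only difference is cosmetic — the paper packages the last step as the trace identity $\Tr_{q/3}\bigl((z^2-c)^2/z\bigr)=\Tr_{q/3}\bigl((c+1)z+c^2/z\bigr)$ rather than isolating $\psi(z-z^3)=1$ — and your justification of that step via the compatibility of the Witt trace with Verschiebung is sound.
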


\begin{proof}
We compute
\begin{align*}
S_{q,0}\,S_{q,c}
&=\sum_{x\in\F_q}\xi(x,0)\sum_{y\in\F_q}\xi(y,cy)
=\sum_{x,y}\xi(x+y,\,cy-xy(x+y)) \\
&=\sum_{x,z}\xi\bigl(z,\,c(z-x) - x(z-x)z\bigr)
\qquad (z:=x+y) \\
&=\sum_x \xi(0,-cx)+\sum_x\sum_{z\neq0}
\xi\Bigl(z,\,-z x^2-(z^2+c)x+cz\Bigr),
\end{align*}
where we separated the summand with $z=0$.  The first term vanishes because
$\psi=\xi(0,\cdot)$ is a nontrivial additive character and $c \neq 0$:
\(\sum_x \xi(0,-cx)=\sum_x\psi(-cx)=0\).

For $z\neq0$ complete the square in $x$:
\[
z x^2-(z^2-c)x+cz
= z\Bigl(x-\frac{z^2+c}{2z}\Bigr)^2 -\frac{(z^2-c)^2}{4z}.
\]
Putting $x_1:=x-(2z)^{-1}(z^2-c)$ (which is a bijection of $\F_q$), we get
\[
\sum_{x\in\F_q}\xi\Bigl(z,\,z x^2-(z^2+c)x+cz\Bigr)
=\xi\Bigl(z,-\frac{(z^2-c)^2}{4z}\Bigr)
\sum_{x_1\in\F_q}\psi(z x_1^2).
\]
Using the Gauss-sum identity
\(\sum_{t\in\F_q}\psi(\alpha t^2)=\nu_q(\alpha)\,G(\psi)\) for \(\alpha\in\F_q^\times\),
we obtain for $z\neq0$:
\[
\sum_{x\in\F_q}\xi\Bigl(z,\,z x^2-(z^2+c)x+cz\Bigr)
= \xi\Bigl(z,-\frac{(z^2-c)^2}{4z}\Bigr)\nu_q(z)G(\psi).
\]
Hence
\[
S_{q,0}\,S_{q,c}
= G(\psi)\sum_{z\neq0}\xi\Bigl(z,-\frac{(z^2-c)^2}{4z}\Bigr)\nu_q(z).
\]
Finally note
\[
\Tr_{q/3}\left(\frac{(z^2-c)^2}{z}\right) = 
\Tr_{q/3}\left(z^3+cz+\frac{c^2}{z}
\right)
= \Tr_{q/3}\left((c+1)z+\frac{c^2}{z}\right),
\]
so renaming $z\mapsto x$ yields the claimed formula.
\end{proof}
\subsection{Valuation of the Gauss--Heilbronn sum}
Let $\zeta_9=e^{2\pi i/9}$. 
Let $\mathfrak{p}$ denote the prime of 
$\mathbb{Q}(\zeta_9)$ 
lying over $(3)$. 
Let $v_{\mathfrak{p}}(-)$ denote the valuation of
the completion 
$\mathbb{Q}(\zeta_9)$ at $\mathfrak{p}$ normalized 
such that $v_{\mathfrak{p}}(3)=1$.
For $\xi \in W_{\mathrm{prim}}^{\vee}$, 
we study $v_{\mathfrak{p}}(S_{\xi})$ using supersingular curves. 
\begin{lemma}\label{vv}
Let $(a,b) \in \F_q^{\times} \times \F_q$ and let 
${}^0 \psi \in \F_3^{\vee} \setminus \{1\}$.  
Put $\psi:={}^0 \psi \circ \Tr_{q/3}$ and 
\[
S(a,b,\psi)
:=\sum_{x \in \F_q^{\times}} 
\nu_q(x)\,\psi(ax^3+a x(x+b)b).
\]
Then $v_{\mathfrak{p}}(S(a,b,\psi)) \ge f/2$.
\end{lemma}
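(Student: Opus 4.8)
The plan is to rewrite $S(a,b,\psi)$ as a Frobenius trace on the first cohomology of an explicit curve of van der Geer--van der Vlugt type, and then read off the valuation from supersingularity. Since $p=3$ we have $x^2+bx+b^2=(x-b)^2$, so the argument of $\psi$ simplifies to $ax^3+ax(x+b)b=ax(x-b)^2=:g(x)$, giving $S(a,b,\psi)=\sum_{x\in\F_q^{\times}}\nu_q(x)\,\psi(g(x))$. Using $\#\{w\in\F_q:w^2=x\}=1+\nu_q(x)$ for $x\neq0$ together with $g(0)=0$, this splits as $S(a,b,\psi)=\Sigma_2-\Sigma_1$, where $\Sigma_1:=\sum_{x\in\F_q}\psi(g(x))$ and $\Sigma_2:=\sum_{w\in\F_q}\psi(g(w^2))$. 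It then suffices to prove $v_{\mathfrak p}(\Sigma_1)\geq f/2$ and $v_{\mathfrak p}(\Sigma_2)\geq f/2$.

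For $\Sigma_1$, fix $\widetilde a\in\F_q$ with $\widetilde a^{\,3}=a$. Subtracting the Artin--Schreier coboundary $\widetilde a^{\,3}x^3-\widetilde a x$ from $g(x)=ax^3+abx^2+ab^2x$ replaces it by the quadratic $ab\,x^2+(ab^2+\widetilde a)x$ without changing the sum. When $b\neq0$ its leading coefficient $ab$ is nonzero, so $\Sigma_1$ is a cube root of unity times the quadratic Gauss sum $\nu_q(ab)\,G(\psi)$, whence $v_{\mathfrak p}(\Sigma_1)=f/2$ by $G(\psi)^2=\pm q$ and $v_{\mathfrak p}(q)=f$. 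The degenerate case $b=0$ gives $g=ax^3$ and $\Sigma_1=\sum_x\psi(ax)=0$, so the bound is trivial there.

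The heart of the argument is $\Sigma_2$. By the Grothendieck trace formula $\Sigma_2=-\Tr(\Fr_q^{\ast};H^1_{\rm c}(\mathbb A^1,\mathscr L_\psi(g(w^2))))$, the $H^0_{\rm c}$ and $H^2_{\rm c}$ terms vanishing because $g(w^2)$ is not an Artin--Schreier coboundary; moreover this space is a Frobenius-stable direct summand of $H^1$ of the smooth compactification $\overline D$ of the Artin--Schreier curve $y^3-y=g(w^2)=aw^2(w^2-b)^2$, exactly as in Lemmas~\ref{ll} and~\ref{Fum}. I would then show $\overline D$ is supersingular: subtracting the coboundary $\widetilde a^{\,3}w^6-\widetilde a w^2$ (the substitution $y\mapsto y+\widetilde a w^2$) puts the equation in the form $y^3-y=w\,R(w)$ with $R(w)=ab\,w^3+(ab^2+\widetilde a)w$ an $\F_3$-linearized polynomial. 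When $R$ is separable, i.e.\ $ab^2+\widetilde a\neq0$, this is of van der Geer--van der Vlugt type and hence supersingular by \cite{ITT,TT}; the remaining case $ab^2+\widetilde a=0$ gives the supersingular Hermitian curve $y^3-y=ab\,w^4$. As $\overline D$ is supersingular, every eigenvalue of $\Fr_q^{\ast}$ on $H^1(\overline D)$ equals $\sqrt q$ times a root of unity and so has $\mathfrak p$-adic valuation $f/2$; in particular this holds on the summand computing $\Sigma_2$, giving $v_{\mathfrak p}(\Sigma_2)\geq f/2$. Combining, $v_{\mathfrak p}(S(a,b,\psi))\geq\min\{v_{\mathfrak p}(\Sigma_1),v_{\mathfrak p}(\Sigma_2)\}\geq f/2$.

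The step I expect to require the most care is the cohomological identification of $\Sigma_2$ (equivalently of $S$ itself) with a Frobenius-stable summand of $H^1(\overline D)$, together with the bookkeeping of the coboundary reductions that put $\overline D$ into van der Geer--van der Vlugt form, including the degenerate subcases $ab^2+\widetilde a=0$ and $b=0$; the supersingularity input itself is then furnished by \cite{ITT,TT}.
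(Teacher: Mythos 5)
Your argument is correct and follows essentially the same route as the paper: both reduce the bound to the supersingularity, via van der Geer--van der Vlugt, of the very same Artin--Schreier curve $y^3-y=cw^4+dw^2$ with $c=ab$ and $d=ab^2+a^{1/3}$ (your coboundary substitution $y\mapsto y+\widetilde a\,w^2$ produces exactly the paper's $D_{c,d}$). The only differences are organizational and bibliographic: the paper extracts $-S(a,b,\psi)$ directly as the Frobenius trace on the two-dimensional isotypic component $H_{\mathrm{c}}^1(\mathbb{G}_{\mathrm{m}},\mathscr{K}_{\nu_0}(t)\otimes\mathscr{L}_{{}^0\psi}(ct^2+dt))$ using the full $\{\pm1\}\times\F_3$-action, whereas you split $S=\Sigma_2-\Sigma_1$, evaluate $\Sigma_1$ as an explicit quadratic Gauss sum and bound $\Sigma_2$ on the full ${}^0\psi$-part of $H^1$ of the same curve; for the supersingularity input the appropriate reference here is \cite{GV} (or \cite{TT}) rather than the characteristic-two paper \cite{ITT}.
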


\begin{proof}
First assume $b=0$. By $p=3$ and 
$\psi:={}^0 \psi \circ \Tr_{q/3}$,  
we have $\psi(ax^3)=\psi(a^{1/3}x)$. 
Hence 
\[
S(a,0,\psi)
=\sum_{x \in \F_q^{\times}} \nu_q(x)\psi(ax^3)
=\sum_{x \in \F_q^{\times}} \nu_q(x)\psi(a^{1/3}x)
=G(\psi_{a^{1/3}})=\nu_q(a^{1/3})\,G(\psi),
\]
whose $\mathfrak{p}$-adic valuation is $f/2$.

Now assume $b\neq 0$. Writing 
\[
c:=ab \in \F_q^{\times}, \qquad d:=ab^2+a^{1/3},
\]
we have
\[
S(a,b,\psi)
= \sum_{x\in \F_q^{\times}}
\nu_q(x)\,\psi(c x^2 + d x).
\]
Let $D_{c,d}$ be the affine curve defined by 
\[
y^3 - y = c x^4 + d x^2 
\]
over $\F_q$.  
The group $\{\pm1\}\times \F_3$ acts on $D_{c,d}$ via  
\[
(\iota,\zeta)\colon (x,y)\longmapsto (\iota x,\; y+\zeta) \quad \textrm{for $(\iota,\zeta) \in \{\pm 1\} \times \F_3$}.
\]
Let $D^0_{c,d} \subset D_{c,d}$ be the 
open subscheme defined by $x \neq 0$. 
The morphism 
\[
D^0_{c,d} \to \mathbb{G}_{\mathrm{m},\,\F_q}, \quad 
(x,y) \mapsto x^2, 
\]
is a finite \'etale Galois covering 
with Galois group $\{\pm 1\} \times \F_3$.  
Let $\nu_0$ be the nontrivial character of $\{\pm 1\}$, and put 
$\mathbb{G}_{\mathrm{m}}=\Spec \F[t^{\pm1}]$.  
Then
\[
H_{\mathrm{c}}^1(D_{c,d})[\nu_0\boxtimes{}^0 \psi]
\xleftarrow{\sim} 
H_{\mathrm{c}}^1(D^0_{c,d})[\nu_0\boxtimes{}^0 \psi]
\simeq 
H_{\mathrm{c}}^1
\!\left(
\mathbb{G}_{\mathrm{m}},\;
\mathscr{K}_{\nu_0}(t)
\otimes \mathscr{L}_{{}^0 \psi}(c t^2 + d t)
\right),
\]
where $\mathscr{K}_{\nu_0}(t)$ is the Kummer sheaf defined by the Kummer covering 
$y^2=x$ and $\nu_0$.
The sheaf 
$\mathscr{K}_{\nu_0}(t)\otimes \mathscr{L}_{{}^0 \psi}(c t^2 + d t)$ on $\mathbb{G}_{\mathrm{m},\, \F_q}$ 
is tamely ramified at $0$ and wildly ramified at $\infty$ with Swan conductor $2$.  
By the Grothendieck--Ogg--Shafarevich formula,
\[
\dim 
H_{\mathrm{c}}^1
\!\left(
\mathbb{G}_{\mathrm{m}},
\mathscr{K}_{\nu_0}(t)
\otimes \mathscr{L}_{{}^0 \psi}(c t^2 + d t)
\right)
=2.
\]

Since the defining equation of $D_{c,d}$ is
$y^3-y = x(cx^3+d x)$ and the polynomial $cx^3+dx$ is $\F_3$-linearized,
the smooth compactification $\overline{D}_{c,d}$ is 
supersingular by \cite{GV}.  
Hence all eigenvalues of $\Fr_q^{\ast}$ on 
$H^1(\overline{D}_{c,d})$ are of the form 
$\kappa\,\sqrt{q}$ with $\kappa$ a root of unity.

Let $\alpha_1,\alpha_2$ be the eigenvalues of 
$\Fr_q^{\ast}$ on 
$H_{\mathrm{c}}^1(D_{c,d})[\nu_0\boxtimes{}^0 \psi]$.  
Then $\alpha_i=\kappa_i\sqrt{q}$ with 
$\kappa_i$ a root of unity, and therefore 
$v_{\mathfrak{p}}(\alpha_i)=f/2$.
Since
\[
S(a,b,\psi)=-\alpha_1-\alpha_2,
\]
we obtain
\[
v_{\mathfrak{p}}(S(a,b,\psi))
=\min\{v_{\mathfrak{p}}(\alpha_1),
      v_{\mathfrak{p}}(\alpha_2)\}
\ge f/2.
\]
Hence we have proved the claim. 
\end{proof}
\begin{proposition}\label{ve}
Let $\xi \in W_{\mathrm{prim}}^{\vee}$. 
Then we have $v_{\mathfrak{p}}(S_{\xi}\, U_{\xi})=f/2$.
\end{proposition}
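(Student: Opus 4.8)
The plan is to expand the product $S_\xi U_\xi$ into a double character sum over the Witt group $W=W_{2,q}$, reduce it modulo $\mathfrak p$ to a single Gauss sum, and read off the valuation. First I would record the group-law identity $(y,0)+(y,0)=(-y,y^3)$ in characteristic $3$ (using $g_3(y,y)=-2y^3=y^3$ and $2=-1$), so that $\xi^2(y,0)=\xi(-y,y^3)$. Writing $\psi:=\xi(0,\cdot)$, a nontrivial additive character of $\F_q$ of order $3$, and using $\xi(z,w)=\xi(z,0)\psi(w)$ (valid since $g_3(z,0)=0$), a direct computation with $(x,0)+(-y,y^3)=(x-y,\,y^3+xy(x-y))$ followed by the substitution $z=x-y$ gives
\[
S_\xi U_\xi=\sum_{z\in\F_q}\xi(z,0)\,I(z),\qquad
I(z):=\sum_{y\in\F_q^\times}\nu_q(y)\,\psi\!\left(y^3+z^2y+zy^2\right).
\]

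Next I would bound each $I(z)$ from below. For $z\neq0$ the sum $I(z)$ is exactly the sum $S(1,z,\psi)$ appearing in Lemma~\ref{vv} (the exponent $y^3+zy^2+z^2y$ matches $ay^3+aby^2+ab^2y$ with $a=1$, $b=z$), so $v_{\mathfrak p}(I(z))\ge f/2$; for $z=0$ one has $I(0)=\sum_{y\neq0}\nu_q(y)\psi(y^3)=G(\psi)$ since $y\mapsto y^3$ is a bijection fixing $\nu_q$, and this again has valuation $f/2$ because $G(\psi)^2=\nu_q(-1)q$. The crucial $3$-adic input is that every value $\xi(z,0)$ is a $9$-th root of unity, hence $\xi(z,0)\equiv1\pmod{\mathfrak p}$ and $v_{\mathfrak p}(\xi(z,0)-1)>0$ whenever $\xi(z,0)\neq1$. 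Consequently
\[
v_{\mathfrak p}\!\left(S_\xi U_\xi-\sum_{z\in\F_q}I(z)\right)
=v_{\mathfrak p}\!\left(\sum_{z\in\F_q}(\xi(z,0)-1)I(z)\right)>\frac f2,
\]
so it suffices to evaluate $\sum_{z}I(z)$ exactly.

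Here the characteristic-$3$ simplification enters: since $4\equiv1$ and $2\equiv-1$ modulo $3$, one has $z^2+yz+y^2=(z-y)^2$ identically, so for fixed $y\neq0$ the inner sum over $z$ is $\sum_{z}\psi\bigl(y(z-y)^2\bigr)=\nu_q(y)G(\psi)$ by the Gauss-sum identity. Hence
\[
\sum_{z\in\F_q}I(z)=\sum_{y\in\F_q^\times}\nu_q(y)\cdot\nu_q(y)\,G(\psi)=(q-1)\,G(\psi).
\]
Because $q-1\equiv-1\pmod 3$ is a $\mathfrak p$-adic unit and $v_{\mathfrak p}(G(\psi))=f/2$, this has valuation exactly $f/2$, and combining with the previous display yields $v_{\mathfrak p}(S_\xi U_\xi)=f/2$.

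The main obstacle is not the lower bound $v_{\mathfrak p}(S_\xi U_\xi)\ge f/2$, which falls out of Lemma~\ref{vv} once the product is organized as above, but the matching upper bound: a priori the order-$f/2$ contributions of the individual $I(z)$ could cancel. What rules this out is the exact evaluation $\sum_z I(z)=(q-1)G(\psi)$, made possible by the congruence $\xi(z,0)\equiv1\pmod{\mathfrak p}$ (which collapses the character weights to $1$ at the leading level) together with the completing-the-square identity special to characteristic $3$. In writing this up I would verify that the possible twist in $\psi=\xi(0,\cdot)$ does not affect the applicability of Lemma~\ref{vv}, since the supersingularity argument there is insensitive to replacing $\psi$ by a nonzero scalar twist (equivalently, $I(z)=S(a^3,z,\psi_0)$ for the standard $\psi_0$ when $\xi=\xi_{a,b}$), and that the normalization $v_{\mathfrak p}(3)=1$ indeed gives $v_{\mathfrak p}(G(\psi))=f/2$.
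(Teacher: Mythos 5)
Your proposal is correct and follows essentially the same route as the paper's proof: expand $S_\xi U_\xi$ using $\xi^2(y,0)=\xi(-y,y^3)$, substitute $z=x-y$ to write it as $\sum_z\xi(z,0)S(\ast,z,\ast)$, invoke Lemma~\ref{vv} together with $v_{\mathfrak p}(\xi(z,0)-1)>0$ to reduce modulo $f/2+$ to $\sum_z I(z)$, and evaluate that exactly as $(q-1)G(\psi)$ via the characteristic-$3$ completion of the square $y^3+z^2y+zy^2=y(z-y)^2$. The only differences are cosmetic (you keep $\psi=\xi(0,\cdot)$ rather than factoring out the twist parameter $a$, which as you note is harmless since Lemma~\ref{vv} holds for all $a\in\F_q^\times$).
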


\begin{proof}
We write $\nu$ for $\nu_q$. 
By $p=3$, we have $\xi^2(x,0)=\xi(-x,x^3)$. 
Thus 
\[
S_{\xi}\, U_{\xi}
= \sum_{\substack{x\neq 0 \\ y\in\F_q}}
\nu(x)\,\xi(-x,x^3)\,\xi(y,0)
= \sum_{\substack{x\neq 0 \\ y}}
\nu(x)\,\xi(-x+y,\, x^3 + xy(-x+y)).
\]

Put $z=-x+y$.  
Then $y=x+z$, and we obtain
\[
S_{\xi}\, U_{\xi}
= \sum_{x\neq 0}\sum_{z}
\nu(x)\,\xi\!\left(z,\, x^3 + x(x+z)z\right).
\]

We use the notation $\psi={}^0 \psi\circ\Tr_{q/3}$ with 
${}^0 \psi \in \F_3^{\vee}\setminus\{1\}$, so that
$\xi(x_0,x_1)=\xi(x_0,0) \psi(ax_1)$ with $a \in \F_q^{\times}$. 
We get
\begin{align*}
S_{\xi}\, U_{\xi}
= \sum_{z}\xi(z,0)
 \sum_{x\neq 0}
 \nu(x)\psi\!\left(a x^3 + a x(x+z)z\right). 
\end{align*}
In the notation of Lemma \ref{vv}, 
\[
S_{\xi}\, U_{\xi}
=\sum_{z} \xi(z,0) S(a,z,\psi). 
\]
For $x,y \in \mathbb{Q}_3(\zeta_9)$ and $r \in \mathbb{Q}$, 
we write 
$x \equiv y \pmod{r+}$ if $v_{\mathfrak{p}}(x-y)>r$. 
By Lemma \ref{vv} and $v_{\mathfrak{p}}(\xi(z,0)-1)>0$ for any $z \in \F_q$, we have 
\begin{equation}\label{cong}
S_{\xi}\, U_{\xi}
=\sum_{z} \xi(z,0) S(a,z,\psi)
\equiv \sum_z S(a,z,\psi) \pmod{f/2+}. 
\end{equation}

Completing the square in $z$ and using 
$p=3$, we have
\[
a x^3 + ax(x+z)z
= a x (z - x)^2.
\]

Hence
\[
 \sum_z S(a,z,\psi) 
= \sum_{x\neq 0} \nu(x)
   \sum_{z} \psi\!\left( a x (z-x)^2 \right).
\]

For fixed $x\neq 0$, the inner sum is a quadratic Gauss sum:
\[
\sum_{z} \psi\!\left(a x (z-x)^2\right)
= \nu(ax) G(\psi) \quad \textrm{for $a \in \F_q^{\times}$}.
\]

Therefore,
\[
 \sum_z S(a,z,\psi) 
= \sum_{x\neq 0} \nu(x)\nu(ax)\,G(\psi)
= \nu(a) \sum_{x\neq 0} G(\psi)
= \nu(a)(q-1)G(\psi).
\]

Finally, since $v_{\mathfrak{p}}(G(\psi))=f/2$ and 
\eqref{cong}, we obtain
\[
v_{\mathfrak{p}}(S_{\xi}\, U_{\xi})
= v_{\mathfrak{p}}(\nu(a)(q-1) G(\psi))= f/2.
\]
We obtain the claim. 
\end{proof}
We show one of our main theorems. 
\begin{corollary}\label{sl}
Let $\xi \in W_{\mathrm{prim}}^{\vee}$. 
Then we have $v_{\mathfrak{p}}(S_{\xi})=f/3$
and $v_{\mathfrak{p}}(U_{\xi})=f/6$. 
\end{corollary}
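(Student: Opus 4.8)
The plan is to set up a two-equation linear system for the valuations $s:=v_{\mathfrak{p}}(S_\xi)$ and $u:=v_{\mathfrak{p}}(U_\xi)$ and then solve it. First I would observe that $S_\xi$ is a sum of roots of unity and $U_\xi$ is such a sum twisted by the $\{\pm1\}$-valued character $\nu_q$; both are therefore algebraic integers, so $s\ge 0$ and $u\ge 0$. Proposition~\ref{ve} then supplies the first relation, namely $s+u=f/2$.

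The second relation comes from the quadratic identity $S_\xi^2=q+\nu_q(2)\,G(\psi)\,U_\xi$ of Lemma~\ref{2}. Here $\nu_q(2)=\pm1$ has valuation $0$, and since $G(\psi)=\sum_{x\in\F_q}\psi(x^2)$ coincides with the standard quadratic Gauss sum, one has $G(\psi)^2=\pm q$, whence $v_{\mathfrak{p}}(G(\psi))=f/2$. Thus on the right-hand side the term $q$ has valuation $f=v_{\mathfrak{p}}(q)$, while $\nu_q(2)G(\psi)U_\xi$ has valuation $f/2+u=f-s$, using $u=f/2-s$.

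The hard part is to read off $s$ from $2s=v_{\mathfrak{p}}\bigl(q+\nu_q(2)G(\psi)U_\xi\bigr)$, which requires excluding cancellation between the two summands. I would argue as follows: if $s=0$, then both summands have valuation $f$, so their sum has valuation $\ge f$, forcing $2s\ge f\ge 1$, contradicting $s=0$. Hence $s>0$, so the two summands carry the distinct valuations $f$ and $f-s<f$, and by the non-archimedean triangle inequality their sum has valuation exactly $f-s$. Therefore $2s=f-s$, giving $s=f/3$, and then $u=f/2-f/3=f/6$, which is the assertion.

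In summary, the only genuinely delicate step is ensuring that the non-archimedean equality $v_{\mathfrak{p}}(a+b)=\min\{v_{\mathfrak{p}}(a),v_{\mathfrak{p}}(b)\}$ applies, i.e.\ that the two valuations $f$ and $f-s$ are distinct; the integrality bound $s\ge 0$ is precisely what rules out the degenerate case $s=0$, after which the values of $s$ and $u$ are forced by the linear system.
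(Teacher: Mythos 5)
Your proof is correct and follows essentially the same route as the paper: both combine Proposition~\ref{ve} (giving $s+u=f/2$) with the quadratic identity of Lemma~\ref{2}, and then rule out cancellation between $q$ and $\nu_q(2)\,G(\psi)\,U_\xi$ so that the ultrametric equality pins down $2s=f-s$. The only cosmetic difference is in how the degenerate case is excluded --- the paper assumes $v_{\mathfrak{p}}(\nu_q(2)G(\psi)U_\xi)\ge f$ and derives the contradiction $v_{\mathfrak{p}}(U_\xi)\le 0$ versus $v_{\mathfrak{p}}(U_\xi)\ge f/2$, whereas you use integrality of $S_\xi$ to exclude $s=0$ --- and these are equivalent in substance.
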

\begin{proof}

By Lemma \ref{2}, 
\begin{equation}\label{sxi}
S_{\xi}^2=q-\nu_q(2) \,G(\psi)\, U_{\xi}.  
\end{equation}
If $v_{\mathfrak{p}}(\nu_q(2) \,G(\psi)\, U_{\xi}) \geq f$, we have 
$v_{\mathfrak{p}}(S_{\xi}) \geq f/2$ and $v_{\mathfrak{p}}(U_{\xi}) \geq f/2$
by $v_{\mathfrak{p}}(G(\psi))=f/2$. 
By Proposition \ref{ve}, we have $v_{\mathfrak{p}}(U_{\xi})=(f/2)-v_{\mathfrak{p}}(S_{\xi}) \leq 0$, which 
contradicts $v_{\mathfrak{p}}(U_{\xi}) \geq f/2$. Hence 
$v_{\mathfrak{p}}(\nu_q(2) \,G(\psi)\, U_{\xi}) < f$ and again by \eqref{sxi}, 
\[
v_{\mathfrak{p}}(S_{\xi}^2)=v_{\mathfrak{p}}(\nu_q(2) \,G(\psi)\, U_{\xi})=(f/2)+v_{\mathfrak{p}}(U_{\xi})=f-v_{\mathfrak{p}}(S_{\xi}), 
\]
where we use Proposition \ref{ve} at the last equality. 
Thus the claim follows. 
\end{proof}
\begin{corollary}
Let $\xi \in W_{\mathrm{prim}}^{\vee}$ and let 
\[
T_{\xi}=\sum_{x \in \F_q} \xi(x^2,0), \quad 
V_{\xi,c}:=
\sum_{x\in\F_q^{\times}}
\xi\!\left(x,(2c-1)x-c^2 x^{-1}\right)\nu_q(x) \quad 
\textrm{for $c \in \F_q^{\times}$}. 
\]
Then we have $v_{\mathfrak{p}}(T_{\xi})=
v_{\mathfrak{p}}(V_{\xi,c})=f/6$
for any $c \in \F_q^{\times}.$
\end{corollary}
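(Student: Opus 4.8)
The plan is to derive both valuations from Corollary~\ref{sl} together with the two product identities already in hand: the relation $U_\xi = T_{\xi^2}-S_{\xi^2}$ obtained in the proof of Proposition~\ref{3}, and the factorization of Lemma~\ref{prod}.

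First I would treat $T_\xi$. The identity $U_\xi = T_{\xi^2}-S_{\xi^2}$ rewrites as $T_{\xi^2}=U_\xi+S_{\xi^2}$. Since $p=3$, the character $\xi^2$ is again primitive, because its restriction to $\{0\}\times\F_q$ is $\psi^2=\psi^{-1}\neq 1$. Hence Corollary~\ref{sl} gives $v_{\mathfrak p}(S_{\xi^2})=f/3$ and $v_{\mathfrak p}(U_\xi)=f/6$, and as these two valuations differ, the valuation of the sum is the minimum, so $v_{\mathfrak p}(T_{\xi^2})=f/6$. To pass from $\xi^2$ to an arbitrary primitive character I would use that squaring is a bijection on $W^\vee_{\mathrm{prim}}$: by Lemma~\ref{cyclic} the group $W^\vee$ is isomorphic to $(\mathbb Z/9\mathbb Z)^f$, on which multiplication by $2$ is invertible (the inverse being $\eta\mapsto\eta^5$), and squaring preserves primitivity by the same restriction computation. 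Thus every primitive character has the form $\xi^2$, and $v_{\mathfrak p}(T_\xi)=f/6$ for all $\xi\in W^\vee_{\mathrm{prim}}$.

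For $V_{\xi,c}$ I would first note that in characteristic $3$ one has $-(c+1)=2c-1$, so $V_{\xi,c}$ is precisely the sum appearing on the right-hand side of Lemma~\ref{prod}. The computation proving that lemma uses only that $\xi$ is a character with nontrivial restriction $\psi=\xi(0,-)$, so it applies verbatim to an arbitrary primitive $\xi$ and yields
\[
S_\xi\cdot\Bigl(\sum_{y\in\F_q}\xi(y,cy)\Bigr)=G(\psi)\,V_{\xi,c}.
\]
The key point is then to recognise the second factor as a Gauss--Heilbronn sum attached to a primitive character. For this I would use the translation $\tau_c\colon(x_0,x_1)\mapsto(x_0,x_1+cx_0)$, which is a group automorphism of $W_{2,q}$ (a direct check against the addition law $(x_0,x_1)+(x_0',x_1')=(x_0+x_0',x_1+x_1'+g_3(x_0,x_0'))$) and which fixes $\{0\}\times\F_q$ pointwise. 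Then $\xi':=\xi\circ\tau_c$ is again primitive, and $\sum_{y}\xi(y,cy)=\sum_y\xi'(y,0)=S_{\xi'}$.

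Finally I would combine the valuations. By Corollary~\ref{sl} one has $v_{\mathfrak p}(S_\xi)=v_{\mathfrak p}(S_{\xi'})=f/3$, and $v_{\mathfrak p}(G(\psi))=f/2$ since $G(\psi)$ is a quadratic Gauss sum, so solving the displayed identity for $V_{\xi,c}$ gives
\[
v_{\mathfrak p}(V_{\xi,c})=v_{\mathfrak p}(S_\xi)+v_{\mathfrak p}(S_{\xi'})-v_{\mathfrak p}(G(\psi))=\tfrac f3+\tfrac f3-\tfrac f2=\tfrac f6.
\]
I expect the only genuinely delicate step to be the identification $\sum_y\xi(y,cy)=S_{\xi'}$ in the third paragraph: one must verify that $\tau_c$ respects the Witt addition and fixes $\{0\}\times\F_q$, so that primitivity—and hence the valuation $f/3$ supplied by Corollary~\ref{sl}—is preserved. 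Everything else is valuation bookkeeping built on the already-established Corollary~\ref{sl}.
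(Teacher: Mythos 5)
Your proposal follows the same route as the paper, which proves this corollary in one line by citing Proposition~\ref{3} (equivalently the identity $U_{\xi}=T_{\xi^2}-S_{\xi^2}$), Lemma~\ref{prod} and Corollary~\ref{sl}; your write-up supplies exactly the details the paper leaves implicit. The $T_{\xi}$ half is complete and correct: the ultrametric inequality applied to $T_{\xi^2}=U_{\xi}+S_{\xi^2}$ with the distinct valuations $f/6<f/3$, together with the observation that squaring is a bijection of $W_{\mathrm{prim}}^{\vee}$ onto itself (since $W^{\vee}\simeq(\mathbb{Z}/9\mathbb{Z})^f$ and squaring preserves nontriviality on $\{0\}\times\F_q$), is precisely what is needed. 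For $V_{\xi,c}$, your translation automorphism $\tau_c\colon(x_0,x_1)\mapsto(x_0,x_1+cx_0)$ is a clean alternative to the paper's Lemma~\ref{ss} for recognising $\sum_y\xi(y,cy)$ as $S_{\xi'}$ with $\xi'$ primitive; the check that $\tau_c$ respects the Witt addition and fixes $\{0\}\times\F_q$ is correct, and the final valuation count $f/3+f/3-f/2=f/6$ matches the paper.

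One step is overclaimed: the assertion that the proof of Lemma~\ref{prod} ``applies verbatim to an arbitrary primitive $\xi$.'' The computation up to
\[
S_{q,0}\,S_{q,c}=G(\psi)\sum_{z\neq0}\xi\Bigl(z,-\tfrac{(z^2-c)^2}{z}\Bigr)\nu_q(z)
\]
indeed uses only that $\psi=\xi(0,\cdot)$ is nontrivial. But the last step of that proof replaces $-\tfrac{(z^2-c)^2}{z}$ by $(2c-1)z-c^2z^{-1}$ inside $\xi(z,\cdot)$; since the two arguments differ by $-z^3+z$ in the second component, this requires $\psi(-z^3+z)=1$ for all $z$, i.e.\ $\psi\circ\Fr_3=\psi$. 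That holds exactly when $\psi$ factors through $\Tr_{q/3}$, which is the hypothesis $\xi={}^0\xi\circ\Tr_{q/3}$ under which Lemma~\ref{prod} is actually stated, not for an arbitrary element of $W_{\mathrm{prim}}^{\vee}$ (whose restriction to $\{0\}\times\F_q$ is $x\mapsto{}^0\psi(\Tr_{q/3}(\beta x))$ with arbitrary $\beta\in\F_q^{\times}$). So either restrict the $V_{\xi,c}$ assertion to characters of that form, or insert the correcting factor $\psi(-z^3+z)$ and argue separately. This imprecision is inherited from the paper itself, which states the corollary for all primitive $\xi$ while only proving Lemma~\ref{prod} in the special case; it does not affect the $T_{\xi}$ statement or the overall strategy.
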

\begin{proof}
The assertion for $T_{\xi}$
follows from Proposition \ref{3} and 
Corollary \ref{sl}. 
The assertion for $V_{\xi,c}$ follows from Lemma \ref{prod} and 
Corollary \ref{sl}. 
\end{proof} 
For $\alpha, \beta \in \overline{\Q}_{\ell}^{\times}$, 
if $\alpha/\beta$ is a root of unity, 
we write $\alpha \equiv \beta \pmod{\mu_{\infty}(\overline{\Q}_{\ell})}$. 
The following lemma is shown in a more general setting 
by a different method in Corollary \ref{kkatz}. 
\begin{lemma}\label{laumon}
Let $\xi \in W_{\mathrm{prim}}^{\vee}$. 
Let $\nu_0$ be a nontrivial character of $\{\pm 1\}$
and let $\mathscr{K}_{\nu_0}(x)$ be the 
Kummer sheaf on $\mathbb{G}_{\mathrm{m},\, \F_q}$
defined by $y^2=x$ and $\nu_0$. 
Then we have 
\[
\det(-\mathrm{Fr}_q^\ast; H_{\rm c}^1(\mathbb{G}_{\mathrm{m}},\mathscr{K}_{\nu_0}(x) \otimes 
\mathscr{L}_{\xi}(x,0))) \equiv q\, G(\psi) 
\pmod{\mu_{\infty}(\overline{\mathbb{Q}}_{\ell})}. 
\]
\end{lemma}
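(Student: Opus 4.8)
The plan is to avoid the full machinery of Laumon's product formula (used for the general statement in Corollary~\ref{kkatz}) and instead extract the determinant from a geometric self-duality of the cohomology, matching it against $q\,G(\psi)$ only up to sign.

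First I would record the shape of the cohomology. The sheaf $\mathscr{G} := \mathscr{K}_{\nu_0}(x)\otimes\mathscr{L}_{\xi}(x,0)$ is a rank-one lisse sheaf on $\mathbb{G}_{\mathrm{m},\F_q}$ which is nontrivial, so $H^0_{\rm c}=H^2_{\rm c}=0$. It is tame at $0$ (the Witt factor is unramified there and $\nu_0$ is tame) and wild at $\infty$ with $\mathrm{Swan}_\infty = \mathrm{Swan}_\infty(\mathscr{L}_\xi(x,0)) = 3$ by Lemma~\ref{lemma:Brylinski}. The Grothendieck--Ogg--Shafarevich formula on $\mathbb{G}_{\mathrm{m}}$ then gives $\dim H^1_{\rm c}(\mathbb{G}_{\mathrm{m}},\mathscr{G}) = 3$, so the left-hand side equals $-\det(\Fr_q^\ast;H^1_{\rm c})$ and it suffices to compute $\det(\Fr_q^\ast;H^1_{\rm c})$ up to sign. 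Since $\mathscr{G}$ has no inertia invariants at $0$ or $\infty$, the forget-supports map $H^1_{\rm c}\to H^1$ is an isomorphism; this will be used below.

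The key step is a functional equation coming from the involution $[-1]\colon \mathbb{G}_{\mathrm{m}}\to\mathbb{G}_{\mathrm{m}}$, $x\mapsto -x$, which is defined over $\F_q$. Because $l=3$, we have $-(x,0)=(-x,0)$ in $W$, so the Witt factor transforms as $[-1]^\ast\mathscr{L}_\xi(x,0)\simeq\mathscr{L}_{\xi^{-1}}(x,0)$; by multiplicativity of the Kummer sheaf, $[-1]^\ast\mathscr{K}_{\nu_0}(x)\simeq\mathscr{K}_{\nu_0}(x)\otimes\beta$, where $\beta$ is the geometrically constant rank-one sheaf on which $\Fr_q^\ast$ acts by $\nu_q(-1)$. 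As $\nu_0$ is quadratic, this yields a Frobenius-equivariant isomorphism $[-1]^\ast\mathscr{G}\simeq\mathscr{G}^\vee\otimes\beta$, hence $H^1_{\rm c}(\mathbb{G}_{\mathrm{m}},\mathscr{G})\simeq H^1_{\rm c}(\mathbb{G}_{\mathrm{m}},\mathscr{G}^\vee)\otimes\beta$. Combining this with Poincaré duality $H^1_{\rm c}(\mathbb{G}_{\mathrm{m}},\mathscr{G})\simeq H^1(\mathbb{G}_{\mathrm{m}},\mathscr{G}^\vee)^\vee(-1)$ and the identification $H^1_{\rm c}\simeq H^1$ for $\mathscr{G}^\vee$, I obtain an autoduality
\[
H^1_{\rm c}(\mathbb{G}_{\mathrm{m}},\mathscr{G})\simeq H^1_{\rm c}(\mathbb{G}_{\mathrm{m}},\mathscr{G})^\vee\otimes\beta(-1).
\]

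Taking determinants of this autoduality on the $3$-dimensional space gives $\det(\Fr_q^\ast;H^1_{\rm c})^2=(\nu_q(-1)\,q)^3=\nu_q(-1)\,q^3$. On the other hand the quadratic Gauss sum satisfies $G(\psi)^2=\nu_q(-1)\,q$, whence $(q\,G(\psi))^2=\nu_q(-1)\,q^3$ as well. Therefore $\det(\Fr_q^\ast;H^1_{\rm c})=\pm\,q\,G(\psi)$, and since $-1\in\mu_\infty(\overline{\mathbb{Q}}_\ell)$ the claimed congruence follows. The main obstacle will be the precise bookkeeping in the third paragraph: one must pin down the arithmetic twist $\beta$ exactly (the scalar $\nu_q(-1)$, not merely a geometric triviality) and verify $\Fr_q^\ast$-equivariance of all three isomorphisms, since the entire comparison with $q\,G(\psi)$ rests on the constant $\nu_q(-1)\,q^3$. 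The oddness of $\dim H^1_{\rm c}=3$ is equally essential, as it is what forces the answer into the shape $q\,G(\psi)$ rather than a rational power of $q$.
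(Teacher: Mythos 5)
Your argument is correct, and it reaches the congruence by a genuinely different route from the paper. The paper's proof runs Laumon's product formula, writing $\det(-\mathrm{Fr}_q^\ast;H^1_{\rm c})=q\,\epsilon_0\,\epsilon_\infty$ and evaluating the two local factors separately: the tame factor at $0$ is $\equiv G(\psi)$ by Laumon's explicit formula for Kummer sheaves, and the wild factor at $\infty$ is $\equiv 1$ via Abbes--Saito's computation of epsilon factors of totally ramified characters with Swan conductor $3$. You instead exploit the involution $x\mapsto -x$: since $l=3$ is odd one has $-(x,0)=(-x,0)$, and since $\nu_0$ is quadratic the Kummer factor is self-dual up to the arithmetic constant $\nu_q(-1)$, so $[-1]^\ast\mathscr{G}\simeq\mathscr{G}^\vee\otimes\beta$ over $\F_q$; combined with Poincar\'e duality and the forget-supports isomorphism (both sides have no inertia invariants at $0$ or $\infty$) this gives the arithmetic self-duality $V\simeq V^\vee(-1)\otimes\beta$ on the three-dimensional space $V=H^1_{\rm c}$, whence $\delta^2=\nu_q(-1)q^3=(qG(\psi))^2$ and $\delta=\pm qG(\psi)$. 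All the individual steps check out: the dimension count via Grothendieck--Ogg--Shafarevich, the Frobenius-equivariance of the three isomorphisms, and the identity $G(\psi)^2=\nu_q(-1)q$. What your approach buys is the avoidance of epsilon-factor machinery entirely, and it even pins down $\delta$ exactly up to a sign rather than up to an arbitrary root of unity; what it costs is generality, since it leans on two accidents of this case --- the existence of the $\pm1$ symmetry (needing $l$ odd and $\nu_0$ quadratic) and the oddness of $\dim V=3$, without which $\delta^2=\nu_q(-1)^{\dim V}q^{\dim V}$ would only determine $\delta$ up to a power of $q$ times a sign and could not distinguish $qG(\psi)$ from $q^{3/2}$. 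In particular it would not yield the more general statement of Corollary~\ref{kkatz}, for which the paper's appendix uses purity, integrality and Kronecker's theorem instead.
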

\begin{proof}
We simply write $\mathscr{L}:=\mathscr{K}_{\nu_0}(x) \otimes 
\mathscr{L}_{\xi}(x,0)$. 
Let $D$ be the projective line over $\F_q$ 
and we regard $\mathbb{G}_{\mathrm{m},\, \F_q}=\Spec \F_q[x^{\pm 1}]$ as the open subscheme of $D$. 
Let $j \colon \mathbb{G}_{\mathrm{m},\, \F_q}
\to D$ be the open immersion. 
%For a closed point $x$ of $D$, let $D_{(x)}$ denote the henselization of $D$ at $x$ and let $\eta_{(x)}$ denote the generic point of $D_{(x)}$. 
We freely use the notation in \cite{La}. 
We recall $H_{\rm c}^i(\mathbb{G}_{\mathrm{m}},\mathscr{L})=0$ for
$i \neq 1$. 
By the product formula \cite[Th\'eor\`eme (3.2.1.1)]{La} for $X=D$, $K=j_! \mathscr{L}$, 
we have 
\begin{align*}
\det(-\mathrm{Fr}_q^\ast; H_{\rm c}^1(\mathbb{G}_{\mathrm{m}},\mathscr{L}))
=q\, \epsilon(D_{(0)}, \mathscr{L}|_{\eta_0},d\pi_0)\, 
\epsilon\!\left(D_{(\infty)}, \mathscr{L}|_{\eta_{\infty}},-\frac{d\pi_{\infty}}{\pi_{\infty}^2}\right), 
\end{align*}
where $\pi_0=x$ and $\pi_{\infty}=x^{-1}$ (cf.\ 
\cite[(3.1.1.5)]{La}). 
By \cite[(3.1.5.6) and (3.5.3.1)]{La}, we have 
\[
\epsilon(D_{(0)}, \mathscr{L}|_{\eta_0},d\pi_0)
\equiv \epsilon(D_{(0)}, \mathscr{K}_{\nu_0}(x)|_{\eta_0},d\pi_0) \equiv G(\psi) 
\pmod{\mu_{\infty}(\overline{\mathbb{Q}}_{\ell})}. 
\]
We recall that the Swan conductor 
exponent of $\mathscr{L}$ is $3$ by Lemma \ref{lemma:Brylinski}.  
We also have 
\begin{align*}
\epsilon\!\left(D_{(\infty)}, \mathscr{L}|_{\eta_{\infty}},-\frac{d\pi_{\infty}}{\pi_{\infty}^2}\right) & \equiv q^{-2} \, 
\epsilon(D_{(\infty)}, \mathscr{L}|_{\eta_{\infty}},d\pi_{\infty}) \pmod{\mu_{\infty}(\overline{\mathbb{Q}}_{\ell})}\\
&=q^{-2}\, \epsilon(\chi_{\mathscr{L}}, \Psi_{d\pi_{\infty}}) \\
& =q^{-2}\int_{\pi_{\infty}^{-4} \mathcal{O}_{\infty}^{\times}}
\chi_{\mathscr{L}}^{-1}(z) \Psi_{d\pi_{\infty}}(z)dz, 
\end{align*}
where we use \cite[(3.1.5.5)]{La}, \cite[Th\'eor\`eme (3.1.5.4)(v)]{La} and \cite[(3.1.3.2)]{La}, respectively. Here 
$\chi_{\mathscr{L}}$ is a character of 
$\F_q((\pi_{\infty}))^{\times}$ induced by 
$\mathscr{L}|_{\eta_{\infty}}$ via the local class field theory, $\mathcal{O}_{\infty}:=
\F_q[[\pi_{\infty}]]$, and 
$\int_{\mathcal{O}_{\infty}} dz=1$. 
Applying \cite[Proposition~8.7(ii)]{AS} to the local field
$K=\F_q((\pi_{\infty}))$ with $\chi=\chi_{\mathscr{L}}$ and
$\psi=\Psi_{d\pi_{\infty}}$, we note that by Lemma~\ref{lemma:Brylinski}
the character $\chi_{\mathscr{L}}$ has Swan conductor exponent $3$.
In the notation of loc.\ cit.\ we therefore take
$\mathrm{ord}(\psi)=0$, $\mathrm{ord}(\beta)=1$ and $\mathrm{ord}(c)=-4$,
so that the relevant integral is over $\pi_{\infty}^{-4}\mathcal{O}_{\infty}^{\times}$.
With the normalization $\int_{\mathcal{O}_{\infty}}dz=1$, Proposition~8.7(ii)
gives
\[
\epsilon(\chi_{\mathscr{L}},\Psi_{d\pi_{\infty}})
\equiv q^2 \pmod{\mu_{\infty}(\overline{\Q}_{\ell})}.
\]

\end{proof}

Now we prove one of our main theorems.

\begin{theorem}
We simply write $D$ for $C_{2,2,3,q}$ and assume $q=3^f$.
\begin{itemize}
\item[{\rm (1)}]
Let $\nu_0$ be a nontrivial character of $\{\pm 1\}$. 
Then
\[
H^1(\overline{D}) \simeq 
\bigoplus_{\xi \in W^{\vee}}
H_{\rm c}^1(\mathbb{A}^1,\mathscr{L}_{\xi}(x,0))
\oplus H_{\rm c}^1(\mathbb{G}_{\mathrm{m}},\mathscr{K}_{\nu_0}(x) \otimes \mathscr{L}_{\psi}(x,0)). 
\]
Moreover, $g(\overline{D})=(q-1)(5q+1)/2$. 

\item[{\rm (2)}] 
The multiset of slopes of 
$H_{\rm c}^1(\mathbb{G}_{\mathrm{m}},\mathscr{K}_{\nu_0}(x) \otimes \mathscr{L}_{\psi}(x,0))$ with respect to
$\mathrm{Fr}_q^\ast$ is  
\[
\begin{cases}
\left\{\dfrac{1}{2}\right\} & \text{if $\xi \not\in W_{\mathrm{prim}}^{\vee}$ and $\xi \neq 1$}, \\[0.3cm]
\left\{\dfrac{1}{2},\ \dfrac{1}{6},\ \dfrac{5}{6}\right\} & \text{if $\xi \in W_{\mathrm{prim}}^{\vee}$}. 
\end{cases}
\]

\item[{\rm (3)}]
The multiset of slopes of $\overline{D}$ with respect to
$\mathrm{Fr}_q^\ast$ is  
\[
\left\{\frac{1}{2},\ \frac{1}{3},\ \frac{2}{3},\ 
\frac{1}{6},\ \frac{5}{6}\right\}.
\]
\end{itemize}
\end{theorem}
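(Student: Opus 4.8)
The plan is to treat the three parts in order, deducing part~(3) from parts~(1) and~(2) together with the slope computation for $\overline{C}=\overline{C}_{1,2,3,q}$ already contained in Corollaries~\ref{cc2} and~\ref{sl}. For part~(1) I would exploit the factorization of the level‑$2$ immersion through the squaring map. By Lemma~\ref{ll} we have $H_{\rm c}^1(D)\simeq\bigoplus_{\xi\in W^\vee}H_{\rm c}^1(\mathbb{A}^1,\mathscr{L}_{\xi}(t^2,0))$, and since $\mathscr{L}_{\xi}(t^2,0)=\phi^\ast\mathscr{L}_{\xi}(x,0)$ for the finite morphism $\phi\colon\mathbb{A}^1\to\mathbb{A}^1$, $t\mapsto t^2=x$, the projection formula reduces this to $H_{\rm c}^1(\mathbb{A}^1,\mathscr{L}_{\xi}(x,0)\otimes\phi_\ast\overline{\mathbb{Q}}_{\ell})$. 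As $p=3$ is odd, $\phi$ is tamely ramified only at the origin, so $\phi_\ast\overline{\mathbb{Q}}_{\ell}\simeq\overline{\mathbb{Q}}_{\ell}\oplus j_!\mathscr{K}_{\nu_0}(x)$ with $j\colon\mathbb{G}_{\mathrm m}\hookrightarrow\mathbb{A}^1$, the extension by zero being legitimate because $\mathscr{K}_{\nu_0}$ has nontrivial tame monodromy at $0$. This yields, for each $\xi$,
\[
H_{\rm c}^1(\mathbb{A}^1,\mathscr{L}_{\xi}(t^2,0))\simeq H_{\rm c}^1(\mathbb{A}^1,\mathscr{L}_{\xi}(x,0))\oplus H_{\rm c}^1(\mathbb{G}_{\mathrm m},\mathscr{K}_{\nu_0}(x)\otimes\mathscr{L}_{\xi}(x,0)).
\]
Summing over $\xi$ and using the forgetful isomorphism $H_{\rm c}^1(D)\xrightarrow{\sim}H^1(\overline D)$ as in Corollary~\ref{genus} gives the stated decomposition, and the genus is exactly the value recorded in Corollary~\ref{genus} for $m=n=2$, $l=3$, namely $g(\overline D)=(q-1)(5q+1)/2$; I would cross‑check it against the summand dimensions $2$, $3$, $1$ provided by Lemma~\ref{lemma:Brylinski}.

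The heart is part~(2) in the primitive case. Writing $\mathscr{G}:=\mathscr{K}_{\nu_0}(x)\otimes\mathscr{L}_{\xi}(x,0)$, this sheaf is tame at $0$ and wild at $\infty$ with $\mathrm{Swan}_\infty=3$ by Lemma~\ref{lemma:Brylinski}, so $\dim H_{\rm c}^1(\mathbb{G}_{\mathrm m},\mathscr{G})=3$ by Grothendieck--Ogg--Shafarevich, and the nontriviality of the local monodromy at both $0$ and $\infty$ makes the forgetful map $H_{\rm c}^1\to H^1$ an isomorphism. I would pin the three slopes down from three inputs. First, Lemma~\ref{laumon} gives $\det(-\Fr_q^\ast)\equiv qG(\psi)\pmod{\mu_\infty(\overline{\mathbb{Q}}_\ell)}$, so the slopes sum to $v_{\mathfrak p}(qG(\psi))/f=3/2$. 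Second, Poincar\'e duality together with the forgetful isomorphism identifies the Frobenius eigenvalues of $\mathscr{G}^\vee=\mathscr{K}_{\nu_0}(x)\otimes\mathscr{L}_{\xi^{-1}}(x,0)$ with $q\alpha_i^{-1}$, whence $\mathrm{slopes}(\xi^{-1})=1-\mathrm{slopes}(\xi)$; since $3$ is totally ramified in $\mathbb{Q}(\zeta_9)$ its decomposition group is all of $\Gal(\mathbb{Q}(\zeta_9)/\mathbb{Q})$, and the automorphism $\zeta_9\mapsto\zeta_9^{-1}$ carries the characteristic polynomial for $\xi$ to that for $\xi^{-1}$ while preserving $v_{\mathfrak p}$, so $\mathrm{slopes}(\xi^{-1})=\mathrm{slopes}(\xi)$. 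Hence the slope multiset is stable under $s\mapsto 1-s$ and equals $\{1/2,\,s,\,1-s\}$ for some $0\le s\le 1/2$. Third, $\Tr(\Fr_q^\ast;H_{\rm c}^1(\mathbb{G}_{\mathrm m},\mathscr{G}))=-\sum_{x\in\F_q^\times}\nu_q(x)\xi(x,0)=-U_{\eta}$, where $\eta$ is the unique square root of $\xi$ (again primitive, as squaring preserves primitivity in characteristic $3$), which has valuation $f/6$ by Corollary~\ref{sl}. If $s=1/2$ then all slopes are $1/2$ and the trace valuation would be $\ge f/2$, a contradiction; so $s<1/2$, the minimal valuation $fs$ is attained by a single eigenvalue, and $v_{\mathfrak p}(\mathrm{trace})=fs=f/6$ forces $s=1/6$. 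This gives slopes $\{1/2,1/6,5/6\}$. For $\xi\notin W_{\mathrm{prim}}^\vee$ with $\xi\neq1$ one has $a_0=0$, so $\mathscr{L}_{\xi}(x,0)=\mathscr{L}_{\psi}(cx)$ with $c\neq0$, $\dim H_{\rm c}^1=1$, and the eigenvalue is a quadratic Gauss sum of valuation $f/2$; for $\xi=1$ the cohomology vanishes.

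For part~(3) I would reassemble the slopes through the decomposition of part~(1). In each $\mathbb{A}^1$‑summand with $\xi$ primitive the eigenvalues $\alpha_\xi,\beta_\xi$ satisfy $\alpha_\xi\beta_\xi=q$ and $\alpha_\xi+\beta_\xi=-S_\xi$ (Corollary~\ref{cc2}), and since $v_{\mathfrak p}(S_\xi)=f/3<f/2$ (Corollary~\ref{sl}) their slopes are $\{1/3,2/3\}$. The $\mathbb{G}_{\mathrm m}$‑summands contribute $\{1/2,1/6,5/6\}$ for primitive $\xi$ and $\{1/2\}$ for the characters with $a_0=0$, $a_1\neq0$, by part~(2), while the trivial character contributes nothing. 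Taking the union of all occurring slope values yields exactly $\{1/2,1/3,2/3,1/6,5/6\}$.

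The main obstacle is the middle slope in part~(2): neither the determinant nor a single trace determines it, so I must couple the functional equation coming from Poincar\'e duality with the Galois‑invariance of $v_{\mathfrak p}$ (total ramification of $3$ in $\mathbb{Q}(\zeta_9)$) to force the $s\mapsto 1-s$ symmetry, and then argue that the minimal slope is simple, so that the trace valuation is attained without cancellation. The technical points to secure are that the forgetful map $H_{\rm c}^1\to H^1$ is an isomorphism for both $\mathscr{G}$ and $\mathscr{G}^\vee$ (needed for duality) and that $\mathscr{G}^\vee$ is again of primitive type; once these are in place the argument reduces to the valuation inputs already established in Corollary~\ref{sl} and Lemma~\ref{laumon}.
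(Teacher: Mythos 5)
Your proposal is correct, and parts (1) and (3) follow essentially the paper's own route (the paper phrases the splitting of $\phi_*\overline{\mathbb{Q}}_{\ell}$ via the $\mathbb{G}_{\mathrm{m}}$/$\mathbb{A}^1$ excision sequences rather than via $j_!\mathscr{K}_{\nu_0}$, but this is the same computation). Where you genuinely diverge is in pinning down the three slopes in part (2). The paper combines three inputs: the determinant valuation $3f/2$ from Lemma~\ref{laumon} (an $\epsilon$-factor computation via Laumon's product formula), the inequality $a\le f/6$ from the trace, and the \emph{exact} valuation $f/6$ of the dual trace via Poincar\'e duality, followed by a short case analysis. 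You instead observe that the characteristic polynomial lies in $\mathbb{Q}(\zeta_9)[T]$ and that $\zeta_9\mapsto\zeta_9^{-1}$ carries it to the one for $\xi^{-1}$ while preserving $v_{\mathfrak p}$ (total ramification of $3$), so that the Galois symmetry combined with the duality relation $\mathrm{slopes}(\xi^{-1})=1-\mathrm{slopes}(\xi)$ forces the multiset to be stable under $s\mapsto 1-s$, hence of the form $\{1/2,s,1-s\}$; a single exact trace valuation then gives $s=1/6$. This renders Lemma~\ref{laumon} logically unnecessary for the theorem (your determinant input is implied by the symmetry), which is a real simplification, though both arguments ultimately rest on the same valuation input $v_{\mathfrak p}(U_{\xi})=f/6$ from Corollary~\ref{sl}. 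Two details you handle correctly that are worth keeping explicit in a write-up: the trace of $\Fr_q^\ast$ on $H_{\rm c}^1(\mathbb{G}_{\mathrm{m}},\mathscr{K}_{\nu_0}(x)\otimes\mathscr{L}_{\xi}(x,0))$ is $-U_{\eta}$ for the unique square root $\eta$ of $\xi$ (the paper is slightly loose here), and in part (3) the $\mathbb{A}^1$-summands for nonprimitive nontrivial $\xi$ vanish, so only the primitive ones contribute $\{1/3,2/3\}$.
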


\begin{proof}
We prove (1). 
By the definition of $D$ and the proof of Corollary \ref{genus}, we have 
\[
H^1(\overline{D}) \simeq H_{\rm c}^1(D) \simeq \bigoplus_{\xi \in W^{\vee}}
H_{\rm c}^1(\mathbb{A}^1,\mathscr{L}_{\xi}(x^2,0)).  
\]

For $\mathscr{F} \in \{\mathscr{L}_{\xi}(x,0),\ \mathscr{L}_{\xi}(x^2,0)\}$, 
we have a short exact sequence 
\[
0 \to \overline{\mathbb{Q}}_{\ell}
\to H_{\rm c}^1(\mathbb{G}_{\mathrm{m}},\mathscr{F}) 
\to H_{\rm c}^1(\mathbb{A}^1,\mathscr{F}) \to 0.
\]
We simply write $\mathscr{L}_{\nu_0,\xi}$
for $\mathscr{K}_{\nu_0}(x) \otimes \mathscr{L}_{\xi}(x,0)$. 
By the projection formula, 
\[
H_{\rm c}^1(\mathbb{G}_{\mathrm{m}},\mathscr{L}_{\xi}(x^2,0)) \simeq 
H_{\rm c}^1(\mathbb{G}_{\mathrm{m}},\mathscr{L}_{\xi}(x,0)) \oplus 
H_{\rm c}^1(\mathbb{G}_{\mathrm{m}},\mathscr{L}_{\nu_0,\xi}). 
\]

The above short exact sequence then implies
\[
H_{\rm c}^1(\mathbb{A}^1,\mathscr{L}_{\xi}(x^2,0))
\simeq 
H_{\rm c}^1(\mathbb{A}^1,\mathscr{L}_{\xi}(x,0)) \oplus 
H_{\rm c}^1(\mathbb{G}_{\mathrm{m}},\mathscr{L}_{\nu_0,\xi}). 
\]

Thus the first claim follows. 
The genus formula follows from Lemma \ref{genus} for $l=p=3$ and $n=2$.

We now prove (2). 
Assume $\xi \not\in W_{\rm prim}^{\vee}$. 
If $\xi=1$, then
\[
H_{\rm c}^1(\mathbb{G}_{\mathrm{m}},\mathscr{L}_{\nu_0,\xi}) 
\simeq 
H_{\rm c}^1(\mathbb{G}_{\mathrm{m}},\mathscr{K}_{\nu_0}(x))=0.
\]
Thus assume $\xi\neq 1$. 
It is well known that 
the Frobenius slope of 
$H_{\rm c}^1(\mathbb{G}_{\mathrm{m}},\mathscr{L}_{\nu_0,\xi})$ 
with respect to $\mathrm{Fr}_q^\ast$ is $1/2$, since 
$\mathscr{L}_{\xi}(x,0)\simeq \mathscr{L}_{\psi}(x)$ for some 
$\psi \in \F_q^{\vee}\setminus \{1\}$.

Assume now that $\xi \in W_{\rm prim}^{\vee}$. 
Since $\mathrm{Swan}_{\infty}(\mathscr{L}_{\nu_0,\xi})=3$ by Lemma \ref{lemma:Brylinski}, 
\[
\dim H_{\rm c}^1(\mathbb{G}_{\mathrm{m}},\mathscr{L}_{\nu_0,\xi})=3. 
\]

Lemma \ref{laumon} yields
\begin{equation}\label{v3} 
v_{\mathfrak{p}}\left(\det(-\mathrm{Fr}_q^\ast; H_{\rm c}^1(\mathbb{G}_{\mathrm{m}},\mathscr{L}_{\nu_0,\xi}))\right)=\frac{3f}{2}. 
\end{equation}

Let $\alpha,\,\beta,\,\gamma$ be the Frobenius eigenvalues of $\mathrm{Fr}_q^\ast$ 
on $H_{\rm c}^1(\mathbb{G}_{\mathrm{m}},\mathscr{L}_{\nu_0,\xi})$, ordered so that 
$a:=v_{\mathfrak{p}}(\alpha)\le b:=v_{\mathfrak{p}}(\beta)\le c:=v_{\mathfrak{p}}(\gamma)$. 
Since $v_{\mathfrak{p}}(U_{\xi})=f/6$ by Corollary \ref{sl}, 
we have $a\le v_{\mathfrak{p}}(\alpha+\beta+\gamma)=f/6$. 

By Poincar\'e duality and $\mathscr{L}_{\nu_0,\xi}^{\vee}
\simeq \mathscr{L}_{\nu_0,\xi^{-1}}$,
\begin{gather}\label{ppo}
\begin{aligned}
H_{\rm c}^1(\mathbb{G}_{\mathrm{m}}, \mathscr{L}_{\nu_0,\xi})^{\vee}(-1)
\simeq 
H^1(\mathbb{G}_{\mathrm{m}},\mathscr{L}_{\nu_0,\xi^{-1}}) \xleftarrow{\sim} 
H_{\rm c}^1(\mathbb{G}_{\mathrm{m}},\mathscr{L}_{\nu_0,\xi^{-1}}).
\end{aligned}
\end{gather}

Thus, since $v_{\mathfrak{p}}(U_{\xi^{-1}})=f/6$, 
\begin{equation}\label{poincare}
v_{\mathfrak{p}}(q\alpha^{-1}+q\beta^{-1}+q\gamma^{-1})=\frac{f}{6}.
\end{equation}

From \eqref{v3}, we have $a+b+c=3f/2$. 

Assume $a<f/6$. Then $a=b$, because $v_{\mathfrak{p}}(\alpha+\beta+\gamma)=f/6$. 
Hence both $v_{\mathfrak{p}}(q\alpha^{-1})$ and $v_{\mathfrak{p}}(q\beta^{-1})$ exceed $5f/6$. 
Then $c=(3f/2)-2a>5f/6$, giving 
$v_{\mathfrak{p}}(q\gamma^{-1})=f-c<f/6$, which contradicts \eqref{poincare}. 
Thus $a=f/6$. 

Replacing $\xi$ by $\xi^{-1}$ and using \eqref{ppo}, we obtain  
$\min\{v_{\mathfrak{p}}(q\alpha^{-1}),v_{\mathfrak{p}}(q\beta^{-1}),v_{\mathfrak{p}}(q\gamma^{-1})\}
= v_{\mathfrak{p}}(q\gamma^{-1})=f/6$, hence $c=5f/6$.  
Since $a+b+c=3f/2$, we get $b=f/2$.  

This proves the claim. 

Finally, (3) follows from (1) and (2).
\end{proof}

%For $\psi \in \F_q^{\vee}$, let $\mathscr{L}_{\psi}(x)$ denote the sheaf on $\mathbb{A}^1_{\F_q}$ defined by $y^q-y=x$ and $\psi$. For a morphism $f \colon X \to \mathbb{A}^1_{\F_q}$, let $\mathscr{L}_{\psi}(f)$ denote the pull-back of $\mathscr{L}_{\psi}(x)$ by $f$. 

\section{Covering of the curve}\label{s4}
We simply write $C_n$ for $C_{1,n,p,q}$.
Let $P(x) \in \F_q[x]$ be an $\F_p$-linearized separable polynomial.  
We define $C_{P,n}$ by the cartesian diagram
\[
\xymatrix{
C_{P,n} \ar[d]^{\mathrm{pr}'_1}\ar[rr]^{\varphi_n} && C_n \ar[d]^{\mathrm{pr}_1} \\
\mathbb{A}_{\F_q}^1 \ar[rr]^{x \mapsto P(x)} &&  \mathbb{A}^1_{\F_q}, 
}
\]
where $\mathrm{pr}_1$ is given by $(x_0,\ldots,x_{n-1}) \mapsto x_0$. 
Our main aim in this section is to determine the $L$-polynomial of 
$\overline{C}_{P,n}$
in the case $n=2$ and $p=3$.

We note that $C_{P,1}$ is isomorphic to 
$\mathbb{A}^1_{\F_q}$. 

We define an algebraic group 
$\mathbb{W}^P_{n,q}$ by the cartesian diagram
\[
\xymatrix{
\mathbb{W}^P_{n,q} \ar[rr]^{\phi_n}\ar[d]& & \mathbb{W}_{n,q} \ar[d]^{\mathrm{pr}_1} \\
\mathbb{G}_{\mathrm{a},\,\F_q} \ar[rr]^{x \mapsto P(x)} & & \mathbb{G}_{\mathrm{a},\,\F_q}, 
}
\]
where $\mathrm{pr}_1$ is given by $(x_0,\ldots,x_{n-1}) \mapsto x_0$. 
We consider 
\[
\phi_n \colon \mathbb{W}^P_{n,q}(\F) \to 
\mathbb{W}_{n,q}(\F)
\]
and we define 
\[
W_{P,n}:=\phi_n^{-1}(W_{n,q}). 
\]
The morphisms
$C_{P,n} \xrightarrow{\varphi_n} C_n \subset \mathbb{W}_{n,q}$
and 
$\mathrm{pr}'_1 \colon C_{P,n} \to \mathbb{G}_{\mathrm{a},\,\F_q}$
induce a morphism 
$C_{P,n} \hookrightarrow \mathbb{W}_{n,q}^P$. 
By $\mathbb{W}_{n,q}^P =\mathbb{A}^1_{\F_q} \times_{P,\, \mathbb{A}^1_{\F_q}} \mathbb{W}_{n,q} $, 
we have 
\begin{align*}
\mathbb{W}_{n,q}^P \times_{\phi_n,\, \mathbb{W}_{n,q}} C_n
\simeq 
(\mathbb{A}^1_{\F_q} \times_{P,\, \mathbb{A}^1_{\F_q}} \mathbb{W}_{n,q}) \times_{\mathbb{W}_{n,q}} C_n
\simeq \mathbb{A}^1_{\F_q}\times_{P,\, \mathbb{A}^1_{\F_q}}
C_n=C_{P,n}. 
\end{align*}
Hence, 
for $x \in \mathbb{W}_{n,q, \F}^P$, 
\[
x \in  C_{P,n,\F} \iff \phi_n(x) \in C_{P,\F}. 
\] 
For $x \in C_{P,n,\F}$ and 
$b \in W_{P,n}$, we have $x+b \in C_{P,n,\F}$ 
since $\phi_n(x+b)=\phi_n(x)+\phi_n(b) 
\in C_{P,\F}$ by $\phi_n(b) \in W_{n,q}$. 
\begin{lemma}\label{product}
Let 
$K_P:=\{x \in \F \mid P(x)=0\}$. 
The group $W_{P, n}$ is isomorphic to
a product group 
$K_P \times W_{n,q}$. 
\end{lemma}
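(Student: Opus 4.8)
The plan is to exhibit a split short exact sequence of abelian groups
\[
0 \longrightarrow K_P \longrightarrow W_{P,n} \xrightarrow{\ \phi_n\ } W_{n,q} \longrightarrow 0
\]
and to read off $W_{P,n}\simeq K_P\times W_{n,q}$ from the splitting. First I would unwind the fibre-product description already set up above: a point of $\mathbb{W}_{n,q}^P(\F)$ is a pair $(u,w)$ with $u\in\F$ and $w=(w_0,\dots,w_{n-1})\in\mathbb{W}_{n,q}(\F)$ subject to $P(u)=w_0$, and $\phi_n$ is the projection $(u,w)\mapsto w$. Since $W_{P,n}=\phi_n^{-1}(W_{n,q})$, it consists of those $(u,w)$ with $w\in W_{n,q}=\mathbb{W}_{n,q}(\F_q)$. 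The group $\mathbb{W}_{n,q}^P$ is commutative, being a fibre product of the commutative algebraic groups $\mathbb{G}_{\mathrm{a},\,\F_q}$ and $\mathbb{W}_{n,q}$ over $\mathbb{G}_{\mathrm{a},\,\F_q}$ (the structure maps $P$ and $\mathrm{pr}_1$ are homomorphisms, $P$ because it is $\F_p$-linearized), so $W_{P,n}$ is abelian and its law is componentwise. As $P\colon\F\to\F$ is nonconstant and $\F$ is algebraically closed, $\phi_n$ is surjective, and $\ker\phi_n=\{(u,0):P(u)=0\}\simeq K_P$; this yields the displayed sequence.

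The heart of the argument is to split it, which I would do by producing an $\F_p$-linear section of $P$ over $\F_q$. Concretely, fix an $\F_p$-basis $e_1,\dots,e_f$ of $\F_q$; using surjectivity of $P$, choose $u_i\in\F$ with $P(u_i)=e_i$, and define $\tau\colon\F_q\to\F$ by $\tau\bigl(\sum_i c_i e_i\bigr)=\sum_i c_i u_i$ for $c_i\in\F_p$. Because $P$ is $\F_p$-linearized, hence additive and $\F_p$-homogeneous, we get $P(\tau(e))=e$ for every $e\in\F_q$, and $\tau$ is $\F_p$-linear. I would then set
\[
s\colon W_{n,q}\longrightarrow W_{P,n},\qquad w\longmapsto(\tau(w_0),\,w),
\]
which lands in $W_{P,n}$ precisely because $P(\tau(w_0))=w_0$.

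It remains to verify that $s$ is a homomorphism splitting $\phi_n$. Since Witt addition is ordinary addition on the zeroth coordinate, $(w+w')_0=w_0+w_0'$, and $\tau$ is additive, so using componentwise addition in $W_{P,n}$ one finds $s(w+w')=(\tau(w_0)+\tau(w_0'),\,w+w')=s(w)+s(w')$, while $\phi_n\circ s=\mathrm{id}$ is immediate. Hence the sequence splits, and a split short exact sequence of abelian groups identifies $W_{P,n}$ with $\ker\phi_n\oplus s(W_{n,q})\simeq K_P\times W_{n,q}$, as claimed.

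The only genuinely substantive step is the construction of $\tau$: it rests on $P$ being $\F_p$-linear, so that lifting an $\F_p$-basis of $\F_q$ extends to an $\F_p$-linear splitting, together with $\F_q$ being finite-dimensional over $\F_p$. Everything else—surjectivity of $\phi_n$, the identification $\ker\phi_n\simeq K_P$, and compatibility of $s$ with the Witt group law—is formal, the last point using only that the operations are componentwise and ordinary on the zeroth Witt coordinate.
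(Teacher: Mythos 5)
Your argument is correct, and it reaches the splitting by a genuinely different route from the paper. The paper also starts from the short exact sequence $0 \to K_P \to W_{P,n} \xrightarrow{\phi_n} W_{n,q} \to 0$, but it splits it abstractly: it shows that every $x \in W_{P,n}$ satisfies $p^n x = 0$ (since $p^n\phi_n(x)=0$ by the structure result $W_{n,q}\simeq(\mathbb{Z}/p^n\mathbb{Z})^f$ of Lemma~\ref{cyclic}, so $p^n x\in\Ker\phi_n$, and multiplication by $p^n$ forces its first coordinate to vanish), and then concludes that a surjection of $\mathbb{Z}/p^n\mathbb{Z}$-modules onto the free module $(\mathbb{Z}/p^n\mathbb{Z})^f$ splits. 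You instead build an explicit section $s(w)=(\tau(w_0),w)$ from an $\F_p$-linear right inverse $\tau$ of $P$ over $\F_q$, obtained by lifting an $\F_p$-basis; the verification that $s$ is a homomorphism correctly uses only that the zeroth Witt coordinate adds ordinarily and that the fibre-product law is componentwise. What each approach buys: yours is constructive and independent of Lemma~\ref{cyclic}, needing only that $P$ is additive, $\F_p$-homogeneous and surjective on $\F$; the paper's is shorter once the cyclic structure of $W_{n,q}$ is in hand and makes transparent why the exponent of $W_{P,n}$ is $p^n$, a fact of independent use. Both correctly identify $\Ker\phi_n$ with $K_P$ and both establish surjectivity of $\phi_n$, so the two proofs are interchangeable.
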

\begin{proof}
We have a short exact sequence 
\[
0 \to K_P \to W_{P,n} \xrightarrow{\phi_n}
W_{n,q} \to 0. 
\]
Let $x=(x_0,\ldots,x_{n-1}) \in W_{P,n} \subset
\F^n$. Then $\phi_n(x)=
(P(x_0),x_1,\ldots,x_{n-1}) \in W_{n,q}$.  
Let $x \in W_{P,n}$. We show $p^n x=0$. 
We have $\phi_n(p^n x)=p^n \phi_n(x)=0$ by Lemma \ref{cyclic}. 
Hence $p^n x \in \Ker \phi_n$. 
Then we can write $(y_0, 0'\mathrm{s})=p^n x$. 
By $p^n x=(0,\ast,\ldots,\ast)$, we know $y_0=0$. 
Hence $p^nx=0$. Hence the short exact sequence 
is split.  
\end{proof}

We have a finite \'etale Galois covering 
\begin{align*}
\pi_n & \colon C_n \to \mathbb{A}^1_{\F_q}, \quad 
(x_0,\ldots,x_{n-1}) \mapsto x_0^q-x_0, 
\end{align*}
whose Galois group is $W_{n,q}=\mathbb{W}_n(\F_q)$. 

We take $\psi \in \F_q^{\vee} \setminus \{1\}$. 
Let $r$ be the minimal positive integer such that  
$K_P \subset \F_{q^r}$. 
Let $Q(x) \in \F_q[x]$ be an $\F_p$-linearized  polynomial such that $Q(P(x))=P(Q(x))=x^{q^r}-x$. 
We have a surjection 
\[
\F_{q^r}\twoheadrightarrow K_P, \quad x \mapsto Q(x). 
\]
Taking the dual, 
we have 
\[
K_P^{\vee} \hookrightarrow \F_{q^r}^{\vee}. 
\]
We have the bijection 
\[
\Psi \colon \F_{q^r} \to \F_{q^r}^{\vee}, \quad 
a \mapsto  (x \mapsto \psi\circ \Tr_{q^r/q}(ax)). 
\]
We define 
$K_{P,\psi}^0 \subset \F_{q^r}$ to be 
$\Psi^{-1}(K_P^{\vee})$.
We take a minimal extension 
$\F_{q^s}/\F_{q^r}$ such that, for any $a \in K_{P,\psi}^0$, 
there exist solutions $x \in \F_{q^s}$ such that $x^q-x=a$. 
We consider a map 
$g \colon \F_{q^s} \to \F_{q^s}, \ x \mapsto 
x^{1/q}-x$ and 
 choose a subset $K_{P,\psi}\subset\F_{q^s}$ such that
\[
g\big|_{K_{P,\psi}}\colon K_{P,\psi}\xrightarrow{\sim} K_{P,\psi}^0.
\]
The morphism 
\[
\pi_n \circ \varphi_n \colon C_{P,n,\F_{q^s}} \to \mathbb{A}^1_{\F_{q^s}}, \quad 
(x_0,\ldots,x_{n-1}) \mapsto P(x_0)^q-P(x_0)
\]
is a finite \'etale Galois covering whose Galois group is $W_{P,n}$. 
\begin{proposition}\label{pp}
We have 
\[
\pi_{n\ast}\varphi_{n\ast} \overline{\mathbb{Q}}_{\ell}
\simeq 
\bigoplus_{\xi \in W_{n,q}^{\vee},\, 
\alpha \in K_{P,\psi}} \mathscr{L}_{\psi}(\alpha x) 
\otimes \mathscr{L}_{\xi}(x,0'\mathrm{s}). 
\]
\end{proposition}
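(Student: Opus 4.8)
The plan is to exploit the Galois structure of $\pi_n \circ \varphi_n$. Over $\F_{q^s}$ this map is a connected finite \'etale Galois covering of $\mathbb{A}^1_{\F_{q^s}}$ with group $W_{P,n}$ (as recalled just before the statement), so
\[
\pi_{n\ast}\varphi_{n\ast}\overline{\mathbb{Q}}_{\ell}
\simeq \bigoplus_{\chi \in W_{P,n}^{\vee}} \mathscr{L}_{\chi},
\]
where $\mathscr{L}_{\chi}$ is the rank one summand on which $W_{P,n}$ acts through $\chi$. Because $C_{P,n} \to \mathbb{A}^1$ is a $W_{P,n}$-torsor, the assignment $\chi \mapsto \mathscr{L}_{\chi}$ is a homomorphism, i.e. $\mathscr{L}_{\chi\chi'} \simeq \mathscr{L}_{\chi} \otimes \mathscr{L}_{\chi'}$. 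Invoking the splitting $W_{P,n} = K_P \times W_{n,q}$ of Lemma~\ref{product}, I write $\chi = \chi_1 \boxtimes \xi$ with $\chi_1 \in K_P^{\vee}$, $\xi \in W_{n,q}^{\vee}$, so that $\mathscr{L}_{\chi_1 \boxtimes \xi} \simeq \mathscr{L}_{\chi_1 \boxtimes 1} \otimes \mathscr{L}_{1 \boxtimes \xi}$. It then suffices to identify the two ``pure'' families.

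The $W_{n,q}$-family is immediate. Since $\chi_1 \boxtimes 1$ is trivial on $K_P$, the sheaf $\mathscr{L}_{1 \boxtimes \xi}$ is pulled back from the quotient $C_{P,n}/K_P$; as $\varphi_n$ is $K_P$-invariant of degree $\#K_P$ onto $C_n$, this quotient is $C_n$ and the induced covering is $\pi_n$, which is exactly the $W_{n,q}$-torsor defining $\mathscr{L}_{\xi}(x,0'\mathrm{s})$. Hence $\mathscr{L}_{1 \boxtimes \xi} \simeq \mathscr{L}_{\xi}(x,0'\mathrm{s})$.

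The $K_P$-family is the crux. Here I use the factorization $\pi_n \circ \varphi_n = R \circ \mathrm{pr}'_1$, where $R(x_0) = P(x_0)^q - P(x_0)$ is the additive isogeny $(y \mapsto y^q - y)\circ P$ and $\mathrm{pr}'_1\colon C_{P,n}\to C_{P,1}\cong\mathbb{A}^1$ is $K_P$-equivariant. The induced surjection $W_{P,n} \twoheadrightarrow \ker R = P^{-1}(\F_q) = K_P \times \F_q$ is the identity on $K_P$ and the Witt truncation $W_{n,q} \twoheadrightarrow \F_q$ on the second factor, so it carries $\chi_1 \boxtimes 1$ to the character $\chi_1 \boxtimes 1$ of $\ker R$; consequently $\mathscr{L}_{\chi_1 \boxtimes 1}$ is the corresponding summand of $R_{\ast}\overline{\mathbb{Q}}_{\ell}$. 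The problem thus reduces to the character sheaves of the additive covering $R$, which are linear Artin--Schreier sheaves $\mathscr{L}_{\psi}(\alpha x)$, and the definition of $K_{P,\psi}$ is precisely the recipe for the exponents attached to $K_P^{\vee}$. The key identity
\[
(y \mapsto y^q - y)^{\ast}\mathscr{L}_{\psi}(\alpha x)
\simeq \mathscr{L}_{\psi}\!\left((\alpha^{1/q}-\alpha)\,y\right)
= \mathscr{L}_{\psi}(g(\alpha)\,y)
\]
(by Artin--Schreier equivalence) explains the appearance of the map $g(x)=x^{1/q}-x$ and of the field $\F_{q^s}$ in which the equation $\alpha^{1/q}-\alpha \in K_{P,\psi}^0$ is solved, while the partial inverse $Q$ (with $Q\circ P = (y\mapsto y^{q^r}-y)$) and the pairing $\Psi$ implement the Artin--Schreier duality matching the character sheaves of the $P$-covering with the exponents in $K_{P,\psi}^0 = \Psi^{-1}(K_P^{\vee})$. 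This produces a bijection $K_P^{\vee}\xrightarrow{\sim}K_{P,\psi}$, $\chi_1\mapsto\alpha$, together with $\mathscr{L}_{\chi_1 \boxtimes 1} \simeq \mathscr{L}_{\psi}(\alpha x)$.

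Finally, tensoring the two families gives $\mathscr{L}_{\chi_1 \boxtimes \xi} \simeq \mathscr{L}_{\psi}(\alpha x) \otimes \mathscr{L}_{\xi}(x,0'\mathrm{s})$, and reindexing over $(\xi,\alpha) \in W_{n,q}^{\vee} \times K_{P,\psi}$ yields the asserted decomposition; the rank count $\#W_{n,q}^{\vee}\cdot\#K_{P,\psi} = q^{n}\,\#K_P = \deg(\pi_n\circ\varphi_n)$ confirms that nothing is missed. I expect the main obstacle to be the third step: pinning down the $K_P$-character sheaves as the specific linear sheaves $\mathscr{L}_{\psi}(\alpha x)$, and checking that the data $(Q,\Psi,g)$ together with the tower $\F_q \subset \F_{q^r} \subset \F_{q^s}$ reproduce exactly the index set $K_{P,\psi}$ with the correct normalization of $\psi$ over $\F_{q^s}$. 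The reduction to the additive isogeny $R$ via $\mathrm{pr}'_1$ and the Artin--Schreier duality for $R$ are where the genuine work lies.
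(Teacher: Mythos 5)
Your argument is correct in outline but organizes the proof quite differently from the paper. The paper never invokes the full Galois group $W_{P,n}$ of $\pi_n\circ\varphi_n$: it computes the pushforward in stages, first using proper base change on the cartesian square to get $\varphi_{n\ast}\overline{\mathbb{Q}}_{\ell}\simeq \mathrm{pr}_1^{\ast}P_{\ast}\overline{\mathbb{Q}}_{\ell}$, then decomposing $P_{\ast}\overline{\mathbb{Q}}_{\ell}\simeq\bigoplus_{\psi'\in K_P^{\vee}}\mathscr{L}_{\psi'}(x)$ by rank count (using $Q$ and the embedding $K_P^{\vee}\hookrightarrow\F_{q^r}^{\vee}$), and finally applying $\pi_{n\ast}=f_{0\ast}\mathrm{pr}_{1\ast}$ with the projection formula, splitting $\mathrm{pr}_{1\ast}\overline{\mathbb{Q}}_{\ell}$ by characters of the subgroup $H=\{(0,a_1,\dots,a_{n-1})\}$ and descending $\mathscr{L}_{\psi}(ax)$ through $f_0$ to $\mathscr{L}_{\psi}(\alpha x)$ with $\alpha^{1/q}-\alpha=a$. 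You instead decompose $\pi_{n\ast}\varphi_{n\ast}\overline{\mathbb{Q}}_{\ell}$ all at once by characters of $W_{P,n}$, use the splitting of Lemma~\ref{product} and multiplicativity of character sheaves, and identify the two pure families separately; this is conceptually cleaner and makes the tensor-product shape of the answer transparent, at the price of Lemma~\ref{product} being an essential input (the paper's proof does not need the splitting at all) and of the index set only being canonical up to the choice of complement (harmless here, since the statement is a sum over all pairs).

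Two points in your sketch deserve explicit verification. First, for $\mathscr{L}_{\chi_1\boxtimes 1}$ to descend to the covering $R=f_0\circ P$ you need $\chi_1\boxtimes 1$ to be trivial on $H':=\ker\bigl(W_{P,n}\to\ker R,\ b\mapsto b_0\bigr)$, i.e.\ you need $H'$ to be contained in your chosen complement $M$ of $K_P$. This is true but not automatic from the splitting: one checks $H'=pW_{P,n}$ (the first Witt coordinate of $pb$ vanishes, and both sides have order $q^{n-1}$ since $K_P$ is $p$-torsion), and $pM=pW_{P,n}$ for any complement $M$, so $H'\subseteq M$. Second, the identification of the $K_P$-character sheaves of $R$ with the specific sheaves $\mathscr{L}_{\psi}(\alpha x)$, $\alpha\in K_{P,\psi}$ --- which you correctly flag as the crux --- is exactly the content of the paper's two middle steps (the decomposition of $P_{\ast}\overline{\mathbb{Q}}_{\ell}$ via $Q$ and $\Psi$, and the descent $\mathscr{L}_{\psi}(ax)\simeq f_0^{\ast}\mathscr{L}_{\psi}(\alpha x)$), so your route relocates rather than avoids that work. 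With those two items filled in, your proof is complete and yields the same decomposition over $\F_{q^s}$.
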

\begin{proof}
We consider the commutative diagram
\[
\xymatrix{
C_{P,n,\F} \ar[rr]^{\varphi_n}\ar[d]^{\mathrm{pr}'_1} & & C_{n,\F} \ar[r]^-{\pi_n}\ar[d]^{\mathrm{pr}_1} & \mathbb{A}^1 \\
\mathbb{A}^1 \ar[rr]^{x \mapsto P(x)} && \mathbb{A}^1, \ar[ur]_{f_0} 
}
\]
where $f_0$ is given by $x \mapsto x^q-x$. 
Here, the left commutative diagram is cartesian. 
The morphism
$\mathbb{A}^1 \to \mathbb{A}^1,\ x \mapsto x^{q^r}-x$ factors 
through 
\[
\mathbb{A}^1 \xrightarrow{x \mapsto Q(x)} \mathbb{A}^1 
\xrightarrow{x \mapsto P(x)} \mathbb{A}^1. 
\]
The group $K_P$ is regarded as a quotient of $\F_{q^r}$ by 
$x \mapsto Q(x)$. 
For any $\psi' \in K_P^{\vee} \subset \F_{q^r}^{\vee}$, 
we have $P^{\ast}\mathscr{L}_{\psi'}(x) \simeq 
\overline{\Q}_{\ell}$. Thus we obtain a nonzero morphism $\mathscr{L}_{\psi'}(x)
\to P_\ast \overline{\Q}_{\ell}$. Thus 
we obtain an injection $\bigoplus_{\psi' \in K_P^{\vee}} 
\mathscr{L}_{\psi'}(x) \to P_{\ast} \overline{\mathbb{Q}}_{\ell}$. By comparing the ranks, we obtain an isomorphism 
$\bigoplus_{\psi' \in K_P^{\vee}} 
\mathscr{L}_{\psi'}(x) \simeq P_{\ast} \overline{\mathbb{Q}}_{\ell}$. 
By the proper base change theorem, 
\[
\varphi_{n\ast} \overline{\mathbb{Q}}_{\ell}\simeq 
\mathrm{pr}_1^\ast P_{\ast} \overline{\Q}_{\ell} 
\simeq 
\bigoplus_{\psi' \in K_P^{\vee}} 
\mathrm{pr}_1^\ast\mathscr{L}_{\psi'}(x) \simeq 
\bigoplus_{a \in K_{P,\psi}^0} 
\mathrm{pr}_1^\ast\mathscr{L}_{\psi}(a x). 
\]
Let $H \subset W_{n,q}$ denote the subgroup consisting of the elements $(0,a_1,\ldots,a_{n-1})$. 
Let $\mathscr{L}_{H,\psi'}$ denote the 
sheaf defined by the $H$-covering $\varphi_n$
and $\psi' \in H^{\vee}$. 
Applying $\pi_{n\ast}$ and using the projection formula yield 
\begin{align*}
\pi_{n\ast}\varphi_{n\ast} \overline{\mathbb{Q}}_{\ell}
& \simeq \bigoplus_{a \in K_{P,\psi}^0} 
f_{0\ast} \mathrm{pr}_{1\ast}\mathrm{pr}_1^\ast\mathscr{L}_{\psi}(a x) \\
& \simeq 
\bigoplus_{a \in K_{P,\psi}^0} 
f_{0\ast} (\mathscr{L}_{\psi}(a x)\otimes 
\mathrm{pr}_{1\ast} \overline{\mathbb{Q}}_{\ell}) \\
& \simeq 
\bigoplus_{a \in K_{P,\psi}^0} \bigoplus_{\psi' \in H^{\vee}}
f_{0\ast} (\mathscr{L}_{\psi}(a x)\otimes 
\mathscr{L}_{H,\psi'}). 
\end{align*}
We take $\alpha \in K_{P,\psi}$ such that 
$\alpha^{1/q}-\alpha=a$. 
Then $\mathscr{L}_{\psi}(a x)
\simeq  \mathscr{L}_{\psi}(\alpha(x^q-x)) \simeq 
f_0^\ast \mathscr{L}_{\psi}(\alpha x)$. 
Thus 
\begin{align*}
f_{0\ast} (\mathscr{L}_{\psi}(a x)\otimes 
\mathscr{L}_{H,\psi'})
& \simeq f_{0\ast} (f_0^\ast \mathscr{L}_{\psi}(\alpha x)\otimes 
\mathscr{L}_{H,\psi'}) \\
& \simeq 
\mathscr{L}_{\psi}(\alpha x)\otimes f_{0\ast} 
\mathscr{L}_{H,\psi'} \\
& \simeq 
\bigoplus_{\xi \in W_{\psi'}^{\vee}} 
\mathscr{L}_{\psi}(\alpha x)\otimes  
\mathscr{L}_{\xi}(x,0'\mathrm{s}), 
\end{align*}
where $W_{\psi'}^{\vee}:=\{\xi \in W_{n,q}^{\vee} \mid \xi|_H=\psi'\}$. 

Hence we obtain 
\[
\pi_{n\ast}\varphi_{n\ast} \overline{\mathbb{Q}}_{\ell}
\simeq \bigoplus_{\alpha \in K_{P,\psi}}
\bigoplus_{\xi \in W_{n,q}^{\vee}} 
\mathscr{L}_{\psi}(\alpha x) \otimes \mathscr{L}_{\xi}(x,0'\mathrm{s}). 
\]
\end{proof}
The morphism $C_n \to C_{n-1}, \quad (x_0,\ldots,x_{n-1})
\mapsto (x_0,\ldots,x_{n-2})$ indues a natural projection 
\[
r_n \colon C_{P,n} \to C_{P,n-1}. 
\]
Clearly we have $\pi_n \circ \varphi_n=\pi_{n-1} \circ \varphi_{n-1} \circ r_n$. 
The morphism $r_n$ induces 
\[
r_n^\ast \colon H_{\rm c}^1(C_{P,n-1}) \hookrightarrow 
H_{\rm c}^1(C_{P,n}). 
\]
We have a surjective homomorphism
$W_{n,q} \to W_{n-1,q},\ (x_0,\ldots,x_{n-1}) \to (x_0,\ldots,x_{n-2})$, which 
induces an injection $W_{n-1,q}^{\vee} \to W_{n,q}^{\vee}$. 
\begin{lemma}\label{rn}
Under the identifications in Proposition~\ref{pp}, the subspace 
\[
r_n^\ast \colon 
H_{\rm c}^1(C_{P,n-1}) \hookrightarrow H_{\rm c}^1(C_{P,n})
\]
corresponds to
\[
\bigoplus_{\xi \in W_{n-1,q}^{\vee},\, \alpha \in K_{P,\psi}} 
H_{\rm c}^1(\mathbb{A}^1, \mathscr{L}_{\psi}(\alpha x) \otimes \mathscr{L}_{\xi}(x,0'\mathrm{s})),
\]
which is exactly the morphism induced by the injection $W_{n-1,q}^{\vee} \hookrightarrow W_{n,q}^{\vee}$. 
\end{lemma}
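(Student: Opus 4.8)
The plan is to realize both cohomology groups as $H_{\rm c}^1$ of a single sheaf on $\mathbb{A}^1_{\F_{q^s}}$ and to identify $r_n^\ast$ with the map of sheaves coming from adjunction. Since $\pi_n\circ\varphi_n$ is finite, we have $H_{\rm c}^1(C_{P,n})\simeq H_{\rm c}^1(\mathbb{A}^1,\pi_{n\ast}\varphi_{n\ast}\overline{\mathbb{Q}}_{\ell})$, and likewise at level $n-1$; the decompositions of Proposition~\ref{pp} then exhibit the two sides as the asserted direct sums, now indexed by $\xi\in W_{n,q}^{\vee}$ and $\xi\in W_{n-1,q}^{\vee}$ respectively. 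First I would use the relation $\pi_n\circ\varphi_n=\pi_{n-1}\circ\varphi_{n-1}\circ r_n$ to write $\pi_{n\ast}\varphi_{n\ast}\overline{\mathbb{Q}}_{\ell}\simeq \pi_{(n-1)\ast}\varphi_{(n-1)\ast}(r_{n\ast}\overline{\mathbb{Q}}_{\ell})$, so that $r_n^\ast$ on $H_{\rm c}^1$ is induced by the unit of adjunction $\overline{\mathbb{Q}}_{\ell}\to r_{n\ast}\overline{\mathbb{Q}}_{\ell}$ on $C_{P,n-1}$, pushed forward by $\pi_{(n-1)\ast}\varphi_{(n-1)\ast}$ and followed by $H_{\rm c}^1(\mathbb{A}^1,-)$.

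Next I would analyze $r_n$ as a Galois cover. By Lemma~\ref{product} we have $W_{P,n}\simeq K_P\times W_{n,q}$ and $W_{P,n-1}\simeq K_P\times W_{n-1,q}$, and $r_n$ is the quotient of $C_{P,n}$ by the kernel $N$ of $W_{P,n}\twoheadrightarrow W_{P,n-1}$, namely $N=\{0\}\times\ker(W_{n,q}\to W_{n-1,q})$, the last-coordinate subgroup. Thus $r_{n\ast}\overline{\mathbb{Q}}_{\ell}\simeq\bigoplus_{\chi\in N^{\vee}}\mathscr{M}_{\chi}$ decomposes according to characters of $N$, and the unit $\overline{\mathbb{Q}}_{\ell}\to r_{n\ast}\overline{\mathbb{Q}}_{\ell}$ is the inclusion of the trivial-character summand $\mathscr{M}_{1}\simeq\overline{\mathbb{Q}}_{\ell}$. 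Pushing this forward and matching it against the Galois decomposition underlying Proposition~\ref{pp}, the image of $r_n^\ast$ consists exactly of the summands indexed by characters of $W_{P,n}\simeq K_P^{\vee}\times W_{n,q}^{\vee}$ that are trivial on $N$. On the $\xi$-label these are precisely the $\xi\in W_{n,q}^{\vee}$ trivial on $\ker(W_{n,q}\to W_{n-1,q})$, i.e.\ the image of $W_{n-1,q}^{\vee}\hookrightarrow W_{n,q}^{\vee}$, while the index $\alpha\in K_{P,\psi}$ (the $K_P^{\vee}$ component) is untouched.

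The last step is to check that the summand attached to $\xi\in W_{n-1,q}^{\vee}$ in the level-$n$ decomposition is canonically the sheaf $\mathscr{L}_{\psi}(\alpha x)\otimes\mathscr{L}_{\xi}(x,0'\mathrm{s})$ produced at level $n-1$; this is the compatibility of the Artin--Schreier--Witt sheaves $\mathscr{L}_{\xi}(x,0'\mathrm{s})$ under the projection $\mathbb{W}_{n,q}\to\mathbb{W}_{n-1,q}$, which follows from functoriality of the Lang-torsor construction defining $\mathscr{L}_{\xi}$. The main obstacle is bookkeeping: tracking the unit of adjunction through the two applications of the projection formula used in the proof of Proposition~\ref{pp}, and verifying that the trivial-$N$-character summands coincide with the subset $W_{n-1,q}^{\vee}\subset W_{n,q}^{\vee}$ \emph{on the nose}, rather than merely matching in dimension. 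Once this identification is in place, injectivity of $r_n^\ast$ is automatic, since its image is realized as a direct summand.
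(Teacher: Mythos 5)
Your argument is correct and is in essence the paper's own proof: the paper simply observes that $r_n$ is the quotient of $C_{P,n}$ by the last-coordinate subgroup $\F_q\hookrightarrow W_{n,q}$, $t\mapsto(0'\mathrm{s},t)$, so that $H_{\rm c}^1(C_{P,n-1})=H_{\rm c}^1(C_{P,n})^{\F_q}$ and the image of $r_n^\ast$ is the sum of the summands with $\xi|_{\F_q}=1$, i.e.\ $\xi\in W_{n-1,q}^{\vee}$ — exactly your identification of the trivial-$N$-character part, with $N=\ker(W_{P,n}\to W_{P,n-1})$. Your extra bookkeeping (the adjunction unit for $r_{n\ast}\overline{\mathbb{Q}}_{\ell}$ and the compatibility of $\mathscr{L}_{\xi}(x,0'\mathrm{s})$ under the projection $\mathbb{W}_{n,q}\to\mathbb{W}_{n-1,q}$) is a valid and slightly more careful rendering of steps the paper leaves implicit.
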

\begin{proof}
Let $\F_q \hookrightarrow W_{n,q},\ t \mapsto (0'\mathrm{s},t)$. 
We have an isomorphism 
$C_{P,n}/\F_q \simeq C_{P,n-1}$. 
Thus 
\[
H_{\rm c}^1(C_{P,n-1})=H_{\rm c}^1(C_{P,n})^{\F_q}, 
\]
where $(-)^{\F_q}$ denotes the $\F_q$-fixed part. 
We have $\xi|_{\F_q}=1 \iff \xi \in W_{n-1,q}^{\vee}$. Hence the claim follows. 
\end{proof}
\begin{corollary}\label{rnc}
We define 
\[
V_n:=H_{\rm c}^1(C_{P,n})/H_{\rm c}^1(C_{P,n-1}). 
\]
Then we have an isomorphism 
\[
V_n \simeq \bigoplus_{\xi \in W_{n,q,\mathrm{prim}}^{\vee}} \bigoplus_{\alpha \in K_{P,\psi}} 
H_{\rm c}^1(\mathbb{A}^1,\mathscr{L}_{\psi}(\alpha x) \otimes \mathscr{L}_{\xi}(x,0'\mathrm{s})). 
\]
\end{corollary}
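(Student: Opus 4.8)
The plan is to read off the decomposition of $H_{\rm c}^1(C_{P,n})$ from Proposition~\ref{pp}, to excise the image of $r_n^\ast$ by means of Lemma~\ref{rn}, and finally to identify the index set of the surviving summands with $W_{n,q,\mathrm{prim}}^{\vee}$. First I would apply $H_{\rm c}^1(\mathbb{A}^1,-)$ to the isomorphism of Proposition~\ref{pp}. As $\pi_n \circ \varphi_n \colon C_{P,n} \to \mathbb{A}^1$ is finite \'etale, it has no higher direct images and $(\pi_n\circ\varphi_n)_\ast=\pi_{n\ast}\varphi_{n\ast}$, so
\[
H_{\rm c}^1(C_{P,n}) \simeq H_{\rm c}^1\bigl(\mathbb{A}^1, \pi_{n\ast}\varphi_{n\ast}\overline{\mathbb{Q}}_{\ell}\bigr) \simeq \bigoplus_{\xi \in W_{n,q}^{\vee},\, \alpha \in K_{P,\psi}} H_{\rm c}^1\bigl(\mathbb{A}^1, \mathscr{L}_{\psi}(\alpha x) \otimes \mathscr{L}_{\xi}(x,0'\mathrm{s})\bigr),
\]
a direct sum indexed by $W_{n,q}^{\vee} \times K_{P,\psi}$.

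By Lemma~\ref{rn}, under this identification the subspace $r_n^\ast H_{\rm c}^1(C_{P,n-1})$ is exactly the partial sum over those indices with $\xi$ in the image of the injection $W_{n-1,q}^{\vee} \hookrightarrow W_{n,q}^{\vee}$ (and arbitrary $\alpha$). Since this is a union of entire summands of the above direct sum decomposition, the quotient $V_n$ is canonically the complementary partial sum, namely
\[
V_n \simeq \bigoplus_{\xi \in W_{n,q}^{\vee} \setminus W_{n-1,q}^{\vee},\, \alpha \in K_{P,\psi}} H_{\rm c}^1\bigl(\mathbb{A}^1, \mathscr{L}_{\psi}(\alpha x) \otimes \mathscr{L}_{\xi}(x,0'\mathrm{s})\bigr).
\]

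It remains to identify $W_{n,q}^{\vee} \setminus W_{n-1,q}^{\vee}$ with $W_{n,q,\mathrm{prim}}^{\vee}$. The injection $W_{n-1,q}^{\vee} \hookrightarrow W_{n,q}^{\vee}$ is dual to the truncation $W_{n,q} \twoheadrightarrow W_{n-1,q}$, $(x_0,\ldots,x_{n-1}) \mapsto (x_0,\ldots,x_{n-2})$, whose kernel is the subgroup $\{0'\mathrm{s}\} \times \F_q$ of Witt vectors supported in the last coordinate. A character $\xi \in W_{n,q}^{\vee}$ lies in the image of $W_{n-1,q}^{\vee}$ if and only if it factors through the truncation, i.e.\ if and only if $\xi|_{\{0'\mathrm{s}\} \times \F_q} = 1$; by the definition of $W_{n,q,\mathrm{prim}}^{\vee}$ the complementary characters are precisely the primitive ones. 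Substituting this identification into the previous display gives the claimed isomorphism. I expect this last combinatorial matching of index sets to be the only point needing care, the cohomological substance being carried entirely by Proposition~\ref{pp} and Lemma~\ref{rn}.
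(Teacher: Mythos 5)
Your proposal is correct and follows essentially the same route as the paper: the paper's proof is precisely the observation that $W_{n,q,\mathrm{prim}}^{\vee}=W_{n,q}^{\vee}\setminus W_{n-1,q}^{\vee}$ combined with Lemma~\ref{rn}, which already locates $r_n^\ast H_{\rm c}^1(C_{P,n-1})$ as the partial sum over $\xi\in W_{n-1,q}^{\vee}$ inside the decomposition coming from Proposition~\ref{pp}. You simply spell out the intermediate steps (taking $H_{\rm c}^1(\mathbb{A}^1,-)$ of the pushforward and checking that the kernel of truncation is $\{0'\mathrm{s}\}\times\F_q$) that the paper leaves implicit.
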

\begin{proof}
The claim follows from $W_{n,q,\mathrm{prim}}^{\vee}=W_{n,q}^{\vee}
\setminus W_{n-1,q}^{\vee}$ and Lemma \ref{rn}. 
\end{proof}

We recall that $W_{n,q,\mathrm{prim}}^{\vee}$
denotes the set of characters $\xi$ 
of $W_{n,q}$ such that $\xi|_{\{0'\mathrm{s}\} \times \F_q}\neq 1$.  
\begin{lemma}\label{chd}
Let ${}^0 \xi \in W_{n,p}^{\vee}$
be a faithful character and let 
$\xi \in W_{n,q,\rm{prim}}^{\vee}$. 
Choose $a=(a_0,\ldots,a_{n-1}) \in W_{n,q}$
such that $\xi=\xi_a$
in the notation of Lemma \ref{ch}.  
 Let ${}^0 \psi:={}^0 \xi|_{\{0'\mathrm{s}\} \times \F_p}$ and $\psi:= {}^0 \psi \circ \Tr_{q/p}$. 
 Let $a':=a+(0'\mathrm{s},\alpha^{p^{n-1}})$. 
 Then $\xi_{a'} \in W_{n,q,\mathrm{prim}}^{\vee}$ and 
 \[
 \mathscr{L}_{\psi}(\alpha x) \otimes \mathscr{L}_{\xi}(x,0'\mathrm{s})
 \simeq \mathscr{L}_{{}^0 \xi}(a' \cdot (x,0'\mathrm{s})). 
 \]
\end{lemma}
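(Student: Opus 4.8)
The plan is to deduce both assertions from the functoriality of Artin--Schreier--Witt sheaves under additive endomorphisms of $\mathbb{W}_{n,q}$, combined with distributivity of Witt multiplication. Throughout I write $\xi^{\#}:={}^0\xi\circ\Tr_{q/p}\in W_{n,q}^{\vee}$, interpreting $\mathscr{L}_{{}^0\xi}(h)$ as $\mathscr{L}_{\xi^{\#}}(h)$ for a $\mathbb{W}_{n,q}$-valued function $h$, in keeping with the convention already used (e.g.\ in Lemmas~\ref{ch} and~\ref{coho}); recall that $\xi=\xi_a$ means $\xi(u)={}^0\xi(\Tr_{q/p}(au))=\xi^{\#}(a\cdot u)$. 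The first claim is immediate: Witt addition is ordinary addition on the zeroth coordinate, so $a'=a+(0'\mathrm{s},\alpha^{p^{n-1}})$ has the same zeroth component $a_0$ as $a$. Since $\xi=\xi_a$ is primitive, Lemma~\ref{ch} forces $a_0\neq0$, hence $a'$ also has nonzero zeroth component and $\xi_{a'}$ is again primitive.

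For the isomorphism, the first step is the pullback formula $\mathscr{L}_{\xi_a}(f)\simeq\mathscr{L}_{{}^0\xi}(a\cdot f)$ for any $f\colon X\to\mathbb{W}_{n,q}$. Let $m_a\colon\mathbb{W}_{n,q}\to\mathbb{W}_{n,q}$, $u\mapsto a\cdot u$, be multiplication by $a$; it is an additive morphism, and because $a\in W_{n,q}$ is fixed by the $q$-power Frobenius $F^{f}$ (notation of Lemma~\ref{cyclic}) it commutes with the Lang torsor $L_q=F^{f}-\mathrm{id}$. Thus $m_a$ lifts to a $W_{n,q}$-equivariant self-map of the torsor $L_q$, acting on the Galois group by $m_a$ as well, so one reads off
\[
m_a^{\ast}\mathscr{L}_{\xi^{\#}}\simeq\mathscr{L}_{\xi^{\#}\circ m_a}=\mathscr{L}_{\xi_a},
\]
using $\xi^{\#}\circ m_a=\xi_a$. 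Pulling back along $f$ gives the formula, and taking $f=(x,0'\mathrm{s})$ identifies $\mathscr{L}_{\xi}(x,0'\mathrm{s})$ with $\mathscr{L}_{{}^0\xi}(a\cdot(x,0'\mathrm{s}))$.

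The second step rewrites the remaining Artin--Schreier factor as the top Witt component. By distributivity, $a'\cdot(x,0'\mathrm{s})=a\cdot(x,0'\mathrm{s})+(0'\mathrm{s},\alpha^{p^{n-1}})\cdot(x,0'\mathrm{s})$, and since $w_i(x,0'\mathrm{s})=x^{p^{i}}$ the ghost components multiply to give $b\cdot(x,0'\mathrm{s})=(b_0x,b_1x^{p},\dots,b_{n-1}x^{p^{n-1}})$; hence the second summand equals $(0'\mathrm{s},(\alpha x)^{p^{n-1}})=V^{n-1}((\alpha x)^{p^{n-1}})$. As $V^{n-1}$ is additive and commutes with $F^{f}$, the same argument yields $(V^{n-1})^{\ast}\mathscr{L}_{\xi^{\#}}\simeq\mathscr{L}_{\xi^{\#}\circ V^{n-1}}$, where $\xi^{\#}(0'\mathrm{s},c)={}^0\xi(0'\mathrm{s},\Tr_{q/p}(c))={}^0\psi(\Tr_{q/p}(c))=\psi(c)$ by the definition ${}^0\psi={}^0\xi|_{\{0'\mathrm{s}\}\times\F_p}$. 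Therefore $\mathscr{L}_{{}^0\xi}((0'\mathrm{s},(\alpha x)^{p^{n-1}}))\simeq\mathscr{L}_{\psi}((\alpha x)^{p^{n-1}})$, and since $\Tr_{q/p}(y^{p})=\Tr_{q/p}(y)$ implies $\mathscr{L}_{\psi}(y^{p})\simeq\mathscr{L}_{\psi}(y)$, this is $\simeq\mathscr{L}_{\psi}(\alpha x)$. Combining with additivity of $\mathscr{L}_{\xi^{\#}}$ under Witt sums gives
\[
\mathscr{L}_{{}^0\xi}(a'\cdot(x,0'\mathrm{s}))\simeq\mathscr{L}_{{}^0\xi}(a\cdot(x,0'\mathrm{s}))\otimes\mathscr{L}_{\psi}(\alpha x)\simeq\mathscr{L}_{\xi}(x,0'\mathrm{s})\otimes\mathscr{L}_{\psi}(\alpha x),
\]
which is the assertion.

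The main obstacle is the careful justification of the two pullback isomorphisms $m_a^{\ast}\mathscr{L}_{\xi^{\#}}\simeq\mathscr{L}_{\xi_a}$ and $(V^{n-1})^{\ast}\mathscr{L}_{\xi^{\#}}\simeq\mathscr{L}_{\psi}$: each rests on the chosen map commuting with the Lang torsor $L_q$ — which uses respectively that $a$ is $\F_q$-rational and that Verschiebung commutes with Frobenius — and on tracking the induced action on the Galois group $W_{n,q}$ in order to read off the correct resulting character. The remaining ingredients, namely distributivity, the ghost-component product rule, and the Frobenius-invariance reduction $\mathscr{L}_{\psi}(y^{p^{n-1}})\simeq\mathscr{L}_{\psi}(y)$, are routine.
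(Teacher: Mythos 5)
Your proof is correct and follows essentially the same route as the paper: both express $\mathscr{L}_{\psi}(\alpha x)$ as $\mathscr{L}_{{}^0\xi}\bigl((0'\mathrm{s},\alpha^{p^{n-1}})\cdot(x,0'\mathrm{s})\bigr)$ via the product formula $b\cdot(x,0'\mathrm{s})=(b_0x,\dots,b_{n-1}x^{p^{n-1}})$, combine with $\mathscr{L}_{\xi}(x,0'\mathrm{s})\simeq\mathscr{L}_{{}^0\xi}(a\cdot(x,0'\mathrm{s}))$ by additivity and distributivity, and deduce primitivity of $\xi_{a'}$ from $a'_0=a_0\neq0$. The only difference is that you spell out the torsor-functoriality justifications (commutation of $m_a$ and $V^{n-1}$ with the Lang isogeny) that the paper leaves implicit.
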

\begin{proof}
We recall
\[
a \cdot (x,0'\mathrm{s})=
(a_0x,a_1x^p,\ldots,a_{n-1} x^{p^{n-1}}). 
\]

We have 
\begin{align*}
\mathscr{L}_{\psi}(\alpha x) \otimes \mathscr{L}_{\xi}(x,0'\mathrm{s}) &\simeq 
\mathscr{L}_{{}^0 \xi}(0'\mathrm{s},(\alpha x)^{p^{n-1}}) \otimes 
\mathscr{L}_{{}^0 \xi}(a \cdot (x,0'\mathrm{s})) \\
&\simeq 
\mathscr{L}_{{}^0 \xi}((0'\mathrm{s},\alpha^{p^{n-1}}) \cdot  (x,0'\mathrm{s})) \otimes 
\mathscr{L}_{{}^0 \xi}(a \cdot (x,0'\mathrm{s}))\\
&\simeq \mathscr{L}_{{}^0 \xi}(a' \cdot (x,0'\mathrm{s})). 
\end{align*}
We write $a'=(a'_0,\ldots,a'_{n-1})$.  
By $a'_0=a_0 \neq 0$, we have $\xi_{a'} \in W_{n,q,\mathrm{prim}}^{\vee}$
by Lemma \ref{ch}. 
\end{proof}

\begin{corollary}\label{c42}
Let $V_n$ be as in Corollary \ref{rnc}. 
\begin{itemize}
\item[{\rm (1)}]
We have $\dim V_n= q^{n-1}(p^{n-1}-1)(q-1)\deg P$.
\item[{\rm (2)}] The curve $\overline{C}_{n,P}$ is geometrically connected. 
\item[{\rm (3)}] 
The natural map 
$H_{\rm c}^1(C_{P,n}) \to H^1(\overline{C}_{P,n})$
is an isomorphism.
We have 
\[
g(\overline{C}_{P,n})=\deg P \cdot \frac{q-1}{2} \sum_{i=1}^n q^{i-1}(p^{i-1}-1)
=\deg P \cdot g(\overline{C}_n). 
\]
\end{itemize}  
\end{corollary}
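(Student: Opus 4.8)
The plan is to treat the three assertions in turn, using the character decomposition of Proposition~\ref{pp} and Corollary~\ref{rnc} as the common backbone, and I will prove them in the order (1), (2), (3) so that connectedness is available when I compute the genus.

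For (1), I would start from the isomorphism of Corollary~\ref{rnc}, which writes $V_n$ as a direct sum of the groups $H^1_{\mathrm c}(\mathbb{A}^1,\mathscr{L}_{\psi}(\alpha x)\otimes\mathscr{L}_{\xi}(x,0'\mathrm{s}))$ indexed by $(\xi,\alpha)\in W_{n,q,\mathrm{prim}}^{\vee}\times K_{P,\psi}$. By Lemma~\ref{chd} each tensor product is isomorphic to some $\mathscr{L}_{\xi_{a'}}(x,0'\mathrm{s})$ with $\xi_{a'}\in W_{n,q,\mathrm{prim}}^{\vee}$, so Lemma~\ref{lemma:Brylinski} (with $m=1$) gives $\dim H^1_{\mathrm c}(\mathbb{A}^1,\mathscr{L}_{\psi}(\alpha x)\otimes\mathscr{L}_{\xi}(x,0'\mathrm{s}))=p^{n-1}-1$ uniformly. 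It then remains only to count indices: $\#K_{P,\psi}=\#K_P=\deg P$ since $P$ is separable, and $\#W_{n,q,\mathrm{prim}}^{\vee}=(q-1)q^{n-1}$ by Lemma~\ref{ch} (the primitive characters being the $\xi_a$ with $a_0\neq0$). Multiplying yields $\dim V_n=q^{n-1}(p^{n-1}-1)(q-1)\deg P$.

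For (2), I would prove by induction on $n$ the equivalent statement that every nontrivial $\chi\in W_{P,n}^{\vee}$ has $\mathscr{L}_{\chi}$ geometrically nontrivial; since $\pi_n\circ\varphi_n\colon C_{P,n}\to\mathbb{A}^1$ is Galois with abelian group $W_{P,n}$ over the geometrically connected base $\mathbb{A}^1$, and $W_{P,n}\simeq K_P\times W_{n,q}$ is abelian by Lemma~\ref{product}, this is equivalent to geometric connectedness. The base case $n=1$ is $C_{P,1}\cong\mathbb{A}^1$, where every nontrivial character is Artin--Schreier and hence wildly ramified at $\infty$. For the inductive step I would split the characters $(\alpha,\xi)$ of Proposition~\ref{pp} according to whether $\xi$ is primitive: if $\xi\in W_{n,q,\mathrm{prim}}^{\vee}$, Lemma~\ref{chd} identifies $\mathscr{L}_{\psi}(\alpha x)\otimes\mathscr{L}_{\xi}(x,0'\mathrm{s})$ with a primitive $\mathscr{L}_{\xi_{a'}}(x,0'\mathrm{s})$, which has $\mathrm{Swan}_{\infty}=p^{n-1}>0$ and so is geometrically nontrivial; if instead $\xi\in W_{n-1,q}^{\vee}$, then $\mathscr{L}_{\xi}(x,0'\mathrm{s})$ descends through the level $n-1$ construction, so $\mathscr{L}_{\chi}$ coincides with the sheaf attached to $(\alpha,\xi)\in W_{P,n-1}^{\vee}$ and the inductive hypothesis applies (using $C_{P,n}/\F_q\simeq C_{P,n-1}$ as in Lemma~\ref{rn}).

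For (3), I would first argue, exactly as in the proof of Corollary~\ref{genus}, that the forgetful map $H^1_{\mathrm c}(C_{P,n})\to H^1(C_{P,n})$ is an isomorphism: by part (2) every nontrivial summand $\mathscr{L}_{\psi}(\alpha x)\otimes\mathscr{L}_{\xi}(x,0'\mathrm{s})$ is geometrically nontrivial, hence wildly ramified at $\infty$ (the only ramification available on $\mathbb{A}^1$), so each forgetful map on a summand is an isomorphism while the trivial summand contributes $0$ on both sides. As the image of the forgetful map is $H^1(\overline{C}_{P,n})$, this yields $H^1_{\mathrm c}(C_{P,n})\xrightarrow{\sim}H^1(\overline{C}_{P,n})$, and with $\overline{C}_{P,n}$ connected by (2) the genus equals $\tfrac12\dim H^1_{\mathrm c}(C_{P,n})=\tfrac12\sum_{j=1}^{n}\dim V_j$, the telescoping using the injections $r_j^{*}$ and $H^1_{\mathrm c}(C_{P,1})=0$. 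Substituting part (1) and reindexing $i\mapsto i-1$ against Corollary~\ref{genus} for $\overline{C}_n$ gives both $\deg P\cdot\frac{q-1}{2}\sum_{i=1}^n q^{i-1}(p^{i-1}-1)$ and its identification with $\deg P\cdot g(\overline{C}_n)$. The step I expect to be the main obstacle is the inductive case of (2): one must verify that for $\xi\in W_{n-1,q}^{\vee}$ the sheaf $\mathscr{L}_{\xi}(x,0'\mathrm{s})$ is genuinely the pullback of the level $n-1$ sheaf, so that all the new wild ramification is carried by the primitive characters, where Lemma~\ref{chd} controls it cleanly; this is what allows the induction to bypass any delicate cancellation analysis in the small Swan conductor range.
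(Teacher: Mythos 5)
Your proposal is correct, and for parts (1) and (3) it follows essentially the same path as the paper: Corollary~\ref{rnc} together with Lemmas~\ref{chd} and~\ref{lemma:Brylinski} for the dimension count, ramification at $\infty$ of every nontrivial rank-one summand for the forgetful map $H_{\rm c}^1\to H^1$, and the telescoping $2g=\sum_i\dim V_i$ for the genus. The only real divergence is in (2): the paper proves geometric connectedness by showing $\dim H_{\rm c}^2(C_{P,n})=1$ via Poincar\'e duality, with an explicit base case at $n=2$ where one checks that among the sheaves $\mathscr{L}_{\psi}((\alpha+b)x)$ exactly one pair $(\alpha,b)$ yields the trivial sheaf, whereas you use the equivalent monodromy-surjectivity criterion for the abelian covering $C_{P,n}\to\mathbb{A}^1_{\F_q}$ and push the induction down to the base case $C_{P,1}\simeq\mathbb{A}^1_{\F_q}$. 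Since $H_{\rm c}^2(\mathbb{A}^1,\mathscr{L})=0$ precisely when the rank-one sheaf $\mathscr{L}$ is geometrically nontrivial, the two formulations are interchangeable; your version avoids the explicit $n=2$ bookkeeping, at the cost of having to justify that the non-primitive summands genuinely descend to the level-$(n-1)$ covering --- which is exactly the content of Lemma~\ref{rn} (via $C_{P,n}/\F_q\simeq C_{P,n-1}$) and is therefore available. The step you flag as the main obstacle is thus already covered, and the rest of the argument goes through as you describe.
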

\begin{proof}
Let $\xi \in W_{n,q,\mathrm{prim}}^{\vee}$ and $\alpha \in K_{P,\psi}$. 
We simply write $\mathscr{L}_{\xi,\alpha}$ for 
$\mathscr{L}_{\psi}(\alpha x) \otimes \mathscr{L}_{\xi}(x,0'\mathrm{s})$. 
We show (1). 
The claim follows from Corollary \ref{rnc} and 
\[
\dim H_{\rm c}^1(\mathbb{A}^1,\mathscr{L}_{\xi,\alpha})=
p^{n-1}-1, \quad \# W_{n,q,\mathrm{prim}}^{\vee}=q^{n-1}(q-1), \quad 
\# K_{P,\psi}=\deg P,
\]
where the first equality follows from Lemmas \ref{lemma:Brylinski} and 
\ref{chd}. 

We show (2). 
By Poincar\'e duality, it suffices to show that $\dim H_{\rm c}^2(C_{P,n})=1$. 
We show this by induction on $n$.  
If $\xi\in W_{n,q,\mathrm{prim}}^{\vee}$, 
then $\mathscr{L}_{\xi,\alpha}$ is ramified, and 
$H_{\rm c}^2(\mathbb{A}^1,\mathscr{L}_{\xi,\alpha}) = 0$.
Assume $n=2$. 
By Proposition \ref{pp}, 
\begin{align*}
H_{\rm c}^2(C_{P,2})
&\simeq \bigoplus_{\xi \in W_{2,q}^{\vee},\ \alpha \in K_{P,\psi}} H_{\rm c}^2(\mathbb{A}^1,\mathscr{L}_{\xi,\alpha}) \\
&\simeq \bigoplus_{\alpha \in K_{P,\psi},\, b \in \F_q} H_{\rm c}^2(\mathbb{A}^1,\mathscr{L}_{\psi}((\alpha+b) x))\simeq 
H_{\rm c}^2(\mathbb{A}^1,\overline{\mathbb{Q}}_{\ell})
\simeq \overline{\mathbb{Q}}_{\ell}(-1). 
\end{align*}
Assume $n \ge 3$. 
By (1) and 
$H_{\rm c}^2(\mathbb{A}^1, \mathscr{L}_{\xi,\alpha})=0$ for $\xi\in W_{n,q,\mathrm{prim}}^{\vee}$, 
we have an isomorphism $H_{\rm c}^2(C_{P,n-1}) \xrightarrow{\sim} H_{\rm c}^2(C_{P,n})$. Thus the claim 
follows from the induction hypothesis. 

We show (3). 
By induction on $n$, 
we show that 
the forgetful map 
\[
H_{\rm c}^1(C_{P,n}) \to H^1(C_{P,n}) 
\]
 is an isomorphism.

The forgetful map
\[
H_{\rm c}^1(\mathbb{A}^1,\mathscr{L}_{\xi,\alpha}) \to 
H^1(\mathbb{A}^1,\mathscr{L}_{\xi,\alpha})
\]
is an isomorphism for $a \in K_{P,\psi}$
and $\xi \in W_{n,q,\mathrm{prim}}^{\vee}$. 
The claim for $n=2$ follows from this. 
By the induction hypothesis, the map 
\[
H_{\rm c}^1(C_{P,n}) \to H^1(C_{P,n}) 
\]
is an isomorphism. 

Hence we obtain an isomorphism $H_{\rm c}^1(C_{P,n}) 
\xrightarrow{\sim} H^1(\overline{C}_{P,n})$. 
The latter claim follows from (1) and 
\[
2g(\overline{C}_{P,n})=\dim H_{\rm c}^1(C_{P,n})=\sum_{i=1}^n \dim  V_i. 
\]
\end{proof}
\begin{corollary}\label{cla}
\begin{itemize}
\item[{\rm (1)}]
We define 
\[
L_{V_n/\F_{q^s}}(T):=\det(1-\mathrm{Fr}_{q^s}^\ast T; V_n). 
\]
Then we have 
\[
L_{\overline{C}_{P,n}/\F_{q^s}}(T)=
\prod_{i=2}^n L_{V_i/\F_{q^s}}(T)
\]
and 
\[
L_{V_n/\F_{q^s}}(T)=
\prod_{\xi \in W_{n,q,\rm prim}^{\vee}} \prod_{\alpha \in K_{P,\psi}}
\det(1-\mathrm{Fr}_{q^s}^\ast T; 
H_{\rm c}^1(\mathbb{A}^1,\mathscr{L}_{\psi}(\alpha x) \otimes \mathscr{L}_{\xi}(x,0))). 
\]
\item[{\rm (2)}] 
Assume that the set of 
the Frobenius slopes of $V_n$ with respect to $\mathrm{Fr}_q^\ast$ are 
\[
\left\{\frac{1}{p^{n-1}},\ \ldots,\ 
\frac{p^{n-1}-1}{p^{n-1}}\right\}
\]
for any $n$.  
Then the set of the 
Frobenius slopes of $\overline{C}_{P,n}$ with respect to 
$\mathrm{Fr}_q^\ast$ are 
\[
\left\{\frac{j}{p^i} \mid 1 \leq i \leq n-1,\ 1 \le j \leq p^i-1\right\}.
\]
\end{itemize}
\end{corollary}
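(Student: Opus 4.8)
The plan is to deduce both assertions formally from the filtration of $H^1_{\rm c}(C_{P,n})$ by the images of the maps $r_i^\ast$, combined with the decomposition of each graded piece established in Corollary~\ref{rnc}.

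First I would prove (1). Since $C_{P,1}\simeq\mathbb{A}^1_{\F_q}$, we have $H_{\rm c}^1(C_{P,1})=0$, and by Lemma~\ref{rn} the maps $r_i^\ast\colon H_{\rm c}^1(C_{P,i-1})\hookrightarrow H_{\rm c}^1(C_{P,i})$ are $\mathrm{Fr}_{q^s}^\ast$-equivariant injections. They assemble into a $\mathrm{Fr}_{q^s}^\ast$-stable filtration
\[
0=H_{\rm c}^1(C_{P,1})\subseteq H_{\rm c}^1(C_{P,2})\subseteq\cdots\subseteq H_{\rm c}^1(C_{P,n}),
\]
whose successive quotients are exactly $V_2,\dots,V_n$. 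By Corollary~\ref{c42}(3) the forgetful map $H_{\rm c}^1(C_{P,n})\xrightarrow{\sim}H^1(\overline{C}_{P,n})$ is an isomorphism of Frobenius modules, so $L_{\overline{C}_{P,n}/\F_{q^s}}(T)=\det(1-\mathrm{Fr}_{q^s}^\ast T;H_{\rm c}^1(C_{P,n}))$. Because the determinant of $1-\mathrm{Fr}_{q^s}^\ast T$ on a filtered space is the product of the determinants induced on the graded pieces, this factors as $\prod_{i=2}^{n}\det(1-\mathrm{Fr}_{q^s}^\ast T;V_i)=\prod_{i=2}^{n}L_{V_i/\F_{q^s}}(T)$, giving the first formula.

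For the second formula I would feed the decomposition of Corollary~\ref{rnc} into the determinant. That isomorphism is defined over $\F_{q^s}$ (this is precisely why $K_{P,\psi}\subset\F_{q^s}$ was arranged), hence is $\mathrm{Fr}_{q^s}^\ast$-equivariant; taking $\det(1-\mathrm{Fr}_{q^s}^\ast T;-)$ of the direct sum turns it into the claimed product over $\xi\in W_{n,q,\mathrm{prim}}^\vee$ and $\alpha\in K_{P,\psi}$ of the local factors attached to $\mathscr{L}_{\psi}(\alpha x)\otimes\mathscr{L}_{\xi}(x,0'\mathrm{s})$. To prove (2) I would pass from characteristic polynomials to Newton polygons. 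The multiset of Frobenius slopes (normalized by $\log_q$) of a module carrying a Frobenius-stable filtration is the disjoint union of the slope multisets of the graded pieces, since by (1) its characteristic polynomial is the product of those of the $V_i$ and slopes are read off from the $p$-adic valuations of the roots; moreover these normalized slopes are insensitive to replacing $q$ by $q^s$ (an eigenvalue $\alpha$ of $\mathrm{Fr}_q^\ast$ becomes $\alpha^s$ for $\mathrm{Fr}_{q^s}^\ast$, leaving $v_p(\alpha)/f$ unchanged), so the computation over $\F_{q^s}$ is compatible with slopes taken with respect to $\mathrm{Fr}_q^\ast$. Using again $H_{\rm c}^1(C_{P,n})\simeq H^1(\overline{C}_{P,n})$, the set of slopes of $\overline{C}_{P,n}$ is thus the union over $2\le i\le n$ of the slope sets of $V_i$. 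By hypothesis the slope set of $V_i$ is $\{\,j/p^{i-1}\mid 1\le j\le p^{i-1}-1\,\}$, and re-indexing by $i-1$ gives
\[
\bigcup_{i=2}^{n}\Bigl\{\tfrac{j}{p^{i-1}}\ \Big|\ 1\le j\le p^{i-1}-1\Bigr\}
=\Bigl\{\tfrac{j}{p^{i}}\ \Big|\ 1\le i\le n-1,\ 1\le j\le p^{i}-1\Bigr\},
\]
as claimed.

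The argument is essentially an assembly of the earlier structural results, so there is no deep new obstacle; the only points requiring care are checking that the filtration maps $r_i^\ast$ and the decomposition of Corollary~\ref{rnc} are genuinely Frobenius-equivariant, so that the determinants and slope multisets really split along the filtration, and verifying the base-change invariance of the normalized slopes, which reconciles the use of $\F_{q^s}$ in (1) with the slopes taken with respect to $\mathrm{Fr}_q^\ast$ in (2).
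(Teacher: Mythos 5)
Your proposal is correct and follows essentially the same route the paper intends: part (1) is the factorization of $\det(1-\mathrm{Fr}_{q^s}^\ast T;H_{\rm c}^1(C_{P,n}))$ along the Frobenius-stable filtration with graded pieces $V_i$ (starting from $H_{\rm c}^1(C_{P,1})=0$) together with the decomposition of Corollary~\ref{rnc}, and part (2) is the corresponding union of slope multisets combined with Corollary~\ref{c42}(3). The paper compresses this into two sentences; your write-up supplies exactly the omitted bookkeeping, including the re-indexing of the slope sets and the invariance of normalized slopes under passing from $\mathrm{Fr}_q^\ast$ to $\mathrm{Fr}_{q^s}^\ast$.
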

\begin{proof}
The claim (1) is a consequence of Corollary  \ref{rnc}. 

The claim (2) follows from the assumption and Corollary \ref{c42}.  
\end{proof}
\begin{remark}\label{Cliu}
We consider Lemma \ref{chd}. 
We have 
\begin{equation}\label{important}
\mathscr{L}_{{}^0 \xi}(a' \cdot (x,0'\mathrm{s}))=\mathscr{L}_{{}^0 \xi}(a'_0x, \ldots,a'^{p^{-(n-1)}}_{n-1} x)
=\mathscr{L}_{{}^0 \xi}\left(\sum_{i=0}^{n-1} V^i\left([a'^{p^{-i}}_i x]\right)\right), 
\end{equation}
where $[-] \colon \mathbb{A}^1_{\F_q} \to \mathbb{W}_{n,q}$
denotes the Teichm\"uller lift and $V \colon \mathbb{W}_{n,q} \to 
\mathbb{W}_{n,q}$ is the Verschiebung (cf.\ \cite[the proof of Lemma 3.1]{Ka0a}). 
By $a'_0 \neq 0$ and \eqref{important}, the Newton polygon of the polynomial 
\[
\det(1-\mathrm{Fr}_{q^s}^\ast T; 
H_{\rm c}^1(\mathbb{A}^1,\mathscr{L}_{\psi}(\alpha x) \otimes \mathscr{L}_{\xi}(x,0)))
\]
is determined 
if \cite[Theorem 1.3]{Li} is correct.
If \cite[Theorem 1.3]{Li} holds 
for general $q$, the assumption in (2) in Corollary \ref{cla} is satisfied
and the Frobenius slopes of $\overline{C}_P$ 
are determined.

Although \cite[Theorem~1.3]{Li} is stated for arbitrary $q$, the arguments in the proof of \cite[Proposition~5.1]{Li} seem to be valid \emph{only} in the case $q=p$.
In particular, the precise Newton polygon information required in \cite[Theorem~1.3]{Li} cannot, in general, be deduced from \cite[Proposition~5.2]{Li}.
Indeed, when $q>p$, the valuations of the coefficients of the reciprocal characteristic polynomial of the matrix appearing in the last paragraph of \cite[p.~281]{Li} cannot, in general, be controlled using \cite[Proposition~5.2]{Li} alone.
\end{remark}
\begin{corollary}
Assume that $p$ is odd. 
The curve $\overline{C}_{P,n}$ is not supersingular. 
If $q=p$ and a smooth projective curve $X$ is a finite quotient of $\overline{C}_{P,n}$, the curve $X$ is not 
supersingular. 
\end{corollary}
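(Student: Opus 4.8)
The plan is to deduce both assertions from a single principle: a finite surjective morphism $f\colon Y\to X$ of smooth projective curves over a finite field induces a Frobenius-equivariant injection $f^{*}\colon H^{1}(X)\hookrightarrow H^{1}(Y)$, so that the multiset of Frobenius slopes of $X$ is contained in that of $Y$. In particular, if $X$ has positive genus and $Y$ has no slope equal to $1/2$, then neither does $X$, so $X$ is not supersingular. I would organize the proof around verifying the relevant slope facts and then applying this principle in the appropriate direction.

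First I would handle the assertion that $\overline{C}_{P,n}$ is not supersingular, for arbitrary $q=p^{f}$ with $p$ odd. The case $n=1$ is trivial, since $\overline{C}_{P,1}\cong\mathbb{P}^{1}$ has genus zero, so assume $n\ge 2$. The finite surjective morphism $\varphi_{n}\colon C_{P,n}\to C_{n}=C_{1,n,p,q}$ extends to a finite morphism $\overline{C}_{P,n}\to\overline{C}_{n}$ of smooth compactifications, giving a Frobenius-equivariant injection $H^{1}(\overline{C}_{n})\hookrightarrow H^{1}(\overline{C}_{P,n})$. By Corollary~\ref{nss} the curve $\overline{C}_{n}=\overline{C}_{1,n,p,q}$ is not supersingular for odd $p$, hence carries a slope $\neq 1/2$; this slope then appears in $H^{1}(\overline{C}_{P,n})$, so $\overline{C}_{P,n}$ is not supersingular.

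For the second assertion I would first show that, when $q=p$, the curve $\overline{C}_{P,n}$ has \emph{no} Frobenius slope equal to $1/2$. Since $q=p$, Liu's theorem \cite[Theorem~1.3]{Li} is applicable (cf.\ Remark~\ref{Cliu}), so the slope hypothesis of Corollary~\ref{cla}(2) is satisfied; consequently the Frobenius slopes of $\overline{C}_{P,n}$ over $\F_{p}$ are exactly $\{\,j/p^{i}\mid 1\le i\le n-1,\ 1\le j\le p^{i}-1\,\}$. As $p$ is odd, each $p^{i}$ is odd, so $j/p^{i}=1/2$ would force $2j=p^{i}$, which is impossible; hence $1/2$ is not a slope of $\overline{C}_{P,n}$. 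Now if a smooth projective curve $X$ is a finite quotient of $\overline{C}_{P,n}$, via a finite surjective $h\colon\overline{C}_{P,n}\to X$, then $h^{*}\colon H^{1}(X)\hookrightarrow H^{1}(\overline{C}_{P,n})$ is Frobenius-equivariant, so the slopes of $X$ form a subset of those of $\overline{C}_{P,n}$, none equal to $1/2$. If $g(X)\ge 1$ then $H^{1}(X)\neq 0$ and $X$ carries a slope $\neq 1/2$, so $X$ is not supersingular (the case $g(X)=0$ being vacuous).

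The hard part will be the second assertion, and specifically the step that $\overline{C}_{P,n}$ has no slope $1/2$ at all. Knowing merely that $\overline{C}_{P,n}$ is non-supersingular is not enough: a finite quotient sees only a Frobenius-subrepresentation of $H^{1}(\overline{C}_{P,n})$, which could a priori isolate a slope-$1/2$ part. This is exactly why the hypothesis $q=p$ is imposed—it is the regime in which Liu's Newton-polygon computation is valid (see Remark~\ref{Cliu}), pinning down every slope as $j/p^{i}$ and letting the parity of $p$ exclude $1/2$. Once the full slope list is available, the passage to quotients is formal.
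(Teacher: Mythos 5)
Your proof is correct and follows essentially the same route as the paper: the first assertion is deduced from the finite map $\overline{C}_{P,n}\to\overline{C}_{n}$ together with Corollary~\ref{nss}, and the second from the explicit slope list of Corollary~\ref{cla}(2) (valid for $q=p$ via Liu's theorem, cf.\ Remark~\ref{Cliu}), noting that $j/p^{i}\neq 1/2$ when $p$ is odd. You are in fact slightly more careful than the paper in spelling out the injectivity of pullback on $H^{1}$ and the degenerate cases $n=1$ and $g(X)=0$.
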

\begin{proof}
The curve $\overline{C}_{P,n}$ admits a finite quotient $\overline{C}_n$. 
Since $\overline{C}_n$ is not supersingular by Corollary \ref{nss}, the curve $\overline{C}_{P,n}$ cannot be 
supersingular. 

If $X$ is supersingular, the Frobenius slope of $X$ is $1/2$. 
Thus the latter claim follows from Corollary \ref{cla}(3). 
\end{proof}

\begin{corollary}%\label{mono}
Assume that $q=p$ and $p$ is odd. 
Let $\xi \in W_{n,q,\rm prim}^{\vee}$ and $\psi \in \F_p^{\vee} \setminus \{1\}$. 
Let $\pi \colon \mathbb{A}^2_{\F_q} \to \mathbb{A}^1_{\F_q},\ (x,t) \mapsto x$ 
and $\pi' \colon \mathbb{A}^2_{\F_q} \to \mathbb{A}^1_{\F_q},\ (x,t) \mapsto t$. 
Let 
\[
\mathscr{F}_{\xi,\psi}
:=R^1\pi'_!(\pi^\ast\mathscr{L}_{\xi}(x,0'\mathrm{s}) \otimes 
\mathscr{L}_{\psi}(tx)) \bigl(\tfrac12\bigr). 
\]  
Then $\mathscr{F}_{\xi,\psi}$ does not have finite monodromy. 
\end{corollary}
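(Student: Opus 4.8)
The plan is to compute the fibres of $\mathscr{F}_{\xi,\psi}$ by proper base change, to rewrite each fibre as a \emph{primitive} Gauss--Heilbronn summand via Lemma~\ref{chd}, and then to show that finite monodromy would force that summand to be supersingular, contradicting the valuation of the corresponding sum. First I would apply the proper base change theorem to $\pi'$: for every $\alpha\in\F_q$ the fibre of $\mathscr{F}_{\xi,\psi}$ at $t=\alpha$ is
\[
(\mathscr{F}_{\xi,\psi})_{\bar\alpha}\simeq H_{\rm c}^1\!\left(\mathbb{A}^1,\mathscr{L}_{\xi}(x,0'\mathrm{s})\otimes\mathscr{L}_{\psi}(\alpha x)\right)\!\bigl(\tfrac12\bigr).
\]
By Lemma~\ref{lemma:Brylinski} each of these groups has dimension $p^{n-1}-1$, independent of $\alpha$, while $H_{\rm c}^0$ and $H_{\rm c}^2$ vanish; hence the Euler characteristic is constant and $\mathscr{F}_{\xi,\psi}$ is lisse on all of $\mathbb{A}^1_{\F_q}$, with formation of fibres commuting with base change. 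Writing $\xi=\xi_a$ as in Lemma~\ref{chd} and putting $a':=a+(0'\mathrm{s},\alpha^{p^{n-1}})$, that lemma identifies the fibre with $H_{\rm c}^1(\mathbb{A}^1,\mathscr{L}_{\xi_{a'}}(x,0'\mathrm{s}))(\tfrac12)$, where $\xi_{a'}\in W_{n,q,\mathrm{prim}}^{\vee}$ because $a'_0=a_0\neq0$.

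Next I would invoke the weight formalism. Since $\mathscr{L}_{\xi_{a'}}(x,0'\mathrm{s})$ is pure of weight $0$, its $H_{\rm c}^1$ is pure of weight $1$, so each fibre of $\mathscr{F}_{\xi,\psi}$ is pure of weight $0$ after the twist. Suppose, for contradiction, that $\mathscr{F}_{\xi,\psi}$ has finite arithmetic monodromy. Then $\rho_{\mathscr{F}_{\xi,\psi}}$ has finite image, so each geometric Frobenius acts on the fibre through an element of finite order and all its eigenvalues are roots of unity. Undoing the twist, the eigenvalues of $\Fr_q^{\ast}$ on $H_{\rm c}^1(\mathbb{A}^1,\mathscr{L}_{\xi_{a'}}(x,0'\mathrm{s}))$ are $\sqrt{q}$ times roots of unity; that is, this $\xi_{a'}$-summand of $H^1(\overline{C}_n)$ (cf.\ Lemma~\ref{ll}) is supersingular. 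By Lemma~\ref{trace}(1) the trace $-S_{\xi_{a'}}$ is the sum of these eigenvalues, which forces $v_{\mathfrak p}(S_{\xi_{a'}})\ge f/2$.

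Finally I would close the argument with the valuation of the Gauss--Heilbronn sum. For $n=2$ and $p=l=3$, Corollary~\ref{sl} gives $v_{\mathfrak p}(S_{\xi_{a'}})=f/3<f/2$, and for $n=2$ with general odd $p$ (and $q=p$) Proposition~\ref{vals} gives $v(S_{\xi_{a'}})=1/p<1/2$; either inequality contradicts $v_{\mathfrak p}(S_{\xi_{a'}})\ge f/2$, so $\mathscr{F}_{\xi,\psi}$ cannot have finite monodromy. For general $n$ the identical argument reduces the statement to the assertion that the order-$p^{n}$ Gauss--Heilbronn sum satisfies $v_{\mathfrak p}(S_{\xi})<f/2$, equivalently that the primitive piece $V_n$ of $H^1(\overline{C}_n)$ is not supersingular.

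The hard part is exactly this last input when $n\ge3$. The non-supersingularity results available here (Corollary~\ref{nss}, and the non-supersingularity of $\overline{C}_{P,n}$) only produce a Frobenius slope $\neq 1/2$ \emph{somewhere} in $H^1(\overline{C}_n)$, and through the reduction $\overline{C}_n\to\overline{C}_2$ of Lemma~\ref{ff} that slope lands in the lower pieces $V_2,\dots,V_{n-1}$ rather than in the primitive piece $V_n$ in which the $\xi_{a'}$-summand sits. Thus to run the argument uniformly in $n$ one needs the sharp valuation $v_{\mathfrak p}(S_{\xi})<f/2$ for \emph{faithful} characters of $W_{n,q}$; I would either cite the known valuation of such sums or establish it by a Stickelberger-type computation generalizing Proposition~\ref{vals}.
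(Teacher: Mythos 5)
Your reduction step coincides with the paper's: proper base change along $\pi'$ together with Lemma~\ref{chd} identifies each fibre $(\mathscr{F}_{\xi,\psi})_{\bar t}$ with $H_{\rm c}^1(\mathbb{A}^1,\mathscr{L}_{\xi'}(x,0'\mathrm{s}))\bigl(\tfrac12\bigr)$ for some primitive $\xi'$, and the implication ``finite arithmetic monodromy $\Rightarrow$ every Frobenius eigenvalue on every fibre is a root of unity'' is the right way to finish. Where you diverge is in how the eigenvalues are shown not to be roots of unity. The paper simply quotes Liu's computation of the Frobenius slopes of these primitive summands, \cite[Theorem~1.3]{Li}, which is applicable here precisely because the corollary assumes $q=p$ (cf.\ Remark~\ref{Cliu}); since for odd $p$ none of those slopes equals $1/2$, no twisted eigenvalue is a root of unity, for every $n$. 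Your substitute for $n=2$ --- Proposition~\ref{vals}(2) gives $v(S_{\xi'})=1/p<1/2$, contradicting the bound $v(S_{\xi'})\ge 1/2$ forced by all eigenvalues having valuation $1/2$ --- is correct and has the advantage of being self-contained within the paper rather than resting on an external citation.

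For $n\ge 3$, however, your argument has a genuine gap, which you candidly flag: you need that not all eigenvalues of $\Fr_p^{\ast}$ on $H_{\rm c}^1(\mathbb{A}^1,\mathscr{L}_{\xi'}(x,0'\mathrm{s}))$ have valuation $1/2$ when $\xi'$ is a faithful character of $W_{n,p}$, and nothing in the paper's toolkit outside of \cite[Theorem~1.3]{Li} delivers this. As you correctly observe, Proposition~\ref{vals} and Corollary~\ref{nss} only produce a slope $\neq 1/2$ \emph{somewhere} in $H^1(\overline{C}_{1,n,p,p})$, and the reduction of Lemma~\ref{ff} to $\overline{C}_{1,2,p,p}$ places that slope in a lower-level summand, not in the primitive piece where $\xi'$ lives. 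The missing input is exactly Liu's determination of the Newton polygon of the primitive pieces for all $n$ in the case $q=p$; without citing it, or reproving a Stickelberger-type valuation for faithful length-$n$ Witt sums generalizing Proposition~\ref{vals}, your proof establishes the corollary only for $n=2$.
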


\begin{proof}
%Take a faithful character ${}^0 \xi \in W_{n,p}^{\vee}$. Define ${}^0 \psi \in \F_p^{\vee}$ by ${}^0 \psi(x)={}^0 \xi(0'\mathrm{s},x)$ for $x \in \F_p$. Let $d \in \F_q^{\times}$ satisfy $\psi(x)={}^0 \psi \circ \Tr_{q/3}(dx)$. Write $\xi=\xi_a$ with $a=(a_0,\ldots,a_{n-1})$ as in Lemma \ref{ch}, let $c:=a^{-1}b^{1/3}$, and $t:=-cd^{-1}$. 
%For a morphism $f \colon X \to \mathbb{A}^1$, let $\mathscr{L}_{{}^0 \psi}(f(x))$ denote the pull-back of  $\mathscr{L}_{{}^0 \psi}(x)$ by $f$. 
Let $t \in \F_q$. 
By the proper base change theorem and Lemma \ref{chd}, we have
\[
(\mathscr{F}_{\xi,\psi})_{\bar{t}}
\simeq H_{\rm c}^1(\mathbb{A}^1,\mathscr{L}_{\xi}(x,0'\mathrm{s}) \otimes 
\mathscr{L}_{\psi}(tx))\bigl(\tfrac12\bigr) \simeq H_{\rm c}^1(\mathbb{A}^1,\mathscr{L}_{\xi'}(x,0'\mathrm{s}))\bigl(\tfrac12\bigr) 
\]
with some $\xi' \in W_{n,q,\mathrm{prim}}^{\vee}$. 
By \cite[Theorem 1.3]{Li}, the Frobenius eigenvalues of 
$\mathrm{Frob}_q^\ast$ on $(\mathscr{F}_{\xi,\psi})_{\bar{t}}$ are not a root of unity. 
The sheaf $\mathscr{L}_{\xi}(x,0'\mathrm{s})$ is geometrically irreducible, its determinant is arithmetically of finite order and pure of weight zero.  
Hence the claim follows (cf.\ \cite[Theorem 8.14.4]{Ka}). 
\end{proof}

Assume that $n=2$ and $p=l=3$. We simply write $W$ for $W_{2,q}$ as before. 
For $\xi \in W_{\mathrm{prim}}^{\vee}$ and $\alpha \in K_{P,\psi}$, let 
\[
S_{\xi,\alpha}:=\sum_{x \in \F_{q^s}} \psi(\Tr_{q^s/q}(\alpha x)) \xi(\Tr_{q^s/q}(x,0)). 
\]
Finally, we state our main theorem of this paper. 
\begin{corollary}\label{c43}
Assume $n=2$ and $p=l=3$. 
\begin{itemize}
\item[{\rm (1)}]
We have 
\[
L_{\overline{C}_{P,2}/\F_{q^s}}(T)=\prod_{\xi \in W_{\rm prim}^{\vee}} \prod_{a \in K_{P,\psi}}
(1+S_{\xi,\alpha} T+q^s T^2). 
\]
\item[{\rm (2)}] 
The Frobenius slopes of $\overline{C}_{P,2}$ with respect to $\mathrm{Fr}_q^\ast$ are 
$\{1/3,\, 2/3\}$. 
\end{itemize}
\end{corollary}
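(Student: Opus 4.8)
The plan is to reduce everything to the structural decompositions of Section~\ref{s4} and then transport Theorems~A and~B to the base field $\F_{q^s}$. Since $n=2$, the product in Corollary~\ref{cla}(1) consists of the single term $i=2$, so $L_{\overline{C}_{P,2}/\F_{q^s}}(T)=L_{V_2/\F_{q^s}}(T)$, and it suffices to understand each local factor
\[
\det\!\big(1-\mathrm{Fr}_{q^s}^\ast T;\,H_{\rm c}^1(\mathbb{A}^1,\mathscr{L}_\psi(\alpha x)\otimes\mathscr{L}_\xi(x,0))\big),
\qquad \xi\in W_{\rm prim}^{\vee},\ \alpha\in K_{P,\psi}.
\]
By Corollary~\ref{c42}(1) this factor has degree $p^{n-1}-1=2$. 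The key observation is that over $\F_{q^s}$ the sheaf $\mathscr{L}_\xi(x,0)$ base-changes to the Artin--Schreier--Witt sheaf attached to the inflated character $\xi\circ\Tr_{q^s/q}\in W_{2,q^s}^{\vee}$, and that, because $\alpha\in\F_{q^s}$, Lemma~\ref{chd} applied over $\F_{q^s}$ identifies $\mathscr{L}_\psi(\alpha x)\otimes\mathscr{L}_\xi(x,0)$ with $\mathscr{L}_{\xi'}(x,0)$ for a primitive character $\xi'\in W_{2,q^s,\rm prim}^{\vee}$.

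For part (1), I would then run the length-two cubic analysis of Section~\ref{s3} over $\F_{q^s}$ (with $q$ replaced by $q^s$): by Corollary~\ref{cc2} the two Frobenius eigenvalues on $H_{\rm c}^1(\mathbb{A}^1,\mathscr{L}_{\xi'})$ sum to $-S_{\xi'}$ and multiply to $q^s$, so the local factor equals $1+S_{\xi'}T+q^sT^2$. The isomorphism $\mathscr{L}_\psi(\alpha x)\otimes\mathscr{L}_\xi(x,0)\simeq\mathscr{L}_{\xi'}(x,0)$ on $\mathbb{A}^1_{\F_{q^s}}$ together with the Grothendieck trace formula (Lemma~\ref{trace}(1)) then gives
\[
S_{\xi'}=-\Tr\!\big(\mathrm{Fr}_{q^s}^\ast;H_{\rm c}^1\big)=\sum_{x\in\F_{q^s}}\psi(\Tr_{q^s/q}(\alpha x))\,\xi(\Tr_{q^s/q}(x,0))=S_{\xi,\alpha}.
\]
Substituting and taking the product over all $(\xi,\alpha)$ yields the asserted factorization of $L_{\overline{C}_{P,2}/\F_{q^s}}(T)$.

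For part (2), I would first note that the Frobenius slopes with respect to $\mathrm{Fr}_q^\ast$ and to $\mathrm{Fr}_{q^s}^\ast=(\mathrm{Fr}_q^\ast)^s$ agree: passing to $\F_{q^s}$ raises each eigenvalue to its $s$-th power and scales the reference valuation $v_{\mathfrak p}(q^s)=sf$ by the same factor $s$. It therefore suffices to read off the slopes on $V_2$ over $\F_{q^s}$. Each summand has characteristic polynomial $T^2+S_{\xi,\alpha}T+q^s$ by part (1), and since $S_{\xi,\alpha}=S_{\xi'}$ with $\xi'$ primitive, Theorem~B (Corollary~\ref{sl}) applied over $\F_{q^s}=\F_{3^{sf}}$ gives $v_{\mathfrak p}(S_{\xi,\alpha})=sf/3$. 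The Newton polygon of $T^2+S_{\xi,\alpha}T+q^s$, with vertices $(0,sf)$, $(1,sf/3)$, $(2,0)$, then breaks into two segments whose root valuations are $sf/3$ and $2sf/3$, i.e. normalized slopes $1/3$ and $2/3$. As this holds for every pair $(\xi,\alpha)$, the Frobenius slopes of $\overline{C}_{P,2}$ are exactly $\{1/3,2/3\}$, in agreement with Corollary~\ref{cla}(2).

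The step I expect to require the most care is the base-change bookkeeping over $\F_{q^s}$: one must verify that Lemma~\ref{chd} and the cubic results of Section~\ref{s3} (Corollaries~\ref{cc2} and~\ref{sl}) transport verbatim with $q$ replaced by $q^s$, that the identification there is an isomorphism of sheaves on $\mathbb{A}^1_{\F_{q^s}}$ respecting $\mathrm{Fr}_{q^s}^\ast$ (so that it preserves traces and determinants), and that the character $\xi'$ produced is genuinely primitive, so that Corollary~\ref{sl} is applicable with $f$ replaced by $sf$. Once these compatibilities are in place, the remaining arguments are a single trace computation and a two-segment Newton polygon.
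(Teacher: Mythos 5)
Your proposal is correct and follows essentially the same route as the paper's proof: reduce to $L_{V_2/\F_{q^s}}(T)$ via Corollary~\ref{cla}, identify $\mathscr{L}_{\psi}(\alpha x)\otimes\mathscr{L}_{\xi}(x,0)$ with an Artin--Schreier--Witt sheaf $\mathscr{L}_{\xi_{a'}}(x,0)$ for a primitive character via Lemma~\ref{chd}, apply Corollary~\ref{cc2} over $\F_{q^s}$ to get the quadratic local factor, and deduce the slopes from Corollary~\ref{sl}. Your added care about the base-change bookkeeping over $\F_{q^s}$ and the explicit two-segment Newton polygon in part (2) only spells out what the paper leaves implicit.
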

\begin{proof}
We show (1). 
By $n=2$, we have 
\[
L_{\overline{C}_{P,2}/\F_{q^s}}(T)=L_{V_2/\F_{q^s}}(T). 
\]
By Corollary \ref{chd}, we have 
\begin{align*}
\mathscr{L}_{\psi}(\alpha x) \otimes \mathscr{L}_{\xi}(x,0)
 \simeq \mathscr{L}_{\xi_{a'}}(x,0), 
\end{align*}
Thus, by Corollary \ref{cc2}, 
\begin{align*}
\det(1-\mathrm{Fr}_{q^s}^\ast T; H_{\rm c}^1(\mathbb{A}^1,\mathscr{L}_{\psi}(\alpha x) \otimes \mathscr{L}_{\xi}(x,0)))
&=\det(1-\mathrm{Fr}_{q^s}^\ast T; H_{\rm c}^1(\mathbb{A}^1,\mathscr{L}_{\xi_{a'}}(x,0))) \\
&=1+S_{\xi_{a'}} T+q^s T^2. 
\end{align*}
Hence the claim follows from Corollary \ref{cla}
and $S_{\xi,\alpha}=S_{\xi_{a'}}$. 
The claim (2) follows from Corollary \ref{sl}. 
\end{proof}
\subsection*{Acknowledgement} 

T.\ I.\ is supported by JSPS KAKENHI Grant Numbers 23K20786, 24K21512 and 25K00905.
D.\ T.\ is supported by JSPS KAKENHI Grant Number 25KJ0122. 
T.\ T.\ is supported by JSPS KAKENHI Grant Numbers 25K06959
and 23K20786.

\appendix
\section{Determinant}

In this appendix, we show that the Frobenius determinant on the first \'etale cohomology group with suitable coefficients is equal to a Gauss sum multiplied by a root of unity. This provides a generalization of Lemma~\ref{laumon}.

Let $q$ be a power of an odd prime $p$. 
The following is shown similarly to \cite[the proof of Theorem~7.1]{Ka0a}.
\begin{lemma}\label{katz}
Let $\mathscr{L}$ be a smooth 
$\overline{\mathbb{Q}}_{\ell}$-sheaf pure of weight $0$ on 
a smooth curve $X^0$ over $\F_q$. 
Let $X:=X^0_{\F}$. 
Assume that 
\[
\det(-\mathrm{Fr}_q^\ast; H_{\mathrm{c}}^1(X,\mathscr{L}))
\in \mathbb{Z}[\zeta_{p^n}] \quad \textrm{with an integer $n \ge 0$}.
\] 
Furthermore, we suppose that the natural map 
\[
H_{\rm c}^1(X,\mathscr{L})
\to H^1(X,\mathscr{L})
\]
is an isomorphism. 
We take $\psi\in\mathbb{F}_q^\vee\setminus\{1\}$ and 
denote by 
\[
G(\psi)=\sum_{x\in\mathbb{F}_q}\psi(x^2)
\]
the quadratic Gauss sum. 
If we set $s:=\dim H_{\mathrm{c}}^1(X,\mathscr{L})$, we have 
\[
\frac{\det(-\mathrm{Fr}_q^\ast;  
H_{\mathrm{c}}^1(X,\mathscr{L}))}
{G(\psi)^s}
\]
is a root of unity.
\end{lemma}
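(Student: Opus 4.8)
The plan is to prove that the element
\[
z:=\frac{\det(-\mathrm{Fr}_q^\ast; H_{\mathrm c}^1(X,\mathscr L))}{G(\psi)^s}
\]
is a root of unity by verifying the two hypotheses of Kronecker's theorem: that $z$ is an algebraic integer which is a unit at every finite place, and that every archimedean conjugate of $z$ has absolute value $1$. This follows the strategy of \cite[the proof of Theorem~7.1]{Ka0a}. Write $\delta:=\det(-\mathrm{Fr}_q^\ast; H_{\mathrm c}^1(X,\mathscr L))$ and $\alpha_1,\dots,\alpha_s$ for the Frobenius eigenvalues, so $\delta=\prod_i(-\alpha_i)$.

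First I would establish the archimedean estimate. Since $\mathscr L$ is pure of weight $0$ and the forgetful map $H_{\mathrm c}^1(X,\mathscr L)\to H^1(X,\mathscr L)$ is an isomorphism, the group $H_{\mathrm c}^1(X,\mathscr L)$ equals the image of $H_{\mathrm c}^1\to H^1$, which by Deligne's theory of weights is pure of weight $1$. Hence each $\alpha_i$ is a Weil $q$-number of weight $1$, i.e.\ $|\sigma(\alpha_i)|=\sqrt q$ for every embedding $\sigma\colon\overline{\mathbb Q}\hookrightarrow\mathbb C$, giving $|\sigma(\delta)|=q^{s/2}$. For the Gauss sum one has $\sigma(G(\psi))=G(\sigma\circ\psi)$, again a nontrivial quadratic Gauss sum, so $|\sigma(G(\psi))|=\sqrt q$ and $|\sigma(G(\psi)^s)|=q^{s/2}$. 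Therefore $|\sigma(z)|=1$ for every $\sigma$.

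Next I would control the finite places away from $p$. By hypothesis $\delta\in\mathbb Z[\zeta_{p^n}]$ is an algebraic integer, and the previous step gives $|N_{K/\mathbb Q}(\delta)|=q^{s[K:\mathbb Q]/2}$, whose only prime factor is $p$; hence $\delta$ is a unit at every finite place $v\nmid p$. The same reasoning applies to $G(\psi)^s$ using $G(\psi)^2=\pm q$. Consequently $z$ is a unit at every finite place not lying over $p$.

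The remaining, and most delicate, point is the place above $p$, and this is where the integrality hypothesis is essential. Both $\delta$ (by assumption) and $G(\psi)\in\mathbb Z[\zeta_p]$ lie in $K:=\mathbb Q(\zeta_{p^{\max(n,1)}})$, and $K/\mathbb Q$ is totally ramified at $p$, so there is a unique prime $\mathfrak p$ above $p$. Applying the product formula $\prod_v\|z\|_v=1$ over all places of $K$, and using that $\|z\|_v=1$ at every archimedean place (Step~1) and at every finite $v\nmid p$ (Step~2), the single remaining factor forces $\|z\|_{\mathfrak p}=1$, i.e.\ $v_{\mathfrak p}(z)=0$. Thus $z$ is a unit at all finite places with all archimedean conjugates of absolute value $1$, and Kronecker's theorem yields that $z$ is a root of unity. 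I expect this $p$-adic step to be the main obstacle: away from $p$ everything is dictated by weights, whereas pinning down $v_{\mathfrak p}(\delta)=s\,v_{\mathfrak p}(G(\psi))$ genuinely requires the hypothesis $\delta\in\mathbb Z[\zeta_{p^n}]$ together with the total ramification of $p$ in the cyclotomic field, which is exactly what makes the product formula collapse to a single unknown factor.
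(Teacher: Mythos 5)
Your proposal is correct and follows essentially the same route as the paper's proof: purity of weight one of $H_{\mathrm c}^1(X,\mathscr L)$ deduced from the isomorphism $H_{\mathrm c}^1\to H^1$ via Weil~II, unit-ness at finite places away from $p$, the product formula to force triviality of the valuation at the unique place above $p$, and Kronecker's theorem. The only cosmetic difference is that you derive purity from the image of $H_{\mathrm c}^1\to H^1$ directly, while the paper phrases the same point via Poincar\'e duality applied to $\mathscr L^{\vee}$; the substance is identical.
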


\begin{proof}
Put 
\[
\alpha := \det(-\mathrm{Fr}_q^\ast; H_{\mathrm{c}}^1(X,\mathscr{L}))
\in \mathbb{Z}[\zeta_{p^n}].
\]
Since $\mathscr{L}$ is pure of weight $0$, 
Weil~II \cite{Del2} implies that 
$H_{\rm c}^1(X, \mathscr{L})$
and $H_{\rm c}^1(X,\mathscr{L}^{\vee})$ are mixed of weights $\leq 1$. 
By Poincar\'e duality and the assumption, 
the natural map
\[
H^1_{\rm c}(X,\mathscr{L}^{\vee}) \to 
H^1(X,\mathscr{L}^{\vee})
\]
is an isomorphism and 
\[
H_{\rm c}^1(X,\mathscr{L})
\simeq H^1(X,\mathscr{L}^{\vee})^{\vee}(-1)
\simeq H^1_{\rm c}(X,\mathscr{L}^{\vee})^{\vee}(-1). 
\]
Hence 
$H_{\rm c}^1(X,\mathscr{L})$ is mixed of weights $\ge 1$ 
and therefore pure of weight $1$. 
Thus for any embedding 
$\iota \colon \overline{\mathbb{Q}}_{\ell}\hookrightarrow\mathbb{C}$,
the complex absolute value satisfies
\[
|\iota(\alpha)| = q^{s/2}.
\]
Similarly, for the Gauss sum, one has 
$G(\psi)\in\mathbb{Z}[\zeta_{p^n}]$ and 
$|\iota(G(\psi))| = q^{1/2}$ for any $\iota$. 
Hence
\begin{equation}\label{abv}
\biggl|\iota\!\left( \frac{\alpha}{G(\psi)^s} \right)\biggr|
 = 1.
\end{equation}

Let $\mathfrak{p}'$ be a finite place of 
$\mathbb{Q}(\zeta_{p^n})$ with $\mathfrak{p}'\nmid p$. 
Then both $\alpha$ and $G(\psi)$ in 
$\mathbb{Z}[\zeta_{p^n}]$
are $\mathfrak{p}'$-units. 

Since all archimedean absolute values of $\alpha/G(\psi)^s$
are $1$ by \eqref{abv}, the product formula implies 
that the $p$-adic valuation is zero. 
Therefore
\[
\frac{\alpha}{G(\psi)^s} \in 
\mathbb{Z}[\zeta_{p^n}],
\]
i.e.\ it is an algebraic integer, and by \eqref{abv}
all of its complex absolute values are equal to $1$.

By Kronecker's theorem, an algebraic integer whose
Galois conjugates all have absolute value $1$ 
is a root of unity. 
Thus $\alpha/G(\psi)^s$ is a root of unity.
\end{proof}
\begin{corollary}\label{kkatz}
Let $\mathscr{K}_{\chi}(x)$ be the Kummer 
sheaf on $\mathbb{G}_{\mathrm{m},\, \F_q}$ defined by 
$y^2=x$ and a character $\chi \in \F_2^{\vee}$. 
Let $n$ be a positive integer and let 
$W_{n,q} := \mathbb{W}_n(\F_q)$, 
where $\mathbb{W}_n$ denotes the $p$-typical Witt vector scheme of length $n$.  
For $\chi \in \F_2^{\vee}$ and a nontrivial character 
 $\xi \in W_{n,q}^{\vee} \setminus \{1\}$, 
 we define 
\[
V_{\chi, \xi}:=
\begin{cases}
H_{\rm c}^1(\mathbb{A}^1,\mathscr{L}_{\xi}(x,0's))
&\textrm{if $\chi = 1$}, \\
H_{\rm c}^1(\mathbb{G}_{\mathrm{m}}, \mathscr{K}_{\chi}(x) \otimes 
\mathscr{L}_{\xi}(x,0's)) & \textrm{if $\chi \neq 1$}. 
\end{cases}
\]
Then 
\[
\frac{\det(-\mathrm{Fr}_q^\ast; V_{\chi,\xi})}{G(\psi)^{\dim V_{\chi,\xi}}} 
\]
is a root of unity.

\end{corollary}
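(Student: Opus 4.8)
The plan is to deduce the statement directly from Lemma~\ref{katz}. In the case $\chi=1$, set $X^0=\mathbb{A}^1_{\F_q}$ and $\mathscr{L}:=\mathscr{L}_{\xi}(x,0'\mathrm{s})$; in the case $\chi\neq 1$, set $X^0=\mathbb{G}_{\mathrm{m},\,\F_q}$ and $\mathscr{L}:=\mathscr{K}_{\chi}(x)\otimes\mathscr{L}_{\xi}(x,0'\mathrm{s})$. With $X:=X^0_{\F}$ we then have $V_{\chi,\xi}=H_{\rm c}^1(X,\mathscr{L})$ in each case, and it suffices to verify the three hypotheses of Lemma~\ref{katz}: that $\mathscr{L}$ is lisse and pure of weight $0$; that $\det(-\mathrm{Fr}_q^\ast;H_{\rm c}^1(X,\mathscr{L}))\in\mathbb{Z}[\zeta_{p^n}]$; and that the forgetful map $H_{\rm c}^1(X,\mathscr{L})\to H^1(X,\mathscr{L})$ is an isomorphism.

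Purity is immediate, since both the Artin--Schreier--Witt sheaf $\mathscr{L}_{\xi}(x,0'\mathrm{s})$ and the Kummer sheaf $\mathscr{K}_{\chi}(x)$ are lisse of rank one and pure of weight $0$, hence so is their tensor product. For the rationality of the determinant, I would argue as follows. By Lemma~\ref{cyclic} we have $W_{n,q}\simeq(\mathbb{Z}/p^n\mathbb{Z})^f$, so $\xi$ takes values in $\mu_{p^n}$, while $\chi$ takes values in $\mu_2=\{\pm 1\}$. Since $\mathscr{L}$ is a geometrically nontrivial rank-one character sheaf (the Witt factor is wildly ramified at $\infty$ for every nontrivial $\xi$, so no cancellation with the tame Kummer factor occurs), we have $H_{\rm c}^0(X,\mathscr{L})=H_{\rm c}^2(X,\mathscr{L})=0$. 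The Grothendieck--Lefschetz trace formula then expresses $\Tr(\mathrm{Fr}_{q^m}^\ast;V_{\chi,\xi})$, for every $m\ge 1$, as minus a sum of products of values of $\xi$ and $\chi$; as $p$ is odd, $\zeta_2=-1\in\mathbb{Z}$, so all these traces lie in $\mathbb{Z}[\zeta_{p^n}]$. The determinant is a symmetric function of the Frobenius eigenvalues, hence expressible through these traces via Newton's identities, so it lies in $\mathbb{Q}(\zeta_{p^n})$; being an algebraic integer (a product of Weil numbers), it lies in the ring of integers $\mathbb{Z}[\zeta_{p^n}]$.

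The isomorphism of the forgetful map follows from the absence of nonzero inertia invariants of $\mathscr{L}$ at the points of $X\setminus X^0$, where the ambient smooth compactification is $X=\mathbb{P}^1$. When $\chi=1$ the only boundary point is $\infty$, where $\mathscr{L}_{\xi}(x,0'\mathrm{s})$ has positive Swan conductor for every nontrivial $\xi$ by Lemma~\ref{lemma:Brylinski}, hence is wildly ramified with no invariants. When $\chi\neq 1$ the boundary is $\{0,\infty\}$: at $0$ the Witt factor is lisse while $\mathscr{K}_{\chi}$ is tamely ramified with nontrivial quadratic inertia action, so the tensor has no invariants, and at $\infty$ the Witt factor is wildly ramified and dominates the tame Kummer factor. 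In either case $H_{\rm c}^1(X,\mathscr{L})\to H^1(X,\mathscr{L})$ is an isomorphism.

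With all three hypotheses verified, Lemma~\ref{katz} yields that $\det(-\mathrm{Fr}_q^\ast;V_{\chi,\xi})/G(\psi)^{\dim V_{\chi,\xi}}$ is a root of unity, which is the claim. The step I expect to require the most care is the integrality assertion $\det\in\mathbb{Z}[\zeta_{p^n}]$: it rests on controlling the field generated by all the Frobenius traces, and in particular on $p$ being odd, so that the quadratic character $\chi$ contributes only signs and introduces no ramification into the coefficient field beyond $\mathbb{Q}(\zeta_{p^n})$.
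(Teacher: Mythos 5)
Your proposal is correct and follows essentially the same route as the paper's proof: both reduce to Lemma~\ref{katz} by checking purity, the rationality/integrality of the determinant via the trace formula (you package the passage from traces to the characteristic polynomial through Newton's identities where the paper uses the $L$-function $\exp\bigl(\sum_m S_m T^m/m\bigr)$, which is the same computation), and the bijectivity of the forgetful map from the ramification of $\mathscr{L}$ at every boundary point of $\mathbb{P}^1$. The only point handled more explicitly in the paper is the integrality of the determinant, which is justified there by embedding $V_{\chi,\xi}$ into $H^1$ of a smooth projective curve and invoking Weil~II, rather than by the bare assertion that a product of Weil numbers is an algebraic integer.
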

\begin{proof}
We simply write $\mathscr{L}$ for $\mathscr{K}_{\chi}(x) \otimes \mathscr{L}_{\xi}(x,0's)$.
We define $X^0$ to be $\mathbb{A}^1_{\F_q}$
if $\chi=1$ and $\mathbb{G}_{\mathrm{m}, \, \F_q}$ if $\chi \neq 1$. 
We apply Lemma \ref{katz}. 
By the trace formula, 
\[
S_m:=-\Tr(\mathrm{Fr}_{q^m}^\ast; H_{\rm c}^1(X,\mathscr{L})) \in \mathbb{Z}[\zeta_{p^n}]
\quad \textrm{for an integer $m \ge 1$}. 
\]
We define the $L$-function by
\[
L_{\mathscr{L}}(T):=\exp\left(\sum_{m=1}^{\infty} \frac{S_m}{m} T^m\right)
\in \mathbb{Q}(\zeta_{p^n})[[T]]. 
\]
We have $H_{\rm c}^i(X,\mathscr{L})=0$ for $i \neq 1$. 
Hence the function $L_{\mathscr{L}}(T)$ equals 
$\det(1-\mathrm{Fr}_q^\ast T; H_{\rm c}^1(X,\mathscr{L}))$ and the leading 
coefficient is 
\[
\alpha:=\det(-\mathrm{Fr}_q^\ast; H_{\rm c}^1(X, \mathscr{L})) \in \mathbb{Q}(\zeta_{p^n}). 
\]
Since $V_{\chi,\xi}\subset H^1(Y)$ for a smooth projective curve $Y$,
the eigenvalues of $\Fr_q^\ast$ on $V_{\chi,\xi}$ are algebraic integers by 
\cite{Del2}.  Hence $\alpha$ is an algebraic integer and $\alpha \in \mathbb{Z}[\zeta_{p^n}]$. 

Since $\mathscr{L}$ is 
ramified at every point of 
$\overline{X} \setminus X$ and has rank one, 
the natural map 
\[
H_{\rm c}^1(X,\mathscr{L})
\to H^1(X,\mathscr{L})
\]
is an isomorphism. 
The latter assertion follows from Lemma \ref{katz}. 
\end{proof}

\end{document}